
\documentclass[a4paper, 11pt]{article}

\usepackage{amsthm,amssymb}
\usepackage{mathtools}		
\usepackage{enumitem}		 	
\usepackage{graphicx} 		
\usepackage{subfigure} 		

\theoremstyle{plain}	
\newtheorem{thm}{Theorem}[section]
\newtheorem{lem}[thm]{Lemma}
\newtheorem{cor}[thm]{Corollary}
\newtheorem{claim}[thm]{Claim}
\newtheorem*{thm:RecursiveConstructionDBalMConn}{Theorem \ref{thm:RecursiveConstructionDBalMConn}}
\newtheorem*{thm:MConnectedGloballyRigidIffDBal}{Theorem \ref{thm:MConnectedGloballyRigidIffDBal}}

\theoremstyle{definition}
\newtheorem{case}{Case}[thm] 

\DeclarePairedDelimiter\abs{\lvert}{\rvert}

\newcommand{\M}{\mathcal{M}}
\newcommand{\R}{\mathcal{R}}


\title{Global rigidity of 2-dimensional direction-length frameworks with connected rigidity matroids} 
\author{Katie Clinch \thanks{School of Mathematical Sciences, Queen Mary University of London, Mile End Road, London, E1 4NS, United Kingdom. Email: k.clinch@qmul.ac.uk}} 

\begin{document}


\maketitle 

\begin{abstract}
A two-dimensional direction-length framework $(G,p)$ consists of a multigraph $G=(V;D,L)$ whose edge set is formed of ``direction'' edges $D$ and ``length'' edges $L$, and a realisation $p$ of this graph in the plane. The edges of the framework represent geometric constraints: length edges fix the distance between their endvertices, whereas direction edges specify the gradient of the line through both endvertices. A direction-length framework $(G,p)$ is globally rigid if every framework $(G,q)$ which satisfies the same direction and length constraints as $(G,p)$ can be obtained by translating $(G,p)$ in the plane, and/or rotating $(G,p)$ by $180^{\circ}$. 

In this paper, we characterise global rigidity for generic direction-length frameworks whose associated rigidity matroid is connected, by showing that such frameworks are globally rigid if and only if every 2-separation of the underlying graph is direction-balanced. This extends previous work by Jackson and Jord{\'a}n \cite{JJ_MixedCircuits}, who considered direction-length frameworks where the edge set forms a circuit in the rigidity matroid.
\end{abstract}




\section{Introduction}

A two-dimensional \emph{direction-length framework} is a pair $(G,p)$, where $G=(V;D,L)$ is a loop-free multigraph whose edge set consists of ``direction'' edges $D$, and ``length'' edges $L$, such that any pair of vertices has at most one edge of each type between them, and $p: V \to \mathbb{R}^2$ is a realisation of the graph in the plane. We consider the edges of our framework $(G,p)$ to define geometric constraints: a direction edge $uv\in D$ fixes the gradient of the line through $p(u)$ and $p(v)$, whereas a length edge $uv\in L$ specifies the distance between the points $p(u)$ and $p(v)$. 

In the special cases where $D=\emptyset$ or $L=\emptyset$ we say that the graph $G$ and framework $(G,p)$ are \emph{length-pure} or \emph{direction-pure} respectively, and refer to these collectively as \emph{pure graphs} and \emph{pure frameworks}. When both $D$ and $L$ are non-empty we call $G$ a \emph{mixed graph} and $(G,p)$ a \emph{mixed framework}.

Given such a configuration, $(G,p)$, a natural question to ask is whether there are other realisations of $G$ which satisfy the same direction and length constraints. Any framework $(G,q)$ which satisfies the same constraints as $(G,p)$ is said to be \emph{equivalent} to $(G,p)$. It is clear that we can always translate a framework in the plane and/or rotate it by $180^{\circ}$ to obtain an equivalent framework; any realisation that can be obtained in this manner from our original framework $(G,p)$, is said to be \emph{congruent} to $(G,p)$. If all frameworks which are equivalent to $(G,p)$ are also congruent to $(G,p)$ then we say that $(G,p)$ is \emph{globally rigid}. However many frameworks are not globally rigid, such as those in Figure \ref{fig:NecessaryConditionsButNotGloballyRigid}.

\begin{figure}
\centering
\includegraphics[scale = 1]{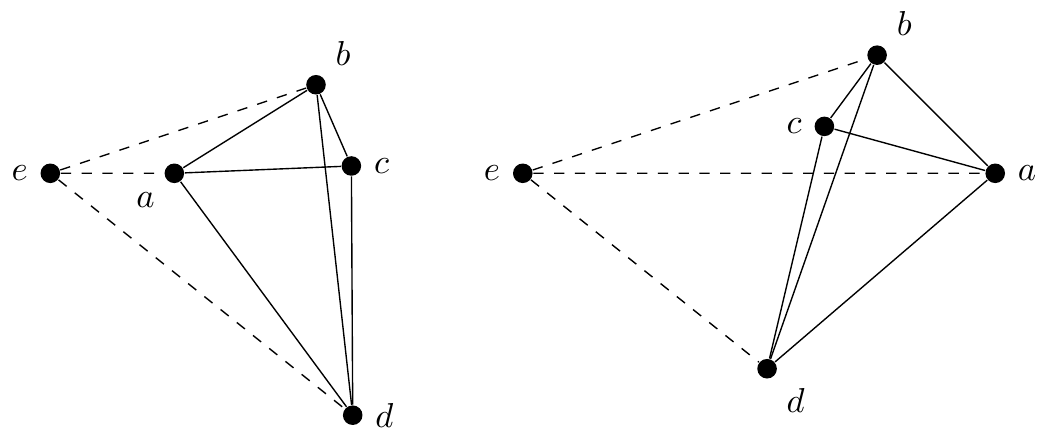}
\caption{Two equivalent but non-congruent realisations of a mixed graph in $\mathbb{R}^2$. Direction constraints are represented by dashed lines, and length constraints by solid lines.}
\label{fig:NecessaryConditionsButNotGloballyRigid}
\end{figure}

Global rigidity is an important property in many real-world applications. For example, in Computer Aided Design (CAD), a planar design consists of a collection of geometric objects such as line segments, points and curves, with constraints on their size, and relative positions. It is still an open problem to determine when such a design can be uniquely identified by its constraints. Similarly, in the theory of sensor networks, we would like to know when it is possible to determine the position of individual sensors, given their relative positions and the location of two transmitters. In both cases, we are asking whether the constraints are sufficient to ensure a unique realisation. In other words, whether the framework is globally rigid.

\subsection{Rigidity}


A closely related concept, which is a necessary condition for global rigidity, is rigidity. A direction-length framework $(G,p)$ is \emph{rigid} if the only continuous motions of the vertices which preserve the constraints are translations of the entire framework. We say a mixed framework $(G,p)$ is \emph{redundantly rigid}, if $(G-e,p)$ is rigid for all edges $e\in E(G)$. 

It is often helpful to consider a stronger version of rigidity, known as infinitesimal rigidity, which is based on the \emph{rigidity matrix} $R(G,p)$ of the framework $(G,p)$. This matrix consists of $2\abs{V}$ columns and $\abs{D}+\abs{L}$ rows, where each row corresponds to an edge, and each successive pair of columns to a vertex of $G$. For any edge $uv\in E(G)$, the corresponding row of the rigidity matrix has zero-entries throughout, except in the two pairs of columns corresponding to the endvertices $u$ and $v$. These columns instead contain the vectors $p(u)-p(v)$ and $p(v)-p(u)$ respectively when $uv\in L$, and  $(p(u)-p(v))^\perp$ and $(p(v)-p(u))^\perp$ respectively when $uv\in D$ (where $(x,y)^\perp=(y,-x)$). We define a direction-length framework $(G,p)$  to be \emph{infinitesimally rigid} if $\text{rank}(R(G,p))=2\abs{V}-2$. 

Every motion of a framework $(G,p)$ starts as an ``instantaneous motion'' of that framework, i.e.\ an assignment of instantaneous velocity vectors to each of the vertices in the framework. These velocity vectors can be concatenated to give a single vector of length $2\abs{V}$ in the kernel of $R(G,p)$. All frameworks $(G,p)$, whether rigid or not, have at least two motions: the two translations. Hence $\abs{\text{ker}(R(G,p))}\geq 2$. If the framework is infinitesimally rigid, then this holds with equality, and so the only motions of the framework are translations. Thus any infinitesimally rigid framework is rigid. However, the converse is not true in general, since there may exist vectors in the kernel of $R(G,p)$ whose corresponding instantaneous motion of the framework does not extend to a finite motion.

One of the key questions in rigidity theory is to determine to what extent the rigidity properties of a framework are combinatorial (determined by the underlying graph), or geometric (dependent on the realisation of the framework). In general, this is not an easy question to answer. However for infinitesimal rigidity, it is much simpler: for all possible realisations of a graph $G$, the rank of the rigidity matrix will be maximised when there are no algebraic dependencies between the coordinates of the vertices. As such, we say that a realisation $p$ of a graph $G$ is \emph{generic} if the coordinates in $p(V)$ are algebraically independent over the rationals, and we call the corresponding framework $(G,p)$, a \emph{generic framework}. 

Given a direction-length framework $(G,p)$, the rows in the rigidity matrix $R(G,p)$ define a matroid. Further, any two generic realisations of $G$ will define the same matroid, which we call the \emph{rigidity matroid of G}, and denote $\R(G)$. Following standard matroid terminology, we say that a set of edges of a graph $G$ is \emph{independent} if the corresponding rows in $\R(G)$ are linearly independent, and that a set of edges is a \emph{circuit} if the corresponding rows are linearly dependent in $\R(G)$, but any proper subset is linearly independent.

Since either all generic realisations of a graph $G$ are infinitesimally rigid, or none of them are, we say that infinitesimal rigidity is a \emph{generic property} of direction-length frameworks. We can now define a graph $G$ to be \emph{infinitesimally rigid} when it has a generic realisation which is infinitesimally rigid (or equivalently, when all generic realisations are infinitesimally rigid). A similar argument to Asimow and Roth's for bar-and-joint frameworks \cite{AR_RigidityOfGraphs}, can be used to show that rigidity and infinitesimal rigidity are equivalent for generic direction-length frameworks (see \cite[Lemma 8.1]{JK_DL_BoundedFrameworks}). This implies that rigidity and redundant rigidity are also generic properties for direction-length frameworks. In contrast, it remains an open problem to determine whether global rigidity is a generic property for direction-length frameworks.

\subsection{Global Rigidity}

Generic global rigidity is not yet fully understood for direction-length frameworks. In comparison, generic global rigidity has been completely characterised for  direction-pure and length-pure frameworks. 

Pure frameworks have additional isometries which do not violate their constraints, and so we have to adapt our definitions for rigidity and global rigidity in pure frameworks accordingly. A length-pure framework has no constraints on its orientation, and so can be rotated or reflected, whereas a direction-pure framework lacks any distance constraints, and so can be dilated. 

For length-pure frameworks, more commonly called ``bar-and-joint frameworks'', we say that a framework $(G,p)$ is \emph{globally length-rigid} if all equivalent realisations $(G,q)$ can be obtained from $(G,p)$ by a translation, rotation or reflection. Gortler et al.\ \cite{G_L_CharacterisingGlobalRigidity} showed that global length-rigidity is a generic property for length-pure frameworks in $\mathbb{R}^d$ for all $d>0$.

We can similarly define a direction-pure framework $(G,p)$ to be \emph{globally direction-rigid} if all equivalent realisations can be obtained from $(G,p)$ by a translation or dilation. Whiteley \cite{W_MatroidsFromDiscAppGeom} showed that global direction-rigidity is equivalent to direction-rigidity for direction-pure frameworks in $\mathbb{R}^d$. This implies that global direction-rigidity is a generic property for direction-pure frameworks in $\mathbb{R}^d$ for all $d>0$.

These results suggest that global rigidity might also be a generic property for direction-length frameworks in $\mathbb{R}^d$, but this is not yet known. So for our study of direction-length frameworks in the plane, we shall define a direction-length graph $G$ to be \emph{globally rigid} if all generic realisations of $G$ in $\mathbb{R}^2$ are globally rigid. Several combinatorial conditions for global rigidity in direction-length frameworks have already been identified:

\begin{lem}\cite[Lemma 1.6]{JJ_MixedCircuits},\cite[Theorems 1.1 and 1.3]{JK_NecessaryConditions} \label{lem_JJK:NecessaryConditionsForGlobalRigidity} 
Let $(G,p)$ be a generic direction-length framework with at least three vertices. Suppose that $(G,p)$ is globally rigid in $\mathbb{R}^2$, and let $G=(V;D,L)$ Then
\begin{enumerate}
	\item $G$ is mixed,
	\item $G$ is rigid,
	\item $G$ is 2-connected,
	\item $G$ is direction-balanced, i.e. both sides of any 2-separation contain a direction edge,
	\item the only 2-edge-cuts which can occur in $G$ consist of two direction edges incident with a common vertex of degree two,
  \item if $|L|\geq 2$, then $G-e$ is rigid for all $e\in L$, and
	\item if $e\in D$ and $G-e$ contains a rigid subgraph on at least 2 vertices, then $G-e$ is either rigid or unbounded.
\end{enumerate}
\end{lem}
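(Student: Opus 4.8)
The plan is to prove each of (a)--(g) by contradiction: assuming the stated condition fails, I will construct a framework $(G,q)$ that is equivalent to $(G,p)$ but not congruent to it. Two mechanisms do almost all of the work. The first is to pick a similarity $\tau$ of the plane preserving the relevant geometric constraints --- an isometry for length edges, and a map carrying every line to a parallel line (a translation, a $180^\circ$ rotation, or a dilation) for direction edges --- and to apply $\tau$ to one part of the framework only, leaving the rest fixed. Since a congruence is a single global translation or $180^\circ$ rotation, and no congruence other than the identity fixes two distinct points, the resulting $q$ is non-congruent to $p$ whenever the fixed part contains at least two vertices (which are distinct, as $p$ is generic) and $\tau$ moves at least one vertex. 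The second mechanism is to take a nontrivial finite flex of a flexible subgraph of $G$ and follow it until the constraints of the deleted edge(s) hold again; here I will use that a generic direction-length framework which is not rigid admits a nontrivial analytic flex (the direction-length analogue of Asimow--Roth, \cite[Lemma~8.1]{JK_DL_BoundedFrameworks}), together with the observation that a flex preserving the deleted constraints too would be a flex of $G$, contradicting rigidity.

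For (a), if $G$ is direction-pure then any dilation $q=\lambda p$ with $\lambda\notin\{1,-1\}$ is equivalent but not congruent, and if $G$ is length-pure then any rotation $q=R_\theta p$ with $\theta\notin\{0,\pi\}$ is equivalent but not congruent. For (b), a continuous motion of $G$ that is not a family of translations provides, arbitrarily near $p$, a non-congruent equivalent framework, since a continuous path of congruences starting at the identity consists only of translations. For (c), if $G$ is disconnected I translate one of its components; and if $v$ is a cut vertex with $G=G_1\cup G_2$ and $V(G_1)\cap V(G_2)=\{v\}$, I rotate $G_2$ by $180^\circ$ about $p(v)$: being an isometry that maps each line to a parallel line, this preserves every constraint while fixing the at-least-two vertices of $G_1$, and generically it moves a vertex of $G_2$, so $q$ is equivalent and non-congruent.

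For (d), given a 2-separation $\{G_1,G_2\}$ with $V(G_1)\cap V(G_2)=\{a,b\}$ in which $G_2$ is length-pure, I reflect $G_2$ in the line through $p(a)$ and $p(b)$; this fixes $G_1$ (in particular $a,b$), preserves the length constraints of $G_2$, and generically moves a vertex of $G_2$ off that line, so $q$ is equivalent and non-congruent. For (e), let $\{e_1,e_2\}$ be a 2-edge-cut with $e_i=a_ib_i$ and vertex-classes $V_1\ni a_i$, $V_2\ni b_i$. A rank count forces both sides to be rigid:
\[ 2\abs{V}-2=\text{rank}\,R(G,p)\le\text{rank}\,R(G[V_1],p)+\text{rank}\,R(G[V_2],p)+2\le(2\abs{V_1}-2)+(2\abs{V_2}-2)+2=2\abs{V}-2, \]
so equality holds throughout and both $G[V_1]$ and $G[V_2]$ are (infinitesimally) rigid. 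Hence any framework $q$ that agrees with $(G,p)$ on $V_1$ and re-places $V_2$ by a translation or a $180^\circ$ rotation --- options always available because $G[V_2]$ is rigid --- is equivalent to $(G,p)$ once it meets the two constraints carried by $e_1$ and $e_2$. Each of those constraints is a single equation (linear in the translation vector for a direction edge, a circle for a length edge), so generically the solution set is finite in each of the two components and contains the original placement; a further solution --- yielding the wanted non-congruent equivalent framework --- exists unless $V_1$ (or $V_2$) is a single vertex whose only two edges are $e_1,e_2$, both direction edges, which is precisely the configuration permitted in (e). (If a cut edge is a length edge, the extra solution already occurs among the translations, since a line or circle through the origin meets the other such curve again; if both are direction edges but $\abs{V_1},\abs{V_2}\ge 2$, it occurs in the $180^\circ$-rotation component.)

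Finally, for (f) and (g), suppose the relevant $G-e$ is not rigid and take a nontrivial analytic flex $(G-e,p_t)$ with $p_0=p$; along it the quantity constrained by $e$ --- the squared length $\norm{p_t(u)-p_t(v)}^2$ when $e\in L$, or the direction of $p_t(u)-p_t(v)$ when $e\in D$ --- is a non-constant analytic function, for otherwise $(G,p_t)$ would be a flex of $G$. It then remains to show that this function re-attains its value at $(G,p)$ at a configuration not congruent to $(G,p)$, and this is exactly where the hypotheses enter: the second length edge in (f) (needed because otherwise $G-e$ is direction-pure, a different regime), and the boundedness of $G-e$ together with a rigid subgraph on at least two vertices in (g) (the rigid subgraph pinning the scale so that boundedness can be brought to bear on the flex). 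I expect this last step to be the main obstacle, as it requires controlling the flex globally rather than only to first order --- in particular the structure theory of bounded direction-length frameworks --- and the cleanest route is to follow the arguments of Jackson and Jord\'an \cite{JJ_MixedCircuits} and of Jackson and Keevash \cite{JK_NecessaryConditions}, to which the present paper defers for this lemma. The remaining points --- the degenerate cases in (c)--(e), and the verification that each constructed $q$ is genuinely non-congruent --- are routine once the generic position of $p$ is used.
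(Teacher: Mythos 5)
The paper itself gives no proof of this lemma: it is imported verbatim, with the citation to Jackson and Jord\'an \cite{JJ_MixedCircuits} for the first parts and to Jackson and Keevash \cite{JK_NecessaryConditions} for the last ones, so the only meaningful comparison is with those sources. For parts (a)--(e) your constructions are correct in outline and are essentially the standard arguments from that literature: dilation or rotation for pure graphs, the connectedness-of-the-congruence-group argument for (b), a point reflection of one side at a cut vertex for (c), reflection of the length-pure side of a 2-separation in the line through the two cut vertices for (d), and the translation/point-reflection analysis of the two constraint curves for (e). Two small remarks: in (d) you should note that a possible direction edge joining the two cut vertices is harmless because both cut vertices are fixed by the reflection; and in (e) the rank count is not actually needed, since applying a translation or point reflection to one side preserves all of its internal constraints whether or not that side is rigid --- the substance of (e) is exactly the curve-intersection and degenerate-case analysis you compress into the final parenthetical, and those genericity and non-congruence checks (non-tangency, non-parallelism, the $|V_1|=1$ cases) do go through.

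The genuine shortfall is in (f) and (g). There you supply only the first-order setup --- a nontrivial analytic flex of $G-e$ along which the deleted constraint is non-constant --- and you explicitly defer the essential step, namely showing that the constraint re-attains its original value at a configuration not congruent to $(G,p)$, to the machinery of Jackson and Keevash (boundedness and the global structure of the flex, including the role of the second length edge in (f) and of the rigid subgraph plus boundedness in (g)). Since the present paper likewise proves nothing here and simply cites \cite{JK_NecessaryConditions}, this is not a deviation from the paper's treatment; but as a standalone proof your proposal is incomplete precisely at (f) and (g), and you have correctly located where the real difficulty lies rather than papering over it.
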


Where a direction-length framework $(G,p)$ is \emph{unbounded} if there exists an equivalent framework $(G,q)$ such that for some $u,v\in V(G)$, $\| q(u)-q(v)\|> K$ for all $K\in \mathbb{R}$. The conditions in Lemma \ref{lem_JJK:NecessaryConditionsForGlobalRigidity} are not sufficient to guarantee global rigidity. There exist generic direction-length frameworks, such as those in Figure \ref{fig:NecessaryConditionsButNotGloballyRigid}, which satisfy all of the above conditions, but are not globally rigid. 

A problem which is closely related to identifying whether global rigidity is a generic property, is to characterise all globally rigid frameworks in terms of the underlying graph. Jackson and Jord\'{a}n succeeded in characterising global length-rigidity for generic length-pure frameworks:

\begin{thm}\cite[Theorem 7.1]{JJ_LengthConnected} 
A generic length-pure framework $(G,p)$ where $G=(V,L)$ is globally length-rigid in $\mathbb{R}^2$ if and only if either $G$ is a complete graph on at most 3 vertices, or $G$ is 3-connected and redundantly length-rigid.
\end{thm}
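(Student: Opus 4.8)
The plan is to prove the two directions separately, with essentially all of the work living in the sufficiency direction. First I would dispose of the small cases by hand: $K_1, K_2, K_3$ are globally length-rigid, while any non-complete graph on at most three vertices is either not rigid or is a path, hence flexible, so not globally length-rigid. So assume $|V|\ge 4$ and that $G$ is not complete. For necessity I would invoke Hendrickson's classical necessary conditions for generic global rigidity in $\mathbb{R}^2$. If $\{a,b\}$ is a $2$-separator of $G$ with sides $G_1,G_2$, then reflecting $p(V(G_2))$ in the line through $p(a)$ and $p(b)$ fixes all length constraints (no edge runs between $V(G_1)\setminus\{a,b\}$ and $V(G_2)\setminus\{a,b\}$) and, by genericity, yields a non-congruent realisation; hence global length-rigidity forces $3$-connectivity. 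If $G-e$ is not rigid for some $e$, then the positive-dimensional solution variety of the constraints of $G-e$, cut by the single equation coming from $e$, still has dimension exceeding that of the congruence orbit of $p$, producing an equivalent non-congruent realisation; hence global length-rigidity forces redundant length-rigidity.

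For sufficiency the key input is a recursive construction of the graph class. I would use the (purely combinatorial, matroid-theoretic) fact that every $3$-connected redundantly length-rigid graph on $n\ge 5$ vertices can be reduced either by deleting an edge $e$ so that $G-e$ is still $3$-connected and redundantly length-rigid, or by a $1$-reduction at a vertex of degree $3$ that preserves both properties; and that the unique such graph on $4$ vertices is $K_4$. Equivalently, the class is generated from $K_4$ by edge additions and $1$-extensions (Henneberg $2$-moves). It then suffices to verify three things: (i) $K_4$ is globally length-rigid — immediate, e.g.\ because a generic planar $K_4$ carries an equilibrium stress whose stress matrix has rank $4-3=1$, so Connelly's sufficient condition applies; (ii) edge addition preserves global length-rigidity — trivial, since any framework equivalent to $(G+e,p)$ is equivalent to $(G,p)$, and a generic realisation of $G+e$ restricts to a generic realisation of $G$; (iii) $1$-extension preserves global length-rigidity.

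Step (iii) is the technical heart, and I would handle it through Connelly's stress-matrix criterion rather than directly. Concretely: show that if some generic realisation of $G$ admits an equilibrium stress with stress matrix of rank $n-3$, then so does any $1$-extension $H$ of $G$, now as a realisation on $n+1$ vertices with a stress matrix of rank $(n+1)-3$. The construction is to subdivide the relevant edge $xy$ of $G$ by placing the new vertex $v$ on the segment $\overline{p(x)p(y)}$ and transporting the stress of $(G,p)$ through $v$, then adjoin the edge $vz$ with zero stress, and finally perturb $v$ into general position while checking that the rank of the stress matrix does not drop. Connelly's theorem then turns "admits such a stress matrix" into "globally rigid", and, if one wants the statement for an arbitrary generic $p$ rather than a single one, the Gortler–Healy–Thurston theorem that generic global rigidity is a generic property finishes the job.

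I expect two genuinely hard points. The first is the recursive construction lemma: proving that a $3$-connected redundantly length-rigid graph always admits an admissible edge deletion or $1$-reduction requires a careful analysis of how the generic $2$-dimensional rigidity matroid interacts with small vertex cuts and of the rigid-component structure of $G-e$ and of $1$-reductions. The second, and the subtler one, is the rank-continuity in step (iii): one must rule out the rank of the stress matrix collapsing as the split vertex is moved off the line $p(x)p(y)$ to a generic point. This is exactly where the naive alternative argument — determine the degree-$3$ vertex $v$ from its three neighbours and induct on $G$ — breaks down, since in an equivalent realisation those three neighbours can become collinear and $v$ is then not pinned down; routing everything through stress matrices sidesteps this geometric degeneracy at the cost of this rank argument.
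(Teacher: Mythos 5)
This statement is quoted in the paper as background (it is Jackson and Jord\'an's characterisation of generic global rigidity for bar--joint frameworks, cited from \cite{JJ_LengthConnected}); the paper itself contains no proof of it, so your proposal can only be compared against the original proof in the literature. Your overall architecture matches that proof: Hendrickson's conditions for necessity, a recursive construction of the graph class from $K_4$ by edge additions and 1-extensions, and Connelly's stress-matrix sufficient condition together with his theorem that 1-extension preserves the existence of a maximum-rank equilibrium stress. Two remarks on where your account diverges from, or falls short of, the actual argument. First, your justification of the necessity of redundant rigidity is wrong as stated: if $G$ is rigid and $G-e$ is flexible, then cutting the (positive-dimensional) solution variety of $G-e$ by the single equation for $e$ generically drops the dimension by exactly one, landing you precisely at the dimension of the congruence orbit --- consistent with $G$ being rigid and yielding no contradiction. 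Hendrickson's actual argument is a compactness/critical-point argument: the configuration space of $(G-e,p)$ modulo congruences (with suitable pinning) is a compact manifold of dimension at least one, the squared length of the missing edge attains its extrema at non-generic critical points, and hence the generic value is attained at least twice, producing an equivalent non-congruent realisation. Second, your ``recursive construction lemma'' is indeed the technical heart, but Jackson and Jord\'an do not prove it for 3-connected redundantly rigid graphs directly; they first show that a 3-connected redundantly rigid graph has a connected rigidity matroid, and then run the reduction argument (admissible nodes, admissible edges) on 3-connected $\M$-connected graphs via ear decompositions --- exactly the machinery the present paper adapts to the direction-length setting. Working with redundant rigidity alone, one has no handle on why an admissible reduction exists, so the detour through $\M$-connectivity is not cosmetic. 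Your treatment of the sufficiency side is otherwise sound, including the identification of the rank-continuity issue in Connelly's 1-extension argument; the appeal to Gortler--Healy--Thurston is unnecessary, since having a maximum-rank stress matrix is itself a generic property, which is how the original (pre-2010) proof closes that gap.
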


Later, they applied a similar method to direction-length frameworks, and characterised global rigidity for a class of generic direction-length frameworks: those whose edge set is a circuit in the rigidity matroid.

\begin{thm}\cite[Theorem 6.2]{JJ_MixedCircuits} \label{thm_JJ:GloballyRigidCircuits}
Let $(G,p)$ be a generic realisation of a mixed graph whose rigidity matroid is a circuit. Then $(G,p)$ is globally rigid if and only if $G$ is direction-balanced.
\end{thm}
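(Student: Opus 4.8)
\textbf{Necessity} is immediate from Lemma~\ref{lem_JJK:NecessaryConditionsForGlobalRigidity}(d): a globally rigid framework is direction-balanced. So the content of the statement is the converse, and I would prove it by induction on $\abs{V}$, following the recursive-construction strategy that is standard for this kind of global-rigidity result. First I would record what comes for free from $E(G)$ being a circuit: for every edge $e$ the set $E(G)-e$ is independent with $2\abs{V}-2$ elements, hence a basis of the rigidity matroid on $V$, so $G-e$ is minimally rigid; therefore $G$ is redundantly rigid and $2$-connected (standard properties of a circuit in this matroid), and it is bounded in the sense of the excerpt (by the theory of bounded direction-length frameworks a mixed circuit is a bounded graph). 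These facts --- redundant rigidity, $2$-connectivity, boundedness --- are the leverage used to control an arbitrary equivalent realisation.

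The induction splits on connectivity. If $G$ has a nontrivial $2$-separation $\{G_1,G_2\}$ with $V(G_1)\cap V(G_2)=\{u,v\}$, then since $E(G)$ is a circuit each $G_i$ together with an added edge $uv$ of a suitable type is again a smaller circuit, and direction-balance of $G$ guarantees each side carries a direction edge. Given $(G,q)$ equivalent to generic $(G,p)$, one restricts to the two sides, applies the inductive hypothesis to obtain that each side is congruent to its counterpart, and then glues them along $\{u,v\}$ --- the crucial point being that a side cannot be \emph{reflected} in the line through $q(u)$ and $q(v)$ relative to $(G,p)$, because it contains a direction edge and reflection reverses gradients; this is exactly the mechanism that produces the non-congruent realisation of Figure~\ref{fig:NecessaryConditionsButNotGloballyRigid}. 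If instead $G$ is $3$-connected, I would use a recursive construction of $3$-connected (direction-balanced) circuits from a finite list of small base cases via operations such as $1$-extensions and edge additions inside rigid subgraphs, verify global rigidity of the base cases directly (a small finite check that the only real solutions of the direction and length equations are the translates and point-reflections of $p$), and show that each operation preserves generic global rigidity.

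The \textbf{main obstacle} is the operation-preservation step, above all the $1$-extension: one is handed $(G,q)$ equivalent to generic $(G,p)$ with $q$ \emph{not} known to be generic, and must (i) show the new low-degree vertex cannot move independently --- otherwise its deletion would leave a non-rigid or unbounded graph, contradicting the redundant rigidity and boundedness forced by the circuit structure --- (ii) delete it to obtain an equivalent, still sufficiently non-degenerate, realisation of the smaller circuit so that the inductive hypothesis applies, and (iii) re-insert it and show its position is forced. Steps (ii) and (iii) are the delicate ones: the variety of realisations equivalent to a fixed generic $(G,p)$ need not contain generic points, and since generic global rigidity of direction-length frameworks is not known to be a graph property one cannot blithely pass to generic realisations of subgraphs; and in (iii) one must again rule out reflections of sub-pieces, which is precisely where direction-balance is indispensable. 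Keeping the attendant case analysis --- over which edges at the new vertex are direction versus length, over pure versus mixed sub-circuits, and over the possible shapes of the $2$-separations --- finite and exhaustive is the bulk of the remaining work.
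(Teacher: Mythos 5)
This theorem is not proved in the paper at all: it is quoted verbatim from \cite{JJ_MixedCircuits} (their Theorem 6.2) and used as a black box, so there is no internal proof to measure your attempt against. Measured instead against the proof in the cited reference (whose strategy this paper deliberately imitates in Sections 4--7 for the $\M$-connected generalisation), your road map is the right one in outline: necessity from Lemma \ref{lem_JJK:NecessaryConditionsForGlobalRigidity}, and sufficiency by recursively constructing all direction-balanced mixed circuits from $K_3^+$ and $K_3^-$ and checking that each construction step preserves generic global rigidity. Your ``main obstacle'' paragraph also correctly locates the single hardest ingredient, namely that a $1$-extension deleting a redundant edge preserves generic global rigidity even though the equivalent realisations one must control are not generic; that is precisely Lemma \ref{lem:OperationsPreservingGlobalRigidity}, which is the content of the separate paper \cite{JJ_DL_GlobalOperations}. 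As it stands, however, the proposal is a plan rather than a proof, and two of its steps are genuinely broken, not merely deferred.

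First, the $2$-separation case. A $2$-separation $(H_1,H_2)$ of a mixed circuit need not split it into two smaller circuits: the edge counts force one side to carry $2|V(H_i)|-3$ edges, and if that side is mixed then $H_i+uv$ has only $2|V(H_i)|-2$ edges and is independent, not a circuit, so ``each $G_i$ together with an added edge $uv$ of a suitable type is again a smaller circuit'' fails (compare Lemma \ref{lem:2SumMixedCircuit}, which requires one side to be pure). Worse, even when the cleave does produce two circuits, the restriction of an equivalent realisation $q$ to $V(G_i)$ is not an equivalent realisation of $G_i+uv$, because $q$ is not known to satisfy the constraint imposed by the added edge $uv$; so the inductive hypothesis cannot be invoked on the two sides and there is nothing to ``glue''. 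The way this is actually handled --- both in \cite{JJ_MixedCircuits} and in this paper --- is to prove a structure theorem (Lemma \ref{lem:MixedCircuitHasAdmissibleNodeOr2Sum}, refined so as to preserve direction-balance) showing that the only $2$-sums ever needed are with a direction-pure $K_4$, and then to realise such a $2$-sum as a direction-pure $0$-extension followed by a $1$-extension, so that Lemma \ref{lem:OperationsPreservingGlobalRigidity} applies with no gluing of realisations (see the discussion preceding Lemma \ref{lem:2SumDPureK4PreservesGlobalRigidity}). Second, in the $3$-connected case the existence of an admissible node whose $1$-reduction yields a smaller \emph{direction-balanced mixed circuit} is the combinatorial heart of the theorem --- it requires the flower/clover analysis of Lemmas \ref{lem:NonAdmissibleMixedNode} and \ref{lem:NonAdmissiblePureNode}, and is the circuit analogue of Theorem \ref{thm:DbalMConnHasFeasibleMove} proved at length here for $\M$-connected graphs --- and asserting that ``a recursive construction via $1$-extensions and edge additions'' exists is not the same as exhibiting one (note also that edge additions destroy the circuit property, so they cannot occur in a construction of circuits). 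Until those two lemmas are supplied, the argument is a correct table of contents for the known proof rather than a proof.
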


A circuit is the simplest instance of a ``connected matroid'', which we shall define formally in Section \ref{sec:M-Connected}. In this paper, we extend Jackson and Jord\'{a}n's result to all generic direction-length frameworks with a connected rigidity matroid. To do this, we first find an inductive construction of all such graphs which are direction-balanced: 

\begin{thm:RecursiveConstructionDBalMConn}
Let $G$ be a mixed graph. Then $G$ is a direction-balanced mixed graph with a connected rigidity matroid if and only if $G$ can be obtained from $K_3^+$ or $K_3^-$ by a sequence of edge additions, 1-extensions and 2-sums with direction-pure $K_4$'s.
\end{thm:RecursiveConstructionDBalMConn} 

Obtaining this construction is the main theme of the paper. The inductive moves are described in Sections \ref{subsec:Definitions_GlobalRigidityOperations} and \ref{subsec:2Sums}. Our extension of Theorem \ref{thm_JJ:GloballyRigidCircuits} follows readily from this inductive construction, by using properties of the moves used.

\begin{thm:MConnectedGloballyRigidIffDBal}
Let $p$ be a generic realisation of a mixed graph $G$ with a connected rigidity matroid. Then $(G,p)$ is globally rigid if and only if $G$ is direction-balanced. 
\end{thm:MConnectedGloballyRigidIffDBal}

Theorem \ref{thm:MConnectedGloballyRigidIffDBal} implies that the inductive construction in Theorem \ref{thm:RecursiveConstructionDBalMConn} is also a construction of all globally rigid $\M$-connected graphs. See Theorem \ref{thm:RecursiveConstructionGloballyRigidMConn}. As a corollary, we deduce that global rigidity is a generic property for frameworks with connected rigidity matroids. 

The paper is structured as follows. In Section \ref{sec:Definitions} we review some elementary results from rigidity theory, before introducing $\M$-connected graphs (graphs whose rigidity matroid is connected) in Section \ref{sec:M-Connected}. Section \ref{sec:Circuits} then builds upon Jackson and Jord{\'a}n's previous work on circuits. This enables us to find a recursive construction of all $\M$-connected mixed graphs in Section \ref{sec:Admissible}, by generalising ideas from \cite{JJ_LengthConnected}; leading to our main result in Section \ref{sec:Feasible}: an inductive construction of all direction-balanced, $\M$-connected mixed graphs. Finally, in Section  \ref{sec:GlobalRigidity}, we show that this result characterises global rigidity for $\M$-connected graphs, before briefly describing how this result fits into the work to characterise global rigidity for all direction-length graphs in Section \ref{sec:Further}.


\section{Preliminaries}\label{sec:Definitions}

We first introduce some notation before reviewing key results from rigidity theory. 
Let $G=(V;D,L)$. When $\emptyset \neq X\subseteq V$, we let $E_G(X)$ denote the edge set of $G[X]$, and  abbreviate this notation to $E(X)$ when it is clear which graph we are referring to.
It will be helpful to extend this notation to also describe subgraphs induced by sets of edges. When $C\subseteq D\cup L$ is a non-empty set of edges, we let $G[C]$ denote the subgraph induced by $C$, which has edge set $C$ and vertex set $\{v\in V : uv\in C \text{ for some } u\in V\}$.

In what follows, we shall frequently wish to count the numbers of edges in mixed or pure subgraphs. To that end, given a graph $G=(V;D,L)$ and vertex set $X\subseteq V$, we let $i(X)$ denote the number of edges in the graph induced by $X$. Similarly we let $i_L(X)$ (respectively $i_D(X)$) denote the number of length (respectively direction) edges induced by $X$.

When we take the union of two non-empty vertex sets $X,Y\subset V$, the resulting induced graph $G[X\cup Y]$ contains all of the edges in $G[X]$ and $G[Y]$, but may also contain additional edges which have one endvertex in $X-Y$ and the other in $Y-X$. We denote the number of such edges between $X-Y$ and $Y-X$ by $d(X,Y)$, and extend this notation to three non-empty sets $X,Y,Z\subset V$ by letting $d(X,Y,Z)= d(X, Y-Z)+ d(Y, Z-X)+d(Z, X-Y)$. 

\subsection{Independent Sets and Circuits}

If the edge set of the graph $G$ is independent in the rigidity matroid $\R(G)$, then we say that $G$ is \emph{independent}. Similarly, if the edge set of $G$ is a circuit in $\R(G)$, then we call $G$ a \emph{mixed circuit} when $E(G)$ contains both length and direction edges, and a \emph{pure circuit} otherwise.

By considering the edge density of vertex-induced subgraphs, Servatius and Whiteley \cite{SW_PlaneCAD} found the following characterisation of the rigidity matroid:

\begin{lem}\label{lem:IndependenceSparsity}\cite[Theorem 4]{SW_PlaneCAD}
A direction-length graph $G=(V;D,L)$ is independent if and only if for all $X \subseteq V$ with $|X|\geq 2$,
\begin{enumerate}
  \item $i(X)\leq 2|X| -2$, and
  \item $i_{D}(X)\leq 2|X|-3$ and $i_{L}(X)\leq 2|X|-3$.
\end{enumerate}
\end{lem}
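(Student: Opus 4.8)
\emph{Proof proposal.}

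I would separate necessity (routine) from sufficiency (the real content). For necessity, fix a generic realisation $(G,p)$ of the independent graph $G$, so the rows of $R(G,p)$ are linearly independent, and let $X\subseteq V$ with $|X|\ge 2$. Deleting the identically-zero columns indexed by $V\setminus X$ from the rows indexed by $E(X)$ yields exactly the rigidity matrix $R(G[X],p|_X)$ on $2|X|$ columns; its kernel contains the two infinitesimal translations, so its rank, and hence $i(X)$, is at most $2|X|-2$, giving (a). Restricting further to the direction edges inside $X$ gives the rigidity matrix of the induced direction-pure framework, whose kernel additionally contains the infinitesimal dilation $w\mapsto p(w)$ (which preserves every direction constraint), and these three motions are independent since $p(X)$ is not a single point; so $i_D(X)\le 2|X|-3$. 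Replacing direction edges by length edges and the dilation by the infinitesimal rotation $w\mapsto p(w)^{\perp}$ gives $i_L(X)\le 2|X|-3$, completing (b).

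For sufficiency I would prove by induction on $|V|$ that a graph satisfying (a) and (b) for all $X$ is independent. The base case $|V|\le 2$ is clear: at most one direction and one length edge join a pair of vertices, and those two rows restricted to the columns of one endvertex are a nonzero vector and its perpendicular, hence independent. For the inductive step, $2|E|=\sum_v\deg(v)\le 2(2|V|-2)$ forces a vertex $v$ of degree at most $3$, and I would reduce $G$ at $v$ to a graph $G'$ on $|V|-1$ vertices still satisfying (a) and (b); then $G'$ is independent by induction, and the chosen reduction is one whose inverse preserves generic independence, so $G$ is independent as well.

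If $\deg(v)\le 2$, take $G'=G-v$, which satisfies (a) and (b) automatically since every $X\subseteq V(G')$ induces the same edges in $G'$ as in $G$; re-adding $v$ is harmless because there are at most two rows at $v$, the columns of $v$ vanish in every other row, and for generic $p$ those one or two rows are readily seen to be independent of the rows of $R(G-v,p)$. If $\deg(v)=3$, I would perform a reverse $1$-extension: by pigeonhole two of the three edges at $v$, say $vx$ and $vy$ with $x\neq y$, share a type $\tau\in\{D,L\}$; delete $v$ together with its three edges and add a fresh edge $xy$ of type $\tau$. Two facts are needed. First, a \emph{combinatorial lemma}: at least one such reverse $1$-extension produces a $G'$ that still satisfies (a) and (b) and has no repeated edge --- if every candidate failed, each failure would exhibit a subset of $V(G')$ on which one of $i,i_D,i_L$ is extremal in $G$, and using the supermodularity of these counting functions one merges such subsets into a single set containing all three neighbours of $v$ that violates (a) or (b) in $G$ itself, a contradiction. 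Second, a \emph{geometric lemma}: the $1$-extension preserves generic independence. Starting from a generic realisation $q$ of $G'$, extend it to a realisation $p$ of $G$ by placing $p(v)$ on the line through $q(x)$ and $q(y)$; then the three rows at $v$ are still linearly independent, but any linear combination of them vanishing on the columns of $v$ is forced to be a nonzero scalar multiple of the row of the deleted edge $xy$, which does not lie in the row space of $R(G-v,q)$ because $G'$ is independent. Hence the three rows at $v$ meet that row space only in $0$, so $\text{rank}\,R(G,p)=|E(G)|$, and since the rank is maximised at generic realisations this shows $G$ is independent.

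The main obstacle is the combinatorial lemma in the degree-$3$ case. Unlike the single-count Laman situation, there are three inequalities to protect simultaneously, the type of the added edge is dictated by the edges at $v$, and the various extremal sets may intersect in too few vertices, so the argument requires a careful case analysis over the possible type patterns at $v$. An alternative organisation --- first showing that the edge sets satisfying (a) and (b) are the independent sets of a matroid, and then matching its rank with that of $\R(G)$ by the same $1$-extension move --- runs into the same difficulty when verifying the matroid axioms. Everything else, in particular the geometric lemma, is then a standard rank argument once the reduction has been set up.
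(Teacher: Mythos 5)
The paper does not actually prove this lemma --- it is quoted from Servatius and Whiteley \cite{SW_PlaneCAD} --- so there is no internal proof to compare against; your Henneberg-style induction is indeed the shape of the standard argument. Your necessity direction is complete and correct: restricting the independent rows to the columns of $X$ gives $R(G[X],p|_X)$, whose kernel contains the two translations, and the direction-pure (resp.\ length-pure) submatrix additionally annihilates the dilation $w\mapsto p(w)$ (resp.\ the rotation $w\mapsto p(w)^{\perp}$), which is independent of the translations once $|X|\geq 2$. The degree-$\leq 2$ reduction and the geometric lemma for a same-type $1$-reduction (place $p(v)$ on the line through $q(x)$ and $q(y)$, note that the only combination of the three rows at $v$ vanishing on the columns of $v$ is a nonzero multiple of the row of $xy$, then use independence of $G'$ and semicontinuity of rank) are also sound.

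The genuine gap is the combinatorial lemma, which you flag but do not prove, and which is essentially the entire content of the theorem. Two concrete problems. First, your reduction scheme is too rigid: you only allow deleting $v$ and adding an edge $xy$ of type $\tau$ when $vx$ and $vy$ both already have type $\tau$. Take $V=\{v,x,y,z\}$ with $D=\{vz,xz,yz\}$ and $L=\{vx,vy,xy\}$; this satisfies (a) and (b), but at the mixed node $v$ the unique same-type pair is $\{vx,vy\}$ and the edge $xy$ is already a length edge, so no reduction of your restricted form exists at $v$. Your induction commits to reducing at whichever low-degree vertex it finds, so you must either prove that \emph{some} degree-$\leq 3$ vertex always admits a same-type reduction (not established, and false vertex-by-vertex), or admit general mixed $1$-reductions, whose geometric lemma needs a different placement of $p(v)$. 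Second, even with the right class of reductions, the ``merge the blocking sets'' step is where all the work lives: the blocking sets come in three flavours (mixed critical, direction critical, length critical), and Lemma \ref{lem:CriticalUnions} shows that their unions behave differently in each pairing --- sometimes the union is critical only when $|X\cap Y|\geq 2$, sometimes a union of pure critical sets becomes mixed critical, and so on. The analogous admissibility statement for circuits occupies all of Section \ref{sec:Circuits} of this paper (flowers, clovers, and the conclusion that sometimes \emph{no} node is reducible and one must fall back on a $2$-sum decomposition). Asserting that supermodularity ``merges such subsets into a single set containing all three neighbours'' skips precisely this case analysis, so the sufficiency direction is a plan rather than a proof.
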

 
In a direction-length graph $G=(V;D,L)$, let $X\subseteq V$ with $|X|\geq 2$ and $G[X]$ independent. We call $X$ \emph{mixed critical} if $i(X)= 2|X|-2$, or \emph{pure critical} if $i(X)= 2|X|-3$ and either $i_L(X)=0$ or $i_D(X)=0$ (in which case, we call $X$ \emph{direction critical} or \emph{length critical} respectively). We say $X$ is \emph{critical} if it is either mixed or pure critical.

\begin{lem}\label{lem:CriticalUnions}\cite[Lemma 2.4]{JJ_MixedCircuits}
Let $G=(V;D,L)$ be an independent mixed graph.
\begin{enumerate}
  \item If $X$ and $Y$ are mixed critical sets with $X\cap Y\neq\emptyset$, then $X\cap Y$ and $X\cup Y$ are both mixed critical and $d(X,Y)=0$.\label{part:CriticalUnions_BothMixed}
  \item If $X$ and $Y$ are both direction (respectively length) critical sets with $|X\cap Y|\geq 2$, then either \label{part:CriticalUnions_BothPure}
  \begin{enumerate}
    \item $d(X,Y)=0$ and $X\cap Y$, $X\cup Y$ are both direction (respectively length) critical, or
    \item $d(X,Y)=1$, $X\cup Y$ is mixed critical and $i_{D}(X\cup Y)=2|X\cup Y|-3$ (respectively $i_{L}(X\cup Y)=2|X\cup 
Y|-3$).
  \end{enumerate}
  \item If $X$ is mixed critical and $Y$ is pure critical with $|X\cap Y|\geq 2$, then $X\cup Y$ is mixed critical, $X\cap Y$ is pure critical and $d(X,Y)=0$.\label{part:CriticalUnions_MixedPure}
  \item If $X$ is length critical and $Y$ is direction critical with $|X\cap Y|\geq 2$, then $X\cup Y$ is mixed critical, $|X\cap Y|=2$ and $d(X,Y)=0$. 
\end{enumerate}
\end{lem}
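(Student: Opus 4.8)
The whole lemma will follow from one modular counting identity combined with the sparsity bounds of Lemma \ref{lem:IndependenceSparsity}. For any non-empty $X,Y\subseteq V$, classifying each edge of $G[X\cup Y]$ by whether its endvertices lie in $X\cap Y$, in $X-Y$, or in $Y-X$ gives $i(X\cup Y)=i(X)+i(Y)-i(X\cap Y)+d(X,Y)$, and the same identity holds with $i,d$ replaced by $i_D,d_D$ or by $i_L,d_L$, where $d_D$ (respectively $d_L$) counts the direction (respectively length) edges between $X-Y$ and $Y-X$, so $d=d_D+d_L$. Since $G$ is independent, every set below obeys $i(\cdot)\le 2|\cdot|-2$ and $i_D(\cdot),i_L(\cdot)\le 2|\cdot|-3$. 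The strategy in each part is the same squeeze: substitute the criticality hypotheses for $i(X),i(Y)$ (or $i_D(X),i_D(Y)$) as equalities into the identity, bound $i(X\cup Y)$ above by its sparsity bound and $i(X\cap Y)$ above by the sparsity bound appropriate to its type, and observe that the constants cancel exactly so that $d(X,Y)\le 0$ is forced. Equality must then hold throughout, which makes $i(X\cap Y)$ and $i(X\cup Y)$ meet their bounds and hence renders $X\cap Y$ and $X\cup Y$ critical. The one structural input used repeatedly is that purity is inherited by induced subsets: if $X$ is direction critical then $G[X]$ has no length edge, so neither does $G[Z]$ for any $Z\subseteq X$.

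For part (a) I would feed $i(X)=2|X|-2$ and $i(Y)=2|Y|-2$ into the identity and compare with $i(X\cup Y)\le 2|X\cup Y|-2$; a direct rearrangement gives $d(X,Y)\le i(X\cap Y)-2|X\cap Y|+2\le 0$ using $i(X\cap Y)\le 2|X\cap Y|-2$, so $d(X,Y)=0$, both bounds are tight, and $X\cap Y$, $X\cup Y$ are mixed critical. The degenerate case $|X\cap Y|=1$ is covered by reading $i(\{v\})=0=2\cdot 1-2$, so a single shared vertex is mixed critical under this convention; the substantive range is $|X\cap Y|\ge 2$.

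Part (b) is where I expect the real work to lie, and it is the main obstacle, because the squeeze must be run on the direction-count and on the total count in succession. Assume $X,Y$ are direction critical (the length case is symmetric), so $i_L(X)=i_L(Y)=0$, $i_D(X)=2|X|-3$, $i_D(Y)=2|Y|-3$, and since $X\cap Y\subseteq X$ is direction-pure, $i_D(X\cap Y)=i(X\cap Y)\le 2|X\cap Y|-3$. Applying the $i_D$-identity against $i_D(X\cup Y)\le 2|X\cup Y|-3$ squeezes $d_D(X,Y)\le 0$, so there are no direction cross-edges and $X\cap Y$ is direction critical. Feeding the now-determined value $i(X\cap Y)=2|X\cap Y|-3$ into the total identity and comparing with $i(X\cup Y)\le 2|X\cup Y|-2$ then yields $d(X,Y)\le 1$. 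The two admissible values split exactly into the stated alternatives: $d(X,Y)=0$ leaves everything direction-pure and makes $X\cup Y$ direction critical (the first alternative), whereas $d(X,Y)=d_L(X,Y)=1$ forces $i(X\cup Y)=2|X\cup Y|-2$, making $X\cup Y$ mixed critical while its direction edges still meet $i_D(X\cup Y)=2|X\cup Y|-3$ (the second alternative).

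Parts (c) and (d) follow the same pattern with shorter case analysis. For part (c), taking $Y$ direction critical (length critical being symmetric) gives $i(X\cap Y)=i_D(X\cap Y)\le 2|X\cap Y|-3$ since $X\cap Y\subseteq Y$ is direction-pure; substituting $i(X)=2|X|-2$, $i(Y)=2|Y|-3$ into the total identity and squeezing against $i(X\cup Y)\le 2|X\cup Y|-2$ forces $d(X,Y)=0$, makes $X\cap Y$ pure critical, and makes $X\cup Y$ mixed critical. For part (d) the crucial observation is that an edge with both endvertices in $X\cap Y$ would lie simultaneously in the length-pure graph $G[X]$ and in the direction-pure graph $G[Y]$, which is impossible, so $i(X\cap Y)=0$; substituting this with $i(X)=2|X|-3$ and $i(Y)=2|Y|-3$ and comparing with the sparsity bound gives $d(X,Y)\le 4-2|X\cap Y|$, which together with $|X\cap Y|\ge 2$ pins down $|X\cap Y|=2$ and $d(X,Y)=0$, whereupon equality makes $X\cup Y$ mixed critical. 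The only genuinely delicate bookkeeping is the separation of $d$ into $d_D$ and $d_L$ in part (b); once the identity and the inherited-purity observation are in place, the remaining parts are routine.
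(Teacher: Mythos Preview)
Your proof is correct. The paper does not supply its own proof of this lemma; it is quoted from \cite[Lemma~2.4]{JJ_MixedCircuits} and used as a black box. Your argument---the inclusion--exclusion identity $i(X\cup Y)=i(X)+i(Y)-i(X\cap Y)+d(X,Y)$ combined with the sparsity bounds of Lemma~\ref{lem:IndependenceSparsity} in a squeeze---is the standard way these submodular counting lemmas are established, and is essentially what one finds in the cited source. The decomposition of $d$ into $d_D+d_L$ in part~(b), and the observation in part~(d) that $i(X\cap Y)=0$ because $X\cap Y$ inherits incompatible purity from both sides, are exactly the right extra ingredients.

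One small caveat on part~(a): the paper's definition of ``mixed critical'' requires $|X|\geq 2$, so strictly speaking the conclusion that $X\cap Y$ is mixed critical presupposes $|X\cap Y|\geq 2$. Your arithmetic still shows $d(X,Y)=0$ and $X\cup Y$ mixed critical when $|X\cap Y|=1$, which is what matters for applications; the phrasing ``$X\cap Y$ is mixed critical under this convention'' is a harmless abuse that the original source also commits.
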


\begin{lem}\label{lem:CriticalUnionsMPP}\cite[Lemma 2.5]{JJ_MixedCircuits}
Let $G=(V;D,L)$ be an independent mixed graph with mixed critical set $X$ and pure critical sets $Y$ and $Z$ satisfying $|X\cap Y|=|Y\cap Z|=|X\cap Z| =1$ and $X\cap Y \cap Z = \emptyset$. Then $X\cup Y\cup Z$ is mixed critical and $d(X,Y,Z)=0$.
\end{lem}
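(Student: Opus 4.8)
The plan is to prove the lemma by a direct edge count, trapping $i(X \cup Y \cup Z)$ between an elementary lower bound and the independence upper bound of Lemma \ref{lem:IndependenceSparsity}. Throughout write $T = X \cup Y \cup Z$; note $|T| \ge |X| \ge 2$, and since $|X \cap Y| = |Y \cap Z| = |X \cap Z| = 1$ and $X \cap Y \cap Z = \emptyset$, inclusion–exclusion gives $|T| = |X| + |Y| + |Z| - 3$.

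For the lower bound, the key observation is that the induced edge sets $E_G(X)$, $E_G(Y)$ and $E_G(Z)$ are pairwise disjoint: an edge lying in two of them would have its two distinct endvertices inside a common pairwise intersection, which has only one vertex. All three sets lie inside $E_G(T)$, so
\[
i(T) \;\ge\; i(X) + i(Y) + i(Z) \;=\; (2|X|-2) + (2|Y|-3) + (2|Z|-3) \;=\; 2|T| - 2 ,
\]
the last equality using the computed value of $|T|$. On the other hand $G[T]$, being a subgraph of the independent graph $G$, is independent, so Lemma \ref{lem:IndependenceSparsity}(a) gives $i(T) \le 2|T| - 2$.

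Hence $i(T) = 2|T| - 2$ and the inequality above is an equality, so $E_G(T) = E_G(X) \cup E_G(Y) \cup E_G(Z)$; in other words every edge of $G[T]$ has both endvertices inside one of $X$, $Y$, $Z$. I would then verify that every edge counted by $d(X,Y,Z)$ violates this — an endvertex of such an edge that lies in one of the three sets cannot also lie in a second, which is exactly where the hypothesis $X \cap Y \cap Z = \emptyset$ is used — and conclude $d(X,Y,Z) = 0$. Finally, $G[T]$ is independent with $i(T) = 2|T| - 2$, and it contains the mixed critical set $X$ and hence edges of both types, so $G[T]$ is mixed; therefore $T$ is mixed critical, as required.

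I do not expect a genuine obstacle: the whole argument is a two-sided squeeze. The only point needing care is the final bookkeeping that converts tightness of the edge count into $d(X,Y,Z) = 0$. It is also worth remarking that the pure-critical densities of $Y$ and $Z$ are precisely what make the lower bound equal (rather than exceed) $2|T|-2$; were one of $Y$, $Z$ instead mixed critical the same count would contradict independence, so under these intersection hypotheses the configuration can only occur in the stated form.
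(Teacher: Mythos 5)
Your argument is correct: the pairwise edge-disjointness of $E_G(X)$, $E_G(Y)$, $E_G(Z)$ (forced by the one-point intersections and loop-freeness), the inclusion--exclusion count $|T|=|X|+|Y|+|Z|-3$, and the squeeze against Lemma \ref{lem:IndependenceSparsity} do yield $i(T)=2|T|-2$, and the tightness correctly kills every edge counted by $d(X,Y,Z)$ since such an edge lies in none of the three induced edge sets. The paper itself only cites this lemma from \cite{JJ_MixedCircuits} without reproducing a proof, but your counting argument is essentially the standard one used there, so there is nothing to flag.
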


The characterisation given in Lemma \ref{lem:IndependenceSparsity} of independent sets in the rigidity matroid as sparse graphs, leads to the following results characterising circuits in the rigidity matroid:

\begin{lem}\label{lem:MixedCircuitGraphSparsityDefn}\cite[Lemma 3.1]{JJ_MixedCircuits}
A direction-length graph $G=(V;D,L)$ is a mixed circuit if and only if
\begin{enumerate}
  \item $|D|+|L|=2|V|-1$,
  \item $i(X)\leq 2|X|-2$ for all $X\subset V$ with $2\leq |X|$, and
  \item $i_{D}(X)\leq 2|X|-3$ and $i_{L}(X)\leq 2|X|-3$ for all $X\subseteq V$ with $|X|\geq 2$.
\end{enumerate}
\end{lem}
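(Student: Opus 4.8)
The plan is to deduce both directions from the sparsity characterisation of independence in Lemma~\ref{lem:IndependenceSparsity}, together with the single matroidal fact that a circuit is a minimal dependent set (so every proper subset of it is independent). Throughout I assume, as is implicit in calling $G$ a circuit, that $G$ has no isolated vertices; in particular $E_G(X)$ is then a \emph{proper} subset of $E(G)$ whenever $X\subsetneq V$.

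\textbf{Necessity.} Suppose $G$ is a mixed circuit. For every $X\subsetneq V$ the set $E_G(X)$ is a proper subset of the circuit $E(G)$, hence independent, so Lemma~\ref{lem:IndependenceSparsity} applied to $G[X]$ gives $i(X)\le 2|X|-2$ together with $i_D(X)\le 2|X|-3$ and $i_L(X)\le 2|X|-3$; this yields condition~(2) and the part of condition~(3) with $X\neq V$. For $X=V$ in condition~(3): since $G$ is mixed, $D$ and $L$ are each proper subsets of $E(G)$ and therefore independent, so Lemma~\ref{lem:IndependenceSparsity} applied to the pure graphs $(V,D)$ and $(V,L)$ gives $|D|=i_D(V)\le 2|V|-3$ and $|L|=i_L(V)\le 2|V|-3$. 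Finally, for condition~(1): deleting any edge $e$ leaves $G-e$ independent, so Lemma~\ref{lem:IndependenceSparsity} with $X=V$ gives $|E(G)|-1\le 2|V|-2$; and if instead $|E(G)|\le 2|V|-2$ held, then the bounds above would show that $G$ itself satisfies every sparsity condition of Lemma~\ref{lem:IndependenceSparsity}, making $G$ independent and contradicting that a circuit is dependent. Hence $|E(G)|=2|V|-1$.

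\textbf{Sufficiency.} Suppose $G$ satisfies (1)--(3). It is mixed, since $L=\emptyset$ would force $|D|=2|V|-1$ by~(1), contradicting $i_D(V)\le 2|V|-3$ from~(3) (and symmetrically if $D=\emptyset$). Next, $E(G)$ is dependent: by~(1) it has $2|V|-1$ edges, which violates the bound $i(V)\le 2|V|-2$ needed for independence in Lemma~\ref{lem:IndependenceSparsity}. It remains to show that every proper subset $C\subsetneq E(G)$ is independent, which I would do by verifying the conditions of Lemma~\ref{lem:IndependenceSparsity} for the subgraph $(V;D\cap C,L\cap C)$: for $X\subsetneq V$ every relevant count is at most the corresponding count in $G$, so hypotheses~(2) and~(3) apply directly, while for $X=V$ hypothesis~(3) supplies the two pure bounds and $|C|\le|E(G)|-1=2|V|-2$ supplies the mixed bound. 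Thus $E(G)$ is a minimal dependent set, i.e.\ a circuit, and being mixed it is a mixed circuit.

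I do not expect a genuine obstacle once Lemma~\ref{lem:IndependenceSparsity} is available; the argument is essentially careful bookkeeping. The main subtlety is that in condition~(3) the case $X=V$ must be handled separately from the proper subsets $X\subsetneq V$: the mixed bound $i(V)\le 2|V|-2$ genuinely fails for a circuit, so for $X=V$ one argues instead via the two pure subgraphs $(V,D)$ and $(V,L)$ and via the minimality of the circuit. One should also record at the outset why $E_G(X)\subsetneq E(G)$ for every $X\subsetneq V$, which is precisely where the absence of isolated vertices is used.
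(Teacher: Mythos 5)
Your proposal is correct: both directions follow, as you argue, from the sparsity characterisation of independence in Lemma~\ref{lem:IndependenceSparsity} together with the minimality of circuits, and this is essentially the same counting argument used to prove the cited result (the paper itself only quotes the lemma from \cite{JJ_MixedCircuits}). Your two flagged subtleties --- handling $X=V$ in condition (c) via the pure subgraphs $(V,D)$ and $(V,L)$, and noting that $E_G(X)\subsetneq E(G)$ because a circuit has no isolated vertices --- are exactly the points that need care, and you handle them correctly.
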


\begin{lem}\label{lem:PureCircuitGraphSparsityDefn}\cite[Lemma 3.2]{JJ_MixedCircuits}
A direction-length graph $G=(V;D,L)$ is a pure circuit if and only if
\begin{enumerate}
  \item $|D|+|L|=2|V|-2$ and either $D=\emptyset$ or $L=\emptyset$, and
  \item $i(X)\leq 2|X|-3$ for all $X\subset V$ with $2\leq |X|$.
\end{enumerate}
\end{lem}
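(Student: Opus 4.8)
The plan is to argue directly from the matroidal definition of a circuit as a minimal dependent set, using the independence characterisation of Lemma \ref{lem:IndependenceSparsity}, and to mirror the structure of the mixed-circuit proof (Lemma \ref{lem:MixedCircuitGraphSparsityDefn}). Since Lemma \ref{lem:IndependenceSparsity} treats $D$ and $L$ symmetrically, I would first observe that both a length-pure and a direction-pure graph $G$ are independent if and only if $i(X)\leq 2|X|-3$ for every $X\subseteq V$ with $|X|\geq 2$, the weaker bound $i(X)\leq 2|X|-2$ being implied. This lets me assume without loss of generality that $G$ is length-pure, so $D=\emptyset$ and $i(X)=i_L(X)$ throughout; the direction-pure case follows by the same argument with the roles of $D$ and $L$ exchanged.

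For the forward direction, suppose $G$ is a pure circuit. I would first establish (b). Fix a proper subset $X\subsetneq V$ with $|X|\geq 2$. Since the vertex set of $G$ consists precisely of the endpoints of its edges, it has no isolated vertices, so some edge of $G$ has an endpoint in $V-X$ and hence $E(X)\subsetneq E(G)$. As $E(G)$ is a circuit, this proper subset is independent, so the length-pure form of Lemma \ref{lem:IndependenceSparsity} gives $i(X)\leq 2|X|-3$. For (a) I would pin down the edge count from both sides. Deleting any edge $e$ leaves the independent set $E(G)-e$, so applying Lemma \ref{lem:IndependenceSparsity} on the full vertex set yields $|E(G)|-1=i_{G-e}(V)\leq 2|V|-3$, whence $|E(G)|\leq 2|V|-2$. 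Conversely, if we had $|E(G)|\leq 2|V|-3$, then this bound together with the bounds on proper subsets already established would verify every hypothesis of Lemma \ref{lem:IndependenceSparsity} for $G$ itself, forcing $E(G)$ to be independent and contradicting that a circuit is dependent. Hence $|D|+|L|=|E(G)|=2|V|-2$, giving (a).

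For the converse, assume (a) and (b) with $G$ length-pure. To show $E(G)$ is a circuit I would verify that it is dependent while every single-edge deletion is independent. Dependence is immediate: $i_L(V)=|L|=2|V|-2>2|V|-3$ violates the length bound of Lemma \ref{lem:IndependenceSparsity}, so $E(G)$ cannot be independent. For minimality, fix any $e\in L$ and check the hypotheses of Lemma \ref{lem:IndependenceSparsity} for $G-e$: on a proper subset $X\subsetneq V$ we have $i_{G-e}(X)\leq i_G(X)\leq 2|X|-3$ by (b), while on the full vertex set $i_{G-e}(V)=|L|-1=2|V|-3$, so both bounds hold and $E(G)-e$ is independent. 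Since $e$ was arbitrary, every proper subset of $E(G)$ is independent, so $E(G)$ is a minimal dependent set, i.e.\ a circuit; being pure, it is a pure circuit.

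The routine part is the translation between sparsity counts and (in)dependence. The one point requiring genuine care is the mismatch between the ``for all $X\subseteq V$'' quantifier in Lemma \ref{lem:IndependenceSparsity} and the ``for all proper $X\subset V$'' quantifier in (b): the full vertex set $X=V$ is exactly where dependence is decided, and treating it separately via the precise edge count of (a) is what distinguishes a circuit from a merely sparse independent graph. I would therefore make sure this boundary case is handled explicitly in both directions, and I would also note, in the forward direction, why the induced subgraph on a proper vertex subset is genuinely a proper \emph{edge} subset, so that its independence can be invoked.
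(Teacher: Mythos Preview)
Your proof is correct. The paper does not give its own proof of this lemma; it cites it directly from \cite{JJ_MixedCircuits}, so there is nothing to compare against here beyond noting that your argument is the natural one: reduce to the pure form of Lemma~\ref{lem:IndependenceSparsity}, read off sparsity of proper subsets from independence of proper edge-subsets of a circuit, and pin down the global edge count by combining the upper bound from $G-e$ independent with the lower bound forced by $E(G)$ being dependent. Your handling of the boundary case $X=V$ and your observation that a circuit has no isolated vertices (so $E(X)\subsetneq E(G)$ for proper $X$) are exactly the points that need care, and you address them.
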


The pure circuit with fewest vertices is a pure $K_4$. The mixed circuits with fewest vertices are denoted by $K_3^+$ and $K_3^-$, and are obtained from a length-pure (respectively direction-pure) $K_3$ by adding two direction (respectively length) edges between distinct pairs of vertices, see Figure \ref{fig:K3+and-}. Servatius and Whiteley \cite{SW_PlaneCAD} characterised rigidity for circuits:

\begin{figure}
\centering
\includegraphics{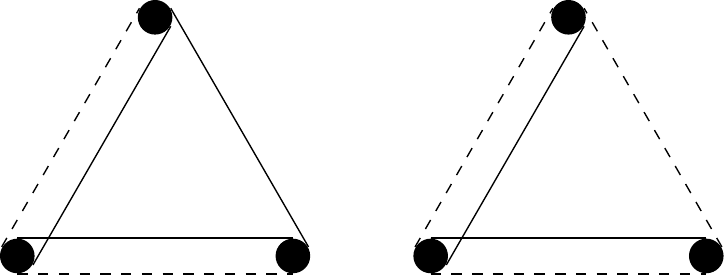}
\caption{The two mixed circuits on three vertices, $K_3^+$ and $K_3^-$.}
\label{fig:K3+and-}
\end{figure}

\begin{lem}\label{lem:RigidCircuits}\cite[Theorems 2, 4]{SW_PlaneCAD} 
Let $G=(V;D,L)$ be a circuit. Then $G$ is (redundantly) rigid if and only if $G$ is mixed.
\end{lem}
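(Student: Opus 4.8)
The plan is to translate both rigidity and redundant rigidity into rank conditions on the rigidity matroid, and then read off the conclusion from the edge counts in the circuit characterisations of Lemmas \ref{lem:MixedCircuitGraphSparsityDefn} and \ref{lem:PureCircuitGraphSparsityDefn}. I would work throughout at a generic realisation $p$, so that $\text{rank}(R(G,p)) = \text{rank}(\R(G))$, and I would use that rigidity and infinitesimal rigidity coincide for generic direction-length frameworks \cite[Lemma 8.1]{JK_DL_BoundedFrameworks}. Since $2|V|-2$ is the maximum possible rank of $R(G,p)$, this reduces the problem to the statement that $G$ is rigid if and only if $\text{rank}(\R(G)) = 2|V|-2$.

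First I would compute the rank of a circuit. As $E(G)$ is a circuit it is dependent, while every proper subset is independent; hence $E(G)-e$ is a maximum independent subset for any edge $e$, giving $\text{rank}(\R(G)) = |E(G)|-1$. Thus $G$ is rigid if and only if $|D|+|L| = 2|V|-1$. By Lemma \ref{lem:MixedCircuitGraphSparsityDefn} a mixed circuit has $|D|+|L| = 2|V|-1$, whereas by Lemma \ref{lem:PureCircuitGraphSparsityDefn} a pure circuit has $|D|+|L| = 2|V|-2$. Since every circuit is either mixed or pure, this edge-count dichotomy shows that $G$ is rigid precisely when it is mixed, establishing the ``rigid if and only if mixed'' equivalence.

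To upgrade this to redundant rigidity, I would first observe that the converse is immediate: redundant rigidity implies rigidity and hence, by the above, that $G$ is mixed. So suppose $G$ is a mixed circuit and fix $e \in E(G)$. Then $E(G)-e$ is a proper subset of the circuit and is therefore independent, so $\text{rank}(\R(G-e)) = |E(G)|-1 = 2|V|-2$; as $G-e$ retains the full vertex set $V$, it attains the maximum possible rank and is rigid, so $G$ is redundantly rigid. The argument is essentially bookkeeping once the circuit characterisations are available, so there is no serious obstacle; the only points needing care are the reduction to the rank condition (which relies on genericity and on the equivalence of rigidity with infinitesimal rigidity) and the observation that an independent set of $2|V|-2$ edges on $V$ cannot isolate a vertex — otherwise the bound $i(X) \le 2|X|-2$ of Lemma \ref{lem:IndependenceSparsity}, applied to the remaining $|V|-1$ vertices, would permit at most $2|V|-4$ independent edges, so that $\text{rank}(\R(G-e)) = 2|V|-2$ genuinely certifies rigidity of the whole graph $G-e$.
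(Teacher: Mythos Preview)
The paper does not prove this lemma; it merely imports it from Servatius--Whiteley with a citation and moves on. Your argument is correct and is in fact a clean derivation of the statement from the other imported results already present in the paper: the independence characterisation (Lemma~\ref{lem:IndependenceSparsity}), the circuit edge-counts (Lemmas~\ref{lem:MixedCircuitGraphSparsityDefn} and~\ref{lem:PureCircuitGraphSparsityDefn}), and the generic equivalence of rigidity with infinitesimal rigidity. The rank computation $\text{rank}(\R(G))=|E(G)|-1$ for a circuit, combined with the dichotomy $|E|=2|V|-1$ (mixed) versus $|E|=2|V|-2$ (pure), gives the rigidity equivalence immediately, and your redundant-rigidity step is the standard observation that deleting any edge from a circuit leaves a basis. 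The closing remark about isolated vertices is a little over-cautious --- once you know $E(G)-e$ is independent of size $2|V|-2$ on the vertex set $V$, infinitesimal rigidity of $G-e$ follows directly from the definition, and the absence of isolated vertices is a consequence rather than a hypothesis --- but it does no harm.
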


The following result implies that the union of intersecting mixed circuits is also rigid: 

\begin{lem}\label{lem:RigidUnion}
Let $G=H_{1}\cup H_{2}$ be a mixed graph, with $V(H_{1})\cap V(H_{2})\neq \emptyset$. If $H_{1}$ and $H_{2}$ are rigid then $G$ is rigid.
\end{lem}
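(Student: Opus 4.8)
The plan is to reduce to infinitesimal rigidity and then run the standard ``glue two rigid pieces along a common vertex'' argument. Recall that rigidity and infinitesimal rigidity coincide for generic direction-length frameworks, and that rigidity is a generic property of graphs, so it is enough to show that every generic realisation $(G,p)$ of $G$ is infinitesimally rigid. We may assume $\abs{V(H_1)},\abs{V(H_2)}\ge 2$: if, say, $\abs{V(H_i)}\le 1$ then, since $V(H_1)\cap V(H_2)\ne\emptyset$, the graph $G$ coincides with $H_{3-i}$, which is rigid by hypothesis, and there is nothing to prove.

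The core of the argument is as follows. First I would fix a generic realisation $(G,p)$ of $G$ and put $p_i:=p|_{V(H_i)}$ for $i=1,2$. Since the coordinates of $p$ are algebraically independent over $\mathbb{Q}$, so are those of each $p_i$, so $(H_i,p_i)$ is a generic realisation of the rigid graph $H_i$ and is therefore infinitesimally rigid; that is, $\dim\ker R(H_i,p_i)=2$, so $\ker R(H_i,p_i)$ consists exactly of the translations of $\mathbb{R}^2$. Now take any $m\in\ker R(G,p)$. For each edge $e$ of $H_i$, the row of $R(H_i,p_i)$ indexed by $e$ is just the row of $R(G,p)$ indexed by $e$ with the (necessarily zero) columns of the vertices in $V(G)\setminus V(H_i)$ deleted, so $m|_{V(H_i)}$ lies in $\ker R(H_i,p_i)$ and is hence a translation: there is $t_i\in\mathbb{R}^2$ with $m(v)=t_i$ for every $v\in V(H_i)$. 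Choosing $w\in V(H_1)\cap V(H_2)$ gives $t_1=m(w)=t_2$, and writing $t$ for this common vector we get $m(v)=t$ for all $v\in V(H_1)\cup V(H_2)=V(G)$, i.e.\ $m$ is a translation.

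Hence $\ker R(G,p)$ is precisely the $2$-dimensional space of translations, so $\text{rank}(R(G,p))=2\abs{V(G)}-2$ and $(G,p)$ is infinitesimally rigid; as $(G,p)$ was an arbitrary generic realisation, $G$ is rigid. I do not expect a genuine obstacle here: the analogous statement for length-pure (bar-and-joint) frameworks is false, and the reason this version works is exactly that a rigid mixed framework has no infinitesimal rotation, so each rigid piece can only be translated and the shared vertex glues the two translations together. The only points requiring a little care are the degenerate one-vertex case, handled above, and the elementary fact that a subconfiguration of a generic configuration is generic.
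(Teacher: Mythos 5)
Your proof is correct and follows essentially the same gluing argument as the paper: restrict a motion of $(G,p)$ to each $(H_i,p|_{V(H_i)})$, use the rigidity of $H_i$ to conclude that the restriction is a translation, and use the shared vertex to force the two translations to coincide. The only difference is that you work infinitesimally with $\ker R(G,p)$ (invoking the generic equivalence of rigidity and infinitesimal rigidity), whereas the paper argues directly with continuous motions fixing a common vertex; both are sound.
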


\begin{proof}
Let $V_i= V(H_i)$ for $i\in \{1,2\}$, and let $p$ be a generic realisation of $G$. Then $(H_1,\left.p\right|_{V_1})$ and $(H_2,\left.p\right|_{V_2})$ are generic realisations of $H_1$ and $H_2$ respectively.

Let $v\in V_1\cap V_2$. Since $H_1$ is rigid and $\left.p\right|_{V_1}$ is a generic realisation of $H_1$, the only motions of $(H_1,\left.p\right|_{V_1})$ are translations. Hence the only motion of $(H_1,\left.p\right|_{V_1})$ which fixes $v$ is a translation of length zero, i.e.\ the motion which fixes all of the vertices in $V_1$.

Similarly, the only motion of $(H_2,\left.p\right|_{V_2})$ which fixes $v$ is the motion which fixes all of the vertices in $V_2$. This implies that the only motion of the entire framework $(G,p)$ which fixes $v$ must fix all the vertices in $G$. Hence $(G,p)$ is rigid.
\end{proof}

\subsection{Graph Connectivity}

Given a direction-length graph $G=(V;D,L)$, a \emph{$k$-vertex-cut} (\emph{$k$-edge-cut}) of $G$ is a set of $k$ vertices (edges) whose removal disconnects $G$. The graph $G$ is called \emph{$k$-connected} if $|V|>k$ and there is no set of vertices of size less than $k$ whose removal disconnects $G$. Similarly, $G$ is \emph{$k$-edge-connected} if all of its edge-cuts have size at least $k$.

A \emph{$k$-separation} of $G$ is a pair of subgraphs $(H_{1},H_{2})$ of $G$ with $|V(H_1)|\geq k+1$ and $|V(H_2)|\geq k+1$ such that $H_{1}\cup H_{2}=G$ and $|V(H_{1})\cap V(H_{2})|=k$. An \emph{edge-disjoint $k$-separation} is a $k$-separation $(H_1,H_2)$ where $H_1$ and $H_2$ are edge-disjoint. It is clear that a graph has a $k$-separation if and only if it has an edge-disjoint $k$-separation on the same vertex sets.

A 2-separation $(H_{1},H_{2})$ of a graph $G$ on a 2-vertex-cut $\{x,y\}$ is \emph{direction-balanced} (respectively \emph{length-balanced}) if both $E(H_{1})-E(\{x,y\})$ and $E(H_{2})-E(\{x,y\})$ contain a direction (resp.\ length) edge. The graph $G$ is \emph{direction-balanced} (resp.\ \emph{length-balanced}) if every 2-separation of $G$ is direction-balanced (resp.\ length-balanced). A graph is \emph{balanced} if it is both direction- and length-balanced, and is \emph{unbalanced} otherwise. 

We have the following results on the connectivity of critical sets and circuits.

\begin{lem}\label{lem:CriticalSet}\cite[Lemma 2.3]{JJ_MixedCircuits}
Let $G=(V;D,L)$ be a mixed graph and let $X\subseteq V$ be a critical set. Then
\begin{enumerate}
  \item $G[X]$ is 2-edge-connected unless $|X|=2$ and $i(X)=1$.\label{part:CriticalSet_2EdgeConnected}
  \item If $(H_{1},H_{2})$ is a 1-separation of $G[X]$ then $X$ is mixed critical and $V(H_{1})$ and $V(H_{2})$ are also mixed critical. \label{part:CriticalSet_1sep}
\end{enumerate}
\end{lem}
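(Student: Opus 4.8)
The plan is to prove both parts by direct counting with the sparsity bounds of Lemma~\ref{lem:IndependenceSparsity}, using only that a critical set $X$ has $G[X]$ independent with $i(X)=2|X|-2$ when $X$ is mixed critical and $i(X)=2|X|-3$ when $X$ is pure critical, and that in the latter case every edge of $G[X]$ has the same type. The recurring device is that $i(Y)\le 2|Y|-2$ holds for \emph{every} nonempty $Y\subseteq V$ (trivially with equality when $|Y|=1$), so decomposing the edge count across a disconnection never loses track of singleton pieces.

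For part~(a), I would argue by contradiction, supposing $G[X]$ is not 2-edge-connected. If $G[X]$ is disconnected, take $X_1$ to be the vertex set of one component and $X_2=X-X_1$; there are no edges between them, so $i(X)=i(X_1)+i(X_2)\le(2|X_1|-2)+(2|X_2|-2)=2|X|-4$, contradicting $i(X)\ge 2|X|-3$. If instead $G[X]$ is connected but has a bridge $e$, deleting $e$ leaves two components on vertex sets $X_1,X_2$ with $i(X)=i(X_1)+i(X_2)+1\le 2|X|-3$; hence $X$ must be pure critical, say direction critical (the length-critical case being symmetric), and equality forces $i(X_1)=2|X_1|-2$ and $i(X_2)=2|X_2|-2$. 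But then $G[X_i]$ has no length edges, so $i_D(X_i)=i(X_i)=2|X_i|-2$, which contradicts $i_D(X_i)\le 2|X_i|-3$ unless $|X_i|=1$. So both $X_i$ are singletons, giving $|X|=2$ and $i(X)=1$: exactly the stated exception.

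For part~(b), write $\{w\}=V(H_1)\cap V(H_2)$ and $X_i=V(H_i)$, so $|X_i|\ge 2$, $|X|=|X_1|+|X_2|-1$, and since no edge of $G[X]$ joins $X_1-w$ to $X_2-w$ we again have $i(X)=i(X_1)+i(X_2)$. I would first rule out $X$ being pure critical: if $X$ is direction critical, then $G[X_1]$ and $G[X_2]$ contain no length edges, so $i_D(X)=i_D(X_1)+i_D(X_2)\le(2|X_1|-3)+(2|X_2|-3)=2|X|-4<2|X|-3=i(X)$, a contradiction (and symmetrically for length critical). Hence $X$ is mixed critical with $i(X)=2|X|-2$, and then $i(X_1)+i(X_2)=2|X|-2=(2|X_1|-2)+(2|X_2|-2)$ together with $i(X_i)\le 2|X_i|-2$ forces $i(X_i)=2|X_i|-2$; since $G[X_i]$ is independent with $|X_i|\ge 2$, both $X_1$ and $X_2$ are mixed critical.

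No step here is a genuine obstacle — the lemma is a counting exercise — but the part that has to be handled sharply is the identification of the exceptional case in part~(a): it is precisely the strict bound $i_D(X_i)\le 2|X_i|-3$, valid only for $|X_i|\ge 2$, that collapses the two sides of a bridge to single vertices, and conversely it is the equality $i(\{v\})=0=2\cdot 1-2$ that makes the disconnection and bridge arguments go through uniformly. The rest of the care is bookkeeping: checking that each edge of $G[X]$ is counted exactly once in every decomposition, in particular the edges incident with the cut vertex $w$ in part~(b) and with the endpoints of the bridge in part~(a).
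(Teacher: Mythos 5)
Your proof is correct and is essentially the standard sparsity-counting argument: the paper only cites this lemma from \cite{JJ_MixedCircuits} without reproducing a proof, and the original argument there is the same kind of edge count across the disconnection/bridge/cut-vertex using Lemma \ref{lem:IndependenceSparsity}, with the bound $i_D(Y)\le 2|Y|-3$ (valid only for $|Y|\ge 2$) doing exactly the work you identify. The bookkeeping points you flag — edges at the cut vertex counted once, singleton sides of a bridge forcing the $|X|=2$, $i(X)=1$ exception — are handled correctly.
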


\begin{lem}\cite[Lemma 3.3]{JJ_MixedCircuits}\label{lem:Circuits2Connected}
Let $G$ be a mixed or pure circuit. Then $G$ is 3-edge-connected and 2-connected.
\end{lem}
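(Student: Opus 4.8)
The plan is to argue entirely combinatorially from the edge-count characterisations of circuits in Lemmas~\ref{lem:MixedCircuitGraphSparsityDefn} and~\ref{lem:PureCircuitGraphSparsityDefn}, exploiting the single fact that a circuit carries exactly one more edge than the relevant sparsity bound permits on its proper subsets. I will use that a mixed circuit satisfies $|E(G)|=2|V|-1$ together with $i(X)\le 2|X|-2$ for every $X\subsetneq V$ with $|X|\ge 2$, while a pure circuit satisfies $|E(G)|=2|V|-2$ together with $i(X)\le 2|X|-3$ for every such $X$; I will also use that $|V|\ge 3$ in the mixed case and $|V|\ge 4$ in the pure case, as recorded above.

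First I would establish that $\delta(G)\ge 3$. If some vertex $v$ had $\deg_G(v)\le 2$, then $X:=V\setminus\{v\}$ is a proper subset with $|X|\ge 2$, and $i(X)=|E(G)|-\deg_G(v)\ge |E(G)|-2$; in the mixed case this gives $i(X)\ge 2|V|-3=2|X|-1$, and in the pure case $i(X)\ge 2|V|-4=2|X|-2$, each contradicting the sparsity bound for $X$.

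For 3-edge-connectivity, suppose $G$ has an edge-cut of size at most $2$, giving a bipartition $V=X\sqcup Y$ with $X,Y$ nonempty and $d(X,Y)\le 2$. Since $\delta(G)\ge 3$, neither side is a singleton, so $|X|,|Y|\ge 2$ and both are proper subsets of $V$. Then $|E(G)|=i(X)+i(Y)+d(X,Y)$, so the sparsity bounds give $|E(G)|\le(2|X|-2)+(2|Y|-2)+2=2|V|-2$ in the mixed case and $|E(G)|\le(2|X|-3)+(2|Y|-3)+2=2|V|-4$ in the pure case, each contradicting the exact edge count of a circuit. In particular $G$ is connected.

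Finally, for 2-connectivity it remains to rule out a cut vertex. If $v$ were one, write $G=H_1\cup H_2$ where $V(H_1)\cap V(H_2)=\{v\}$, the $H_i$ are edge-disjoint, and each $H_i$ contains an edge; then each $X_i:=V(H_i)$ has $|X_i|\ge 2$, $|X_1|+|X_2|=|V|+1$, and $i(X_1)+i(X_2)=|E(G)|$. The sparsity bounds now force $|E(G)|\le 2|V|-2$ (mixed) or $|E(G)|\le 2|V|-4$ (pure), again contradicting the circuit edge count; so $G$ has no cut vertex and, being connected with at least three vertices, is 2-connected. I do not expect a real obstacle here — the argument is a handful of applications of the same inequality — and the only points needing care are the singleton cases in the edge-cut step (handled by first proving $\delta(G)\ge 3$) and checking that each set to which a sparsity bound is applied is a proper subset of $V$ with at least two vertices.
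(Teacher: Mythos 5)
Your proof is correct, and it is exactly the standard counting argument: the paper itself only cites this result from Jackson and Jord\'{a}n, whose proof likewise derives it from the sparsity characterisations in Lemmas \ref{lem:MixedCircuitGraphSparsityDefn} and \ref{lem:PureCircuitGraphSparsityDefn}. All the delicate points (minimum degree first, so that neither side of a small edge-cut is a singleton; both sides of each partition being proper subsets on at least two vertices) are handled properly.
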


A trivial, but very useful, consequence of Lemmas \ref{lem:IndependenceSparsity} and \ref{lem:Circuits2Connected} is that all circuits have the same minimum degree:

\begin{cor}\label{cor:CircuitsMinDegree3}
Let $G$ be a mixed or pure circuit. Then $\delta(G)=3$.
\end{cor}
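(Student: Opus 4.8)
The plan is to sandwich $\delta(G)$ between $3$ and $3$. For the lower bound I would invoke Lemma~\ref{lem:Circuits2Connected}, which says that a circuit $G$ is $3$-edge-connected. Since every circuit has at least three vertices (the smallest ones being $K_4$, $K_3^+$ and $K_3^-$), the set of edges incident with any single vertex is an edge-cut, and $3$-edge-connectivity forces each such cut to have size at least $3$; hence every vertex has degree at least $3$, i.e.\ $\delta(G)\ge 3$.

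For the upper bound I would use the sparsity count. Since $G$ is a circuit, deleting any edge $e$ leaves $G-e$ independent, so applying Lemma~\ref{lem:IndependenceSparsity}(a) with $X=V$ to $G-e$ gives $|E(G)|-1\le 2|V|-2$, i.e.\ $|E(G)|\le 2|V|-1$. (One could equally read off $|E(G)|=2|V|-1$ for a mixed circuit or $|E(G)|=2|V|-2$ for a pure circuit from Lemmas~\ref{lem:MixedCircuitGraphSparsityDefn} and \ref{lem:PureCircuitGraphSparsityDefn}.) By the handshake identity $\sum_{v\in V}\deg_G(v)=2|E(G)|\le 4|V|-2$; were every vertex of degree at least $4$ we would get $4|V|\le 2|E(G)|\le 4|V|-2$, a contradiction. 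So some vertex has degree at most $3$, whence $\delta(G)\le 3$ and therefore $\delta(G)=3$.

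I do not expect any real obstacle: the statement is an immediate counting consequence of the two quoted lemmas. The only point requiring a moment's care is that circuits have enough vertices for $3$-edge-connectivity to actually force minimum degree $3$, which is fine since the smallest circuits already have three vertices.
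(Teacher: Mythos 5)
Your proof is correct and matches the paper's intended argument: the paper states this corollary as a "trivial consequence" of Lemmas \ref{lem:IndependenceSparsity} and \ref{lem:Circuits2Connected} without writing out a proof, and your combination of $3$-edge-connectivity (for $\delta(G)\ge 3$) with the sparsity/handshake count (for $\delta(G)\le 3$) is exactly that argument.
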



\subsection{Operations Preserving Global Rigidity}\label{subsec:Definitions_GlobalRigidityOperations}

The goal of this paper is to characterise global rigidity for all generic direction-length frameworks with a ``connected'' rigidity matroid, by finding a inductive construction of all such graphs which are globally rigid. To this end, we define the following three recursive operations which are known to preserve global rigidity in generic frameworks.

Given $G=(V;D,L)$, an \emph{edge addition} adds a new edge $e$ to $G$ to obtain the graph $G'=G+e$. Whereas the \emph{0-extension operation} adds some new vertex $v$ to $G$, along with two new edges $vx$ and $vy$ for some $x,y\in V$, such that if $x=y$ then these edges are of different type. A 0-extension is \emph{direction pure} if both of the edges added are direction edges.

Finally, the \emph{1-extension operation} deletes some edge $e=xy$ of $G$, and adds a vertex $v$ to $G$, along with edges $vx$, $vy$ and $vz$ for some $z\in V$, such that at least one of these new edges is of the same type as $e$. A graph obtained from $G$ in this manner is denoted $G^v$.

\begin{lem} \cite[Theorems 1.2 and 1.3]{JJ_DL_GlobalOperations} \label{lem:OperationsPreservingGlobalRigidity}
Let $(G,p)$ and $(G',p')$ be generic direction-length frameworks. Suppose that either
\begin{enumerate}
	\item  $(G,p)$ is globally rigid, and $(G',p')$ is obtained from $(G,p)$ by an edge addition or a direction-pure 0-extension, or
	\item $(G,p)$ is globally rigid, $G-e$ is rigid for some edge $e\in E(G)$, and $(G',p')$ is obtained from $(G,p)$ by a 1-extension which deletes the edge $e$.
\end{enumerate}
Then $(G',p')$ is globally rigid.
\end{lem}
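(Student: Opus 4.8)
The three operations differ greatly in difficulty, so I would treat them separately, in each case starting from an arbitrary framework $(G',q')$ equivalent to $(G',p')$ and aiming to show that $q'$ is congruent to $p'$ (obtained by a translation and/or a $180^{\circ}$ rotation). Throughout I would use that both types of congruence preserve every direction and length constraint, so that composing a framework with a congruence preserves equivalence.

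\textbf{Edge addition.} Here $p'=p$ and $V(G')=V(G)$, and the constraints of $G'$ include all those of $G$. Hence any $(G',q')$ equivalent to $(G',p')$ is in particular a framework $(G,q')$ equivalent to $(G,p)$, and global rigidity of $(G,p)$ gives that $q'$ is congruent to $p=p'$. Adding a constraint can only shrink the equivalence class, so this case is immediate.

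\textbf{Direction-pure 0-extension.} Write $V'=V\cup\{v\}$ and set $\bar q=q'|_V$. Then $(G,\bar q)$ is equivalent to $(G,p)$, so by global rigidity there is a congruence $\gamma$ with $\bar q=\gamma\circ p$; composing $q'$ with $\gamma^{-1}$ I may assume $q'|_V=p$. It remains to locate $v$. The two added direction edges $vx,vy$ force $q'(v)$ to lie on the line through $p(x)$ with the gradient prescribed for $vx$ and on the line through $p(y)$ with the gradient prescribed for $vy$. Because $p'$ is generic and $x\neq y$, the points $p'(v),p'(x),p'(y)$ are not collinear, so these two gradients differ; the two lines are therefore non-parallel and meet in the single point $p'(v)$, giving $q'(v)=p'(v)$ and hence $q'$ congruent to $p'$. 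This is exactly where direction-purity is used: a length edge would replace a line by a circle and introduce a reflective ambiguity.

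\textbf{1-extension.} This is the main case, and I would reduce it to global rigidity of $G$. Recall $G'=G^{v}$ arises by deleting $e=xy$ and adding $v$ with edges $vx,vy,vz$, at least one of the type of $e$, while $G-e$ is rigid. Again put $\bar q=q'|_V$, so $(G-e,\bar q)$ is equivalent to $(G-e,p)$ and $\bar q$ admits a valid apex at $q'(v)$. The plan has three steps. \emph{Step 1:} show $\bar q$ also satisfies the deleted constraint $e=xy$, so that $(G,\bar q)$ is equivalent to $(G,p)$. \emph{Step 2:} apply global rigidity of $(G,p)$ to get a congruence $\gamma$ with $\bar q=\gamma\circ p$ and reduce to $q'|_V=p$; since $|V|\ge 2$ and $p$ is generic, the only congruence fixing $p|_V$ is the identity, so this reduction is genuine. \emph{Step 3:} with $q'|_V=p$ fixed, the apex $q'(v)$ satisfies three constraints relative to the fixed points $p(x),p(y),p(z)$; any two of them already confine $v$ to a finite set (two lines meet once; a line and a circle, or two circles, in at most two points), and genericity of $p$ ensures the third selects the single position $p(v)$. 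Thus $q'(v)=p(v)$ and $q'$ is congruent to $p'$.

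\textbf{The crux.} The whole difficulty sits in Step 1, and this is the step I expect to be the real obstacle. The natural mechanism is to eliminate the two coordinates of the degree-$3$ apex $v$ from its three constraint equations, obtaining a single polynomial relation $F$ in the positions of $x,y,z$ that must hold whenever a valid apex exists; $F$ vanishes at both $p$ and $\bar q$. One must then show that $F=0$, together with the constraints of $G-e$ and the genericity of $p$, forces the $xy$-measurement of $\bar q$ to equal that of $p$. Both hypotheses are essential: the type condition (some apex edge shares the type of $e$) governs which measurement the eliminant recovers, while the rigidity of $G-e$ makes the set of $(G-e)$-equivalent configurations discrete modulo congruence, preventing $F$ from being satisfied ``for the wrong reason'' along a positive-dimensional family. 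Carrying out this elimination in each assignment of edge types, and ruling out the spurious second solution via genericity, is the technical heart of the cited result; the length and direction cases are dual, exchanging circles with lines and the dot product with its perpendicular.
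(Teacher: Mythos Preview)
The paper does not prove this lemma: it is quoted from \cite{JJ_DL_GlobalOperations} (Theorems 1.2 and 1.3 there) and used as a black box. So there is no in-paper argument to compare against.

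On the substance of your sketch: the edge-addition and direction-pure 0-extension cases are complete and correct as written. For the 1-extension you have correctly isolated the real difficulty (your Step~1, recovering the deleted $xy$-constraint from the three apex constraints) and correctly flagged both hypotheses --- the type condition on the new edges and the rigidity of $G-e$ --- as essential. However, what you give there is an outline, not a proof: the elimination you describe must be carried out case by case over the possible type assignments to $vx,vy,vz$ and $e$, and in each case one must verify that the eliminant genuinely pins down the $xy$-measurement rather than some weaker relation, and that the finitely many spurious solutions are excluded by genericity. You acknowledge this yourself (``the technical heart of the cited result''), so the proposal is best read as a correct roadmap to the proof in \cite{JJ_DL_GlobalOperations} rather than an independent proof.
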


By Theorem \ref{thm_JJ:GloballyRigidCircuits} and Lemma \ref{lem:RigidCircuits}, we know that all generic realisations of the smallest circuits, $K_3^+$ and $K_3^-$ are globally rigid and redundantly rigid. Hence all graphs that can be constructed from these two graphs by the operations in Lemma \ref{lem:OperationsPreservingGlobalRigidity} are also globally rigid. 
However, this paper is concerned with finding globally rigid graphs with ``connected'' rigidity matroids. So we need to identify which of the above operations also preserve  matroid connectivity. This is the focus of the next section.

\section{$\M$-Connected Graphs}\label{sec:M-Connected}

First we shall formally define matroid connectivity and $\M$-connectivity, before identifying some properties of $\M$-connected graphs. In particular, in Sections \ref{subsec:MConnOperations} and \ref{subsec:2Sums} we show that a subset of the operations in Lemma \ref{lem:OperationsPreservingGlobalRigidity} which preserve global rigidity, also preserve matroid connectivity.

Given a matroid $\M=(E,\mathcal{I})$, we define a relation on $E$ such that for all $e,f\in E$, the elements $e$ and $f$ are related if either $e=f$ or there exists a circuit $C$ in the matroid, such that $e,f\in C$. It is well-known that this is an equivalence relation (see \cite[Proposition 4.1.2]{Ox_Matroids}). We say that the matroid $\M$ is \emph{connected} if $\M$ has exactly one equivalence class under this relation, and that $\M$ is \emph{trivially} connected if $E$ consists of a single edge. The simplest, non-trivial, connected matroids are circuits. 

We define a mixed or pure graph $G=(V;D,L)$ to be \emph{$\M$-connected} if its rigidity matroid, $\R(G)$, is connected but not trivially connected.

\begin{lem}\label{lem:MConnectedGraphsAre2Connected}
Let $G$ be a mixed or pure graph. If $G$ is $\M$-connected then $G$ is 2-connected.
\end{lem}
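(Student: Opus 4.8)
## Proof Proposal

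The plan is to prove the contrapositive in two stages: first that $\M$-connectivity forces connectedness of $G$ as a graph, and then that it rules out any cut-vertex. The key tool throughout will be the equivalence relation defining matroid connectivity together with the characterisations of circuits in $\R(G)$ (Lemmas~\ref{lem:MixedCircuitGraphSparsityDefn} and~\ref{lem:PureCircuitGraphSparsityDefn}), via the observation that any circuit of $\R(G)$ must live inside a single ``chunk'' of the graph whenever the graph falls apart at a vertex or is disconnected.

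First I would handle graph connectivity. Suppose $G$ is disconnected, with a component $G_1$ and $G_2 = G - V(G_1)$ both non-empty. Any circuit $C$ of $\R(G)$ induces a subgraph $G[C]$ which is $2$-edge-connected (Lemma~\ref{lem:Circuits2Connected}), hence connected, so $G[C]$ lies entirely within one component. Therefore no circuit contains an edge of $G_1$ together with an edge of $G_2$, so edges from different components are never related. Since $G$ is $\M$-connected and hence has at least two edges in a single equivalence class, and since every graph under consideration has at least one edge, this is a contradiction as soon as both $G_1$ and $G_2$ carry an edge; the degenerate case (a component being a single vertex, or the whole matroid trivial) is excluded by the definitions ($\M$-connected means not trivially connected, so $|E|\geq 2$, and each equivalence class must contain all edges). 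So $G$ is connected.

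Next, the main step: ruling out a cut-vertex. Suppose $v$ is a cut-vertex, so $G = H_1 \cup H_2$ with $V(H_1)\cap V(H_2) = \{v\}$, each $H_i$ having an edge, and $E(H_1), E(H_2)$ partitioning $E(G)$. The claim is that no circuit of $\R(G)$ can contain an edge of $H_1$ and an edge of $H_2$ simultaneously. Indeed, if $C$ were such a circuit, then $G[C]$ would again have a cut-vertex at $v$ (or would not even be connected through $v$), contradicting that $G[C]$ is $2$-connected by Lemma~\ref{lem:Circuits2Connected}. Hence edges in $E(H_1)$ are never related to edges in $E(H_2)$, so $\R(G)$ has at least two equivalence classes, contradicting $\M$-connectivity. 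Combined with the previous paragraph, $G$ must be $2$-connected.

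The main obstacle is making the ``a circuit lives on one side'' argument fully rigorous: one must be careful that $G[C]$, the subgraph \emph{induced by the edge set} $C$, genuinely inherits the separation structure of $G$ — i.e.\ that if $C$ met both $E(H_1)$ and $E(H_2)$ then $v$ would be a cut-vertex of $G[C]$ (or $G[C]$ disconnected), because every $C$-edge incident to a vertex of $V(H_1)\setminus\{v\}$ must lie in $E(H_1)$ and symmetrically. This is where the precise definitions of $E(H_i)$ partitioning $E(G)$ and $V(H_1)\cap V(H_2)=\{v\}$ are used, and then Lemma~\ref{lem:Circuits2Connected} delivers the contradiction cleanly. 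The disconnected case is entirely analogous and slightly easier, since there one does not even need $2$-connectivity of circuits, only their connectedness.
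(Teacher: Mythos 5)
Your proposal is correct and follows essentially the same route as the paper: both arguments use $\M$-connectivity to produce a circuit containing edges from both sides of a putative separation and then invoke the $2$-connectivity of circuits (Lemma~\ref{lem:Circuits2Connected}) to reach a contradiction, with your version merely phrased contrapositively and split into the connected and cut-vertex cases. The minor degenerate issues you flag (isolated vertices, edgeless sides) are likewise glossed over in the paper's proof and are harmless in context.
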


\begin{proof}
Assume that $G$ has a 1-separation $(H_1,H_2)$ and let $e\in E(H_1)$ and $f\in E(H_2)$. Since $G$ is $\M$-connected, the rigidity matroid of $G$ contains a circuit $C$ such that $e,f\in C$. Lemma \ref{lem:Circuits2Connected} implies that $G[C]$ is 2-connected. But $G[C]$ intersects both $H_1-H_2$ and $H_2-H_1$, which contradicts that $(H_1,H_2)$ is a 1-separation of $G$.
\end{proof}


\subsection{Ear Decompositions}

The definition of $\M$-connectivity suggests that we can consider the edge set of an $\M$-connected graph
to be an intersecting sequence of circuits. This sequence is called an \emph{ear decomposition} of the edge set, and will allow us to infer properties of the graph from the properties of the circuits in its ear decomposition.

Let $\M=(E,\mathcal{I})$ be a matroid and $C_{1}, C_{2}, \ldots, C_{m}$ be a non-empty sequence of circuits in $\M$. Let $E_{i}=C_{1}\cup C_{2}\cup\ldots\cup C_{i}$ for all $1 \leq i \leq m$. The sequence $C_{1}, C_{2}, \ldots, C_{m}$ is a \emph{partial ear decomposition} of $\M$ if for all $2\leq i\leq m$
\begin{enumerate}[label=(E\arabic*)]
  \item $C_{i}\cap E_{i-1}\neq \emptyset$, \label{part:E1}
  \item $C_{i}-E_{i-1}\neq\emptyset$, and \label{part:E2}
  \item No circuit $C_{i}'$ satisfying \ref{part:E1} and \ref{part:E2} has $C_{i}'-E_{i-1}\subset C_{i}-E_{i-1}$. \label{part:E3}
\end{enumerate}
The set $\tilde{C}_{i}:=C_{i}-E_{i-1}$ is the \emph{lobe} of the circuit $C_{i}$. A partial ear decomposition with $E_{m}=E$ is called an \emph{ear decomposition of $\M$}.

\begin{lem}\label{lem:EarDecomp}\cite{CH_PortOraclesForMatroids} 
Let $\M=(E, \mathcal{I})$ be a matroid with $\abs{E}\geq 2$ and rank function $r$. Then
\begin{enumerate}
  \item $\M$ is connected if and only if $\M$ has an ear decomposition. \label{part:EarDecomp_Connected}
  \item If $\M$ is connected then any partial ear decomposition of $\M$ can be extended to an ear decomposition of $\M$. \label{part:EarDecomp_Extending}
  \item If $C_{1},C_{2}, \ldots , C_{m}$ is an ear decomposition of $\M$ then
\[ r(E_{i})-r(E_{i-1})=\abs{\tilde{C}_{i}}-1 \quad \text{for } 2 \leq i \leq m. \] \label{part:EarDecomp_Rank}
\end{enumerate}
\end{lem}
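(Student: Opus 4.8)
The plan is to prove the rank identity in part~\ref{part:EarDecomp_Rank} first, and then obtain parts~\ref{part:EarDecomp_Connected} and~\ref{part:EarDecomp_Extending} from it together with one short greedy construction. Fix an ear decomposition $C_1,\dots,C_m$, a stage $2\le i\le m$, and set $k=\abs{\tilde C_i}$. By~\ref{part:E1} the lobe $\tilde C_i$ is a \emph{proper} subset of the circuit $C_i$, hence independent, and by~\ref{part:E2} we have $k\ge 1$. For the upper bound $r(E_i)\le r(E_{i-1})+k-1$ I would pick any $e\in\tilde C_i$ and use that $e$ lies in the closure of $C_i-e\subseteq E_{i-1}\cup(\tilde C_i-e)$, since $C_i$ is a circuit; hence $\overline{E_{i-1}\cup\tilde C_i}=\overline{E_{i-1}\cup(\tilde C_i-e)}$, which has rank at most $r(E_{i-1})+k-1$.

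The crux, and the step I expect to be the main obstacle, is the matching lower bound $r(E_i)\ge r(E_{i-1})+k-1$; this is exactly where the minimality condition~\ref{part:E3} enters. Enumerate $\tilde C_i=\{e_1,\dots,e_k\}$ and adjoin these elements to $E_{i-1}$ one at a time: it is enough to show that at most one of the $k$ steps fails to raise the rank. Suppose two do, and let $e_j$ be the first. Then $e_j$ is spanned by $E_{i-1}\cup\{e_1,\dots,e_{j-1}\}$, so there is a circuit $C'$ with $e_j\in C'\subseteq E_{i-1}\cup\{e_1,\dots,e_j\}$. Now $C'\cap E_{i-1}\neq\emptyset$, since otherwise $C'$ would be a circuit contained in the independent set $\tilde C_i$; thus $C'$ satisfies~\ref{part:E1}, and it satisfies~\ref{part:E2} because $e_j\in C'-E_{i-1}$. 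Finally, since a second failing step occurs after $e_j$ we have $j<k$, so $C'-E_{i-1}\subseteq\{e_1,\dots,e_j\}\subsetneq\tilde C_i=C_i-E_{i-1}$, contradicting~\ref{part:E3}. I would take care over the degenerate cases ($k=1$, or $C'$ lying entirely in $E_{i-1}$ or entirely in $\tilde C_i$), but each is excluded by the independence of $\tilde C_i$ and by~\ref{part:E2}. Together with the upper bound this gives the identity.

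Granting part~\ref{part:EarDecomp_Rank}, I would finish as follows. For the ``if'' direction of part~\ref{part:EarDecomp_Connected}, induct on $i$: all elements of $E_1=C_1$ lie in a single equivalence class of the connectivity relation, and if $E_{i-1}$ does too then, choosing $g\in C_i\cap E_{i-1}$ via~\ref{part:E1}, every element of $\tilde C_i$ is related to $g$ through the circuit $C_i$, so $E_i$ lies in one class; taking $i=m$ shows $\M$ is connected. The ``only if'' direction of part~\ref{part:EarDecomp_Connected} and all of part~\ref{part:EarDecomp_Extending} then reduce to a single greedy lemma: if $C_1,\dots,C_i$ is a partial ear decomposition of a connected matroid with $E_i\neq E$, it extends. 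To see this, pick $f\in E-E_i$ and $e\in E_i$; connectedness yields a circuit containing both, hence a circuit satisfying~\ref{part:E1} and~\ref{part:E2}, and one whose lobe $C-E_i$ is minimal under inclusion is a valid $C_{i+1}$ by~\ref{part:E3}. Iterating enlarges $E_i$ each time, so it terminates with $E_m=E$; this is part~\ref{part:EarDecomp_Extending}, and part~\ref{part:EarDecomp_Connected} ``only if'' follows by starting the iteration from any single circuit (one exists since $\abs E\ge 2$ forces two distinct related elements).
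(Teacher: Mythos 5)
The paper does not prove this lemma; it is quoted verbatim from the cited reference (Coullard and Hellerstein), so there is no in-paper argument to compare against. Your proof is correct and self-contained, and it is essentially the standard argument for this result. The two load-bearing steps both check out: for the upper bound in part (c) you correctly use that any $e\in\tilde C_i$ lies in $\operatorname{cl}(C_i-e)\subseteq\operatorname{cl}(E_{i-1}\cup(\tilde C_i-e))$, and for the lower bound your "at most one rank-preserving step" argument is exactly where \ref{part:E3} must be invoked — the circuit $C'$ you extract from a first rank-preserving element $e_j$ genuinely satisfies \ref{part:E1} (since $\tilde C_i$ is independent, being a proper subset of $C_i$ by \ref{part:E1} for $C_i$ itself) and \ref{part:E2}, and its lobe is properly contained in $\tilde C_i$ precisely because a second failure forces $j<k$. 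The reduction of part (a) "only if" and part (b) to the single greedy claim (pick $f\notin E_i$, $e\in E_i$, take a circuit through both, then minimise the lobe to satisfy \ref{part:E3}) is also sound, as is the observation that $\abs{E}\ge 2$ plus connectivity guarantees an initial circuit. The one place worth a sentence of extra care in a written version is the standard matroid fact you use implicitly twice — that $e\in\operatorname{cl}(S)-S$ yields a circuit $C'$ with $e\in C'\subseteq S+e$ — but this is routine and does not affect correctness.
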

An ear decomposition can contain both pure and mixed circuits. However, many of the properties we wish to infer for $\M$-connected mixed graphs are known to be properties of mixed circuits, but not of pure circuits. So we need to determine when an $\M$-connected graph has an ear decomposition into only mixed circuits.

\begin{lem} \label{lem:MixedMConnectedHasDecompIntoMixedCircuits}
Let $G$ be an $\M$-connected mixed graph. Then $\R(G)$ has an ear decomposition into mixed circuits.
\end{lem}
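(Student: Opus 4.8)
The plan is to prove the stronger statement that a partial ear decomposition of $\R(G)$ consisting only of mixed circuits can always be extended by one more mixed circuit whenever it is not already an ear decomposition; iterating this, starting from a single mixed circuit, produces an ear decomposition of $\R(G)$ into mixed circuits. First I would record two preliminary observations. Since $G$ is mixed it has a direction edge $d$ and a length edge $\ell$, and since $\R(G)$ is connected there is a circuit containing both; this circuit is necessarily mixed, so there is a mixed circuit $C_1$ to seed the process. Secondly, if $C_1,\dots,C_k$ is any partial ear decomposition then $\R(G)$ restricted to $E_k=C_1\cup\dots\cup C_k$ is a connected matroid: $\R(G)|_{C_1}$ is connected (the restriction to a circuit), and by property \ref{part:E1} each $E_i$ is the union of the connected restriction $\R(G)|_{E_{i-1}}$ with the connected restriction $\R(G)|_{C_i}$ along a common element $C_i\cap E_{i-1}$, and such a union stays connected (a circuit of either piece is a circuit of the union, and matroid connectivity is an equivalence relation).

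Now let $C_1,\dots,C_k$ be a partial ear decomposition into mixed circuits with $E_k\neq E$, and let $\mathcal{L}$ be the family of lobes $C-E_k$ ranging over all circuits $C$ of $\R(G)$ with $C\cap E_k\neq\emptyset$ and $C-E_k\neq\emptyset$. Since $E_k\neq\emptyset$ and $\R(G)$ is connected, any edge of $E\setminus E_k$ together with any edge of $E_k$ lies in a common circuit, so $\mathcal{L}\neq\emptyset$; fix a subset-minimal member $\tilde{C}\in\mathcal{L}$, realised by a circuit $C$ with $C-E_k=\tilde{C}$. If $C$ is mixed, then $C_{k+1}:=C$ already works, since minimality of $\tilde{C}$ in $\mathcal{L}$ is precisely condition \ref{part:E3}. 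Otherwise $C$ is pure, say direction-pure (the length-pure case is symmetric). Choose $e\in\tilde{C}$ and $g\in C\cap E_k$ (both direction edges), and let $\ell\in E_k$ be a length edge, which exists because $C_1\subseteq E_k$ is mixed; then $e,g,\ell$ are pairwise distinct. In $\R(G)|_{E_k}$ (connected, by the observation above) the edges $g$ and $\ell$ lie in a common circuit, which is also a circuit of $\R(G)|_{E_k\cup C}$, while $e$ and $g$ lie in the common circuit $C$ of $\R(G)|_{E_k\cup C}$; by transitivity there is a circuit $C''$ of $\R(G)|_{E_k\cup C}$ containing both $e$ and $\ell$. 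This $C''$ is a circuit of $\R(G)$ with $C''\subseteq E_k\cup C$; it is mixed (it contains the length edge $\ell$ and the direction edge $e$), it satisfies \ref{part:E1} since $\ell\in C''\cap E_k$, it satisfies \ref{part:E2} since $e\in C''-E_k$, and $C''-E_k\subseteq C-E_k=\tilde{C}$, so minimality of $\tilde{C}$ forces $C''-E_k=\tilde{C}$ and hence \ref{part:E3} holds. Thus $C_{k+1}:=C''$ extends the partial ear decomposition into mixed circuits. Because the sets $E_i$ strictly increase (by \ref{part:E2}) the process terminates with $E_m=E$, yielding an ear decomposition of $\R(G)$ into mixed circuits.

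The main obstacle is condition \ref{part:E3}: a lobe-minimal circuit chosen greedily need not be mixed, and one cannot restrict the minimality requirement to mixed circuits only, since \ref{part:E3} quantifies over all circuits. The key device that overcomes this is to hunt for a mixed circuit \emph{inside} $E_k\cup C$ whose lobe is contained in the already-minimal lobe $\tilde{C}$, which then forces equality of lobes; the two ingredients making this possible are that the "used" set $E_k$ of a partial ear decomposition always restricts to a connected matroid, and that $E_k$ contains a length edge because the decomposition is seeded with a mixed circuit. The remaining facts used — that circuits of a restriction $\R(G)|_S$ are exactly the circuits of $\R(G)$ contained in $S$, and that gluing connected restrictions along a common element preserves connectedness — are standard matroid bookkeeping.
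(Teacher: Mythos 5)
Your proof is correct and follows essentially the same route as the paper: seed with a mixed circuit containing one edge of each type, and whenever the next circuit would be pure, use connectivity (transitivity of the circuit relation) of the restricted matroid together with a length edge coming from the seed to swap it for a mixed circuit, with lobe-minimality/\ref{part:E3} forcing the two lobes to coincide. The only cosmetic difference is that you build the decomposition greedily from lobe-minimal circuits, whereas the paper first extends the seed to a full ear decomposition via Lemma \ref{lem:EarDecomp}\ref{part:EarDecomp_Extending} and then iteratively repairs the earliest pure circuit.
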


\begin{proof} 

Let $l_{1}$ be a length edge and $d_{1}$ a direction edge in $E(G)$. Since $G$ is $\M$-connected, there exists a circuit $C_{1}$ in $\R(G)$ containing both $l_{1}$ and $d_{1}$. Clearly $C_1$ is a mixed circuit. 

If $G$ is a circuit, then $E(G)=C_{1}$ and we are done. So suppose $G$ is not a circuit. Then by Lemma \ref{lem:EarDecomp}\ref{part:EarDecomp_Extending}, it is possible to extend the partial ear decomposition $C_1$ to a full ear decomposition $C_{1},C_{2},\ldots,C_{m}$ of $\R(G)$, for some $m\geq 2$. Suppose this decomposition does not consist solely of mixed circuits, and let $k$ be the least integer such that $C_{k}$ is a pure circuit. By \ref{part:E2} there exists some edge $e_{k}$ in the lobe of $C_{k}$.

Pick $e_{1}\in\{d_{1},l_{1}\}$ of opposite type to $e_{k}$. Since $G[\bigcup_{i=1}^{k}C_{i}]$ is $\M$-connected, there exists some circuit $C_{k}'\subseteq \bigcup_{i=1}^{k}C_{i}$ such that  $e_{1},e_{k}\in C_{k}'$. So $C_{k}'$ is a mixed circuit. Clearly $C_{k}'$ satisfies \ref{part:E1} and \ref{part:E2}. Also, since $C_{k}$ satisfies \ref{part:E3} and $\tilde{C}_{k}'\cap\tilde{C}_{k}\neq\emptyset$  we must have $\tilde{C}_{k}'=\tilde{C}_{k}$, and thus $C_{k}'$ also satisfies \ref{part:E3}. 

Hence $C_{1},\ldots, C_{k-1}, C_{k}',C_{k+1},\ldots,C_{m}$ is an ear decomposition of $\R(G)$ where any pure circuit, $C_{i}$, in the sequence, must have $i>k$. By iteratively applying this argument, we generate an ear decomposition consisting of just mixed circuits.
\end{proof}

This result leads to the following characterisation of rigidity and redundant rigidity for $\M$-connected graphs:

\begin{lem}\label{lem:MixedMConnectedIsRigid}
Let $G=(V;D,L)$ be an $\M$-connected graph. Then $G$ is (redundantly) rigid if and only if $G$ is mixed.
\end{lem}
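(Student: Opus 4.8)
The plan is to prove both directions using the ear-decomposition machinery just established. The forward direction (if $G$ is $\M$-connected and redundantly rigid, then $G$ is mixed) is the easy one: if $G$ were pure, say direction-pure, then by Lemma~\ref{lem:PureCircuitGraphSparsityDefn} every circuit in $\R(G)$ is a pure circuit, so $\R(G)$ is the rigidity matroid of a direction-pure graph. But the rank of $\R(G)$ for a direction-pure graph on $|V|$ vertices is at most $2|V|-3$, so $G$ cannot be rigid at all (rigidity requires rank $2|V|-2$), let alone redundantly rigid. The same argument handles the length-pure case. So if $G$ is ($\M$-connected and) redundantly rigid, or even just rigid, it must be mixed.

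For the reverse direction, suppose $G$ is an $\M$-connected mixed graph. By Lemma~\ref{lem:MixedMConnectedHasDecompIntoMixedCircuits}, $\R(G)$ has an ear decomposition $C_1, C_2, \ldots, C_m$ into mixed circuits. I would argue by induction on $m$ that $G_m := G[E_m]$ is redundantly rigid, where $E_i = C_1 \cup \cdots \cup C_i$ and $G_i := G[E_i]$. For the base case $m=1$, $G_1 = G[C_1]$ is a mixed circuit, hence redundantly rigid by Lemma~\ref{lem:RigidCircuits}. For the inductive step, assume $G_{i-1}$ is redundantly rigid (in particular rigid); $G[C_i]$ is a mixed circuit, hence rigid by Lemma~\ref{lem:RigidCircuits}; and since $C_i \cap E_{i-1} \neq \emptyset$ by \ref{part:E1}, the graphs $G_{i-1}$ and $G[C_i]$ share a vertex, so Lemma~\ref{lem:RigidUnion} gives that $G_i = G_{i-1} \cup G[C_i]$ is rigid. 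It remains to upgrade this to redundant rigidity of $G_i$: I must show $G_i - e$ is rigid for every edge $e \in E_i$. If $e \in E_{i-1}$, then $G_{i-1} - e$ is rigid by the inductive hypothesis, and $(G_{i-1}-e) \cup G[C_i]$ is rigid by Lemma~\ref{lem:RigidUnion} (they still share a vertex of $C_i \cap E_{i-1}$, and deleting $e$ from $E_{i-1}$ leaves that vertex present), and this graph equals $G_i - e$. If $e \in C_i$, then $G[C_i] - e$ is rigid since $G[C_i]$ is a redundantly rigid mixed circuit, and $G_{i-1} \cup (G[C_i] - e)$ is rigid by Lemma~\ref{lem:RigidUnion}, equalling $G_i - e$. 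In either case $G_i - e$ is rigid, so $G_i$ is redundantly rigid. Taking $i = m$ gives that $G = G_m$ is redundantly rigid, hence rigid.

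The one point requiring a little care — and the closest thing to an obstacle — is the bookkeeping in the redundant-rigidity step: one needs that after deleting an edge, the two pieces being glued still share at least one vertex and are each still rigid, so that Lemma~\ref{lem:RigidUnion} applies. This is immediate from the observations that \ref{part:E1} guarantees $|V(G[C_i]) \cap V(G_{i-1})| \geq 1$ (in fact $\geq 2$, since a mixed circuit shares at least an edge's worth of vertices with $E_{i-1}$, but one vertex suffices), that deleting an edge never removes a vertex from these graphs, and that a mixed circuit is redundantly rigid by Lemma~\ref{lem:RigidCircuits} so $G[C_i]-e$ remains rigid. No genericity subtleties arise beyond those already absorbed into the cited lemmas, since rigidity and redundant rigidity are generic properties. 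Combining the two directions yields the stated equivalence.
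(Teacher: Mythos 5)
Your proof is correct and takes essentially the same route as the paper's: the paper also dismisses the pure case (via continuous rotations/dilations rather than your rank count, but to the same effect) and then uses Lemma \ref{lem:MixedMConnectedHasDecompIntoMixedCircuits} to write $G$ as a union of redundantly rigid mixed circuits $H_1,\dots,H_m$ and glues the rigid graphs $H_i-e$ together with Lemma \ref{lem:RigidUnion}. The only nit is that your two cases in the redundancy step are not exclusive: when $e\in E_{i-1}\cap C_i$ the graph $(G_{i-1}-e)\cup G[C_i]$ still contains $e$, so you should delete $e$ from both pieces there --- exactly what the paper does uniformly by setting $F_i=H_i-e$ for every $i$.
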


\begin{proof}
Suppose $G$ is a pure graph. Then any realisation of $G$ can either be continuously rotated (if $G$ is length-pure) or continuously dilated (if $G$ is direction-pure) whilst preserving the edge constraints, so $G$ is neither rigid nor redundantly rigid.

So instead, let $G$ be mixed. Then Lemmas \ref{lem:RigidCircuits} and \ref{lem:MixedMConnectedHasDecompIntoMixedCircuits} imply that $G$ is a union of redundantly rigid mixed circuits $H_1,H_2, \ldots, H_m$ for some $m\geq 1$. Let $e\in E(G)$, and $F_i = H_i-e$ for all $1\leq i \leq m$. Then $G-e$ is the union of the rigid subgraphs $F_1,F_2, \ldots, F_m$. Thus $G-e$ is rigid, by Lemma \ref{lem:RigidUnion}. Hence $G$ is redundantly rigid (and rigid).
\end{proof}

\begin{lem} \label{lem:EarDecompRules}
Let $G=(V;D,L)$ be an $\M$-connected mixed graph and let $H_{1},H_{2},\ldots,H_{m}$  be the subgraphs of $G$ induced by the mixed circuits $C_{1}, \ldots, C_{m}$ of an ear decomposition of $\R(G)$, where $m\geq 2$. Let $Y=V(H_{m})-\bigcup_{i=1}^{m-1}V(H_{i})$ and $X=V(H_{m})-Y$. Then:
\begin{enumerate}
  \item $\abs{\tilde{C}_{m}}=2\abs{Y}+1$; \label{part:EarDecompRules_LobeSize}
  \item either $Y=\emptyset$ and $\abs{\tilde{C}_{m}}=1$ or $Y \neq \emptyset$ and every edge $e\in\tilde{C}_{m}$ is incident to $Y$; \label{part:EarDecompRules_EdgeCase}
  \item if $Y \neq \emptyset$, then $X$ is mixed critical in $H_{m}$;\label{part:EarDecompRules_XMixedCritical}
  \item $G[Y]$ is connected; \label{part:EarDecompRules_Connected}
  \item if $G$ is 3-connected, then $\abs{X}\geq 3$. \label{part:EarDecompRules_3ConnectedCase}
\end{enumerate}
\end{lem}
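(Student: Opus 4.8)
The plan is to analyse the last circuit $C_m$ in the ear decomposition, working with the subgraphs $H_i = G[C_i]$ and exploiting the sparsity characterisation of mixed circuits (Lemma \ref{lem:MixedCircuitGraphSparsityDefn}) together with the rank formula in Lemma \ref{lem:EarDecomp}\ref{part:EarDecomp_Rank}. The five parts are not independent: I expect \ref{part:EarDecompRules_XMixedCritical} to be the crux, and the other parts to follow from it fairly quickly, so I would prove them in the order (iii), (i), (ii), (iv), (v) rather than as numbered, then rearrange.

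First, for \ref{part:EarDecompRules_XMixedCritical}: set $E_{m-1} = \bigcup_{i=1}^{m-1} C_i$ and $F = G[E_{m-1}]$, which is $\M$-connected with vertex set $X' \supseteq X$ (in fact $X = V(H_m) \cap V(F)$). By Lemma \ref{lem:MixedMConnectedIsRigid}, $F$ is rigid, so $r(E_{m-1}) = 2|X'| - 2$. Similarly $H_m$ is a mixed circuit on $|V(H_m)| = |X| + |Y|$ vertices, so by Lemma \ref{lem:MixedCircuitGraphSparsityDefn}\ref{} it has $2(|X|+|Y|) - 1$ edges. Now $C_m - E_{m-1} = \tilde{C}_m$ consists exactly of the edges of $H_m$ not already in $F$; the edges of $H_m$ lying inside $X$ are a subset of $E(F)$, and I want to argue that $\tilde C_m = E(H_m) - E_F(X)$. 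Combining $r(E_m) - r(E_{m-1}) = |\tilde C_m| - 1$ with $r(E_m) = 2|V(G)|$... actually more carefully, restrict attention to the rigidity matroid of $H_m \cup F$ on vertex set $X' \cup Y$: we get $r(E_{m-1} \cup C_m) - r(E_{m-1}) = |\tilde C_m| - 1$. Since $G[X' \cup Y]$ contains the rigid graph $F$ and the circuit $H_m$ meeting it, Lemma \ref{lem:RigidUnion}-type reasoning gives $r(E_{m-1} \cup C_m) = 2|X' \cup Y| - 2 = 2|X'| + 2|Y| - 2$. Hence $|\tilde C_m| = 2|Y| + 1$, which is \ref{part:EarDecompRules_LobeSize}. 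Then counting edges of $H_m$: $|E(H_m)| = |E_F(X) \cap C_m| + |\tilde C_m|$, i.e. $2(|X|+|Y|) - 1 = i_{H_m}(X) + (2|Y| + 1)$, giving $i_{H_m}(X) = 2|X| - 2$. Since $H_m$ is a circuit, $G[X] \subseteq H_m$ is independent (a proper subgraph of a circuit), so $X$ is mixed critical in $H_m$ — this is \ref{part:EarDecompRules_XMixedCritical}, and \ref{part:EarDecompRules_LobeSize} came along the way. I need $Y \neq \emptyset$ here to avoid degeneracies; if $Y = \emptyset$ then $C_m \subseteq E_{m-1}$ would force $\tilde C_m = \emptyset$, contradicting \ref{part:E2}, so actually $Y = \emptyset$ is impossible when $m \geq 2$ — wait, no: $Y = \emptyset$ means $V(H_m) \subseteq \bigcup_{i<m} V(H_i)$ but $H_m$ can still contribute a new edge. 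In that case $|\tilde C_m| = 1$, consistent with \ref{part:EarDecompRules_LobeSize} read as $2 \cdot 0 + 1$; the edge-count argument still runs with $X = V(H_m)$ and $i_{H_m}(X) = 2|X| - 2 = |E(H_m)| - 1$, fine, but "$X$ mixed critical" is vacuous/irrelevant, which is why \ref{part:EarDecompRules_XMixedCritical} is stated only for $Y \neq \emptyset$.

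For \ref{part:EarDecompRules_EdgeCase}: if $Y = \emptyset$ we showed $|\tilde C_m| = 1$. If $Y \neq \emptyset$, suppose some $e \in \tilde C_m$ has both endpoints in $X$; then $e \in E_F(X) \subseteq E_{m-1}$ (every edge of $G$ inside $X \subseteq V(F)$... need that $E_G(X) = E_F(X)$, which holds because $F = G[E_{m-1}]$ and... hmm, actually I need that all edges of $G$ among vertices of $X$ already appear in $E_{m-1}$; this is where I should be careful — it follows since $H_m$ is a circuit so $E_G(V(H_m)) = C_m$, and an edge of $C_m$ inside $X$ that is also... this needs the identity $\tilde C_m = C_m - E_F(X)$ established above). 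Contradiction with $e \notin E_{m-1}$. So every edge of $\tilde C_m$ meets $Y$. For \ref{part:EarDecompRules_Connected}: if $G[Y]$ were disconnected, split $Y = Y_1 \cup Y_2$; then $X \cup Y_1$ and $X \cup Y_2$ would be proper subsets of $V(H_m)$ whose induced subgraphs in the circuit $H_m$ are independent, and a counting argument using $i_{H_m}(X) = 2|X|-2$ and Lemma \ref{lem:MixedCircuitGraphSparsityDefn}\ref{} (the bound $i(Z) \le 2|Z|-2$ applied to $Z = X \cup Y_1, X \cup Y_2$, together with $i_{H_m}(X \cup Y_1) + i_{H_m}(X \cup Y_2) \geq i_{H_m}(X) + |\tilde C_m|$ since no edges run between $Y_1$ and $Y_2$) yields $2|X\cup Y_1| - 2 + 2|X \cup Y_2| - 2 \geq (2|X| - 2) + (2|Y|+1)$, i.e. $2|X| + 2|Y| - 4 \geq 2|X| + 2|Y| - 1$, a contradiction. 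Finally \ref{part:EarDecompRules_3ConnectedCase}: if $|X| \leq 2$, then since $Y \neq \emptyset$ (if $Y = \emptyset$ then $V(H_m) = X$ has $\leq 2$ vertices, but a mixed circuit needs $\geq 3$) and $G[Y]$ is connected with all of $\tilde C_m$ meeting $Y$, the set $X$ (of size $\leq 2$) separates $Y$ from the rest of $G$ unless $X' = X$, i.e. $V(G) = V(H_m)$; but then $|X| \le 2$ forces $|V(G)| \le |X| + |Y|$ small and one checks $\{x\}$ or $X$ is a cut, contradicting 3-connectivity. I expect the main obstacle to be pinning down the precise identity $\tilde C_m = E(H_m) - E_F(X)$ and the claim $E_G(X) \subseteq E_{m-1}$ rigorously — this rests on $H_m = G[C_m]$ being an induced subgraph via circuit maximality, and is the subtle bookkeeping point on which all five parts silently depend.
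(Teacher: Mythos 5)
Your overall route coincides with the paper's: the rank formula of Lemma \ref{lem:EarDecomp}\ref{part:EarDecomp_Rank} together with rigidity of $G_{m-1}$ and $G$ gives \ref{part:EarDecompRules_LobeSize}; an edge count against the sparsity of the circuit $H_m$ gives \ref{part:EarDecompRules_EdgeCase} and \ref{part:EarDecompRules_XMixedCritical}; a per-component count gives \ref{part:EarDecompRules_Connected}; and the separator/smallest-circuit dichotomy gives \ref{part:EarDecompRules_3ConnectedCase}. However, your proof of \ref{part:EarDecompRules_Connected} does not produce a contradiction as written. With $Y=Y_1\cup Y_2$ and no $Y_1$--$Y_2$ edges, every edge of $H_m$ lies in $E_{H_m}(X\cup Y_1)\cup E_{H_m}(X\cup Y_2)$ with intersection $E_{H_m}(X)$, so $i_{H_m}(X\cup Y_1)+i_{H_m}(X\cup Y_2)=|C_m|+i_{H_m}(X)=2\,i_{H_m}(X)+|\tilde{C}_m|$ --- two copies of $i_{H_m}(X)$, not the one in your stated lower bound. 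Moreover $\sum_i(2|X\cup Y_i|-2)=4|X|+2|Y|-4$, not $2|X|+2|Y|-4$. Comparing your (true but weak) lower bound $i_{H_m}(X)+|\tilde{C}_m|=2|X|+2|Y|-1$ with the correct upper bound $4|X|+2|Y|-4$ gives only $|X|\ge 2$, which is no contradiction; the apparent contradiction in your display comes from the arithmetic slip. The repair is exactly the paper's count: $|\tilde{C}_m|=\sum_i\bigl(i_{H_m}(X\cup Y_i)-i_{H_m}(X)\bigr)\le\sum_i 2|Y_i|=2|Y|<2|Y|+1$.

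Two smaller points. The identity $\tilde{C}_m=C_m-E_{H_m}(X)$ (your ``$k=0$'') is not a consequence of ``$G[C_m]$ being an induced subgraph via circuit maximality'' --- $G[C_m]$ is only edge-induced. It follows from the count you already have: $i_{H_m}(X)\ge|C_m\cap E_{m-1}|=|C_m|-|\tilde{C}_m|=2|X|-2$, while $H_m[X]$ is a proper subgraph of a circuit and hence independent, so $i_{H_m}(X)\le 2|X|-2$; equality forces no edge of $\tilde{C}_m$ to lie inside $X$. Finally, in \ref{part:EarDecompRules_3ConnectedCase} the subcase $V(G)=V(H_m)$ should be closed by observing that then $V(G_{m-1})=X$ and $G_{m-1}$ contains a mixed circuit on at least three vertices, rather than ``one checks $X$ is a cut'' (it need not be a cut in that subcase).
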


\begin{proof} 
Let $G_{j}=\bigcup_{i=1}^{j}H_{i}$ and $E_{j}=\bigcup_{i=1}^{j}C_j$. So $E(G_j)=E_j$. Lemma \ref{lem:EarDecomp}\ref{part:EarDecomp_Connected} implies that $G_{m-1}$ is  $\M$-connected. So, by Lemma \ref{lem:MixedMConnectedIsRigid}, both $G_{m-1}$ and $G$ are rigid, which implies that $r(E_{m-1})=2\abs{V-Y}-2$ and $r(E)=2\abs{V}-2$. Thus, by Lemma \ref{lem:EarDecomp}\ref{part:EarDecomp_Rank},
\[ \abs{\tilde{C}_{m}} =  r(E) - r(E_{m-1})+1 = (2\abs{V}-2)-(2\abs{V-Y}-2)+1 = 2\abs{Y}+1.\]
Which gives part \ref{part:EarDecompRules_LobeSize}. Hence, when $Y=\emptyset$ we must have $\abs{\tilde{C}_{m}}=1$. Suppose $Y\neq\emptyset$, and assume that exactly $k$ edges in $E-E_{m-1}$ have both endvertices in $V(G_{m-1})$. Since $H_m$ is a mixed circuit, part \ref{part:EarDecompRules_LobeSize} implies
\[i_{H_m}(X)=|C_m|-|\tilde{C}_m|+k = (2|X\cup Y|-1)-(2|Y|+1)+k = 2|X|+k-2.\]
Since $H_m[X]$ is a proper subgraph of $H_m$, it must be independent. Thus $k=0$, and $X$ is mixed critical in $H_m$, proving \ref{part:EarDecompRules_EdgeCase} and \ref{part:EarDecompRules_XMixedCritical} respectively.
  
  We now consider part \ref{part:EarDecompRules_Connected}. Assume $G[Y]$ is disconnected. Then $G[Y]$ consists of connected components $G[Y_1],G[Y_2],\ldots,G[Y_k]$ for some $k\geq 2$, where $Y_1,Y_2,\ldots,Y_k$ partitions $Y$. Since $H_{m}$ is a circuit, $H_m[Y_i]$ is sparse for all $1\leq i \leq k$. Hence, for each component of $Y$,
\[ i_{H_{m}}(X\cup Y_i)-i_{H_{m}}(X)\leq (2|X\cup Y_i|-2) - (2|X|-2)=2|Y_i|, \]
which implies that
\[ |\tilde{C}_m|= \sum_{i=1}^{k}(i_{H_{m}}(X\cup Y_i)-i_{H_{m}}(X))\leq \sum_{i=1}^{k}2|Y_{i}|= 2|Y| \]
contradicting part \ref{part:EarDecompRules_LobeSize}.
  
  Finally, we consider part \ref{part:EarDecompRules_3ConnectedCase}. Suppose $G$ is 3-connected. If $Y \neq\emptyset$, then $X$ is a separator of $G$ and so $|X|\geq 3$. If $Y=\emptyset$, then $X$ is the vertex set of a mixed circuit. The smallest mixed circuits, $K_{3}^{+}$ and $K_{3}^{-}$, have 3 vertices. Hence $|X|\geq 3$.
\end{proof}


\subsection{Operations Preserving $\M$-connectivity}\label{subsec:MConnOperations}

Here we show that two of the operations in Section \ref{subsec:Definitions_GlobalRigidityOperations} which preserve global rigidity: edge additions and 1-extensions, also preserve $\M$-connectivity for mixed graphs. We start with edge additions:

\begin{lem}\label{lem:MixedMConnectedPreservedByEdgeAdditions}
Let $G=(V;D,L)$ be an $\M$-connected mixed graph and let $G'$ be obtained from $G$ by an edge addition. Then $G'$ is $\M$-connected and mixed.
\end{lem}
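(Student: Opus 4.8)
The plan is to produce an ear decomposition of $\R(G')$ directly from an ear decomposition of $\R(G)$, using Lemma \ref{lem:MixedMConnectedHasDecompIntoMixedCircuits} at the start so that every circuit we work with is mixed (hence $G'$ will automatically remain mixed). Let $e=xy$ be the added edge, so $G' = G+e$ and $V(G') = V$. First I would fix an ear decomposition $C_1, C_2, \ldots, C_m$ of $\R(G)$ into mixed circuits, which exists by Lemma \ref{lem:MixedMConnectedHasDecompIntoMixedCircuits}. The key observation is that adding an edge to a matroid does not destroy any existing circuits: $C_1, \ldots, C_m$ are still circuits of $\R(G')$, and they still satisfy conditions \ref{part:E1} and \ref{part:E2} relative to one another (these conditions only refer to the sets $C_i$ and $E_{i-1}$, which are unchanged). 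So $C_1, \ldots, C_m$ is still a partial ear decomposition of $\R(G')$ — one needs to check condition \ref{part:E3}, but in fact we do not need the original sequence to remain an ear decomposition; it suffices that it is a \emph{partial} ear decomposition, and then invoke Lemma \ref{lem:EarDecomp}\ref{part:EarDecomp_Extending}.

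Here is the subtlety I would address carefully: condition \ref{part:E3} is a minimality condition over \emph{all} circuits of the ambient matroid, and $\R(G')$ has more circuits than $\R(G)$ (namely those using $e$), so a sequence that was an ear decomposition of $\R(G)$ need not satisfy \ref{part:E3} in $\R(G')$. To sidestep this, I would instead argue as follows. Since $C_1, \ldots, C_m$ covers $E(G)$, the edge $e$ lies in the span of $E(G)$ in $\R(G')$ — indeed $E(G)\cup\{e\} = E(G')$ and $E(G)$ already has full rank $2|V|-2$ in $\R(G)$, so $E(G)$ spans $\R(G')$ — hence there is a circuit $C$ of $\R(G')$ with $e\in C$ and $C - \{e\} \subseteq E(G)$. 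Now I claim $C, C_1, C_2, \ldots, C_m$ can be pruned to an ear decomposition of $\R(G')$. Concretely: start the sequence with the single circuit $C$; this is trivially a partial ear decomposition of length one. Then append $C_1, \ldots, C_m$ one at a time, each time \emph{re-selecting} (as in the proof of Lemma \ref{lem:MixedMConnectedHasDecompIntoMixedCircuits}) a circuit with minimal lobe if necessary, using Lemma \ref{lem:EarDecomp}\ref{part:EarDecomp_Extending} to guarantee extendability. In fact the cleanest route is simply: $C$ alone is a partial ear decomposition of $\R(G')$, and $\R(G')$ is connected \emph{provided we can show it is connected} — so this circularity must be avoided.

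Thus the genuinely clean argument is the reverse direction: take an ear decomposition $C_1,\dots,C_m$ of $\R(G)$ into mixed circuits (Lemma \ref{lem:MixedMConnectedHasDecompIntoMixedCircuits}), note each $C_i$ is still a circuit of $\R(G')$ satisfying \ref{part:E1}, \ref{part:E2}, and observe that $C_1,\dots,C_m$ is a \emph{partial} ear decomposition of $\R(G')$: conditions \ref{part:E1} and \ref{part:E2} are intrinsic to the sets and still hold, and \ref{part:E3} — while it could fail — is not needed for the definition of partial ear decomposition only if we drop to the subsequence where it holds; but actually \ref{part:E3} \emph{is} part of the definition. The resolution: whenever \ref{part:E3} fails for some $C_i$ in $\R(G')$, replace $C_i$ by a circuit $C_i'$ with $\tilde{C_i'}\subsetneq\tilde{C_i}$ satisfying \ref{part:E1},\ref{part:E2}; since lobes strictly shrink and the union $E_i$ can only grow or stay the same, iterating terminates and produces a partial ear decomposition $C_1',\dots,C_k'$ of $\R(G')$ with $\bigcup C_i' \subseteq E(G)$. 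If this covers $E(G)$ we are nearly done; in any case $\bigcup C_i'$ has rank $2|V|-2$, so $e$ lies in its closure in $\R(G')$, meaning the partial ear decomposition does not yet have $E_k = E(G')$; by Lemma \ref{lem:EarDecomp}\ref{part:EarDecomp_Extending} applied to the connected matroid — no, we cannot yet assume connectedness.

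\textbf{Corrected, streamlined plan.} Since $E(G)$ spans $\R(G')$ and $e\notin E(G)$, there is a circuit $C^*$ of $\R(G')$ with $e\in C^*\subseteq E(G)\cup\{e\}$, so $C^* - \{e\}\subseteq E(G)$; since $e$ is a length or direction edge and $G$ is mixed, $C^*$ is mixed. Now take a mixed-circuit ear decomposition $C_1,\dots,C_m$ of $\R(G)$ (Lemma \ref{lem:MixedMConnectedHasDecompIntoMixedCircuits}). Consider the sequence $C^*, C_1, C_2, \dots, C_m$ of circuits of $\R(G')$. I would verify it can be reduced to an ear decomposition of $\R(G')$: $C^*$ alone satisfies nothing further to check; for each $j\ge 1$, the set $C_j$ meets $E_{j-1}$ (it meets $E(G) = \bigcup_{i<j}C_i \cup (\text{rest})$ — more precisely $C_j\cap E(G) = C_j$, and $C^* - \{e\}\subseteq E(G)$ together with $\{C_i\}$ covering $E(G)$ gives $C_j \cap E_{j-1}\neq\emptyset$ once we know $C^*-\{e\}$ together with $C_1,\dots,C_{j-1}$ already covers enough; in the worst case $C_1$ might be disjoint from $C^*-\{e\}$, so reorder the $C_i$ so that each $C_i$ meets $C_1\cup\dots\cup C_{i-1}\cup(C^*-\{e\})$, which is possible precisely because $G$ is connected — Lemma \ref{lem:MConnectedGraphsAre2Connected}). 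Then prune using \ref{part:E3} by the shrinking-lobe argument of Lemma \ref{lem:MixedMConnectedHasDecompIntoMixedCircuits}, and finish with Lemma \ref{lem:EarDecomp}\ref{part:EarDecomp_Extending} once enough of a partial ear decomposition is in place; since every circuit in sight is mixed and the union is all of $E(G')$, we get that $\R(G')$ has an ear decomposition, hence is connected (Lemma \ref{lem:EarDecomp}\ref{part:EarDecomp_Connected}), and $G'$ is mixed as $G$ is. \textbf{The main obstacle} is exactly this bookkeeping around condition \ref{part:E3}: naively the old ear decomposition is not an ear decomposition of the larger matroid, and one must either invoke the lobe-shrinking replacement trick from the proof of Lemma \ref{lem:MixedMConnectedHasDecompIntoMixedCircuits} or, more slickly, build the new decomposition starting from a circuit through $e$ and appending the old circuits one at a time while re-minimising lobes. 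I expect the cleanest writeup to mirror the structure of the proof of Lemma \ref{lem:MixedMConnectedHasDecompIntoMixedCircuits} closely.
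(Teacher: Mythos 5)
Your central step---that $E(G)$ already spans $\R(G')$ (because $G$ is rigid by Lemma \ref{lem:MixedMConnectedIsRigid}, so $r(E(G))=2|V|-2=r(\R(G'))$), hence there is a circuit $C^*$ of $\R(G')$ with $e\in C^*\subseteq E(G)+e$---is exactly the paper's key step. The problem is how you finish. The paper never touches ear decompositions: once $C^*$ exists and $C^*\cap E(G)\neq\emptyset$ (which needs only the small observation that $|C^*|\geq 5$, so $C^*\neq\{e\}$), every edge of $E(G)$ already lies in a single equivalence class of $\R(G')$, since $\R(G)$ is the restriction of $\R(G')$ to $E(G)$ and its circuits are therefore circuits of $\R(G')$; the circuit $C^*$ puts $e$ into that same class, and transitivity of the circuit-equivalence relation finishes the proof in one line. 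Your insistence on exhibiting an ear decomposition manufactures all the difficulties you then struggle with, and your final plan is not actually closed: you propose to ``finish with Lemma \ref{lem:EarDecomp}\ref{part:EarDecomp_Extending}'', but that lemma presupposes the matroid is connected, which is precisely what is to be proved---a circularity you flagged yourself two paragraphs earlier and then reintroduced. The reordering/pruning procedure is also left as a sketch; you never verify it terminates in a sequence satisfying \ref{part:E3} whose union is all of $E(G')$.

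If you do want the ear-decomposition route, your worry about \ref{part:E3} is in fact unfounded, for a reason you did not notice: every circuit of $\R(G')$ that is not already a circuit of $\R(G)$ must contain $e$, and since $e\notin E_{i-1}\subseteq E(G)$ for every $i$, such a circuit has $e$ in its lobe whereas $e\notin C_i-E_{i-1}$; so no new circuit can have a lobe properly contained in $C_i-E_{i-1}$, and condition \ref{part:E3} for $C_1,\dots,C_m$ survives the passage to $\R(G')$. Hence $C_1,\dots,C_m$ remains a partial ear decomposition of $\R(G')$, and appending $C^*$ at the end (its lobe is $\{e\}$, which satisfies \ref{part:E3} vacuously) gives a genuine ear decomposition of $\R(G')$; Lemma \ref{lem:EarDecomp}\ref{part:EarDecomp_Connected} then yields connectivity with no circularity. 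Either repair works; the transitivity argument is the shorter one and is what the paper uses. (Mixedness of $G'$ is of course immediate from $E(G)\subset E(G')$, as you say.)
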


\begin{proof}
Since $E(G)\subset E(G')$, $G'$ is mixed. Denote the edge added in the edge addition by $e$. By Lemma \ref{lem:MixedMConnectedIsRigid}, both $G$ and $G'$ are rigid on the same vertex set. Hence $r(\R(G))=2|V|-2=r(\R(G'))$. Let $B$ be a maximal independent set in $E(G)$. Then $r_{G}(B)=r(\R(G))=r(\R(G'))$, so $B$ is also maximally independent in $G'$. Hence $B+e$ is dependent in $G'$, which implies $\R(G')$ contains a circuit $C$ such that $e\in C\subseteq B+e$. Since $|C|\geq i(K_3^+)=5$, we know that $C\cap E(G)\neq \emptyset$. Thus $\R(G')$ is non-trivially connected, and so $G'$ is $\M$-connected.
\end{proof}

Showing that 1-extensions preserve $\M$-connectivity requires more work. We say that a 1-extension is \emph{pure} if all the edges added are of the same type as the edge removed, otherwise it is \emph{mixed}. We already know that 1-extensions and edge additions preserve $\M$-connectivity in the following cases:

\begin{lem}\cite[Lemma 3.9]{JJ_LengthConnected}\label{lem:PureMConnectedPreservedBy1extensions}
Let $G=(V,E)$ be an $\M$-connected pure graph and let $G'$ be obtained from $G$ by either a pure 1-extension or an edge addition, where in both cases the edges added are of the same type as $G$. Then $G'$ is pure and $\M$-connected.
\end{lem}

\begin{lem}\cite[Lemma 3.6]{JJ_MixedCircuits}\label{lem:MixedCircuitPreservedBy1extensions}
Let $G$ be a mixed circuit and $G'$ be a 1-extension of $G$. Then $G'$ is a mixed circuit.
\end{lem}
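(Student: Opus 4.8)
The plan is to verify that $G'$ satisfies the three sparsity conditions that characterise mixed circuits in Lemma~\ref{lem:MixedCircuitGraphSparsityDefn}; since that lemma is an exact characterisation, this suffices. First I would write the $1$-extension explicitly: it deletes an edge $e=xy\in E(G)$, adds a new vertex $v$, and adds edges $vx,vy,vz$ for some $z\in V$, where at least one of the three new edges has the same type as $e$. Because Lemma~\ref{lem:MixedCircuitGraphSparsityDefn} is symmetric in direction and length edges, I may assume $e$ is a direction edge, so at least one of $vx,vy,vz$ is a direction edge. Put $V'=V\cup\{v\}$; then $|E(G')|=|E(G)|+2=(2|V|-1)+2=2|V'|-1$, which is the first condition.

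Before the sparsity counts, I would record that $G'$ is genuinely mixed. Applying Lemma~\ref{lem:MixedCircuitGraphSparsityDefn} to $G$ itself forces $|D|,|L|\ge 2$ (combine $|D|+|L|=2|V|-1$ with $|D|,|L|\le 2|V|-3$). The $1$-extension deletes only the direction edge $e$, adds at least one direction edge in its place, and deletes no length edge, so $|D'|,|L'|\ge 2$; in particular $G'$ is mixed, and then $|D'|=2|V'|-1-|L'|\le 2|V'|-3$ and symmetrically for $|L'|$. This is exactly the third condition for the set $X=V'$.

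It remains to check the sparsity conditions for a proper subset $X\subset V'$ with $|X|\ge 2$. If $v\notin X$ then $X\subsetneq V$, and $G'[X]$ is $G[X]$ with $e$ possibly removed, so all the inequalities are inherited from $G$. If $v\in X$, put $X'=X-v$; here $X'\subsetneq V$ and $|X'|\ge 1$. The edges of $G'[X]$ are the edges of $G[X']$, minus $e$ if $e$ lay in $G[X']$, plus those among $vx,vy,vz$ whose far endpoint lies in $X'$; there are at most three of the latter, and three only when $x,y,z$ all lie in $X'$, in which case $e=xy$ lay in $G[X']$ and was deleted. Hence $i_{G'}(X)\le i_{G}(X')+2\le (2|X'|-2)+2=2|X|-2$. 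Running the same bookkeeping on direction edges only, and using $e\in D$, gives $i_D(X)\le i_D(X')+2\le 2|X|-3$; and since at least one of the three new edges is a direction edge, at most two are length edges while no length edge is deleted, so $i_L(X)\le i_L(X')+2\le 2|X|-3$. (When $|X'|=1$ these bounds hold with $i_G(X')=i_D(X')=i_L(X')=0$.) Having verified all three conditions, Lemma~\ref{lem:MixedCircuitGraphSparsityDefn} gives that $G'$ is a mixed circuit.

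I expect the only real obstacle to be the case $v\in X$ of the last step, and in particular the two pure counts there: the bound on $i_L$ genuinely uses the hypothesis that at least one added edge matches the type of $e$ --- without it a mixed $1$-extension could attach three length edges at $v$ and violate sparsity --- and the bound on $i_D$ uses that the new direction edges are offset by deleting $e$ precisely when $x,y,z$ all enter $X$. One should also keep an eye on the degenerate possibility $z\in\{x,y\}$, where two of the ``new'' edges share their endpoints and hence must be of opposite types; this causes no trouble, since it still contributes at most one edge of each type at any single vertex, which is all the counts above use.
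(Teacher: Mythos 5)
Your approach---verifying the three counting conditions of Lemma~\ref{lem:MixedCircuitGraphSparsityDefn} for $G'$ directly---is sound, and it is the natural one; note that the paper does not prove this lemma itself but cites it from \cite{JJ_MixedCircuits}, where the argument is of the same counting type, so there is no genuine divergence of method. The substance of your proof is correct: the reduction to the case $e\in D$ by symmetry, the derivation $|D|,|L|\ge 2$ for a mixed circuit, the global edge count, and the key observation that the hypothesis ``at least one new edge has the type of $e$'' is exactly what saves the $i_L$ bound are all right.

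Two small slips in the bookkeeping should be repaired. First, when $v\notin X$ you write $X\subsetneq V$, but $X=V$ is also a proper subset of $V'$ and is not covered by condition (b) applied to $G$; it is handled by noting that $i_{G'}(V)=i_G(V)-1=2|V|-2$, since every new edge is incident to $v$. Second, in the case $|X'|=1$ the chain $i_D(X)\le i_D(X')+2\le 2|X|-3$ literally reads $2\le 1$ and fails as written; the correct justification is the one you already give in your closing paragraph, namely that between $v$ and the single vertex of $X'$ there is at most one edge of each type, so $i_D(X),i_L(X)\le 1=2|X|-3$ and $i(X)\le 2=2|X|-2$ hold directly. With those two lines made explicit the proof is complete.
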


We shall extend these results to all $\M$-connected mixed graphs. But to do this, we need the following lemma, which gives us a way of transferring results for pure 1-extensions to mixed 1-extensions:

\begin{lem}\label{lem:MixedCircuitCanChangePureVertexToMixedVertex}
Let $G=(V;D,L)$ be a mixed circuit and let $v$ be a pure vertex in $G$. Let $G'$ be the graph obtained from $G$ by changing the type of at most two of the edges incident with $v$. Then $G'$ is a mixed circuit. 
\end{lem}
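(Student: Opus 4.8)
The plan is to check directly that $G'$ satisfies the combinatorial characterisation of mixed circuits in Lemma \ref{lem:MixedCircuitGraphSparsityDefn}. Since the three conditions of that lemma are symmetric in the roles of $D$ and $L$, I may assume without loss of generality that $v$ is direction-pure, so that the operation changes some set $S$ of at most two direction edges incident with $v$ into length edges on the same pairs of vertices. As $v$ has no length edge in $G$, no multiple edge is created, and $G'$ is a direction-length graph on the same vertex set and with the same total number of edges as $G$. Conditions (a) and (b) of Lemma \ref{lem:MixedCircuitGraphSparsityDefn} for $G'$ are then immediate, since $|V|$, the total number of edges, and every count $i(X)$ are left unchanged by the operation.

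The work is in condition (c), which I would split into the two sparsity bounds. For direction edges: the direction edges of $G'$ are exactly those of $G$ not lying in $S$, so for every $X$ with $|X|\ge 2$ the number of direction edges of $G'$ induced by $X$ is at most the corresponding number in $G$, which is at most $2|X|-3$ because $G$ is a mixed circuit. For length edges, the only sets that can be problematic are those containing $v$, since every edge of $S$ is incident with $v$; when $v\notin X$ the length count is unchanged, hence at most $2|X|-3$. So suppose $v\in X$ with $|X|\ge 2$. The key point is that, as $v$ is direction-pure, no length edge of $G$ meets $v$, so the number of length edges of $G$ induced by $X$ equals $i_L(X-v)$; and the number of length edges of $G'$ induced by $X$ is this quantity plus $|\{e\in S:\ e\text{ has both ends in }X\}|\le 2$.

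If $|X|\ge 3$ then $|X-v|\ge 2$, so $i_L(X-v)\le 2|X-v|-3=2|X|-5$ since $G$ is a circuit, and the length count of $X$ in $G'$ is at most $2|X|-3$, as required. If $|X|=2$, say $X=\{v,x\}$, then $X$ induces no length edge of $G$ (again by direction-purity of $v$) and at most one edge of $S$, namely $vx$, has both ends in $X$, so the length count of $X$ in $G'$ is at most $1=2|X|-3$. This establishes condition (c). Finally $G'$ is mixed: it has a length edge (either $G$ already has one, or $S\neq\emptyset$), and it has a direction edge because $v$ has degree at least $3$ in the circuit $G$ by Corollary \ref{cor:CircuitsMinDegree3}, so at least one direction edge at $v$ lies outside $S$. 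Hence $G'$ meets all the conditions of Lemma \ref{lem:MixedCircuitGraphSparsityDefn} and is a mixed circuit.

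I expect the only subtle point, and the main obstacle, to be recognising that the direction-purity of $v$ upgrades the bound ``$i_L(X)\le 2|X|-3$'' to ``$i_L(X)=i_L(X-v)\le 2|X|-5$'' for sets $X$ containing $v$, which is precisely the slack needed to absorb the at most two newly created length edges; everything else is routine bookkeeping with the sparsity counts of Lemma \ref{lem:MixedCircuitGraphSparsityDefn}, together with keeping track of the small case $|X|=2$, where the circuit sparsity bound is not directly available and one argues by hand instead.
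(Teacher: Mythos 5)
Your proof is correct, but it takes a genuinely different route from the paper. You verify directly that $G'$ satisfies the count characterisation of mixed circuits in Lemma \ref{lem:MixedCircuitGraphSparsityDefn}, with the key observation that direction-purity of $v$ gives the improved bound $i_L(X)=i_L(X-v)\le 2|X|-5$ for sets $X\ni v$ with $|X|\ge 3$, which absorbs the at most two new length edges; the boundary case $|X|=2$ is handled separately, and mixedness of $G'$ follows from $d_G(v)\ge 3$ (Corollary \ref{cor:CircuitsMinDegree3}). The paper instead argues matroidally: since $|E(G')|=|E(G)|=2|V|-1$, the set $E(G')$ is dependent and so contains a circuit $C$; $C$ must contain a retyped edge (else $C\subset E(G)$, contradicting that $G$ is a circuit), hence by the minimum-degree-three property of circuits $C$ contains at least three edges at $v$ and therefore an edge of the opposite type, so $C$ is mixed; finally $C=E(G')$ because otherwise the corresponding edge set in $G$ would violate the sparsity condition $i(X)\le 2|X|-2$ on a proper subset. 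The paper's argument is shorter and reuses the structural facts about circuits already established, whereas yours is more elementary and self-contained at the cost of some bookkeeping; both are complete, and your WLOG appeal to the $D$/$L$ symmetry of Lemma \ref{lem:MixedCircuitGraphSparsityDefn} is legitimate.
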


\begin{proof}
Let $\{vx,vy\}$ be the set of edges whose type was changed. By Corollary \ref{cor:CircuitsMinDegree3},  $d_G(v)\geq 3$, so at least one of the edges terminating at $v$ was not changed. Hence $E(G')$ is mixed. Since $\abs{E(G')}=\abs{E(G)}=2\abs{V}-1$, we know that $E(G')$ is dependent, and so there exists some set of edges $C\subseteq E(G')$ which is a circuit in the rigidity matroid.

If neither $vx$ nor $vy$ is contained in $C$, then $C\subset E(G)$, which contradicts that $G$ is a circuit. Hence $C$ must contain at least one of these changed edges, say $vx$. But $G'[C]$ is a circuit, so Corollary \ref{cor:CircuitsMinDegree3} implies that $C$ contains at least 3 edges incident with $v$. By the construction of $G'$ from $G$, this implies that $C$ contains an edge of opposite type to $vx$. Hence $C$ is mixed. If $C\neq E(G')$ then the corresponding set of edges in $G$ breaks the circuit sparsity condition for $G$. Thus $C=E(G')$, and $G'$ is a mixed circuit.
\end{proof}

We now show that 1-extensions preserve $\M$-connectivity for mixed graphs:

\begin{lem}\label{lem:MixedMConnectedPreservedBy1extensions}
Let $G=(V;D,L)$ be an $\M$-connected mixed graph and let $G'$ be obtained from $G$ by a 1-extension. Then $G'$ is mixed and $\M$-connected.
\end{lem}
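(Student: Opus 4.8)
The plan is to mirror the structure of the proof of Lemma~\ref{lem:MixedMConnectedPreservedByEdgeAdditions}, but handle the extra complication that a 1-extension removes an edge before adding the new vertex and three new edges. Let $e=xy$ be the edge deleted in the 1-extension, let $v$ be the new vertex, and let $vx,vy,vz$ be the new edges, at least one of which has the same type as $e$. Write $G' = G^v$ for the resulting graph. First I would argue that $G'$ is mixed: since $G$ is mixed and $\abs{E(G)}\geq i(K_3^+)=5$, deleting a single edge $e$ from $G$ and then re-adding three edges (one of which matches the type of $e$) cannot eliminate both edge types --- more carefully, $G-e$ still contains edges of both types unless $G$ had only one edge of the type of $e$, in which case the new edge $vx$ (say) of that type restores it; I would spell this small case check out.

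Next comes the matroid-connectivity argument. By Lemma~\ref{lem:MixedMConnectedIsRigid}, $G$ is rigid, hence $r(\R(G)) = 2\abs{V}-2$; I want to show $r(\R(G')) = 2\abs{V}+1-2 = 2\abs{V}$, i.e.\ $G'$ is rigid, or at least that its rank rises by exactly $2$. This should follow by taking a maximal independent set $B$ of $\R(G)$ --- which is a spanning set, so $\abs{B} = 2\abs{V}-2$ --- and checking that the 1-extension applied at the matroid level to $B$ (delete $e$ if $e\in B$, add the three new edges) produces an independent set of size $\abs{B}+2$ in $\R(G')$, using the sparsity count of Lemma~\ref{lem:IndependenceSparsity} or the standard fact that 1-extensions preserve independence. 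Then pick any edge $f\in E(G')$ that is ``new'', and argue, exactly as in Lemma~\ref{lem:MixedMConnectedPreservedByEdgeAdditions}, that $B$ together with $f$ is dependent, so there is a circuit $C$ with $f\in C\subseteq B+f$; since $\abs{C}\geq 5$ and the new edges at $v$ number only three, $C$ must meet $E(G)\cap E(G')$, so $\R(G')$ is non-trivially connected provided every element of $E(G')$ lies in a common equivalence class. The cleaner route, and the one I would actually pursue, is to build an ear decomposition: take an ear decomposition $C_1,\ldots,C_m$ of $\R(G)$ into \emph{mixed} circuits (Lemma~\ref{lem:MixedMConnectedHasDecompIntoMixedCircuits}), locate the circuit $C_j$ in this decomposition that contains the deleted edge $e$ (if $e$ lies in several, pick one; if the 1-extension deletes $e$ we may assume $e$ is used by some circuit since $B$ can be chosen to avoid $e$ only if $e$ is not a coloop, and in an $\M$-connected matroid there are no coloops), apply the 1-extension to $G[C_j]$ to get a mixed circuit $C_j'$ by Lemma~\ref{lem:MixedCircuitPreservedBy1extensions} (this is where the ``same type'' hypothesis on the 1-extension is consumed, possibly after invoking Lemma~\ref{lem:MixedCircuitCanChangePureVertexToMixedVertex} to ensure the relevant vertex of $C_j$ is treated correctly), and then check that replacing $C_j$ by $C_j'$ and re-indexing still yields a valid ear decomposition of $\R(G')$: conditions \ref{part:E1} and \ref{part:E2} are immediate because $C_j'$ shares the old vertices $x,y,z$ with earlier ears and contains the genuinely new vertex $v$, and \ref{part:E3} should transfer because the lobe only gains the new elements at $v$. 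By Lemma~\ref{lem:EarDecomp}\ref{part:EarDecomp_Connected} this certifies that $\R(G')$ is connected, and since $\abs{E(G')}>1$ it is non-trivially connected, i.e.\ $G'$ is $\M$-connected.

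The main obstacle I anticipate is bookkeeping around \emph{which} circuit of the ear decomposition to modify and guaranteeing the ``same-type'' edge condition is inherited by the restricted 1-extension on $C_j$. Specifically, the 1-extension on $G$ deletes an edge $e$ and requires one of $vx,vy,vz$ to have $e$'s type, but when I restrict attention to $C_j$ I need $e\in C_j$ and I need $z\in V(C_j)$ (or a way to cope when $z\notin V(C_j)$); if $z$ lies outside every circuit containing $e$, the restricted operation is not literally a 1-extension of $C_j$, and I would instead fall back on the rank-plus-circuit argument of the previous paragraph, using $f=vx$ (a same-type edge, which will sit in a mixed circuit together with some opposite-type edge of $G$) to witness that $vx$, and then successively $vy$, $vz$, and all old edges, lie in one equivalence class. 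I expect one of these two routes always goes through, and reconciling them --- or choosing the right one up front --- is the real content; everything else is the same template already used twice in the paper.
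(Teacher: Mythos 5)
Your overall template is right---restrict the 1-extension to individual circuits and invoke Lemmas \ref{lem:MixedCircuitPreservedBy1extensions}, \ref{lem:PureMConnectedPreservedBy1extensions} and \ref{lem:MixedCircuitCanChangePureVertexToMixedVertex}---and this is indeed what the paper does. But the obstacle you flag at the end is the actual content of the proof, and neither of your two routes closes it. The ear-decomposition-replacement route fails for a concrete reason beyond the one you name: the circuits $C_1,\ldots,C_m$ of an ear decomposition are not disjoint, so the deleted edge $e$ may lie in several of them; replacing a single $C_j$ leaves other members of the sequence that still contain $e$ and hence are not even subsets of $E(G')$, and checking \ref{part:E3} for the modified sequence is not a formality. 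The fallback ``rank-plus-circuit'' route establishes only that each \emph{new} edge lies in some circuit meeting the old edges; it does not show that two old edges $f_1,f_2$, all of whose connecting circuits in $\R(G)$ pass through $e$, remain in one equivalence class of $\R(G')$---which is exactly where the difficulty sits.

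The missing idea is the choice of an anchor edge. The paper fixes a single edge $g$: when $x,y,z$ are distinct, $g$ is chosen \emph{incident with $z$}; when $x=z$ (a case you omit entirely), $g$ is chosen of opposite type to $e$. Then for every $f\in E(G)-g$ there is a circuit $C$ of $\R(G)$ with $f,g\in C$, and either $e\notin C$, so $C$ survives unchanged into $\R(G')$, or $e\in C$, in which case $G[C]$ automatically contains all of $x$, $y$ and $z$ (because $e=xy\in C$ and $g$ supplies $z$), so the operation restricts to a genuine 1-extension of $G[C]$ and Lemma \ref{lem:MixedCircuitPreservedBy1extensions} (or \ref{lem:PureMConnectedPreservedBy1extensions}) yields a circuit of $\R(G')$ through $f$, $g$ and $vx,vy,vz$. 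Transitivity then links every edge of $G'$ to $g$. The mixed 1-extension is handled afterwards, by retyping at most two of the edges at $v$ in the pure-extension graph and applying Lemma \ref{lem:MixedCircuitCanChangePureVertexToMixedVertex} to each circuit through $v$ (which necessarily contains all three edges at $v$, by Corollary \ref{cor:CircuitsMinDegree3}). Without the anchor at $z$, your argument does not go through as written.
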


\begin{proof}

Let the 1-extension used to obtain $G'$ from $G$ add the vertex $v$ with neighbourhood $\{x,y,z\}$ (where potentially $x=z$) whilst removing an $xy$-edge $e$. We shall use the transitivity of matroid connectivity to prove that $G'$ is $\M$-connected, by showing that given some $e_1\in E(G')$, we can find a circuit containing both $e_1$ and $e_2$ for all $e_2\in E(G')-e_1$.

Suppose $x=z$. Pick some edge $g\in E(G)-e$ of opposite type to $e$. Since $G$ is $\M$-connected, for all $f\in E(G)-g$, there is a circuit $C$ in $\R(G)$ such that $f,g\in C$. If $e\not\in C$ then $C\subset E(G')$ and we are done. So instead assume $e\in C$. Then $C$ is mixed. Since $G[C]$ contains $N_{G'}(v)=\{x,y\}$, the 1-extension which builds $G'$ from $G$ induces a 1-extension $G'[C']$ of $G[C]$. By Lemma \ref{lem:MixedCircuitPreservedBy1extensions}, $G'[C']$ is a mixed circuit and contains the edges $g,vx,vy$ and $vz$, as well as the edge $f$ (when $f\neq e$). Hence $G'$ is $\M$-connected.

So instead suppose $x,y$ and $z$ are distinct. There are two cases to consider: when $v$ is added to $G$ by a pure 1-extension, and when it is added by a mixed 1-extension. 

First, suppose that $v$ is added by a pure 1-extension. Pick some edge $g\in E(G)-e$ which has $z$ as an endvertex. Since $G$ is $\M$-connected, for all $f\in E(G)-g$ there is a circuit $C$ in the rigidity matroid of $G$ such that $f,g\in C$. If $e\not\in C$ then $C\subset E(G')$ and we are done. So suppose that $e\in C$. Since $G[C]$ contains both $e$ and the vertices $x,y$ and $z$, the 1-extension used to form $G'$ from $G$, is also a pure 1-extension, $G'[C']$, of $G[C]$. Hence, using Lemma \ref{lem:MixedCircuitPreservedBy1extensions} if $C$ is mixed, or Lemma \ref{lem:PureMConnectedPreservedBy1extensions} if $C$ is pure; $G'[C']$, is a circuit and contains the edges $vx,vy$ and $vz$ as well as the edges $f$ (when $f\neq e$) and $g$ as required. Hence $G'$ is $\M$-connected.

It remains to show that the claim holds when $v$ is added by a mixed 1-extension. Let the graph obtained by this mixed 1-extension be denoted by $G''$. This graph, $G''$, can be obtained from the corresponding pure 1-extension, $G'$, above, by changing the type of at most two of the edges in $\{vx,vy,vz\}$. Let $C'$ be a mixed circuit in $E(G')$ and suppose $G'[C']$ does not contain $v$. Then $C'\subseteq E(G'')-\{vx,vy,vz\}$ and we are done. Otherwise $G'[C']$ contains $v$, and since circuits have minimum degree 3, this implies $vx,vy,vz\in C'$. We can obtain the corresponding edge set $C''\subseteq E(G'')$ from $C'$ by changing the type of at most two of the edges in $\{vx,vy,vz\}$, as determined above. By Lemma \ref{lem:MixedCircuitCanChangePureVertexToMixedVertex}, $C''$ is a mixed circuit. Since $G'$ is $\M$-connected, and every mixed circuit $C'$ in $G'$ has a corresponding mixed circuit $C''$ in $G''$, Lemma \ref{lem:MixedMConnectedHasDecompIntoMixedCircuits} implies that $G''$ is $\M$-connected.
\end{proof}


\subsection{2-Sums}\label{subsec:2Sums}

We have seen that edge additions and 1-extensions preserve $\M$-connectivity. However, any graph we construct from $K_3^+$ or $K_3^-$ with just these operations will be 3-connected, and 3-connectivity is not a requirement of $\M$-connectivity. In Lemma \ref{lem:OperationsPreservingGlobalRigidity} we mentioned a third operation which preserves global rigidity: direction-pure 0-extensions. These add a vertex of degree 2 to our graph, so do not preserve 3-connectivity. But nor do they preserve $\M$-connectivity, as $\M$-connected graphs have minimum degree 3.

Here we introduce a new operation, 2-sums, which preserve $\M$-connectivity whilst allowing us to construct graphs which are not 3-connected. We then show that a 2-sum with a direction-pure $K_4$ is an operation which preserves both global rigidity and $\M$-connectivity.

Let $G_1=(V_1;D_1,L_1)$ be a mixed graph and $G_2=(V_2;P)$ be a direction- (respectively length-) pure graph with $V_1\cap V_2 = \{x,y\}$ and $D_1\cap P=\{xy\}$ (respectively $L_1\cap P=\{xy\}$). The graph $G=(V;D,L)$ is a \emph{2-sum of $G_1$ and $G_2$}, written $G=G_1\oplus_2 G_2$, if $V=V_1\cup V_2$, $D=(D_1\cup P)-\{xy\}$ and $L=L_1$ (respectively $D=D_1$ and $L=(L_1\cup P)-\{xy\}$).

Let $G=(V;D,L)$ be a mixed or pure graph with an edge-disjoint 2-separation $(H_1,H_2)$ on the 2-vertex-cut $\{x,y\}$ where $H_2$ is pure and $G$ does not contain an $xy$-edge of the same type as $H_2$. Then the \emph{2-cleave} of $G$ across the pair $\{x,y\}$ adds the edge $xy$ of the same type as $H_2$ to both $H_1$ and $H_2$ to form the graphs $G_1$ and $G_2$ respectively, such that $G=G_1 \oplus_2 G_2$.

When we make a 2-cleave, we often want to ensure we are removing the fewest possible vertices from our graph. To aid our description of this, we introduce the following notation. Given a graph $G=(V;D,L)$ and a set $X\subset V$, the \emph{neighbourhood} of $X$ in $G$, is given by $N_G(X)=\{v\in V-X : xv\in E(G) \text{ for some } x\in X\}$. When it is clear which graph we are referring to, we simply write $N(X)$. We call $X\subset V$ an \emph{end} of $G$ if $|N(X)|=2$, $V-(X\cup N(X))\neq\emptyset$, and for all non-empty $X'\subset X$ we have $|N(X')|\geq 3$.

It is already known that 2-sums and 2-cleaves preserve $\M$-connectivity for pure and mixed circuits:

\begin{lem} \label{lem:2SumPureCircuit}\cite[Lemmas 4.1, 4.2]{BJ_LengthCircuits} 
Let $G$ be a pure graph.
\begin{enumerate}
	\item Suppose $G$ is the 2-sum of two pure graphs $G_1$ and $G_{2}$. If $G_{1}$ and $G_{2}$ are circuits then $G$ is a pure circuit.\label{part:2SumPureCircuit_2Sum}
	\item Suppose $G$ is a pure circuit with an edge-disjoint 2-separation $(H_1,H_2)$ on the 2-vertex-cut $\{x,y\}$. Then $d_G(x),d_G(y)\geq 4$ and $xy$ is not an edge of $G$. In addition, if we 2-cleave $G$ across $\{x,y\}$, we obtain graphs $G_1$ and $G_2$ from $H_1$ and $H_2$ respectively, such that $G_1$ and $G_2$ are both pure circuits.\label{part:2SumPureCircuit_Cleave}
\end{enumerate}
\end{lem}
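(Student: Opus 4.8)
The plan is to prove both statements by direct edge-counting, using the sparsity characterisation of pure circuits in Lemma~\ref{lem:PureCircuitGraphSparsityDefn}: a direction-length graph $H$ is a pure circuit precisely when it is pure, $|E(H)| = 2|V(H)|-2$, and $i_H(X) \le 2|X|-3$ for every $X \subsetneq V(H)$ with $|X| \ge 2$. In both parts, pureness (and staying in the same pure class) is immediate from the definitions of $2$-sum and $2$-cleave, so all the work is in checking the global edge count and the sparsity inequality.

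For part~\ref{part:2SumPureCircuit_2Sum}, write $G = G_1 \oplus_2 G_2$ with $V_1 \cap V_2 = \{x,y\}$ and the common edge $xy$ deleted. The global count is routine: $|E(G)| = |E(G_1)| + |E(G_2)| - 2 = (2|V_1|-2)+(2|V_2|-2)-2 = 2|V|-2$, since $|V| = |V_1|+|V_2|-2$. For sparsity, fix $X \subsetneq V$ with $|X|\ge 2$ and set $X_i = X \cap V_i$. Every edge of $G[X]$ lies in exactly one of $G_1, G_2$, hence in $G_1[X_1]$ or $G_2[X_2]$, and the only possible double-count is the now-absent edge $xy$, giving $i_G(X) = i_{G_1}(X_1) + i_{G_2}(X_2)$ when $\{x,y\}\not\subseteq X$ and $i_G(X) = i_{G_1}(X_1) + i_{G_2}(X_2) - 2$ when $\{x,y\}\subseteq X$. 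I would then apply the sparsity bound to $X_1$ and $X_2$ inside the circuits $G_1$ and $G_2$, using that $i_{G_i}(V_i) = 2|V_i|-2$, that $i_{G_i}(X_i)=0$ for $|X_i|\le 1$, and that $X_i = V_i$ forces $\{x,y\}\subseteq X$. A short case analysis on whether $\{x,y\}\subseteq X$ and on the sizes of $X_1,X_2$ yields $i_G(X)\le 2|X|-3$ in every case; the key point is that when $\{x,y\}\subseteq X$ the extra $-2$ from the deleted edge exactly offsets the loss of a proper-subset bound on one of the two sides.

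For part~\ref{part:2SumPureCircuit_Cleave}, I would establish the structural claims before cleaving. Applying the sparsity bound of $G$ to $V(H_1)$ and to $V(H_2)$ (both proper vertex subsets of $G$, as each side has a vertex outside the cut that the other lacks) gives $|E(H_i)| + [\,xy\in E(H_{3-i})\,] \le 2|V(H_i)|-3$; summing the two inequalities and using $|E(H_1)|+|E(H_2)| = |E(G)| = 2|V|-2$ forces $xy \notin E(G)$ and $|E(H_i)| = 2|V(H_i)|-3$ for $i=1,2$. Next, applying the sparsity bound of $G$ to $V(H_i)-x$ (where, since $xy\notin E(G)$, we have $i_G(V(H_i)-x) = |E(H_i)| - d_{H_i}(x)$) gives $d_{H_i}(x) \ge 2$, and likewise $d_{H_i}(y)\ge 2$; as the separation is edge-disjoint, $d_G(x) = d_{H_1}(x)+d_{H_2}(x)\ge 4$, and similarly for $y$. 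For $G_1 = H_1 + xy$: the count $|E(G_1)| = (2|V(H_1)|-3)+1 = 2|V(H_1)|-2$ is correct, and for sparsity one checks $X \subsetneq V(H_1)$ with $|X|\ge 2$: when $\{x,y\}\not\subseteq X$ we have $i_{G_1}(X) = i_{H_1}(X) = i_G(X) \le 2|X|-3$; when $\{x,y\}\subseteq X$ we need $i_{H_1}(X)\le 2|X|-4$, which follows by applying the sparsity bound of $G$ to $X \cup (V(H_2)-\{x,y\})$ together with the fact that the $H_2$-part contributes exactly $2|V(H_2)|-3$ edges. The symmetric argument handles $G_2$.

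The main obstacle I expect is the bookkeeping around the two distinguished objects — the edge $xy$ and the vertex pair $\{x,y\}$ — in particular being disciplined about whether $\{x,y\}$ lies inside or outside the set being tested, and handling degenerate cases where a piece $X\cap V_i$ has at most one vertex. A secondary but real subtlety, specific to part~\ref{part:2SumPureCircuit_Cleave}, is that one must prove $xy\notin E(G)$ \emph{first}: only then do induced-subgraph counts taken in $G$ and in $H_i$ agree, which is exactly what makes the final sparsity verification for $G_1$ and $G_2$ go through.
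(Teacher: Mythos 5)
The paper does not prove this lemma itself --- it is imported verbatim from \cite{BJ_LengthCircuits} --- so there is no in-paper proof to compare against; judged on its own terms, your argument is correct and complete. It is the standard counting derivation from the sparsity characterisation of pure circuits (Lemma~\ref{lem:PureCircuitGraphSparsityDefn}), which is the same style of argument as the cited source, and you have handled the genuine delicate points correctly: the $-2$ correction for the deleted common edge in part (a), establishing $xy\notin E(G)$ and the tight counts $|E(H_i)|=2|V(H_i)|-3$ \emph{before} verifying sparsity of $G_1$ and $G_2$ in part (b), and the test set $X\cup(V(H_2)-\{x,y\})$ for the case $\{x,y\}\subseteq X$.
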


\begin{lem} \label{lem:2SumMixedCircuit}\cite[Lemma 3.7]{JJ_MixedCircuits} 
Let $G$ be a mixed graph.
\begin{enumerate}
	\item Suppose $G$ is the 2-sum of two graphs $G_1$ and $G_{2}$. If $G_{1}$ is a mixed circuit and $G_{2}$ is a pure circuit then $G$ is a mixed circuit.\label{part:2SumMixedCircuit_2Sum}
	\item Suppose $G$ is a mixed circuit and has an edge-disjoint 2-separation $(H_1,H_2)$ on the 2-vertex-cut $\{x,y\}$ with $H_2$ pure. Then $d_G(x), d_G(y)\geq 4$ and $G$ does not contain an $xy$-edge of the same type as $H_2$. In addition, if we 2-cleave $G$ across $\{x,y\}$, we obtain the graphs $G_1$ and $G_2$ from $H_1$ and $H_2$ respectively, such that $G_1$ is a mixed circuit and $G_2$ is a pure circuit. \label{part:2SumMixedCircuit_Cleave}
\end{enumerate}
\end{lem}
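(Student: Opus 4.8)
The plan is to prove both parts directly from the edge-counting descriptions of mixed and pure circuits, Lemmas~\ref{lem:MixedCircuitGraphSparsityDefn} and~\ref{lem:PureCircuitGraphSparsityDefn}; the rigidity matroid of a $2$-sum of graphs is not visibly the matroid $2$-sum of the two rigidity matroids, so there seems to be no shortcut via abstract matroid $2$-sums. Throughout I take the pure graph to be direction-pure, the length-pure case being obtained by interchanging the roles of $D$ and $L$.

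For part~\ref{part:2SumMixedCircuit_2Sum}, write $n_i=|V(G_i)|$. Adding $|E(G_1)|=2n_1-1$, $|E(G_2)|=2n_2-2$ and removing the two copies of the identified edge $xy$ gives $|D|+|L|=2|V|-1$, and $G$ is mixed since $L=L_1\neq\emptyset$ and $D\supseteq P-\{xy\}\neq\emptyset$ (a pure circuit has $2n_2-2\geq 6$ edges). For the sparsity inequalities, fix $X\subseteq V$ with $|X|\geq 2$, put $X_i=X\cap V(G_i)$, and note $X_1\cap X_2=X\cap\{x,y\}$ and $|X|=|X_1|+|X_2|-|X\cap\{x,y\}|$. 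Every edge of $G[X]$ is induced by exactly one of $X_1$ in $G_1$, $X_2$ in $G_2$, except that $xy$ is counted in each of $i_{G_1}(X_1)$, $i_{G_2}(X_2)$ but not in $i_G(X)$ when $\{x,y\}\subseteq X$. Substituting the circuit bounds for $G_1$ and pure-circuit bounds for $G_2$ into this relation and splitting by $|X\cap\{x,y\}|\in\{0,1,2\}$ (and, when it is $2$, by whether $X_1=V(G_1)$ or $X_2=V(G_2)$) yields $i_G(X)\leq 2|X|-2$ and $i_D^G(X),i_L^G(X)\leq 2|X|-3$; the borderline case is $\{x,y\}\subseteq X$ with precisely one of $X_i$ equal to $V(G_i)$. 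Hence $G$ is a mixed circuit.

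For part~\ref{part:2SumMixedCircuit_Cleave}, let $n_i=|V(H_i)|$, $m_i=|E(H_i)|$, so $m_1+m_2=2|V|-1=2n_1+2n_2-5$. Since $E(H_1)$ is spanned by the proper vertex subset $V(H_1)$ we have $m_1\leq 2n_1-2$, and since $E(H_2)$ is a set of direction edges spanned by $V(H_2)$ we have $m_2\leq 2n_2-3$; the edge count then forces $m_1=2n_1-2$ and $m_2=2n_2-3$. Now $i_G(V(H_1))$ equals $m_1$ plus the number of $xy$-edges of $H_2$ and is at most $2n_1-2$, so $H_2$ has no $xy$-edge; and if $H_1$ contained an $xy$ direction edge then $i_D^G(V(H_2))\geq m_2+1=2n_2-2$ would violate Lemma~\ref{lem:MixedCircuitGraphSparsityDefn}. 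Hence $G$ has no $xy$ direction edge, the $2$-cleave is defined, and produces $G_1=H_1+xy$ and $G_2=H_2+xy$ with $|E(G_1)|=2n_1-1$, $|E(G_2)|=2n_2-2$. That $G_1$ and $G_2$ meet the remaining sparsity conditions is checked by contradiction: a violating set must contain $\{x,y\}$ and carry too many edges of $H_i$, but then $X\cup V(H_{3-i})$ --- or $V$ itself, in the case $X=V(H_1)$ and the direction count --- over-fills a proper vertex subset of the circuit $G$. Thus $G_1$ is a mixed circuit and $G_2$ a pure circuit, and since $G=H_1\cup H_2$ edge-disjointly while $G_i=H_i+xy$, Corollary~\ref{cor:CircuitsMinDegree3} gives $d_G(x)=d_{G_1}(x)+d_{G_2}(x)-2\geq 4$, and likewise $d_G(y)\geq 4$.

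The only real idea is the gluing device in part~\ref{part:2SumMixedCircuit_Cleave}: enlarging a would-be critical subset of $H_i$ through $\{x,y\}$ by all of $V(H_{3-i})$ produces a proper vertex subset of $G$ that breaks the circuit sparsity bound, and the same device simultaneously forces $m_1,m_2$ to their extremal values and rules out a smaller circuit inside either $G_1$ or $G_2$. Everything else is the careful accounting of the identified edge $xy$, which in part~\ref{part:2SumMixedCircuit_2Sum} is counted twice on the $(G_1,G_2)$ side and not at all in $G$; I expect the case analysis there, rather than any conceptual point, to be the most tedious part.
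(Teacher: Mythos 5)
Your proposal is correct, and it follows essentially the same route as the source of this lemma (the paper only cites \cite[Lemma 3.7]{JJ_MixedCircuits}, whose proof is exactly this kind of count): verify the edge totals and the sparsity inequalities of Lemmas \ref{lem:MixedCircuitGraphSparsityDefn} and \ref{lem:PureCircuitGraphSparsityDefn} directly, with the key step in part (b) being that the tight counts $m_1=2n_1-2$, $m_2=2n_2-3$ are forced and that any over-dense subset of $G_i$ containing $\{x,y\}$ can be enlarged by $V(H_{3-i})$ to violate the circuit sparsity of $G$. The bookkeeping of the identified $xy$-edge and the borderline cases you single out are handled correctly.
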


We can extend this result on mixed circuits to mixed $\M$-connected graphs:

\begin{lem} \label{lem:2SumMConn} 
Let $G$ be a mixed graph.
\begin{enumerate}
  \item Suppose $G$ is the 2-sum of two graphs $G_{1}$ and $G_{2}$. If $G_{1}$ is mixed, $G_{2}$ is pure and both are $\M$-connected then $G$ is an $\M$-connected mixed graph. \label{part:2SumMConn_Sum}
  \item Suppose $G$ is an $\M$-connected mixed graph and has an edge-disjoint 2-separation $(H_{1},H_{2})$ on 2-vertex-cut $\{x,y\}$ with $H_2$ pure. Then $d_G(x)\geq 4$ and $d_G(y)\geq 4$. Further, if $G$ contains an $xy$-edge of the same type as $H_2$ then $G-xy$ is $\M$-connected. Otherwise, we can 2-cleave $G$ across $\{x,y\}$ to form the graphs $G_1$ and $G_2$ from $H_1$ and $H_2$ respectively, where $G_1$ is an $\M$-connected mixed graph and $G_2$ is an $\M$-connected pure graph.\label{part:2SumMConn_Cleave}
\end{enumerate}
\end{lem}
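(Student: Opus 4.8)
The plan is to mirror the structure of the circuit version, Lemma \ref{lem:2SumMixedCircuit}, but to replace "circuit" arguments by ear-decomposition arguments, using Lemma \ref{lem:MixedMConnectedHasDecompIntoMixedCircuits} and Lemma \ref{lem:EarDecomp} throughout. For part \ref{part:2SumMConn_Sum}, write $\{x,y\}=V_1\cap V_2$ with $xy$ an edge of the appropriate pure type in both $G_1$ and $G_2$. Since $G_2$ is $\M$-connected and pure, it has an ear decomposition into pure circuits; since $G_1$ is $\M$-connected and mixed, Lemma \ref{lem:MixedMConnectedHasDecompIntoMixedCircuits} gives it an ear decomposition into mixed circuits. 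I would choose the ear decomposition of $G_1$ so that the first mixed circuit $C_1$ contains the edge $xy$ (possible because $\M$-connectivity lets us choose the first circuit through any prescribed pair of elements, taking one of them to be $xy$ and the other an edge of the opposite type), and similarly choose the ear decomposition $D_1,\dots,D_k$ of $G_2$ with $xy\in D_1$. By Lemma \ref{lem:2SumMixedCircuit}\ref{part:2SumMixedCircuit_2Sum}, $C_1\oplus_2 D_1$ (the 2-sum along $xy$) is a mixed circuit of $G$, and it is straightforward that the remaining circuits $C_2,\dots,C_m$ and $D_2,\dots,D_k$, which avoid $xy$, are circuits of $G$ intersecting the part already built. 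Concatenating $C_1\oplus_2 D_1, C_2,\dots,C_m,D_2,\dots,D_k$ and invoking Lemma \ref{lem:EarDecomp}\ref{part:EarDecomp_Extending} to repair \ref{part:E3} where necessary, I obtain an ear decomposition of $\R(G)$; hence $G$ is $\M$-connected, and it is mixed because $G_1$ is.

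For part \ref{part:2SumMConn_Cleave}, let $(H_1,H_2)$ be the edge-disjoint $2$-separation on $\{x,y\}$ with $H_2$ pure; say $H_2$ is direction-pure (the length-pure case is symmetric). The degree bounds $d_G(x),d_G(y)\ge 4$ follow as in Lemma \ref{lem:2SumMixedCircuit}\ref{part:2SumMixedCircuit_Cleave}: if, say, $d_G(x)\le 3$, then $x$ has at most one neighbour on one side of the separation, and picking edges $e\in E(H_1)-E(\{x,y\})$, $f\in E(H_2)-E(\{x,y\})$, the mixed circuit $C$ through $e$ and $f$ guaranteed by $\M$-connectivity must, by Lemma \ref{lem:Circuits2Connected} and Corollary \ref{cor:CircuitsMinDegree3}, use $x$ with degree at least $3$ inside $C$, which is impossible on that side; one also checks $C$ is mixed, forcing $H_2$ to contain a direction edge — fine — but the degree count still fails. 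Next suppose $G$ contains the edge $xy\in D$. I claim $G-xy$ is $\M$-connected: take any ear decomposition of $\R(G)$ into mixed circuits; by Lemma \ref{lem:2SumMixedCircuit}\ref{part:2SumMixedCircuit_Cleave} any mixed circuit containing $xy$ would have to 2-cleave with one side pure, but then it cannot also contain an $xy$-edge of the same type, a contradiction, so in fact no circuit in the matroid of $G$ containing a non-$xy$ edge of $H_2$ can use $xy$; hence replacing each mixed circuit $C_i$ in the ear decomposition with $C_i-xy$ (which is still a circuit, in fact equals $C_i$) shows $\R(G-xy)$ has an ear decomposition, so $G-xy$ is $\M$-connected.

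Finally, when $G$ has no $xy$-edge of the type of $H_2$, perform the 2-cleave, adding a direction edge $xy$ to each of $H_1,H_2$ to form $G_1,G_2$. To show $G_2$ is $\M$-connected: take an ear decomposition of $\R(G)$ into mixed circuits $C_1,\dots,C_m$; each $C_i$ meeting both sides must, by Lemma \ref{lem:2SumMixedCircuit}\ref{part:2SumMixedCircuit_Cleave}, 2-cleave into a mixed circuit on the $H_1$-side and a pure (direction) circuit on the $H_2$-side, the pure part being a circuit of $G_2$ containing $xy$; the $C_i$ lying wholly inside $H_2$ are already pure circuits of $G_2$ not containing $xy$, and at least one $C_i$ meets $H_2$ because $E(H_2)-E(\{x,y\})\ne\emptyset$. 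Assembling these pure circuits (ordered so that one containing $xy$ comes first, then using Lemma \ref{lem:EarDecomp}\ref{part:EarDecomp_Extending}) gives an ear decomposition of $\R(G_2)$; similarly, pushing the $H_2$-parts to the $xy$-edge and keeping the $H_1$-parts together yields one for $\R(G_1)$, which is mixed since $G_1\supseteq H_1$ and $H_1$ inherits a length edge from $G$ (it must, else the $2$-separation is not direction-balanced — but we do not need balancedness here; rather, $G_1$ being a mixed circuit or a mixed $\M$-connected graph follows because some $C_i$ meeting $H_1$ cleaves to a genuinely mixed circuit on the $H_1$-side, as its direction part is a proper pure $K_4$-like piece only when it meets $H_2$ nontrivially). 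The main obstacle is precisely this bookkeeping: verifying that the 2-cleaved pieces of the circuits in a fixed ear decomposition of $G$ can be reordered and concatenated into valid ear decompositions of $G_1$ and $G_2$ while maintaining conditions \ref{part:E1}--\ref{part:E3}, and handling mixed circuits that lie entirely on the $H_1$-side versus those that straddle $\{x,y\}$. I expect to lean on Lemma \ref{lem:EarDecomp}\ref{part:EarDecomp_Extending} to avoid checking \ref{part:E3} by hand, reducing the task to exhibiting, on each side, a single partial ear decomposition through a convenient edge.
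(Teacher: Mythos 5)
Your central tool --- applying the circuit-level results, Lemmas \ref{lem:2SumPureCircuit} and \ref{lem:2SumMixedCircuit}, to individual circuits of $\R(G)$ --- is exactly the paper's, but routing everything through surgery on ear decompositions creates genuine gaps that the paper avoids by arguing directly with the transitivity of the matroid-connectivity relation: to show a graph is $\M$-connected it suffices to exhibit, for each relevant pair of edges $f_1\in E(H_1)$, $f_2\in E(H_2)$, a \emph{single} circuit containing both, and the 2-sum/cleave lemmas produce precisely such a circuit. This sidesteps entirely the bookkeeping you yourself flag as the ``main obstacle'', and in particular the verification of \ref{part:E3}.

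Concretely: (i) in part \ref{part:2SumMConn_Sum}, the later circuits $C_2,\dots,C_m$ in the ear decomposition of $\R(G_1)$ need not avoid the edge $xy$ --- only their lobes consist of new elements --- so they need not be edge sets of $G$ at all, and your concatenation is not a sequence of circuits of $\R(G)$. (ii) In the case where $G$ contains an $xy$-edge of the type of $H_2$, your claim that every $C_i$ satisfies $C_i-xy=C_i$ cannot hold: an ear decomposition covers all of $E(G)$, so some $C_i$ contains $xy$; such a circuit must lie inside one side (plus $xy$), so its induced subgraph has no $2$-separation at $\{x,y\}$ and Lemma \ref{lem:2SumMixedCircuit}\ref{part:2SumMixedCircuit_Cleave} does not forbid it, and for that $C_i$ the set $C_i-xy$ is independent, not a circuit. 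The paper instead notes that the circuit through $f_1$ and $f_2$ (one edge from the interior of each side) avoids $xy$ by the cleave lemmas, and transitivity finishes. (iii) Your use of Lemma \ref{lem:EarDecomp}\ref{part:EarDecomp_Extending} to assemble ear decompositions of $G_1$ and $G_2$ (and to ``repair \ref{part:E3}'') is circular: that statement presupposes the matroid is already connected, which is what you are trying to prove. (iv) The degree bound as sketched does not close; the actual argument is that the circuit $C$ through one edge on each side either cleaves into two circuits each of degree at least $3$ at $x$ sharing the added edge, giving $d_C(x)\geq(3-1)+(3-1)=4$, or (when $xy\in E(G)$) has $d_{G[C]}(x)\geq 3$ with the edge $xy\notin C$ contributing a fourth edge at $x$.
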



\begin{proof} 
First we prove part \ref{part:2SumMConn_Sum}. Let $e$ be the edge removed from both $G_1$ and $G_2$ by the 2-sum operation and let $f_i\in E(G_i)-\{e\}$ for $i\in\{1,2\}$. Since $G_1$ and $G_2$ are both $\M$-connected, their rigidity matroids contain circuits $C_1$ and $C_2$ respectively such that $e,f_i\in C_i$ for $i\in\{1,2\}$, $C_2$ is pure and $C_1$ is either mixed, or pure of the same type as $C_2$ (since it contains $e$). Thus by Lemma \ref{lem:2SumPureCircuit}\ref{part:2SumPureCircuit_2Sum} or \ref{lem:2SumMixedCircuit}\ref{part:2SumMixedCircuit_2Sum} as applicable, $C_1\oplus_2 C_2$ is a circuit in $G$ containing both $f_1$ and $f_2$. Hence, by the transitivity of matroid connectivity, $G$ is $\M$-connected.
	
	We shall now prove part \ref{part:2SumMConn_Cleave}. Assume $G$ contains the $xy$-edge $e$ of the same type as $H_2$. Let $f_i\in E(H_i)-\{e\}$ for $i\in\{1,2\}$. Since $G$ is $\M$-connected, $\R(G)$ contains a circuit $C\subseteq E$ such that $f_1,f_2\in C$. By Lemma \ref{lem:2SumPureCircuit}\ref{part:2SumPureCircuit_Cleave} or \ref{lem:2SumMixedCircuit}\ref{part:2SumMixedCircuit_Cleave} as relevant, $e\not\in C$. Hence, by the transitivity of matroid connectivity, $G-xy$ is $\M$-connected. 	Further, by Lemma \ref{lem:Circuits2Connected}, vertices $x$ and $y$ have degree at least 3 in $G[C]$, but both $x$ and $y$ are also endvertices of $e$ in $G$. Hence $d_G(X),d_G(y)\geq 4$.
		
	So instead assume $e\not\in E(G)$. Then $e$ can be added to both $H_1$ and $H_2$ to form $G_1$ and $G_2$ respectively. Let $f_1\in E(H_1)$ and $f_2\in E(H_2)$ as before. Since $G$ is $\M$-connected, $\R(G)$ contains a circuit $C$ such that $f_1,f_2\in C$. Since circuits are 2-connected, both $x$ and $y$ are vertices in $G[C]$. Thus by Lemma \ref{lem:2SumPureCircuit}\ref{part:2SumPureCircuit_Cleave} or \ref{lem:2SumMixedCircuit}\ref{part:2SumMixedCircuit_Cleave} as relevant, $C_2=(C\cap E(H_2))+e$ is a pure circuit in $\R(G_2)$ and $C_1=(C\cap E(H_1))+e$ is a pure (resp.\ mixed) circuit in $\R(G_1)$ when $C\cap E(H_1)$ is pure (resp.\ mixed). So, by the transitivity of circuits, $G_1$ and $G_2$ are both $\M$-connected.
	
	Finally, by \ref{cor:CircuitsMinDegree3}, we have $d_{G_i}(x)\geq 3$ and $d_{G_i}(y)\geq 3$, for $i\in \{1,2\}$, since $\M$-connected graphs are the union of circuits. But $xy$ is an edge in both $G_1$ and $G_2$. Hence $d_G(x)= (d_{G_1}(x)-1) + (d_{G_2}(x)-1) \geq 4$, and similarly, $d_G(y)\geq 4$.
\end{proof}

Lemma \ref{lem:2SumMConn} tells us that the 2-sum of an $\M$-connected mixed graph $G$ with a direction-pure $K_4$ on some direction edge $xy$, will also be mixed and $\M$-connected, but we want this move to also preserve global rigidity. This 2-sum is equivalent to first performing a 0-extension on $x$ and $y$, followed by a direction-pure 1-extension on $xy$. Since $G$ is a circuit, Lemma \ref{lem:RigidCircuits} implies that deleting the edge $xy$ preserves rigidity. Hence, by Lemma \ref{lem:OperationsPreservingGlobalRigidity}, both of these operations preserve global rigidity, which gives us the result we seek:

\begin{lem}\label{lem:2SumDPureK4PreservesGlobalRigidity}
Let $G$ be an $\M$-connected mixed graph which is globally rigid. Let $G'$ be obtained from $G$ by a 2-sum with a direction-pure $K_4$. Then $G'$ is mixed, $\M$-connected and globally rigid.
\end{lem}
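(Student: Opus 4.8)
The plan is to verify the three conclusions in turn; ``mixed'' and ``$\M$-connected'' are immediate from earlier results, and global rigidity comes from the decomposition of this particular 2-sum noted just before the statement.

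First I would record that a direction-pure $K_4$ is an $\M$-connected pure graph: it has $6=2\cdot 4-2$ edges, all of one type, and satisfies $i(X)\le 2|X|-3$ for every proper subset $X$ of its vertex set with $|X|\ge 2$, so it is a pure circuit by Lemma~\ref{lem:PureCircuitGraphSparsityDefn}, hence $\M$-connected. Writing $xy$ for the direction edge of $G$ along which the 2-sum is performed, $K$ for the $K_4$ glued on, and $a,b$ for the two vertices of $K$ outside $G$, we have $G'=G\oplus_2 K$ with $G$ mixed and $\M$-connected and $K$ pure and $\M$-connected; so Lemma~\ref{lem:2SumMConn}\ref{part:2SumMConn_Sum} immediately gives that $G'$ is an $\M$-connected mixed graph, taking care of the first two conclusions.

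For global rigidity I would exhibit $G'$ as the result of two moves from Lemma~\ref{lem:OperationsPreservingGlobalRigidity} applied to $G$. Let $H$ be obtained from $G$ by the direction-pure 0-extension adding $a$ with direction edges $ax$ and $ay$ (legitimate since $x\neq y$); case~(a) of Lemma~\ref{lem:OperationsPreservingGlobalRigidity} then makes $H$ globally rigid. The edge $xy$ survives in $H$, and applying to $H$ the direction-pure 1-extension that deletes $xy$ and adds $b$ with direction edges $bx,by,ba$ produces a graph on $V(G)\cup\{a,b\}$ with edge set $(E(G)-xy)\cup\{ax,ay,bx,by,ba\}$, which is precisely $G\oplus_2 K$. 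To invoke case~(b) of Lemma~\ref{lem:OperationsPreservingGlobalRigidity} for this step I must check that $H-xy$ is rigid: $G$ is mixed and $\M$-connected, hence redundantly rigid by Lemma~\ref{lem:MixedMConnectedIsRigid}, so $G-xy$ is rigid, and $H-xy=(G-xy)+a$ is a 0-extension of $G-xy$, which preserves rigidity. Then case~(b) of Lemma~\ref{lem:OperationsPreservingGlobalRigidity}, with base graph $H$, deleted edge $xy\in E(H)$, and the above 1-extension, yields that $G'$ is globally rigid, completing the proof.

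The one point needing an argument rather than a citation is that the 0-extension preserves rigidity, so that $H-xy$ is rigid; I expect this to be the main obstacle, but it is routine — a rank count in the rigidity matrix, using only that the two edges $ax,ay$ of the 0-extension form an independent pair (which holds because $x\neq y$, making the $2\times 2$ block in the new vertex's pair of columns generically nonsingular and the only rows supported there, so the rank goes up by exactly $2$). Everything else is definition-chasing for 2-sums, 0-extensions and 1-extensions together with Lemmas~\ref{lem:PureCircuitGraphSparsityDefn}, \ref{lem:MixedMConnectedIsRigid}, \ref{lem:2SumMConn} and \ref{lem:OperationsPreservingGlobalRigidity}.
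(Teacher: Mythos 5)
Your proof is correct and takes essentially the same route as the paper: decompose the 2-sum as a direction-pure 0-extension followed by a direction-pure 1-extension deleting $xy$, get $\M$-connectedness from Lemma~\ref{lem:2SumMConn}, and get global rigidity from Lemma~\ref{lem:OperationsPreservingGlobalRigidity} using redundant rigidity of $G$ (Lemma~\ref{lem:MixedMConnectedIsRigid}). You are in fact slightly more careful than the paper, which only remarks that deleting $xy$ from $G$ preserves rigidity, whereas you correctly verify the hypothesis of Lemma~\ref{lem:OperationsPreservingGlobalRigidity}(b) for the intermediate graph $H$ by noting that the 0-extension preserves rigidity.
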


\subsection{Crossing 2-separators}

Lemma \ref{lem:2SumDPureK4PreservesGlobalRigidity} tells us that 2-sums with direction-pure $K_4$'s preserve global rigidity, but we have not considered when 2-cleaves preserve global rigidity. In Lemma \ref{lem_JJK:NecessaryConditionsForGlobalRigidity}, we saw that being direction-balanced is a necessary condition for global rigidity, so we need to identify when 2-cleaves preserve being direction-balanced. To do this, we first show that if an $\M$-connected mixed graph is unbalanced, then it has no ``crossing 2-separators''.

Let $G$ be a mixed or pure graph with two 2-separations $(H_1,H_2)$ and $(H_1',H_2')$ on 2-vertex-cuts $\{x,y\}$ and $\{x',y'\}$ respectively. If $x$ and $y$ are in different components of $G-\{x',y'\}$ then we say that $\{x,y\}$ \emph{crosses} $\{x',y'\}$. It is clear that if $\{x,y\}$ crosses $\{x',y'\}$, then $\{x',y'\}$ crosses $\{x,y\}$. Thus we can refer to $\{x,y\}$ and $\{x',y'\}$ as \emph{crossing 2-separators}, and we say that the 2-separations $(H_1,H_2)$ and $(H_1',H_2')$ \emph{cross}. Further, if $\{x,y\}$ and $\{x',y'\}$ cross, then neither $xy$ nor $x'y'$ are edges in $G$, so the 2-separations $(H_1,H_2)$ and $(H_1',H_2')$ are both edge-disjoint. See Figure \ref{fig:Crossing2Seps}.

\begin{figure}
\centering
\includegraphics{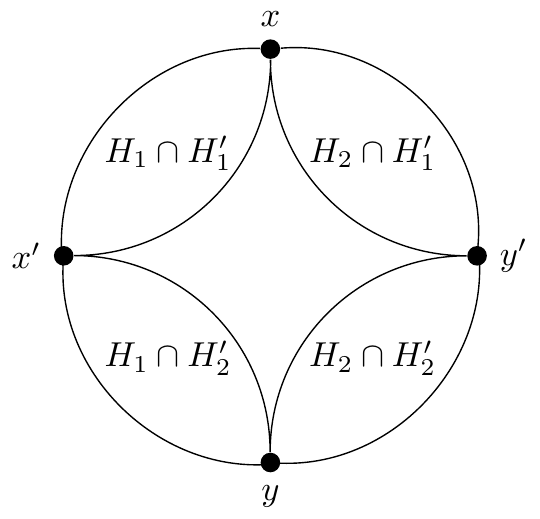}
\caption{Two crossing 2-separations of a graph: $(H_1,H_2)$ on 2-vertex-cut $\{x,y\}$, and $(H_1',H_2')$ on $\{x',y'\}$.}
\label{fig:Crossing2Seps}
\end{figure}

\begin{lem}\label{lem:UnbalancedMixedMConnHasNoCrossing2Sep}
Let $G$ be an $\M$-connected mixed graph and let $(H_{1},H_{2})$ and $(H'_{1},H'_{2})$ be two 2-separations of $G$. If $H_{2}$ is pure then $(H_{1},H_{2})$ and $(H'_{1},H'_{2})$ do not cross.
\end{lem}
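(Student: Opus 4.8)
The plan is to argue by contradiction: suppose $(H_1,H_2)$ and $(H_1',H_2')$ cross, on 2-vertex-cuts $\{x,y\}$ and $\{x',y'\}$ respectively, with $H_2$ pure — say direction-pure, the length-pure case being symmetric. The first observation to record is that, since the separators cross, none of $xy$, $x'y'$ are edges of $G$, and the four vertices $x,y,x',y'$ split $G$ into (up to) four ``quadrant'' pieces: roughly $A = H_1\cap H_1'$, $B = H_1\cap H_2'$, $C = H_2\cap H_1'$, $D = H_2\cap H_2'$, glued along the four ``corner'' vertices. Because $x,y$ lie in different components of $G-\{x',y'\}$ and vice versa, each of the two pieces making up $H_2$ (namely $C$ and $D$) is nonempty and meets the cut $\{x',y'\}$, and symmetrically for $H_1$. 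The key structural fact I want to exploit is that $H_2$ is direction-pure, so \emph{all} edges inside the region $H_2$ are direction edges; in particular the subgraph of $G$ induced on $V(H_2)$ contains no length edge, and $\{x',y'\}$ separates $V(H_2)$ into two direction-pure parts $C$ and $D$.

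Next I would bring in $\M$-connectivity. Take a length edge $\ell$ and a direction edge $e$ of $G$; by Lemma~\ref{lem:MixedMConnectedHasDecompIntoMixedCircuits}, or directly by connectivity of $\R(G)$, there is a mixed circuit $C^\ast$ through any prescribed pair of edges. The point is that a mixed circuit is $3$-edge-connected and $2$-connected (Lemma~\ref{lem:Circuits2Connected}), so it cannot be ``pinched'' across a $2$-vertex-cut into a part that is edgeless: if $G[C^\ast]$ has vertices strictly on both sides of $\{x',y'\}$ then it meets $H_2$ in a genuine subgraph, but that subgraph of $G[C^\ast]$ lying in $V(H_2)$ is forced by the $2$-vertex-cut to attach to the rest of $C^\ast$ only through $x'$ and $y'$, hence must itself be a direction-pure subgraph sitting inside a mixed circuit. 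I would then use Corollary~\ref{cor:CircuitsMinDegree3} together with Lemma~\ref{lem:CriticalSet} (critical/independence-style sparsity of proper subgraphs of a circuit), applied to the vertex set $V(H_2)\cap V(C^\ast)$: since $\{x',y'\}$ is a $2$-cut of the circuit confining everything in $V(H_2)$, one gets $i(V(H_2)\cap V(C^\ast)) \le 2|V(H_2)\cap V(C^\ast)| - 3$ on the direction side, and this contradicts either $3$-edge-connectivity of the circuit or forces the circuit to avoid one of $C,D$ entirely. Doing this for \emph{every} mixed circuit then shows that no mixed circuit uses edges strictly inside $H_2$ on both sides of $\{x',y'\}$, which contradicts the existence of the crossing second separator $\{x',y'\}$ inside the $\M$-connected structure — unless $H_2$ contained no edges of $G$ beyond those at $\{x,y\}$, i.e.\ unless $(H_1,H_2)$ or $(H_1',H_2')$ fails to be a genuine $2$-separation.

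More concretely, the cleanest route is probably: form the vertex sets $U = V(H_1')\cap V(H_2)$ and $W = V(H_2')\cap V(H_2)$, so $U\cup W = V(H_2)$, $U\cap W = \{x',y'\}$, and both $U\setminus\{x',y'\}$ and $W\setminus\{x',y'\}$ are nonempty (crossing). Since $H_2$ is direction-pure, $G[U]$ and $G[W]$ are direction-pure, hence (being proper subgraphs of any circuit, by sparsity) satisfy $i_D(U)\le 2|U|-3$ and $i_D(W)\le 2|W|-3$; also $d(U,W)=0$ because $\{x',y'\}$ is a cut. But $G[V(H_2)]$ together with the two edges of $G$ from $\{x,y\}$ into the rest of the graph must ``do enough work'' to keep $G$ $\M$-connected across the cut $\{x,y\}$ — and I would make this precise by noting that any mixed circuit through a direction edge inside $H_2$ and a length edge outside $H_2$ must, being $2$-connected, enter and leave $H_2$ only through $\{x,y\}$, hence its restriction to $V(H_2)$ is a direction-pure path-like subgraph confined to one side of $\{x',y'\}$ once we also respect that cut; iterating over a spanning family of such circuits produces the contradiction that $\{x',y'\}$ was not actually a cut, or that one of the two separations is trivial.

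\textbf{Main obstacle.} The delicate point is the bookkeeping around the four-way split: making rigorous the claim that a mixed circuit confined to $V(H_2)$ on the far side of $\{x,y\}$ must respect the second cut $\{x',y'\}$ as a direction-pure sub-object, and extracting from this a clean sparsity violation. I expect the cleanest tool to be Lemma~\ref{lem:CriticalSet}\ref{part:CriticalSet_2EdgeConnected} (a critical set is $2$-edge-connected unless trivial) combined with Lemma~\ref{lem:Circuits2Connected} ($3$-edge-connectivity of circuits): the tension between ``$3$-edge-connected circuit'' and ``separated by a $2$-edge-cut induced inside $H_2$ by $\{x',y'\}$'' is where the contradiction really lives, and organising the argument so that this tension is visible for \emph{all} mixed circuits simultaneously (via Lemma~\ref{lem:MixedMConnectedHasDecompIntoMixedCircuits}) is the part requiring care.
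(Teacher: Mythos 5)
Your skeleton is the paper's: pick one mixed circuit $C$ through an edge of $H_2$ and an edge of the opposite type in $H_1$, use $2$-connectivity of circuits (Lemma \ref{lem:Circuits2Connected}) to force $x,y,x',y'\in V(G[C])$, and exploit that the two pieces of $H_2$ cut off by $\{x',y'\}$ are pure and hence subject to the $2n-3$ sparsity bound. But the proposal stops exactly where the proof happens, and you flag that point yourself as the ``main obstacle''. The decisive step is a single edge count on that one circuit: writing $J=V(G[C])$, the edge set $C$ splits as $C\cap E(H_1)$, $C\cap E(H_2)\cap E(H_1')$ and $C\cap E(H_2)\cap E(H_2')$, and Lemma \ref{lem:MixedCircuitGraphSparsityDefn} bounds these by $2|J\cap V(H_1)|-2$, $2|J\cap V(H_2)\cap V(H_1')|-3$ and $2|J\cap V(H_2)\cap V(H_2')|-3$ respectively. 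Since the three vertex sets cover $J$ with exactly three vertices counted twice (namely $x$, $y$, and whichever of $x',y'$ lies in $V(H_2)$), the total is $2|J|-2$, contradicting $|C|=2|J|-1$. One circuit suffices; no iteration over a spanning family, no ear decomposition, and no appeal to $3$-edge-connectivity is needed.

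The gap is genuine because the substitutes you offer for this count do not work. The restriction of $C$ to $V(H_2)$ is \emph{not} ``confined to one side of $\{x',y'\}$'': it necessarily contains $x$, $y$ and one of $x',y'$, and may have vertices strictly inside both quadrants of $H_2$, so there is no clean ``path-like'' object to analyse. The bound $i(V(H_2)\cap J)\le 2|V(H_2)\cap J|-3$ does not by itself contradict $3$-edge-connectivity of $G[C]$, nor does it force $C$ to avoid one of the quadrants. And ``iterating over a spanning family of such circuits'' to conclude that $\{x',y'\}$ was not a cut is not a step that can be made precise from what is written. So while every ingredient of the correct argument appears in your proposal, the contradiction itself is never derived; the fix is the one-line inequality above rather than any of the routes you sketch.
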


\begin{proof}
Assume, for a contradiction, that $(H_1,H_2)$ and $(H_1',H_2')$ cross, and let their 2-vertex-cuts be $\{x,y\}$ and $\{x',y'\}$ respectively. Since these 2-vertex-cuts cross, neither $xy$ nor $x'y'$ are edges in $G$.

Let $e_2$ be an edge in $E(H_2)$, and $e_1$ be an edge of opposite type in $E(H_1)$. Since $G$ is $\M$-connected, there is a mixed circuit $C$ in $\R(G)$ such that $e_1,e_2\in C$. But by Lemma \ref{lem:Circuits2Connected}, $G[C]$ is 2-connected which implies that $x,y,x'$ and $y'$ are all vertices in $G[C]$. Hence
\[ |C|= |C \cap E(H_1)|+ |C \cap E(H_2)\cap E(H_1')|+ |C \cap E(H_2)\cap E(H_2')|.  \]
Let $J=V(G[C])$, and let $V_i=V(H_i)$ and $V_i'=V(H_i')$ for $i\in\{1,2\}$. Since $H_2$ is pure, the sparsity conditions for the mixed circuit $C$ give
\begin{align*}
	|C| &\leq (2|J\cap V_1|-2) + (2|J\cap V_2\cap V_1'|-3) + (2|J\cap V_2\cap V_2'|-3) \\
			&= 2(|J|+|\{x,y,y'\}|)-8 = 2|J| -2
\end{align*}
which contradicts the edge count for a circuit. Hence $(H_1,H_2)$ and $(H_1',H_2')$ do not cross.
\end{proof}

\begin{lem}\label{lem:MConnDBal2SepPreservesDBal}
Let $G=G_1\oplus_2 G_2$ be a mixed graph with $G_2$ direction-pure. Then $G_1$ is direction-balanced if and only if $G$ is direction-balanced.
\end{lem}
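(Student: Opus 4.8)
The plan is to prove both directions by analysing how the 2-separations of $G$ relate to those of $G_1$. Write $V_1 = V(G_1)$, $V_2 = V(G_2)$, $V_1 \cap V_2 = \{x,y\}$, and let $e = xy$ be the direction edge that is deleted by the 2-sum; so $e \in D(G_1) \cap P(G_2)$ and $G = G_1 \oplus_2 G_2$ has $D(G) = (D_1 \cup P) - \{e\}$, $L(G) = L_1$. Note that $G_2$, being a direction-pure graph sharing only $\{x,y\}$ with $G_1$, contributes only direction edges to $G$, and at least one such edge lies strictly between $V_2 - \{x,y\}$ and $\{x,y\}$ or inside $V_2 - \{x,y\}$ — I would record at the outset the simple fact that $E(G_2) - \{e\}$ is a nonempty set of direction edges, none of which is an edge of $G_1$, and that $(G_1, G_2)$ realises $\{x,y\}$ as a 2-separation of $G$ (this needs $|V_1|, |V_2| \geq 3$, which follows from $G_1$ being mixed hence having at least $3$ vertices, and $G_2$ pure $\M$-connected or at least containing the edge $xy$ together with more structure; in the intended application $G_2$ is a direction-pure $K_4$, so $|V_2| = 4$).

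For the ``only if'' direction, suppose $G_1$ is direction-balanced; I want every 2-separation $(F_1, F_2)$ of $G$ to be direction-balanced. First I would handle the 2-separation $(G_1, G_2)$ itself: its two sides are $E(G_1) - E(\{x,y\})$, which contains a direction edge because $G_1$ is mixed and, being $\M$-connected hence $2$-connected (Lemma \ref{lem:MConnectedGraphsAre2Connected}) and in fact direction-balanced, any side of any $2$-separation of $G_1$ — in particular this one, if $G_1$ has a $2$-separation on $\{x,y\}$ — has a direction edge; and if $\{x,y\}$ is not a $2$-vertex-cut of $G_1$ I argue directly that $G_1$ mixed with the direction edge $xy$ present forces a direction edge in $E(G_1) - \{xy\}$ using the minimum degree bound (Corollary \ref{cor:CircuitsMinDegree3} via the ear decomposition, or directly that an $\M$-connected mixed graph has minimum degree $\geq 3$). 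The other side, $E(G_2) - E(\{x,y\})$, is a nonempty set of direction edges by the remark above. For a general 2-separation $(F_1, F_2)$ of $G$ on a cut $\{u,v\}$, I would use Lemma \ref{lem:UnbalancedMixedMConnHasNoCrossing2Sep}: since $G$ is $\M$-connected mixed and $(G_1,G_2)$ is a $2$-separation with $G_2$ pure, $\{u,v\}$ cannot cross $\{x,y\}$. Hence $\{u,v\}$ lies on one side, say $V_2 - \{x,y\}$ together with possibly $x,y$, or is ``inside'' $V_1$. If $\{u,v\} \subseteq V_1$ then the part of $F_1, F_2$ lying in $G_1$ is, after reinstating the edge $xy$, a $2$-separation (or trivial) of $G_1$, and direction-balance of $G_1$ transfers to $G$; the side of $(F_1,F_2)$ that swallows all of $V_2 - \{x,y\}$ automatically picks up a direction edge of $G_2$. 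If $\{u,v\}$ has a vertex in $V_2 - \{x,y\}$, the non-crossing condition and the structure of $G_2$ (for the application, a $K_4$) force $\{u,v\}$ to separate off only a piece of $G_2$; since $G_2$ is direction-pure and $3$-connected as a $K_4$, this case is either impossible or the separated side still contains a direction edge, and the other side contains all of $G_1$ hence a direction edge. I would then conclude $(F_1,F_2)$ is direction-balanced.

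For the ``if'' direction, suppose $G$ is direction-balanced and take any 2-separation $(F_1, F_2)$ of $G_1$ on a cut $\{u,v\}$; I must show it is direction-balanced. The natural move is to ``transport'' this separation to $G$: extend $F_2$ (say the side not containing... ) — more precisely, whichever of $F_1, F_2$ contains both $x$ and $y$, glue $G_2$ onto it along $\{x,y\}$ (deleting $e$), obtaining a $2$-separation $(\hat F_1, \hat F_2)$ of $G$ on the same cut $\{u,v\}$; this is legitimate because $x,y$ lie together on one side of $\{u,v\}$ in $G_1$ (if $x,y$ were separated by $\{u,v\}$ in $G_1$, then $G_1 - \{u,v\}$ disconnects $x$ from $y$, but $e = xy$ is an edge of $G_1$, a contradiction). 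Now the side of $(\hat F_1, \hat F_2)$ that absorbed $G_2$ has the same direction edges as the corresponding side of $(F_1, F_2)$ except that it loses $e$ and gains $E(G_2) - \{e\}$, which is a nonempty set of direction edges — so that side has a direction edge in $G$ iff the corresponding side of $(F_1,F_2)$ in $G_1$ has a direction edge other than possibly $e$, but since it gains direction edges it certainly has one. The other side is unchanged. Since $G$ is direction-balanced, $(\hat F_1, \hat F_2)$ has a direction edge on each side, and I transfer this back: the unchanged side gives a direction edge on that side of $(F_1, F_2)$ directly; for the side that absorbed $G_2$, I need a direction edge of $G_1$ strictly on that side — here the subtlety is that it might only have had $e$, but $e = xy$ lies on the cut... no: $e$ joins $x$ and $y$, both on that side, so $e$ itself is a direction edge of $E(F_i) - E(\{u,v\})$ as long as not both $u,v \in \{x,y\}$. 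The only remaining case is $\{u,v\} = \{x,y\}$, which I handle separately: then $(F_1,F_2)$ is a $2$-separation of $G_1$ on $\{x,y\}$, and correspondingly $(F_1 \cup \text{(rest)}, \ldots)$ relates to the $2$-separation $(G_1, G_2)$-refinement of $G$; since $G$ is direction-balanced, both sides of $G$'s separation on $\{x,y\}$ refined by $\{u,v\}=\{x,y\}$ have direction edges, one of which is $G_2$'s side. The side $E(F_1) - E(\{x,y\})$ gains nothing from $G_2$ but it is a side of a $2$-separation of the direction-balanced graph $G$ (take the $2$-separation with that piece on one side and everything else on the other), hence contains a direction edge, so $G_1$'s separation on $\{x,y\}$ is direction-balanced too.

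The main obstacle I anticipate is the bookkeeping when the cut $\{u,v\}$ of a $2$-separation of $G$ (in the ``only if'' direction) or the absorbed side (in the ``if'' direction) interacts with $\{x,y\}$, and ruling out that a side's only direction edge is the deleted edge $e = xy$. The clean tool is: $e$ has both endpoints $x,y$, so $e$ counts towards $E(F_i) - E(\{u,v\})$ precisely when $\{x,y\} \neq \{u,v\}$, and the case $\{u,v\} = \{x,y\}$ gets absorbed into the base $2$-separation $(G_1,G_2)$ and dispatched via Lemma \ref{lem:UnbalancedMixedMConnHasNoCrossing2Sep} and the $\M$-connectivity (hence $2$-connectivity and minimum degree $\geq 3$) of the pieces. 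Everything else is routine graph surgery, and Lemma \ref{lem:UnbalancedMixedMConnHasNoCrossing2Sep} is exactly what prevents pathological crossings from complicating the transport of $2$-separations between $G$ and $G_1$.
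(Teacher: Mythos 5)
Your proof of the direction the paper actually writes out --- that $G$ direction-balanced implies $G_1$ direction-balanced --- is correct. Your transport argument (absorb $G_2-xy$ into whichever side of a $2$-separation of $G_1$ contains both $x$ and $y$, and treat the cut $\{u,v\}=\{x,y\}$ separately) works, and it turns on the same observation the paper uses: the edge $xy$ deleted by the $2$-sum is a \emph{direction} edge of $G_1$, so the side containing both $x$ and $y$ automatically carries a direction edge unless the cut is $\{x,y\}$ itself. The paper's version is much shorter: if $G_1$ is not direction-balanced it has an end $X$ meeting no direction edge of $G_1$; either $X$ is still an end of $G$ (contradicting that $G$ is direction-balanced), or $X$ meets $\{x,y\}$, in which case the direction edge $xy\in D_1$ is incident with $X$, a contradiction. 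No crossing-separator machinery or $\M$-connectivity is needed for this direction.

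The genuine gap is in your ``only if'' direction ($G_1$ direction-balanced $\Rightarrow$ $G$ direction-balanced), which the paper dismisses as trivial but which you argue using tools whose hypotheses you do not have: Lemma \ref{lem:UnbalancedMixedMConnHasNoCrossing2Sep} requires $G$ to be $\M$-connected, and your repeated fallback to ``in the intended application $G_2$ is a direction-pure $K_4$'' imports $3$-connectivity of $G_2$; neither $\M$-connectivity nor $G_2=K_4$ appears in the statement, so as written this does not prove the stated lemma. Moreover, the specific sub-claim that a minimum-degree bound forces $G_1$ to contain a direction edge other than $xy$ is false. Indeed, take $G_1$ to be a length-pure $K_4$ with a single direction edge $xy$ added (minimum degree $3$, and vacuously direction-balanced since it admits no $2$-separation), and $G_2$ a direction-pure $K_4$ on $xy$: then $G=G_1\oplus_2 G_2$ has a $2$-separation on $\{x,y\}$ whose $G_1$-side contains no direction edge at all, so $G$ is not direction-balanced. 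So the forward implication genuinely needs something beyond the bare hypotheses --- for instance that $G_1$ is $\M$-connected and mixed, which forces at least two direction edges and hence one not induced by $\{x,y\}$, plus enough connectivity to control $2$-separations of $G$ whose cut is not $\{x,y\}$. If you want to retain this direction you must state and justify those extra hypotheses explicitly rather than borrowing them from the intended application.
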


\begin{proof}
The forwards direction is trivial, so we shall only prove the converse. Let $V(G_1)\cap V(G_2)=\{x,y\}$. Assume that $G_1$ is not direction-balanced. Then there is some end $X$ of $G_1$, such that no direction edges in $G_1$ have an endvertex in $X$. If $X$ is also an end of $G$, then this contradicts that $G$ is direction-balanced. Hence $X\cap\{x,y\}\neq\emptyset$. But $xy$ is a direction edge in $G_1$, which contradicts our original assumption.
\end{proof}

\section{Admissible Nodes in Mixed Circuits}\label{sec:Circuits}

In the previous section we showed that performing  1-extensions, edge additions and 2-sums with pure $K_4$'s preserves $\M$-connectivity. So to obtain our recursive construction of $\M$-connected mixed graphs, we need to show that every $\M$-connected mixed graph (other than $K_3^+$ and $K_3^-$) can be obtained from a smaller $\M$-connected mixed graph by one of these operations.

Of these three operations, it is most difficult to identify when an $\M$-connected mixed graph is a 1-extension of another $\M$-connected mixed graph. In this section, we consider the simplest case of an $\M$-connected graph: a circuit. We review and extend Jackson and Jord\'{a}n's methods in \cite{JJ_MixedCircuits} for identifying when a circuit is a 1-extension of another circuit. We will then extend these results to all $\M$-connected graphs in Section \ref{sec:Admissible}.

Given a mixed or pure graph $G=(V;D,L)$, any vertex of degree three in $G$ is called a \emph{node}, and the set of all such vertices is denoted by $V_{3}$. We call $G[V_{3}]$ the \emph{node subgraph of $G$}. A node of $G$ with degree at most one (exactly two, exactly three) in $G[V_{3}]$ is called a \emph{leaf node} (\emph{series node}, \emph{branching node} respectively). 

\begin{lem}\label{lem:NodeForest}\cite[Lemma 3.4]{JJ_MixedCircuits}
Let $G=(V;D,L)$ be a mixed circuit. Then $G[V_{3}]$ is a forest.
\end{lem}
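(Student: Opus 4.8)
The plan is to argue by contradiction: suppose $G[V_3]$ contains a cycle, and derive a violation of the sparsity conditions for a mixed circuit from Lemma \ref{lem:MixedCircuitGraphSparsityDefn}. Let $Z\subseteq V_3$ be the vertex set of a shortest cycle in $G[V_3]$, say $|Z|=k$. The key observation is that every vertex in $Z$ has degree exactly $3$ in $G$, so the number of edges of $G$ leaving the set $Z$ is tightly controlled. First I would count: the vertices of $Z$ collectively have $3k$ edge-endpoints in $G$; at least $k$ of these are consumed by the $k$ edges of the chosen cycle (which contribute $2k$ endpoints inside $Z$), so $i(Z) \geq k$ and the number of edges from $Z$ to $V-Z$ is $d_G(Z) = 3k - 2i(Z) \le 3k - 2k = k$. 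To exploit the sparsity bound I actually want a lower bound on $i(Z)$, or rather I want to bound $i(Z)$ from below well enough to contradict $i(Z)\le 2|Z|-2$; since $i(Z)\ge k = |Z|$, for small $k$ this is not yet a contradiction, so the shortest-cycle hypothesis must be used more carefully.

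The cleaner route, which I would pursue, is to use the circuit structure together with Corollary \ref{cor:CircuitsMinDegree3} and the fact (Lemma \ref{lem:Circuits2Connected}) that $G$ is $3$-edge-connected. Take a cycle in $G[V_3]$ on vertex set $Z$ and consider $G-Z$ (or the edges between $Z$ and the rest): because $G$ is $3$-edge-connected, $d_G(Z)\ge 3$, and combined with $d_G(Z)=3k-2i(Z)$ this gives $i(Z)\le (3k-3)/2$. Now apply the second sparsity condition of Lemma \ref{lem:MixedCircuitGraphSparsityDefn}: we need $i_D(Z)\le 2k-3$ and $i_L(Z)\le 2k-3$. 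The cycle in $G[V_3]$ forces enough structure — in particular, if we pick the cycle to be chordless in $G[V_3]$, its $k$ edges already sit inside $G[Z]$, and I would show that to make $\{e\}\cup(\text{cycle edges})$ independent for a single extra edge $e$ we run into a forbidden subset. Concretely, I expect the argument to isolate a small set $Z'\subseteq Z$ (the cycle, or the cycle plus one neighbour) for which the induced edge count exceeds $2|Z'|-2$, contradicting condition (b) of Lemma \ref{lem:MixedCircuitGraphSparsityDefn}; the $3$-edge-connectivity is what pins the external degree low enough to force the internal count up.

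The main obstacle I anticipate is getting the arithmetic to actually close for the smallest cases — a triangle or a short cycle of nodes — since the naive count $i(Z)\ge |Z|$ falls short of the $2|Z|-2$ threshold by about $|Z|-2$. Resolving this will require extracting more edges into $G[Z]$ than just the cycle: either by choosing $Z$ to be a maximal "node-dense" region, by taking $Z$ together with common neighbours of consecutive cycle vertices, or by a parity/degree-sequence argument showing a chordless node-cycle of length $k$ cannot coexist with the circuit's global count $|E(G)|=2|V|-1$. I would also keep in mind the possibility of instead proving the contrapositive via the ear-decomposition / $1$-extension machinery, but the direct sparsity contradiction seems most economical and is the approach I would write up first, falling back to a more delicate local count around the shortest node-cycle if the crude bound is insufficient.
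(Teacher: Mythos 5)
There is a genuine gap here: the line of attack you say you would ``write up first'' cannot be made to work, and the idea that actually closes the proof appears only as the last of your listed fallbacks, with no details. You aim to exhibit a subset $Z'\subseteq Z$ (the node-cycle, possibly augmented) whose induced edge count violates $i(Z')\le 2|Z'|-2$. No such subset exists in general: a triangle of nodes, each sending its one remaining edge outside the triangle, induces exactly $3=2\cdot 3-3$ edges and is comfortably sparse, and the deficit only grows for longer node-cycles, as your own estimate $i(Z)\ge|Z|$ versus the threshold $2|Z|-2$ already shows. Worse, the extra inputs you bring in ($3$-edge-connectivity, chordlessness, minimality of the cycle) all yield \emph{upper} bounds on $i(Z)$, which point in the wrong direction for the contradiction you are seeking; adjoining common neighbours or passing to a ``maximal node-dense region'' does not repair this.

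The decisive step is to count the \emph{complement} and use the global edge count. Let $Z$ be the vertex set of a cycle of $G[V_3]$. Each vertex of $Z$ has degree $3$ in $G$ and at least two neighbours on the cycle, so it sends at most one edge out of $Z$; hence, writing $d=d(Z,V-Z)$, we get $d\le|Z|$ and $i(Z)=(3|Z|-d)/2$. If $|V-Z|\ge 2$, condition (b) of Lemma \ref{lem:MixedCircuitGraphSparsityDefn} applied to $V-Z$ gives
\[
|E| \;=\; i(Z)+i(V-Z)+d \;\le\; \frac{3|Z|-d}{2}+\bigl(2|V-Z|-2\bigr)+d \;=\; 2|V|-2-\frac{|Z|-d}{2}\;\le\;2|V|-2,
\]
contradicting $|E|=2|V|-1$. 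The degenerate cases are immediate: $V=Z$ forces $2|E|=3|V|$, i.e.\ $|V|=2$, impossible for a mixed circuit; and if $V-Z=\{u\}$ then $|E|=(3|Z|+d_G(u))/2=2|Z|+1$ forces $d_G(u)=|Z|+2>|Z|$, contradicting that each vertex of $Z$ sends at most one edge to $u$. You did gesture at ``the circuit's global count $|E(G)|=2|V|-1$'' as a possible route, but since the argument you commit to is the one that provably cannot close, the proposal as written does not establish the lemma.
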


\begin{lem}\label{lem:MixedCircuitExclCriticalSetContainsNode}\cite[Lemma 3.5]{JJ_MixedCircuits}
Let $G=(V;D,L)$ be a mixed circuit and $X\subset V$ be a mixed critical set. Then $G$ has a node in $V-X$.
\end{lem}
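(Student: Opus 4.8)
The statement is: if $G=(V;D,L)$ is a mixed circuit and $X\subsetneq V$ is mixed critical, then $G$ has a node (vertex of degree $3$) in $V-X$. The plan is to argue by an edge-counting/averaging argument on the subgraph living outside $X$, combined with the sparsity properties of circuits. First I would let $Y=V-X$ (nonempty since $X$ is a proper subset), and note that by Lemma \ref{lem:Circuits2Connected} and Corollary \ref{cor:CircuitsMinDegree3}, every vertex of $G$ has degree at least $3$. So it suffices to show that some vertex of $Y$ has degree \emph{exactly} $3$, i.e. that not every vertex of $Y$ has degree $\geq 4$.

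The core computation: since $X$ is mixed critical, $i(X)=2|X|-2$. Since $G$ is a mixed circuit, $|D|+|L|=2|V|=2|X|+2|Y|-1$ by Lemma \ref{lem:MixedCircuitGraphSparsityDefn}. Summing degrees over $Y$ gives
\[
\sum_{v\in Y} d_G(v) = 2\,i(Y) + d(X,Y),
\]
where $d(X,Y)$ is the number of edges between $X$ and $Y$ — equivalently, using the whole-graph count, $d(X,Y) = |E(G)| - i(X) - i(Y) = (2|X|+2|Y|-1) - (2|X|-2) - i(Y) = 2|Y|+1 - i(Y)$. Hence
\[
\sum_{v\in Y} d_G(v) = 2\,i(Y) + 2|Y| + 1 - i(Y) = i(Y) + 2|Y| + 1.
\]
Now $Y$ induces a proper subgraph of the circuit $G$, so $E(Y)$ is independent, and Lemma \ref{lem:IndependenceSparsity} (or the circuit sparsity condition, Lemma \ref{lem:MixedCircuitGraphSparsityDefn}(b)) gives $i(Y)\leq 2|Y|-2$ when $|Y|\geq 2$. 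Therefore
\[
\sum_{v\in Y} d_G(v) \leq (2|Y|-2) + 2|Y| + 1 = 4|Y| - 1 < 4|Y|,
\]
so the average degree over $Y$ is strictly less than $4$; since all degrees are at least $3$, some vertex of $Y$ has degree exactly $3$, i.e. is a node, and it lies in $V-X$ as required. The remaining small case is $|Y|=1$: then $Y=\{v\}$, $i(Y)=0$, and the formula gives $d_G(v) = 0 + 2 + 1 = 3$, so $v$ is itself a node in $V-X$.

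The main obstacle is mostly bookkeeping: getting the identity $d(X,Y)=|E(G)|-i(X)-i(Y)$ right (this uses that $X\cup Y = V$, so there are no further vertices and every edge is counted exactly once among $E(X)$, $E(Y)$, and the $X$–$Y$ edges), and correctly invoking independence of $E(Y)$ — which follows because $G$ is a circuit and $Y\subsetneq V$ forces $G[Y]$ to be a proper (hence independent) subgraph. One should double-check the boundary case $|Y|=1$ separately (done above) since Lemma \ref{lem:IndependenceSparsity}'s bounds are stated for $|X|\geq 2$. No deep idea is needed beyond this averaging argument; the subtlety is purely in applying the right sparsity bound to the right set.
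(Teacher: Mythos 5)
Your proof is correct: the degree-sum averaging argument over $Y=V-X$, combined with $i(Y)\leq 2|Y|-2$ (independence of the proper subgraph $G[Y]$) and $\delta(G)=3$ from Corollary \ref{cor:CircuitsMinDegree3}, is exactly the standard route and matches the proof of the cited result (the present paper only quotes Lemma \ref{lem:MixedCircuitExclCriticalSetContainsNode} without proof). The only blemish is the typo $|D|+|L|=2|V|$, which should read $|D|+|L|=2|V|-1$; your subsequent computation uses the correct value, and your separate treatment of $|Y|=1$ is appropriate.
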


Given any node $v$ in $G$, the \emph{1-reduction operation} at $v$ on edges $vx$ and $vy$ deletes $v$ and all edges incident to $v$ and adds a new edge $xy$ with the proviso that if $v$ is a pure node then $xy$ must be of the same type as $v$. The graph obtained by this operation is denoted by $G_{v}^{xy}$ and is called a \emph{1-reduction} of $G$. The 1-reduction operation is the inverse of the 1-extension operation.

If $G$ is an $\M$-connected mixed (pure) graph, then a 1-reduction is called \emph{admissible} if the resulting graph is mixed (pure) and $\M$-connected. A node $v$ of $G$ is called \emph{admissible} if there is an admissible 1-reduction at $v$, and is \emph{non-admissible} otherwise. In this section, we are considering the special case where $G$ is a mixed circuit. For mixed (pure) circuits, a 1-reduction is admissible if it results in a smaller mixed (pure) circuit.

Let $G=(V;D,L)$ be a mixed circuit with a 1-reduction at $v$ onto the edge $xy$. Assume $G$ contains some critical set $Z\subset V-v$ such that $x,y\in Z$, and $Z$ is either mixed, or pure of the same type as the $xy$-edge added in the 1-reduction. Then $G[Z]+xy$ is dependent in $G_{v}^{xy}$. Since $Z\neq V-v$, this implies that $G_{v}^{xy}$ is not a circuit. So the existence of the critical set $Z$ prevents this 1-reduction from being admissible. In fact, Jackson and Jord{\'a}n \cite{JJ_MixedCircuits} have shown that we can determine the admissibility of nodes in mixed circuits solely by the absence of such sets. However, we need to avoid different combinations of critical sets, depending on whether the node is pure or mixed. 

Let $G$ be a mixed circuit, and $v$ be a node of $G$ with three distinct neighbours: $r,s$ and $t$. Let $R,S$ and $T$ be critical sets in $G-v$ with $\{s,t\}\subseteq R\subseteq V-\{v,r\}$, $\{r,t\}\subseteq S\subseteq V-\{v,s\}$ and $\{r,s\}\subseteq T\subseteq V-\{v,t\}$ such that either
\begin{enumerate}
	\item $R$, $S$ and $T$ are all mixed critical, \label{defn:StrongFlower}
	\item $v$ is a pure node, $R$ and $S$ are both mixed critical, and $T$ is pure of the same type as $v$, or \label{defn:WeakFlower}
	\item  $v$ is a pure node, $R$ is mixed critical, and $S$ and $T$ are pure of the same type as $v$.\label{defn:Clover}
\end{enumerate}
We say that the triple $(R,S,T)$ is a \emph{strong flower} on $v$ if it satisfies \ref{defn:StrongFlower}, or a \emph{weak flower} on $v$ if it satisfies \ref{defn:WeakFlower}; and that $(R,S,T)$ is a \emph{flower} if it is either a strong or a weak flower (see Figure \ref{fig:Flower}). If instead $(R,S,T)$ satisfies \ref{defn:Clover}, then we say $(R,S,T)$ is a \emph{clover} on $v$ (see Figure \ref{fig:Clover}). 
Flowers and clovers satisfy the following, very restrictive, properties:

\begin{figure}
\centering
\includegraphics{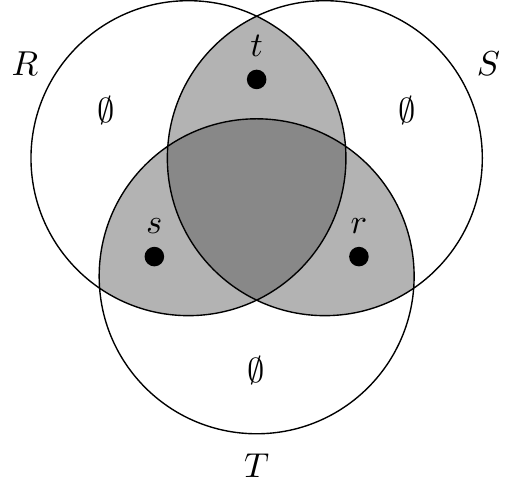}
\caption{Flower formed from vertex sets $R,S$ and $T$.}
\label{fig:Flower}
\end{figure}

\begin{lem}\label{lem:Flowers}\cite[Lemma 4.2]{JJ_MixedCircuits}
Let $G=(V;D,L)$ be a mixed circuit and let $v$ be a node of $G$. Suppose there exists a strong or weak  flower $(R,S,T)$ on $v$, and let $W^*=(V-v)-W$ for all $W\in \{R,S,T\}$. Then
\begin{enumerate}
	\item $R\cup S=S\cup T=R\cup T=V-v$,
	\item $R\cap S\cap T\neq\emptyset$,
	\item $d(R,S)=d(S,T)=d(R,T)=0$, and
	\item $\{R^{*},S^{*},T^{*},R\cap S\cap T\}$ is a partition of $V-v$.
\end{enumerate}
\end{lem}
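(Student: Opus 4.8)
The plan is to exploit the critical-set union lemmas (Lemma \ref{lem:CriticalUnions} and Lemma \ref{lem:CriticalUnionsMPP}) together with the mixed-circuit sparsity count of Lemma \ref{lem:MixedCircuitGraphSparsityDefn}, and to argue by cases on whether $(R,S,T)$ is a strong flower (so $R,S,T$ all mixed critical) or a weak flower (so $R,S$ mixed critical and $T$ pure of the same type as the pure node $v$). The overall strategy is: first establish that every pairwise union equals $V-v$ (part (a)); from this deduce that the pairwise intersections together with the complements $R^*,S^*,T^*$ form a partition of $V-v$ (part (d)); conclude $R\cap S\cap T\neq\emptyset$ by a counting argument (part (b)); and read off $d(R,S)=d(S,T)=d(R,T)=0$ from the relevant parts of Lemma \ref{lem:CriticalUnions} applied to the pairwise intersections (part (c)). The parts are interdependent, so I would interleave them rather than prove them strictly in the stated order.

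For part (a): consider, say, $R\cup S$. Both $R$ and $S$ are mixed critical and $\{r,t\}\subseteq R\cap S$ (since $\{s,t\}\subseteq R$ and $\{r,t\}\subseteq S$ give $t\in R\cap S$, and $r\in S$, $r\in R$ because $\{r,s\}\subseteq T$... more carefully $r\in S$ and $r\notin$? — one checks from the containments that $|R\cap S|\geq 2$). By Lemma \ref{lem:CriticalUnions}\ref{part:CriticalUnions_BothMixed}, $R\cup S$ is mixed critical with $d(R,S)=0$. If $R\cup S\neq V-v$, then $R\cup S$ is a proper mixed critical subset of $V-v$, and adding the three edges at $v$ to $G[R\cup S]$ would violate the sparsity bound $i(X)\le 2|X|-2$ of Lemma \ref{lem:MixedCircuitGraphSparsityDefn} for $X=(R\cup S)\cup\{v\}$ — indeed $i(X)=i(R\cup S)+3=2|R\cup S|-2+3=2|X|+ -1 > 2|X|-2$, contradiction (the bookkeeping: $|X|=|R\cup S|+1$, and $v$ has all three of its edges inside $X$ since $r,s,t\in R\cup S$). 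Hence $R\cup S=V-v$; similarly for $R\cup T$ and $S\cup T$. In the weak-flower case where $T$ is pure, I would instead use Lemma \ref{lem:CriticalUnions}\ref{part:CriticalUnions_MixedPure} for the unions $R\cup T$, $S\cup T$ (mixed $\cup$ pure $\Rightarrow$ mixed critical, $d=0$, intersection pure critical) and the same edge-count contradiction; for a pure $v$, the edge added in a hypothetical 1-reduction is of the same type as $v$, matching the type of $T$, so the count still goes through.

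For parts (b) and (d): from part (a), $R^*=(V-v)-R\subseteq S\cap T$, and likewise $S^*\subseteq R\cap T$, $T^*\subseteq R\cap S$; these three sets are pairwise disjoint (e.g. $R^*\cap S^*=(V-v)-(R\cup S)=\emptyset$), and their union misses exactly $R\cap S\cap T$, which gives the partition in (d) once we know $R^*,S^*,T^*$ are the "outside" pieces and $R\cap S\cap T$ the common core — this is a set-theoretic identity given (a). For (b), count edges: each of $R,S,T$ is critical so $i(R),i(S),i(T)$ are within one of $2|R|-2$, etc.; combined with $d(R,S)=d(S,T)=d(R,T)=0$ (established alongside (a)) and the fact that $G-v$ has $2|V|-1-3 = 2|V-v|-2$ edges (as $G$ is a mixed circuit with a node $v$, and $v$ has degree exactly $3$), a double-count of $E(G-v)$ over the pieces of the partition forces $R\cap S\cap T\neq\emptyset$; if it were empty the edge total would fall short. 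I would phrase this via the submodular-type inequality $i(R\cup S\cup T) \le i(R)+i(S)+i(T) - i(R\cap S) - i(S\cap T) - i(R\cap T) + i(R\cap S\cap T)$ applied with $R\cup S\cup T = V-v$ and the $d(\cdot,\cdot)=0$ facts to pin everything down. The main obstacle I anticipate is the weak-flower case, where $T$ being pure of the same type as $v$ means one must track the extra slack of $1$ in the sparsity count (pure critical sets satisfy $i(X)=2|X|-3$, not $2|X|-2$) and verify that this slack is exactly absorbed by the constraint that the reduction edge at the pure node $v$ has the matching type — so the type-compatibility hypotheses in the definitions of weak flower and clover are doing real work and must be invoked carefully at each union step rather than treated as a formality.
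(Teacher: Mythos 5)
The paper does not actually prove this lemma---it is quoted verbatim from \cite[Lemma 4.2]{JJ_MixedCircuits}---so your argument has to stand on its own. Your overall strategy is the right one: form the pairwise unions via Lemma \ref{lem:CriticalUnions}, use the three edges at $v$ against the sparsity bound of Lemma \ref{lem:MixedCircuitGraphSparsityDefn} to force each union to equal $V-v$, derive (d) set-theoretically, and get (b) by an edge count over $R^{*},S^{*},T^{*}$ (if $R\cap S\cap T=\emptyset$ then, using (a) and (c), $E(G-v)$ splits over the three complements with no cross edges, giving at most $2|V-v|-6$ edges against the $2|V-v|-2$ available; this works). For a strong flower everything goes through, because Lemma \ref{lem:CriticalUnions}\ref{part:CriticalUnions_BothMixed} needs only a non-empty intersection, and $t\in R\cap S$, $r\in S\cap T$, $s\in R\cap T$ are guaranteed by the definition.

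The genuine gap is in the weak-flower case, and it is not the type-compatibility issue you flag at the end but the cardinality hypothesis of Lemma \ref{lem:CriticalUnions}\ref{part:CriticalUnions_MixedPure}: that part requires $|X\cap Y|\geq 2$, whereas the definition of a flower guarantees only the single vertex $s$ in $R\cap T$ (and $r$ in $S\cap T$). Your parenthetical claim that $\{r,t\}\subseteq R\cap S$ is false, since $r\notin R$ by definition; the same oversight is what leaves $|R\cap T|\geq 2$ unjustified. This matters: if $R\cap T=\{s\}$, the direct count gives only $i\bigl((R\cup T)+v\bigr)\geq (2|R|-2)+(2|T|-3)+3=2|(R\cup T)+v|-2$, which violates nothing, so the union argument stalls. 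The repair is to order the steps: first get $R\cup S=V-v$ and $d(R,S)=0$ from the mixed--mixed pair; then note $r\in (S-R)\cap T$ and $s\in (R-S)\cap T$, that $G[T]$ is connected by Lemma \ref{lem:CriticalSet}\ref{part:CriticalSet_2EdgeConnected} (the exceptional case $|T|=2$, $i(T)=1$ would place an $rs$-edge between $R-S$ and $S-R$, contradicting $d(R,S)=0$), and that $d(R,S)=0$ forces every $r$--$s$ path in $G[T]$ through $R\cap S$. Hence $T\cap R\cap S\neq\emptyset$ and $|R\cap T|,|S\cap T|\geq 2$, after which Lemma \ref{lem:CriticalUnions}\ref{part:CriticalUnions_MixedPure} applies and the rest of your outline is correct.
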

 
\begin{lem}\label{lem:Clover}
Let $G=(V;D,L)$ be a mixed circuit and let $v$ be a pure node of $G$ with neighbourhood $\{r,s,t\}$. Suppose $(R,S,T)$ is a clover on $v$ with $R$ mixed critical. Then
\begin{enumerate}
	\item $|R|\geq 3$,\label{part:Clover_R} 
	\item $R\cup S\cup T =V-v$,\label{part:Clover_V-v} 
	\item $|R\cap S|=|S\cap T|=|R\cap T|=1$,\label{part:Clover_PairsIntersectInSingleVertex} 
	\item $R\cap S\cap T=\emptyset$,\label{part:Clover_TripleIntersectionEmpty}
	\item $d(R,S,T)=0$, and \label{part:Clover_RSTCoverAllEdges}
	\item $(G[R],G[S\cup T+ v]-E(R))$ is an unbalanced, edge-disjoint 2-separation of $G$ on 2-vertex cut $\{s,t\}$ with $G[S\cup T+v]-E(R)$ pure.\label{part:Clover_Unbalanced2Separation}
\end{enumerate}
\end{lem}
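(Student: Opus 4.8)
The plan is to carry out the whole argument inside the graph $G-v$. First I would observe that $G-v$ is an independent mixed graph: every proper subset of the edge set of the circuit $G$ is independent, and (taking $v$ to be a direction node, which we may by the direction/length symmetry in the hypothesis) the at least two length edges that any mixed circuit has survive the deletion of $v$, while $S$, being a direction critical set of $G-v$, certifies a direction edge there. So Lemmas~\ref{lem:CriticalUnions}, \ref{lem:CriticalUnionsMPP} and \ref{lem:CriticalSet} all apply in $G-v$, with $R$ mixed critical and $S,T$ direction critical. I would also record the trivial memberships $t\in R\cap S$, $s\in R\cap T$, $r\in S\cap T$ together with $r\notin R$, $s\notin S$, $t\notin T$, and the following \emph{reattachment principle}: if $W$ is a critical set of $G-v$ with $N(v)\subseteq W$, then $G[W+v]$ has $i_{G-v}(W)+3$ edges; for $W$ mixed critical this equals $2|W+v|-1$, too many for a proper induced subgraph of the circuit $G$ by Lemma~\ref{lem:IndependenceSparsity}, so $W=V-v$; for $W$ direction critical it gives $i_D(W+v)=2|W+v|-2$, which contradicts the sparsity of the circuit $G$ (Lemmas~\ref{lem:IndependenceSparsity} and \ref{lem:MixedCircuitGraphSparsityDefn}). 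In particular no direction critical set of $G-v$ contains $N(v)$, and any mixed critical one that does must equal $V-v$.

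The heart of the proof is part~(\ref{part:Clover_PairsIntersectInSingleVertex}), which I would establish by ruling out over-large pairwise intersections. If $|S\cap T|\ge 2$, then by Lemma~\ref{lem:CriticalUnions}(\ref{part:CriticalUnions_BothPure}) either $S\cup T$ is direction critical, which is impossible as it contains $N(v)$, or $d(S,T)=1$ and $S\cup T$ is mixed critical with $i_L(S\cup T)=i(S\cup T)-i_D(S\cup T)=1$; in the latter case the reattachment principle gives $S\cup T=V-v$, so $G$ would have only one length edge, a contradiction. Next, if $|R\cap S|\ge 2$, then by Lemma~\ref{lem:CriticalUnions}(\ref{part:CriticalUnions_MixedPure}), $R\cup S$ is mixed critical (hence $=V-v$), $R\cap S$ is direction critical, and $d(R,S)=0$, so $r\in S\setminus R$, $s\in R\setminus S$, and $T\subseteq R\cup S$. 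If $|T|=2$, the direction edge of $T=\{r,s\}$ joins $S\setminus R$ to $R\setminus S$, contradicting $d(R,S)=0$; otherwise $G[T]$ is $2$-edge-connected by Lemma~\ref{lem:CriticalSet}(\ref{part:CriticalSet_2EdgeConnected}), so an $r$--$s$ path in $G[T]$ — which can use no edge of $G-v$ between $S\setminus R$ and $R\setminus S$ — must pass through $R\cap S$, giving $R\cap S\cap T\ne\emptyset$ and hence $|S\cap T|\ge 2$, contradicting the first case. So $|R\cap S|=1$, and by the symmetry exchanging $(S,s)$ with $(T,t)$ also $|R\cap T|=1$; since $S\cap T\supseteq\{r\}$, the same first case (now with $|S\cap T|\ge2$ ruled out) gives $|S\cap T|=1$.

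Granting part~(\ref{part:Clover_PairsIntersectInSingleVertex}), the rest is bookkeeping. Part~(\ref{part:Clover_TripleIntersectionEmpty}) follows since $R\cap S=\{t\}$ and $t\notin T$. Lemma~\ref{lem:CriticalUnionsMPP} now applies verbatim with mixed critical set $R$ and pure critical sets $S,T$, giving $d(R,S,T)=0$, which is part~(\ref{part:Clover_RSTCoverAllEdges}), and that $R\cup S\cup T$ is mixed critical; by the reattachment principle $R\cup S\cup T=V-v$, which is part~(\ref{part:Clover_V-v}). For part~(\ref{part:Clover_R}): inclusion--exclusion gives $|R|+|S|+|T|=|V-v|+3$, so $i(R)+i(S)+i(T)=(2|R|-2)+(2|S|-3)+(2|T|-3)=2|V-v|-2=|E(G-v)|$; as the three sets meet pairwise in single vertices their edge sets are disjoint, so $E(G-v)=E(R)\sqcup E(S)\sqcup E(T)$, forcing every length edge of $G$ into $G[R]$, whence $i_L(R)=|L|\ge 2$ and $|R|\ge 3$. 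Finally, for part~(\ref{part:Clover_Unbalanced2Separation}), put $H_1=G[R]$ and $H_2=G[S\cup T+v]-E(R)$: then $V(H_1)\cap V(H_2)=R\cap(S\cup T)=(R\cap S)\cup(R\cap T)=\{s,t\}$, $|V(H_1)|=|R|\ge 3$, $|V(H_2)|=|S\cup T|+1\ge 3$, $H_1\cup H_2=G$, and $H_1,H_2$ are edge-disjoint, so $(H_1,H_2)$ is an edge-disjoint $2$-separation of $G$ on $\{s,t\}$; $H_2$ is direction pure because every length edge of $G$ lies in $E(R)$, and $E(H_1)-E_G(\{s,t\})$ contains a length edge (as $i_L(R)\ge 2$) while $E(H_2)-E_G(\{s,t\})$ does not, so the separation is unbalanced.

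The step I expect to be the main obstacle is the one proving part~(\ref{part:Clover_PairsIntersectInSingleVertex}). The mechanism is clear — use Lemma~\ref{lem:CriticalUnions} to enlarge any over-sized intersection into a critical set which, after reattaching $v$, is too dense to sit inside the circuit $G$ unless it already spans $V-v$, and then exploit that a spanning mixed critical set assembled from two direction-pure pieces cannot accommodate the $\ge 2$ length edges of a mixed circuit — but routing the ``$|R\cap S|\ge 2$'' case into the already-settled ``$|S\cap T|\ge 2$'' case through a path in the $2$-edge-connected $G[T]$, and disposing of the degenerate $|T|=2$ possibility, is where the care is needed. Once part~(\ref{part:Clover_PairsIntersectInSingleVertex}) is in hand, everything else rests directly on Lemma~\ref{lem:CriticalUnionsMPP} and elementary counting.
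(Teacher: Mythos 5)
Your proof is correct and follows essentially the same route as the paper's (which itself adapts the proof of Lemma 4.3 of \cite{JJ_MixedCircuits}): first eliminate $|S\cap T|\geq 2$ via the sparsity violation created by reattaching $v$ to $S\cup T$, then eliminate $|R\cap S|\geq 2$ using Lemma \ref{lem:CriticalUnions} together with the connectivity of $G[T]$ from Lemma \ref{lem:CriticalSet}, and finish with Lemma \ref{lem:CriticalUnionsMPP} and an edge count locating all length edges inside $R$. Your explicit treatment of the degenerate $|T|=2$ case and the edge-partition count are slightly more careful than the paper's wording, but the argument is the same.
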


\begin{proof} 
This proof closely follows that of Lemma 4.3 in \cite{JJ_MixedCircuits}. Let $v$ be a node of type $P\in\{D,L\}$. First assume $|S\cap T|\geq 2$. Then Lemma \ref{lem:CriticalUnions}\ref{part:CriticalUnions_BothPure} implies $i_P(S\cup T)=2|S\cup T|-3$. Since $N_G(v)\subseteq S\cup T$, we know $G[S\cup T +v]$ contains a pure circuit of type $P$, contradicting that $G$ is a mixed circuit. Hence $|S\cap T|=1$, and more specifically, $S\cap T = \{r\}$.

Instead, assume $|R\cap S|\geq 2$. Lemma \ref{lem:CriticalUnions}\ref{part:CriticalUnions_MixedPure} implies $R\cup S$ is mixed critical with $d(R,S)=0$. Since $N_G(v)\subseteq R\cup S$, we know $G[R\cup S +v]$ contains a circuit. Hence $R\cup S = V-v$. Since $r,s\in T$ and $S\cap T = \{r\}$, we have that $T$ intersects both $R-S$  and $S-R$, but does not intersect $R\cap S$. But nor are there any edges from $R-S$ to $S-R$, since $d(R,S)=0$. This implies $T$ is disconnected in $G$, contradicting Lemma \ref{lem:CriticalSet}\ref{part:CriticalSet_2EdgeConnected}. So our assumption is false, and $R\cap S=\{t\}$. A similar argument gives $R\cap T=\{s\}$. Thus proving parts \ref{part:Clover_PairsIntersectInSingleVertex} and \ref{part:Clover_TripleIntersectionEmpty}.

Lemma \ref{lem:CriticalUnionsMPP} now implies $d(R,S,T)=0$ (part \ref{part:Clover_RSTCoverAllEdges}) and that $R\cup S\cup T$ is mixed critical. Which in turn implies that $R\cup S\cup T + v$ is dependent, and hence $R\cup S\cup T = V-v$. Thus proving part \ref{part:Clover_V-v}.

We now consider part \ref{part:Clover_R}. Since $G$ is a mixed circuit, it contains at least two edges of opposite type to $P$. But $S$ and $T$ only induce edges of type $P$, and $d(R,S,T)=0$, so  this implies all such edges must be induced by $R$. Hence $|R|\geq 3$, as required.

Finally, since $|S\cup T|\geq 3$, $|R|\geq 3$, $R\cap (S\cup T)=\{s,t\}$ and $d(R,S,T)=0$, we must have that $\{s,t\}$ is the 2-vertex-cut of the edge-disjoint 2-separation $(G[R],G[S\cup T+v]-E(R))$ where $G[S\cup T + v]-E(R)$ is pure. Hence proving part \ref{part:Clover_Unbalanced2Separation}.
\end{proof}

\begin{figure}
\centering
\includegraphics{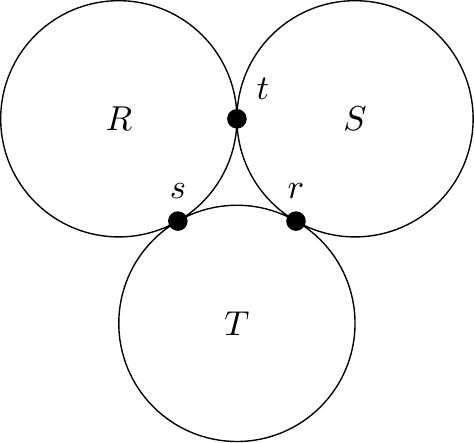}
\caption{Clover formed from vertex sets $R,S$ and $T$.}
\label{fig:Clover}
\end{figure}

Using these properties, we can determine when mixed and pure nodes of a circuit are admissible:

\begin{lem}\label{lem:NonAdmissibleMixedNode}\cite[Lemma 4.5]{JJ_MixedCircuits}
Let $G=(V;D,L)$ be a mixed circuit such that $G\not\in\{K_3^+,K_3^-\}$, and let $v$ be a mixed node of $G$. Then exactly one of the following hold:
\begin{enumerate}
  \item $v$ is admissible,
  \item $v$ has exactly two neighbours $x$ and $y$ and there exists a length critical set $R$ and a direction critical set $S$ with $R\cap S=\{x,y\}$, $R\cup S=V-v$, $d(R,S)=0$ and $i(R\cap S)=0$, or
  \item there is a strong flower on $v$ in $G$.
\end{enumerate}
\end{lem}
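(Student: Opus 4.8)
The plan is to mirror the structure of Jackson and Jord\'an's proof of Lemma 4.5 in \cite{JJ_MixedCircuits}, adapting it to the case of a mixed node. First I would set up the three candidate 1-reductions at $v$: if $v$ has three distinct neighbours $r,s,t$, these are the reductions onto $rs$, $rt$ and $st$, and the type of the new edge is forced (it must be mixed-compatible, i.e. the 1-reduction can add an edge of either type since $v$ is mixed, so we may choose whichever type is convenient). If $v$ has only two distinct neighbours $x,y$, then since $d_G(v)=3$ there is a double edge, necessarily one direction and one length edge (as $G$ has no parallel edges of the same type), and the single available reduction adds an $xy$-edge. I would then observe, as in the discussion preceding the lemma, that a 1-reduction $G^{xy}_v$ fails to be a (necessarily mixed, by Lemma~\ref{lem:RigidCircuits} and Lemma~\ref{lem:MixedCircuitPreservedBy1extensions} run backwards) circuit \emph{if and only if} there is a critical set $Z\subseteq V-v$ with $\{x,y\}\subseteq Z$ which is either mixed critical, or pure critical of the type of the added edge; call such a $Z$ a \emph{blocker} for the reduction onto $xy$. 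The mutual exclusivity of the three alternatives then has to be checked: (b) forces $v$ to have two neighbours while (c) forces three, and (a) is by definition incompatible with $v$ being non-admissible, so at most one holds; the substance is showing at least one holds.

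Next I would assume $v$ is non-admissible and extract the blockers. In the two-neighbour case there is one reduction, blocked by some critical set $R'$ with $\{x,y\}\subseteq R'$; since adding a double edge's worth of constraint is impossible and $|E(G)|=2|V|-1$, a short count ($i(R')=2|R'|-2$ or $2|R'|-3$) combined with the fact that $G-v+ xy$ would otherwise be a circuit should pin down that the blocker configuration is exactly the one in alternative (b): one needs a length critical $R$ and a direction critical $S$ with $R\cap S=\{x,y\}$. The key point is that a \emph{single} mixed critical blocker $Z$ would make $G-v$ too dense after reattaching the two ``halved'' edges at $x$ and $y$; I would argue that non-admissibility in fact forces the presence of \emph{both} a length and a direction critical set through $\{x,y\}$ (otherwise one could pick the added edge's type to avoid the lone pure blocker and the mixed case is handled by a count), and then Lemma~\ref{lem:CriticalUnions}(d) gives $|R\cap S|=2$, $R\cup S$ mixed critical hence equal to $V-v$, and $d(R,S)=0$; that $i(R\cap S)=0$ follows because an edge on $\{x,y\}$ together with $R$ or $S$ would violate sparsity. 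In the three-neighbour case, non-admissibility gives a blocker $Z_{rs},Z_{rt},Z_{st}$ for each of the three reductions, each containing the corresponding pair of neighbours. Taking $T=Z_{rs}$, $S=Z_{rt}$, $R=Z_{st}$, each is critical and each avoids one neighbour of $v$ (e.g. $r\notin R$, else $G[R]+v$ would be a proper dependent subgraph since $R$ is critical and contains all of $N_G(v)$ — contradicting that $G$ is a circuit), so $(R,S,T)$ is a candidate flower or clover configuration. Because $v$ is a \emph{mixed} node, each added edge may be taken of either type, so none of the blockers is forced to be pure; I would argue each blocker can be chosen mixed critical, yielding a strong flower, which is alternative (c).

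The main obstacle, I expect, is the last step just described: ruling out that some blocker is \emph{only} available as a pure critical set and cannot be upgraded to a mixed one, i.e. genuinely showing a \emph{strong} (not merely weak) flower appears. The point is that if, say, the reduction onto $st$ adding a direction edge is blocked only by a direction-critical $R$ (and no mixed critical set contains $\{s,t\}$), then one should instead add a length edge on $st$: this new reduction is also a legitimate 1-reduction at the mixed node $v$, and its blocker must be mixed critical or length critical through $\{s,t\}$. Iterating this ``switch the added type'' trick, together with the union lemmas (Lemma~\ref{lem:CriticalUnions}, especially part (d) which merges a length-critical and a direction-critical set into a mixed-critical one), should force a mixed-critical blocker for at least one choice at each pair, giving the strong flower. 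Handling the bookkeeping here — making sure the three blockers can be chosen \emph{simultaneously} compatible and mixed — and excluding the degenerate small cases that reduce to $K_3^+$ or $K_3^-$ (which is why these are excluded in the hypothesis) is where the care is needed; the geometric/rigidity input is entirely encapsulated in the sparsity characterisation (Lemma~\ref{lem:MixedCircuitGraphSparsityDefn}) and the critical-set union lemmas, so no new rigidity arguments are required beyond what is quoted.
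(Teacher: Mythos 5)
This lemma is not proved in the paper at all: it is imported verbatim from \cite[Lemma 4.5]{JJ_MixedCircuits}, so there is no in-paper argument to compare against. Judged on its own terms, your framework is the right one and matches the standard approach: non-admissibility of a 1-reduction onto $xy$ adding an edge of type $P$ is equivalent to the existence of a ``blocker'', i.e.\ a critical set $Z\subseteq V-v$ through $\{x,y\}$ that is either mixed critical and proper or pure critical of type $P$; the two-neighbour case then works exactly as you say (a mixed critical blocker through both neighbours would force $Z+v$ to be a proper dependent subgraph, so both blockers are pure, and Lemma~\ref{lem:CriticalUnions}(d) delivers alternative (b), with $i(\{x,y\})=0$ because an $xy$-edge would contradict the purity of $R$ or $S$).

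The genuine gap is in the three-neighbour case, at exactly the step you flag as ``where the care is needed''. For a pair, say $\{s,t\}$ with third neighbour $r$, your type-switching argument yields either a mixed critical proper blocker (fine: such a set cannot contain all of $N(v)$, hence avoids $r$ and is a valid petal), or a direction-critical $Z_D$ and a length-critical $Z_L$ both through $\{s,t\}$, whose union Lemma~\ref{lem:CriticalUnions}(d) makes mixed critical. But that union is a valid petal only if it avoids $r$; if $r\in Z_D\cup Z_L$ then the union contains all of $N(v)$, hence must equal $V-v$, and it cannot play the role of $R$ in a strong flower (which requires $R\subseteq V-\{v,r\}$). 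You give no argument ruling this case out or converting it into one of the listed alternatives, and it has real content: for instance, if $r\in Z_D-Z_L$ then $d(Z_D,Z_L)=0$ and $i_L(Z_D)=0$ force every non-$v$ edge at $r$ to be a direction edge, which kills all length-critical blockers through $r$ and forces the $\{r,s\}$- and $\{r,t\}$-reductions to be blocked by mixed critical sets; one must then run a further union/contradiction analysis to recover a third mixed critical petal for $\{s,t\}$ (or a contradiction with non-admissibility). A related small inaccuracy is your claim that ``each blocker avoids one neighbour of $v$'': the density argument you give is valid for mixed critical blockers but fails for pure critical ones (a pure critical $Z\supseteq N(v)$ only makes $Z+v$ mixed critical, which violates nothing). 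Until the case $r\in Z_D\cup Z_L$ is closed, the reduction of the three-neighbour case to a \emph{strong} flower is unproven, and this is the substantive part of the cited proof.
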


\begin{lem}\label{lem:NonAdmissiblePureNode}\cite[Lemmas 4.4, 4.7]{JJ_MixedCircuits}
Let $G=(V;D,L)$ be a mixed circuit and let $v$ be a pure node of $G$. Then exactly one of the following hold:
\begin{enumerate}
	\item $v$ is admissible,
	\item there is a strong or weak flower on $v$ in $G$, or
	\item there is a clover on $v$ in $G$.
\end{enumerate}
\end{lem}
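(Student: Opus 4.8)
The plan is to reduce everything to a sparsity analysis of the critical sets of $G$ that obstruct a $1$-reduction at $v$. Write $P$ for the type of $v$ and $P'$ for the opposite type. Since $v$ has degree three it has three distinct neighbours $r,s,t$, and no two of them are joined by a type-$P$ edge (such an edge would close a type-$P$ triangle through $v$, i.e.\ a pure subcircuit of the mixed circuit $G$, which is impossible). Hence the only $1$-reductions at $v$ add a type-$P$ edge on a pair from $\{r,s,t\}$, and $v$ is admissible precisely when one of them produces a mixed circuit.

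The first step is the non-admissibility criterion. For the reduction onto $st$, the graph $G_v^{st}$ is mixed and has $2|V-v|-1$ edges, so it is dependent by Lemma~\ref{lem:IndependenceSparsity}; thus it is a mixed circuit unless some $Z$ with $\{s,t\}\subseteq Z\subsetneq V-v$ makes $G[Z]+st$ dependent. As $G[Z]$ is independent and $G$ has no $st$-edge of type $P$, a minimal such $Z$ must be mixed critical or pure critical of type $P$; and $r\notin Z$, since otherwise, all of $v$'s neighbours lying in $Z$, the subgraph $G[Z\cup\{v\}]$ would be a dependent proper subgraph of the circuit $G$ (comparing $i$ or $i_P$ against Lemma~\ref{lem:IndependenceSparsity}). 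Running this for each reduction, $v$ is non-admissible if and only if $G-v$ has critical sets $R,S,T$, each mixed critical or pure critical of type $P$, with $\{s,t\}\subseteq R\subseteq V-\{v,r\}$, $\{r,t\}\subseteq S\subseteq V-\{v,s\}$ and $\{r,s\}\subseteq T\subseteq V-\{v,t\}$. Conversely any one such set obstructs its reduction (a mixed critical set then violates $i\le 2|X|-2$, a type-$P$ pure critical set violates $i_P\le 2|X|-3$), which gives the implications $(2)\Rightarrow\neg(1)$ and $(3)\Rightarrow\neg(1)$ at once.

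The heart of the proof is to rule out the case that $R,S,T$ are all pure critical of type $P$. If some pair, say $R,S$, satisfies $|R\cap S|\ge 2$, then Lemma~\ref{lem:CriticalUnions}\ref{part:CriticalUnions_BothPure} makes $R\cup S$ either pure critical of type $P$ or mixed critical; in both cases $R\cup S\supseteq\{r,s,t\}$, and comparing $i$ and $i_P$ on $R\cup S$, on $R\cup S\cup\{v\}$, and on $V$ with the circuit bounds of Lemma~\ref{lem:MixedCircuitGraphSparsityDefn} yields a contradiction. So $R\cap S$, $S\cap T$, $R\cap T$ are single vertices and $R\cap S\cap T=\emptyset$; hence $E(R),E(S),E(T)$ are pairwise disjoint, and inclusion--exclusion on vertices gives $i(R\cup S\cup T)\ge 2|R\cup S\cup T|-3$. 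Since every neighbour of $v$ lies in $R\cup S\cup T$, the set $U:=R\cup S\cup T\cup\{v\}$ has $i(U)=i(R\cup S\cup T)+3\ge 2|U|-2$; a short count on $V$ shows $U\ne V$ (otherwise the mixed-circuit bounds of Lemma~\ref{lem:MixedCircuitGraphSparsityDefn} fail, or $G$ has fewer than two edges of type $P'$), so $G[U]$ is independent and all the inequalities are tight. But then every edge of $G[U]$ has type $P$, so $i_P(U)=i(U)=2|U|-2>2|U|-3$, contradicting independence. Hence at least one of $R,S,T$ is mixed critical, and, relabelling $r,s,t$ appropriately, $(R,S,T)$ is a strong flower (three mixed), a weak flower (two mixed), or a clover (one mixed); this gives $\neg(1)\Rightarrow((2)\text{ or }(3))$.

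Finally, $(2)$ and $(3)$ cannot hold together. A clover $(R,S,T)$ confines, by Lemma~\ref{lem:Clover}, every $P'$-edge of $G$ to $G[R]$ and forces $R,S,T$ to be pairwise almost disjoint with $R\cup S\cup T=V-v$, whereas a flower supplies, by Lemma~\ref{lem:Flowers}, two mixed critical sets $R',S'$ with $R'\cup S'=V-v$, $R'\cap S'\cap T'\ne\emptyset$ and $d(R',S')=0$; as $R'$ and $S'$ each contain a $P'$-edge they each meet $R$ in at least two vertices, and pushing this through the sparsity bounds of Lemma~\ref{lem:MixedCircuitGraphSparsityDefn}, together with $R'\cup S'=V-v$ and $d(R',S')=0$, gives a contradiction. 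I expect the main obstacle to be the elimination of the all-pure configuration in the third paragraph: although each step is a sparsity count, one has to select the right auxiliary set ($R\cup S$, $R\cup S\cup\{v\}$, or $U$) in each branch and keep precise track of which critical sets are mixed and where the $P'$-edges lie. The flower-versus-clover exclusivity is of the same flavour but should be considerably shorter given Lemmas~\ref{lem:Flowers} and~\ref{lem:Clover}.
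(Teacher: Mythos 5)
The paper does not prove this lemma---it is quoted from Jackson and Jord\'an \cite{JJ_MixedCircuits}---so your argument has to stand on its own. The strategy (characterise non-admissibility by three obstructing critical sets, then use Lemma~\ref{lem:CriticalUnions} and sparsity counts to show they cannot all be pure of type $P$) is the right one, and your third paragraph, which eliminates the all-pure configuration, is correct as far as I can check. But there are two genuine defects. The first is your opening claim that no two neighbours of $v$ are joined by a type-$P$ edge: both the claim and its justification are false. A type-$P$ triangle has $i_P=3=2\cdot 3-3$ and is therefore independent, not a pure subcircuit (the smallest pure circuit is $K_4$); and, concretely, a direction-pure $1$-extension of $K_3^-$ on the edge $ab$ with third vertex $c$ produces a mixed circuit whose new pure node has two neighbours joined by a direction edge. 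Your non-admissibility criterion happens to survive---when $st$ is already a type-$P$ edge the reduction onto $st$ is simply unavailable and $\{s,t\}$ is itself the required pure critical set of type $P$---but as written your derivation rests on a false statement and silently excludes a case that actually occurs.

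The second defect is that the mutual exclusivity of flower and clover, which is what makes the trichotomy an ``exactly one'', is asserted rather than proved: ``pushing this through the sparsity bounds\ldots gives a contradiction'' is the entire content of the step, and the ingredients you name ($R'\cup S'=V-v$ and $d(R',S')=0$, which are internal to the flower) do not visibly produce the contradiction. A route that does work: the flower's set $S'$ is mixed critical and contains $r$; its $P'$-edge lies in $G[R]$ by Lemma~\ref{lem:Clover}, so $|R\cap S'|\geq 2$ and Lemma~\ref{lem:CriticalUnions}\ref{part:CriticalUnions_BothMixed} makes $R\cup S'$ mixed critical with $d(R,S')=0$; since $R\cup S'\supseteq\{r,s,t\}=N(v)$, adjoining $v$ gives a dependent set, which forces $R\cup S'=V-v$ and hence $S'\supseteq (V-v)-R$; but then $s\in R-S'$ has every neighbour other than $v$ inside $R$, contradicting the fact that $G[T]$ is connected (Lemma~\ref{lem:CriticalSet}\ref{part:CriticalSet_2EdgeConnected}) with $T\cap R=\{s\}$ and $|T|\geq 2$. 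Without some such argument the exclusivity half of the statement is not established.
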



In the next section, we shall extend these results to all $\M$-connected mixed graphs by identifying when such a graph contains an admissible node in the last lobe of its ear decomposition. Lemmas \ref{lem:MixedMConnectedHasDecompIntoMixedCircuits} and \ref{lem:EarDecompRules}\ref{part:EarDecompRules_XMixedCritical} ensure that the last circuit, $C_m$, in the ear decomposition of such a graph, $G$, is mixed, and that the lobe is either a single edge, or has vertex set $V(G[C_m])-X$ for some mixed critical set $X$ in $V(G[C_m])$. So this problem reduces to describing when a mixed circuit $H$, with mixed critical set $X\subset V(H)$, has an admissible node in $V(H)-X$. In the remainder of this section, we obtain a result which will help us to identify such a node.

Let $G=(V;D,L)$ be a circuit and $v$ be a node in $G$ with $N(v)=\{x,y,z\}$. If $X$ is a critical set with $\{y,z\}\subseteq X\subseteq V-\{v,x\}$ then $X$ is called a \emph{$v$-critical set}. If, in addition, $d(x)\geq 4$ then $X$ is called \emph{$v$-node-critical}.

\begin{thm}\label{thm:LargerCriticalSetOrUnbalanced2Sep}
Let $G=(V;D,L)$ be a mixed circuit and let $X$ be a mixed critical set in $V$. Suppose that either
\begin{enumerate}
  \item there is a non-admissible series node $u$ of $G$ in $V-X$ with exactly one neighbour $r$ in $X$, and $r$ is a node, or \label{part:LargerCriticalSetOrUnbalanced2Sep_SeriesNode}
  \item there is a non-admissible leaf node $u$ of $G$ in $V-X$ with $|N(u)\cap X|\leq 1$.\label{part:LargerCriticalSetOrUnbalanced2Sep_LeafNode}
\end{enumerate}
Then either there exists a mixed node-critical set $X^{*}$ with $X^{*}\supset X$, or there exists an edge-disjoint 2-separation $(H_{1},H_{2})$ of $G$ with $X\subseteq V(H_{1})$ and $H_{2}$ pure.
\end{thm}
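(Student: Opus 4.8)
The plan is to exploit the node $u$ in $V-X$ and its non-admissibility via Lemmas~\ref{lem:NonAdmissibleMixedNode} and~\ref{lem:NonAdmissiblePureNode}, which tell us that each non-admissible node carries a flower or clover. Write $N(u)=\{r,s,t\}$ (with $r$ the neighbour in $X$ in case~\ref{part:LargerCriticalSetOrUnbalanced2Sep_SeriesNode}, and at most one neighbour in $X$ in case~\ref{part:LargerCriticalSetOrUnbalanced2Sep_LeafNode}). First I would split according to which certificate of non-admissibility $u$ has. If $u$ is a mixed node, non-admissibility gives either a strong flower $(R,S,T)$ on $u$, or (when $u$ has only two neighbours) a length-critical $R$ and direction-critical $S$ with $R\cap S=N(u)$, $R\cup S=V-u$, $d(R,S)=0$. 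If $u$ is a pure node, Lemma~\ref{lem:NonAdmissiblePureNode} gives a strong/weak flower or a clover. In the clover case, Lemma~\ref{lem:Clover}\ref{part:Clover_Unbalanced2Separation} already produces an edge-disjoint 2-separation of $G$ whose one side is pure; the only work is to check which side of that 2-separation contains $X$, using that $u\notin X$ and that $X$ meets $\{s,t\}$ in at most one vertex — but since $X$ is mixed critical and hence 2-edge-connected (Lemma~\ref{lem:CriticalSet}\ref{part:CriticalSet_2EdgeConnected}), $X$ cannot straddle the cut $\{s,t\}$ with edges on both sides, so $X\subseteq V(H_1)$ for the appropriate side, giving the second conclusion.

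The heart of the argument is the flower cases, where I want to produce the enlarged mixed node-critical set $X^{*}\supset X$. The idea is to combine $X$ with the critical sets supplied by the flower. Since $u\in V-X$ and $r\in X$ (case~\ref{part:LargerCriticalSetOrUnbalanced2Sep_SeriesNode}) or $|N(u)\cap X|\le 1$ (case~\ref{part:LargerCriticalSetOrUnbalanced2Sep_LeafNode}), at least two of $r,s,t$ lie outside $X$; say $s,t\notin X$ in case~\ref{part:LargerCriticalSetOrUnbalanced2Sep_LeafNode}. One of the flower sets — the one whose "missing" vertex (the $W^{*}=(V-u)-W$ complement from Lemma~\ref{lem:Flowers}) is a neighbour of $u$ lying in $X$ — will contain the other two neighbours and, I claim, will intersect $X$ in at least two vertices, which by Lemma~\ref{lem:CriticalUnions} lets me take the union with $X$ and stay mixed critical (parts~\ref{part:CriticalUnions_BothMixed} and~\ref{part:CriticalUnions_MixedPure}; and part~\ref{part:CriticalUnions_BothPure} or the final part when the flower set is pure, where one must argue the mixed-critical outcome holds because the union still contains $N(u)$ hence a circuit and cannot remain pure-critical). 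Getting the intersection to have size $\ge 2$ is where I expect to use the node hypotheses most carefully: in case~\ref{part:LargerCriticalSetOrUnbalanced2Sep_SeriesNode} the hypothesis that $r$ is itself a node and $u$ is a series node forces a second vertex of the relevant flower set into $X$ (otherwise one gets a small separator contradicting 2-connectivity of $X$ or of $G$ via Lemma~\ref{lem:Circuits2Connected}); in case~\ref{part:LargerCriticalSetOrUnbalanced2Sep_LeafNode}, $|N(u)\cap X|\le 1$ together with the partition structure of Lemma~\ref{lem:Flowers}\ part~4 pins down which $W^{*}$ can meet $X$.

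Once $X^{*}:=X\cup W$ (for the appropriate flower set $W$) is shown mixed critical, it remains to verify it is \emph{node}-critical, i.e.\ that some vertex $x$ with $\{y,z\}=N(u)\cap$(something) — more precisely, that the relevant cut vertex playing the role of "$x$" in the definition of $v$-node-critical has degree $\ge 4$ in $G$. This should follow because $u$ contributes its edges, and the flower structure forces the candidate vertex to have degree at least $3$ within the rest of $G$ as well; I would extract this from $d(R,S)=d(S,T)=d(R,T)=0$ and a count of edges at that vertex inside the two critical sets whose intersection it lies in. The main obstacle, as flagged, is the bookkeeping to guarantee $|X\cap W|\ge 2$ simultaneously with $X^{*}\ne V-u$ (so that it is a proper critical set and the 1-reduction at $u$ is genuinely obstructed) — and handling cleanly the subcase where the flower set $W$ is pure rather than mixed, where one must ensure the union with the mixed set $X$ lands back in the mixed-critical world rather than accidentally producing a pure-critical set that would not enlarge $X$ in the required sense. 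If any of these degenerate configurations instead collapses, I expect it to collapse into exactly the pure 2-separation alternative, so the dichotomy in the conclusion is what absorbs the edge cases.
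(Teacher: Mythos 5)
Your overall architecture matches the paper's: split on the certificate of non-admissibility supplied by Lemmas~\ref{lem:NonAdmissibleMixedNode} and~\ref{lem:NonAdmissiblePureNode}, let the clover yield the pure 2-separation, and let the flowers yield the enlarged critical set. The clover half is essentially right in spirit, though two details are glossed over: Lemma~\ref{lem:CriticalSet}\ref{part:CriticalSet_2EdgeConnected} gives only 2-\emph{edge}-connectivity of $G[X]$, which does not forbid a cut vertex, so ruling out $X$ straddling $\{s,t\}$ actually needs Lemma~\ref{lem:CriticalSet}\ref{part:CriticalSet_1sep} (the pure part of the would-be 1-separation of $G[X]$ cannot be mixed critical); and you must still argue $X$ lands on the \emph{mixed} side $R$, which follows because $X$, being mixed critical, induces an edge of opposite type to $u$ and that edge can only lie in $E(R)$.

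The genuine gap is your selection rule in the flower case. You propose to merge $X$ with the flower set $W$ whose excluded neighbour is the neighbour of $u$ lying in $X$. But then $X\cup W$ contains all three of $r,s,t$ (the excluded one via $X$, the other two via $W$), so $i(X\cup W+u)\geq 2|X\cup W+u|-1$, forcing $X\cup W=V-u$; this set is not $v$-node-critical for any node $v$ (a $v$-critical set must omit a neighbour of $v$), and it is useless in the intended application, where $V-X^{*}$ must contain a non-node. The correct choice is the opposite one: merge $X$ with a flower set that \emph{excludes} a neighbour of $u$ which is \emph{not} in $X$ and is \emph{not a node}. The existence of such a neighbour is precisely what the leaf-node/series-node hypotheses provide (a leaf node has at least two non-node neighbours; a series node with $r\in X$ a node has a third neighbour $t$ that is not a node), and the degree bound $d\geq 4$ on the excluded vertex comes from its being a non-node --- not, as you suggest, from an edge count using $d(R,S)=d(S,T)=d(R,T)=0$, which cannot prevent a neighbour of $u$ from genuinely having degree $3$. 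What remains, and is absent from your sketch, is showing the chosen set meets $X$ in at least two vertices so that Lemma~\ref{lem:CriticalUnions} applies: in case~\ref{part:LargerCriticalSetOrUnbalanced2Sep_SeriesNode} this uses that $r$ is a node, the 2-edge-connectivity of $G[T]$ (forcing $N(r)-\{u\}\subseteq T$) and the connectivity of $G[X]$; in case~\ref{part:LargerCriticalSetOrUnbalanced2Sep_LeafNode} it requires a case split on how $T$ and $t$ meet $X$, including a sub-case with $R$ pure and $R\cap X=\{t\}$ that is eliminated via $R\cup T=V-u$ and $d(R,T)=0$. Finally, the $|N(u)|=2$ case does not ``collapse into the pure 2-separation alternative'' as you hope: there both sides of the resulting 2-separation are pure while $X$ is mixed, so $X$ lies in neither side; the paper shows this configuration contradicts Lemma~\ref{lem:CriticalSet}\ref{part:CriticalSet_1sep} outright, so the case simply cannot occur.
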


\begin{proof}
Suppose $|N(u)|=2$. Then Lemma \ref{lem:NonAdmissibleMixedNode} implies $N(u)=\{r,s\}$ is the 2-vertex-cut of an unbalanced, edge-disjoint 2-separation $(H_{1},H_{2})$ of $G$ where $H_1$ is direction critical and $H_2$ is length critical. However $X$ is mixed critical, so $G[X]$ contains both length and direction edges, which implies that $X$ intersects both $V(H_1)-V(H_2)$ and $V(H_2)-V(H_1)$. We know, by Lemma \ref{lem:CriticalSet}\ref{part:CriticalSet_2EdgeConnected}, that $G[X]$ is connected, thus $X$ must intersect $V(H_1)\cap V(H_2)= \{r,s\}$. In both cases \ref{part:LargerCriticalSetOrUnbalanced2Sep_SeriesNode} and \ref{part:LargerCriticalSetOrUnbalanced2Sep_LeafNode} we know $|X\cap N(u)|\leq 1$, so $X$ must contain exactly one neighbour of $u$, say $r$. But then $\{r\}$ is the 1-vertex-cut of the unbalanced 1-separation $(G[X\cap V(H_{1})],G[X\cap V(H_{2})])$ of $X$, which contradicts Lemma \ref{lem:CriticalSet}\ref{part:CriticalSet_1sep}.

Thus we must have $|N(u)|=3$. Let $N(u)=\{r,s,t\}$ and suppose condition \ref{part:LargerCriticalSetOrUnbalanced2Sep_SeriesNode} holds. Then $N(u)\cap X = \{r\}$ and both $r$ and $s$ are nodes but $t$ is not. Since $u$ is non-admissible, Lemmas \ref{lem:NonAdmissibleMixedNode} and \ref{lem:NonAdmissiblePureNode} imply there exists a critical set $T$ such that $r,s\in T$ but $t,u\not\in T$. We know that $G[V_{3}]$ is a forest by Lemma \ref{lem:NodeForest}. So since $r,s$ and $u$ are nodes, and $ru,su\in E$, this implies $rs\not\in E$. Thus Lemma \ref{lem:CriticalSet}\ref{part:CriticalSet_2EdgeConnected} implies that $G[T]$ is 2-edge-connected with $|T|\geq 3$. Hence $\delta(G[T])\geq 2$.

Since $r\in X\cap T$ is a node and $u\notin X\cup T$, in order to satisfy the minimal degree condition for $G[T]$ we must have $N(r)-\{u\}\subseteq T$. But we also know $G[X]$ is connected with $|X|\geq 2$, so some member of $N(r)-\{u\}$ must also be contained in $X$. Hence $|X\cap T|\geq 2$ and thus, by Lemma \ref{lem:CriticalUnions}, $X^*=X\cup T$ is a mixed $u$-node-critical set with $X\cup T\supset X$ since $s\in T-X$.

We now consider case \ref{part:LargerCriticalSetOrUnbalanced2Sep_LeafNode}. Since $u$ is non-admissible with $\abs{N(u)}=3$, Lemmas \ref{lem:NonAdmissibleMixedNode} and \ref{lem:NonAdmissiblePureNode} imply that there is either a strong or weak flower on $u$, or, if neither of these occur, then there is a clover on $u$.

	\begin{claim}\label{clm:LargerCriticalSetOrUnbalanced2Sep_Flower}
	If there is a strong or a weak flower on $u$ then there exists a mixed node-critical set $X^{*}$ with $X^{*}\supset X$.
	\end{claim}
  
	\begin{proof}
  	Assume there exists a strong or weak flower on $u$ with critical sets $R,S$ and $T$ such that $\{s,t\}\subseteq R\subseteq V-\{r,u\}$, $\{r,t\}\subseteq S\subseteq V-\{s,u\}$ and $\{r,s\}\subseteq T\subseteq V-\{t,u\}$. Since $u$ is a leaf node, we can assume both $r$ and $t$ are not nodes. Lemma \ref{lem:Flowers} now implies at least one of $R$ and $T$ is mixed critical so, relabelling if necessary, we can assume $T$ is mixed critical.
  	
  	Suppose $T\cap X=\emptyset$. Since $T\cup R = V-u\supset X$, we must have $R\supseteq X$, and hence $X^*=R$ is a mixed $u$-node-critical set. Further, since $R\cap N(u)=\{s,t\}$ but at most one of $s$ and $t$ is in $X$, we have $R\supset X$ as required.
  	
  	We next suppose $|T\cap X|\geq 1$ and $t\not\in X$. By Lemma \ref{lem:CriticalUnions}\ref{part:CriticalUnions_BothMixed}, $X^*=T\cup X$ is a mixed $u$-node-critical set. Additionally, since both $r,s\in T$ and at most one of these is in $X$, we have that $X^*=T\cup X \supset X$ as required.
  	
  	It remains to consider the case where $|T\cap X|\geq 1$ and $t\in X$. Since $|X\cap N(u)|\leq 1$, this implies  $r,s\not\in X$ and $|R\cap X|\geq 1$. If either $|R\cap X|\geq 2$ or $R$ is mixed critical then Lemma \ref{lem:CriticalUnions} implies that $X^*=R\cup X$ is $u$-node-critical with $X\cup R\supset X$, since $s\in R-X$. So we may assume $R$ is pure critical with $R\cap X=\{t\}$. By Lemma \ref{lem:Flowers}, we know $R\cup T=V-u$ and $d(R,T)=0$. So since $t\in R-T$, these properties imply $N(t)-\{u\}\subseteq R$. Also, since $G[X]$ is connected and $t\in X$, we know $X$ must contain some member of $N(t)-\{u\}$. Hence $|R\cap X|\geq 2$, which contradicts our assumption.
  \end{proof}

  \begin{claim}\label{clm:LargerCriticalSetOrUnbalanced2Sep_Clover}
	If there is a clover on $u$, then there exists an edge-disjoint 2-separation $(H_{1},H_{2})$ of $G$ with $X\subseteq V(H_{1})$ and $H_{2}$ pure.
	\end{claim}
	
	\begin{proof}
	Suppose there is a clover on $u$. Then $u$ is pure, and Lemma \ref{lem:Clover} implies that there exists a mixed critical set $R$ and pure critical sets $S$ and $T$ of the same type as $u$ such that $(G[R],G[S\cup T + u]-E(R))$ is an edge-disjoint 2-separation of $G$ on 2-vertex-cut $\{ s,t\}$ where $G[S\cup T + u]-E(R)$ is pure. Since $X$ is mixed critical, $X$ contains an edge $e$ of opposite type to $u$. But $S$ and $T$ only induce edges of the same type as $u$, so we must have $e\in E(R)$. Hence $|X\cap R| \geq 2$.
 
  	Assume that $X$ contains some vertex in $(S\cup T)-R$. Since $X$ is connected, and $|X\cap N(u)|\leq 1$, this implies $X$ contains exactly one of the vertices in the 2-vertex-cut $\{s,t\}$. But then this vertex will be a 1-vertex-cut of the 1-separation $(G[X\cap R], G[X\cap (S\cup T)])$ of $G[X]$, which contradicts Lemma \ref{lem:CriticalSet}\ref{part:CriticalSet_1sep}. Hence $X\subseteq R$ and $(G[R],G[S\cup T + u]-E(R))$ is the edge-disjoint 2-separation of $G$ required.
  \end{proof}
  
Claims \ref{clm:LargerCriticalSetOrUnbalanced2Sep_Flower} and \ref{clm:LargerCriticalSetOrUnbalanced2Sep_Clover}, complete our proof of case \ref{part:LargerCriticalSetOrUnbalanced2Sep_LeafNode}.
\end{proof}

We can now use Theorem \ref{thm:LargerCriticalSetOrUnbalanced2Sep} to obtain our result on mixed critical sets.

\begin{thm}\label{thm:NodeCriticalSetInMixedCircuit_AdmNodeOrUnbalanced2Sep}
Let $G=(V;D,L)$ be a mixed circuit, and $X$ be a mixed critical set in $G$. Suppose $V-X$ contains a vertex which is not a node. Then either $V-X$ contains an admissible node, or there exists an edge-disjoint 2-separation $(H_{1},H_{2})$ of $G$ with $X\subseteq V(H_{1})$ and $H_{2}$ pure.
\end{thm}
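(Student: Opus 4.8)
The plan is to deduce this from Theorem~\ref{thm:LargerCriticalSetOrUnbalanced2Sep} by a maximality argument. Suppose that $V-X$ contains no admissible node and that $G$ has no edge-disjoint 2-separation $(H_1,H_2)$ with $X\subseteq V(H_1)$ and $H_2$ pure; I will derive a contradiction. Among all mixed critical sets $X'$ with $X'\supseteq X$ such that $V-X'$ contains a non-node and $V-X'$ has no admissible node, pick one, call it $X^*$, with $|V-X^*|$ minimum (equivalently $|X^*|$ maximum); note $X^*=V$ is impossible since $V-X$ is non-empty, and the set of candidates is non-empty since $X$ itself qualifies (if $V-X$ had an admissible node we would be done, and if a bad 2-separation existed we would be done). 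The idea is that $X^*$ is then a \emph{maximal} mixed (node-)critical set of this kind, so Theorem~\ref{thm:LargerCriticalSetOrUnbalanced2Sep} cannot produce a strictly larger one, forcing the 2-separation alternative and hence a contradiction.

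The key step is to locate a node $u$ in $V-X^*$ to which one of the two hypotheses of Theorem~\ref{thm:LargerCriticalSetOrUnbalanced2Sep} applies. Since $G$ is a mixed circuit, Lemma~\ref{lem:NodeForest} says the node subgraph $G[V_3]$ is a forest; and by Lemma~\ref{lem:MixedCircuitExclCriticalSetContainsNode}, $V-X^*$ contains at least one node. I would examine the forest $G[V_3]$ restricted to $V-X^*$. Because $V-X^*$ contains a non-node vertex but every node of $V-X^*$ is non-admissible, I want to find among the nodes of $V-X^*$ either a leaf node $u$ with $|N(u)\cap X^*|\le 1$, or a series node $u$ with exactly one neighbour $r$ in $X^*$ where $r$ is itself a node. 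The forest structure is what makes this possible: a leaf or an endpoint of the node subgraph has at most one of its three neighbours being a node, and careful bookkeeping of how the node subgraph meets $X^*$ (using that $X^*$ is mixed critical, hence $G[X^*]$ is $2$-edge-connected by Lemma~\ref{lem:CriticalSet}, and that $V-(X^*\cup N(\cdot))$ is non-empty because $V-X^*$ contains a non-node) should pin down such a $u$.

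Once such a $u$ is in hand, Theorem~\ref{thm:LargerCriticalSetOrUnbalanced2Sep} gives either a mixed node-critical set $X^{**}\supset X^*$ or an edge-disjoint 2-separation $(H_1,H_2)$ with $X^*\subseteq V(H_1)$ and $H_2$ pure. The latter immediately contradicts our standing assumption (since $X\subseteq X^*\subseteq V(H_1)$). In the former case, $X^{**}$ is again mixed critical, contains $X$, and — since $u$ is $X^{**}$-node-critical means $d_G(u)\ge 4$, but $u$ was a node, which is the contradiction we want directly, or — more carefully — $X^{**}\supset X^*$ with $V-X^{**}$ still non-empty; if $V-X^{**}$ has an admissible node or satisfies the 2-separation conclusion we are done, otherwise $X^{**}$ is a strictly larger candidate than $X^*$, contradicting maximality. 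Either way we reach a contradiction, completing the proof.

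The main obstacle I anticipate is the combinatorial case analysis in the middle step: extracting the right node $u$ of $V-X^*$ with the precise neighbourhood condition demanded by Theorem~\ref{thm:LargerCriticalSetOrUnbalanced2Sep}. One has to handle leaf nodes, series nodes, and branching nodes of $G[V_3]$, track which of their neighbours lie in $X^*$, and use the forest property together with the $2$-edge-connectivity of $G[X^*]$ and the non-emptiness of $V-(X^*\cup N(u))$ to rule out the bad configurations. I expect the argument to proceed by first reducing to the case where the node subgraph restricted to $V-X^*$ is itself connected (else take a component, whose "boundary" into $X^*$ or into non-nodes is small), then picking an appropriate leaf of that subforest.
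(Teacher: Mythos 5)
Your overall strategy is the same as the paper's: pass to a maximal mixed critical set $X'\supseteq X$ whose complement still contains a non-node, find a suitable node in $V-X'$, and feed it to Theorem~\ref{thm:LargerCriticalSetOrUnbalanced2Sep}, using the maximality to kill the ``larger node-critical set'' alternative. However, the step you flag as the ``main obstacle'' --- producing a node $u\in V-X'$ satisfying one of the two hypotheses of Theorem~\ref{thm:LargerCriticalSetOrUnbalanced2Sep} --- is a genuine gap as written, and the tools you propose for it ($2$-edge-connectivity of $G[X']$ via Lemma~\ref{lem:CriticalSet}, reduction to a connected component of the restricted node forest) are not the ones that close it. The mechanism is simpler: by Lemmas~\ref{lem:MixedCircuitExclCriticalSetContainsNode} and~\ref{lem:NodeForest} there is a node $u\in V-X'$ that is a leaf of the forest $G[V_3-X']$, hence has at most one node-neighbour in $V-X'$. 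The bound $|N(u)\cap X'|\le 1$ then comes directly from maximality, not from connectivity: if $|N(u)\cap X'|=3$ then $i(X'+u)=2|X'+u|-1$ violates circuit sparsity (and $X'+u=V$ is impossible since $V-X'$ contains a non-node while $u$ is a node), and if $|N(u)\cap X'|=2$ then $X'+u$ is a strictly larger mixed critical set whose complement still contains that non-node, contradicting the choice of $X'$. With $|N(u)\cap X'|\le 1$ and at most one node-neighbour in $V-X'$, the vertex $u$ is either a leaf node of $G$ (hypothesis (b)) or a series node whose unique neighbour in $X'$ is itself a node (hypothesis (a)).

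There is also a concrete error in your final step: ``$u$ is $X^{**}$-node-critical means $d_G(u)\ge 4$, but $u$ was a node'' misreads the definition. Node-criticality asserts that the \emph{excluded} neighbour $x$ of $u$ (the one outside the critical set) has $d(x)\ge 4$; it says nothing about $d(u)$, so there is no immediate contradiction there. The actual role of node-criticality is to guarantee that $V-X^{**}$ still contains a non-node (namely $x$), which is precisely what makes $X^{**}$ a valid candidate in your maximality class and hence contradicts the choice of $X'$. Your fallback branch (``otherwise $X^{**}$ is a strictly larger candidate'') is the right one, but it only works once you note this, since your candidates are required to exclude a non-node. Finally, you can drop the extra condition ``$V-X'$ has no admissible node'' from the maximality class: any admissible node found in $V-X''$ for $X''\supseteq X$ lies in $V-X$ and finishes the proof outright, so carrying this condition only complicates the bookkeeping.
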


\begin{proof}
Let $X'$ be a maximal mixed critical set in $G$ such that $X'\supseteq X$ and $V-X'$ contains a vertex which is not a node. Lemma \ref{lem:MixedCircuitExclCriticalSetContainsNode} implies that $V-X'$ contains a node. Hence, by Lemma \ref{lem:NodeForest}, there exists some node $v\in V-X'$ such that $v$ is a leaf in $G[V_3-X']$. 

If $\abs{N(v)\cap X'} =3$ then $X'+v$ would break the sparsity conditions for circuits, and if $\abs{N(v)\cap X'} =2$ then $X'+v$ would be a larger $u$-node-critical set, contradicting the maximality of $X'$. So $\abs{N(v)\cap X'} \leq 1$, and either $v$ is a series node in $G$ with exactly one neighbour in $X'$, which is also a node, or $v$ is a leaf node in $G$.

If $v$ is admissible then we are done. Otherwise, Theorem \ref{thm:LargerCriticalSetOrUnbalanced2Sep} implies that either $G$ has a node-critical set $X^*$ such that $X^*\supset X'$ or $G$ has an edge-disjoint 2-separation $(H_{1},H_{2})$ with $X'\subseteq V(H_{1})$ and $H_{2}$ pure. If the former case holds, then by the definition of node-critical, $V-X^*$ contains a vertex which is not a node, which contradicts the maximality of $X'$. So the latter case must hold, as required.
\end{proof}


\section{Constructing $\M$-connected Mixed Graphs}\label{sec:Admissible}
The aim of this section is to show that any $\M$-connected mixed graph $G\not\in\{K_3^+, K_3^-\}$, can be obtained from a smaller $\M$-connected mixed graph by an edge addition, 1-extension, or a 2-sum with a pure $K_4$. To do this, we consider an ear decomposition of $G$, and apply our results from Section \ref{sec:Circuits} to the last circuit in the ear decomposition. Berg, Jackson and Jord{\'a}n have shown the following results on the existence of admissible nodes in circuits:

\begin{lem}\label{lem:MixedCircuitHasAdmissibleNodeOr2Sum}\cite[Theorem 4.11]{JJ_MixedCircuits}
Let $G=(V;D,L)$ be a mixed circuit with $|V|\geq 4$. Then either $G$ can be expressed as a 2-sum of a mixed circuit with a pure $K_4$, or $G$ has an admissible node.
\end{lem}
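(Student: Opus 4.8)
The plan is to take a mixed circuit $G$ with $|V| \geq 4$ that has \emph{no} admissible node, and produce a 2-separation exhibiting $G$ as a 2-sum of a mixed circuit with a pure $K_4$. The natural starting point is to apply Theorem \ref{thm:NodeCriticalSetInMixedCircuit_AdmNodeOrUnbalanced2Sep}: to invoke it I need a mixed critical set $X$ with $V-X$ containing a vertex that is not a node. The obvious seed is a single edge $uv$ inducing a mixed critical set of size $2$ (a $2|X|-2 = 2$ count), but I must be careful about the edge type; better is to pick $X$ to be the vertex set of one of the smallest mixed circuits $K_3^+$ or $K_3^-$ sitting inside $G$ — or, more robustly, to argue that if \emph{every} vertex of $G$ is a node then $G$ has an admissible node directly (a $3$-regular mixed circuit is small and can be handled by Lemmas \ref{lem:NonAdmissibleMixedNode} and \ref{lem:NonAdmissiblePureNode} together with Corollary \ref{cor:CircuitsMinDegree3}). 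Assuming not all vertices are nodes, I take a maximal mixed critical set $X$ such that $V-X$ still contains a non-node, exactly as in the proof of Theorem \ref{thm:NodeCriticalSetInMixedCircuit_AdmNodeOrUnbalanced2Sep}; since $G$ has no admissible node, that theorem hands me an edge-disjoint 2-separation $(H_1,H_2)$ of $G$ with $X \subseteq V(H_1)$ and $H_2$ pure.

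Next I would upgrade this 2-separation to the required 2-sum structure using Lemma \ref{lem:2SumMixedCircuit}\ref{part:2SumMixedCircuit_Cleave}: since $G$ is a mixed circuit with an edge-disjoint 2-separation $(H_1,H_2)$ and $H_2$ pure, the 2-cleave of $G$ across the 2-vertex-cut $\{x,y\}$ yields graphs $G_1$ (a mixed circuit) and $G_2$ (a pure circuit), with $d_G(x), d_G(y) \geq 4$ and no $xy$-edge in $G$ of the same type as $H_2$. So $G = G_1 \oplus_2 G_2$ with $G_1$ a mixed circuit and $G_2$ a pure circuit. The one remaining thing to arrange is that $G_2$ can be taken to be a \emph{pure $K_4$}: if $G_2$ has more than $4$ vertices it is itself a pure circuit admitting further structure, and the point is to choose the 2-separation so that the pure side is as small as possible. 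Here I would choose $H_2$ (equivalently the pure side) to be minimal — i.e. take $V(H_2) \setminus \{x,y\}$ to be an \emph{end} of $G$ in the sense defined before Lemma \ref{lem:2SumPureCircuit} — so that no proper subset can be split off; then $G_2$ is a pure circuit in which every vertex of $V(G_2) \setminus \{x,y\}$ has all its $G$-neighbours inside $G_2$, and minimality forces $G_2$ to have no further 2-separation, so by Lemma \ref{lem:2SumPureCircuit}\ref{part:2SumPureCircuit_Cleave} (contrapositively) $G_2$ is a pure circuit with no edge-disjoint 2-separation, hence $3$-connected; the smallest such pure circuit is $K_4$, and if $G_2$ were larger I could instead have chosen a smaller end — this is the step that needs care.

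The main obstacle I anticipate is precisely this last point: extracting a 2-separation whose pure side is exactly a pure $K_4$ rather than merely ``a pure circuit.'' Theorem \ref{thm:NodeCriticalSetInMixedCircuit_AdmNodeOrUnbalanced2Sep} only guarantees \emph{some} edge-disjoint 2-separation with a pure side, and iterating 2-cleaves could in principle peel off pure circuits of unbounded size. I would handle this by an extremal / minimal-counterexample argument: either work with an innermost end $X$ of $G$ (minimal under inclusion among sets with $|N_G(X)| = 2$ and $V - (X \cup N(X)) \neq \emptyset$), showing that minimality plus Lemma \ref{lem:2SumPureCircuit}\ref{part:2SumPureCircuit_Cleave} forces the associated pure circuit to be $3$-connected and hence a $K_4$; or argue by induction on $|V|$ — if the pure circuit $G_2$ produced by the cleave has more than $4$ vertices, apply the induction hypothesis (or Lemma \ref{lem:2SumPureCircuit}) to split $G_2$ further and recombine, reducing to a strictly smaller case. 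Combining either route with the rigidity/connectivity bookkeeping from Lemmas \ref{lem:2SumMixedCircuit} and \ref{lem:Circuits2Connected} closes the argument.
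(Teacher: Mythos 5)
This lemma is imported from \cite[Theorem 4.11]{JJ_MixedCircuits} and the paper contains no proof of it, so I can only judge your argument on its own terms; it has two genuine gaps. The first is at the very start: to invoke Theorem \ref{thm:NodeCriticalSetInMixedCircuit_AdmNodeOrUnbalanced2Sep} you need a mixed critical set $X$ with a non-node in $V-X$, and neither of your proposed seeds exists in a general mixed circuit. A mixed critical set of size $2$ is exactly a parallel direction--length pair, and mixed circuits without any such pair are easy to build (2-sum direction-pure $K_4$'s onto both doubled pairs of $K_3^+$); and $G$ cannot contain a copy of $K_3^+$ or $K_3^-$ at all, since a circuit has no proper dependent subset. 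Your fallback of taking ``a maximal mixed critical set $X$ such that $V-X$ contains a non-node'' presupposes that at least one such $X$ exists, which you never establish: the sets $V-v$ for nodes $v$ are always mixed critical but exclude only a node. (The sub-case ``every vertex is a node'' is in fact vacuous, since $\sum_v d_G(v)=2(2|V|-1)>3|V|$ for $|V|\geq 3$, but that only gives a non-node somewhere, not a critical set avoiding one.) A workable seed comes instead from a leaf $u$ of the forest $G[V_3]$ (Lemma \ref{lem:NodeForest}): all but at most one of its neighbours fail to be nodes, and if $u$ is non-admissible then Lemmas \ref{lem:NonAdmissibleMixedNode} and \ref{lem:NonAdmissiblePureNode} hand you either a mixed critical set excluding a non-node neighbour of $u$ (from a flower), or directly the pure-sided 2-separation (from a clover, via Lemma \ref{lem:Clover}, or from the two-neighbour case).

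The second, more serious, gap is your final step: minimality of the pure side does \emph{not} force $G_2=K_4$. A minimal end only guarantees that $G_2$ is a $3$-connected pure circuit, and such circuits exist on arbitrarily many vertices --- this is precisely why Lemma \ref{lem:PureCircuitsAdmissibleNodes} is stated for $|V|\geq 5$ ($K_{3,4}$ is a standard example) --- so ``if $G_2$ were larger I could instead have chosen a smaller end'' is false. The correct way to close the argument is the one the paper itself uses in Case 2 of the proof of Theorem \ref{thm:MixedMConnectedHasAdmissibleNodeEdgeOr2Sum}: if the minimal pure side $G_2$ is not a $K_4$, then it is a $3$-connected pure circuit on at least $5$ vertices, so Lemma \ref{lem:PureCircuitsAdmissibleNodes} yields an admissible node of $G_2$ in $V(G_2)-\{x,y\}$, and composing its 1-reduction with the 2-sum (Lemma \ref{lem:2SumMixedCircuit}\ref{part:2SumMixedCircuit_2Sum}) shows that this node is admissible in $G$. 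In other words, a large pure side puts you in the \emph{other} branch of the dichotomy; it cannot be shrunk to a $K_4$. With the seed repaired and this substitution made, your overall strategy is viable.
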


\begin{lem}\cite[Theorem 3.8]{BJ_LengthCircuits}\label{lem:PureCircuitsAdmissibleNodes}
Let $G=(V;D,L)$ be a 3-connected pure circuit with $|V|\geq 5$. If $x,y\in V$ and $xy$ is an edge in $G$, then $G$ contains at least two admissible nodes in $V-\{x,y\}$.
\end{lem}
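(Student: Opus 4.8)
The statement concerns the generic plane rigidity matroid restricted to pure graphs, where by Lemma~\ref{lem:PureCircuitGraphSparsityDefn} circuits are precisely the graphs with $2|V|-2$ edges, all of one type, satisfying $i(X)\le 2|X|-3$ for every proper vertex subset; since a linear change of coordinates carries the direction-pure matroid to the length-pure one, I may assume $G$ is length-pure. The first step is to reduce admissibility of a node to a combinatorial condition on critical sets. By Corollary~\ref{cor:CircuitsMinDegree3} a pure circuit has minimum degree $3$, and it is simple (a double edge between $a,b$ would give $i(\{a,b\})=2>1$), so each node $v$ has three distinct neighbours $a,b,c$. For a pair $\{a,b\}$ with $ab\notin E(G)$, the graph $G_v^{ab}$ has exactly $2(|V|-1)-2$ edges, so by Lemma~\ref{lem:PureCircuitGraphSparsityDefn} it is a pure circuit iff $i_{G_v^{ab}}(Y)\le 2|Y|-3$ for all proper $Y\subseteq V-v$; as $i_{G_v^{ab}}(Y)=i_G(Y)$ unless $\{a,b\}\subseteq Y$, and $i_G$ never exceeds $2|Y|-3$, the only possible obstruction is a critical set $X$ with $\{a,b\}\subseteq X\subsetneq V-v$. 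Hence $v$ is non-admissible exactly when each of the three pairs in $N(v)$ is either an edge of $G$ or is contained in such a ``blocking'' critical set; since a $K_4$ on $\{v\}\cup N(v)$ would have to equal $G$, which $|V|\ge 5$ forbids, at most two pairs can be edges, so a non-admissible node carries at least two blocking critical sets, each a proper subset of $V-v$.

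Next I would analyse the blocking sets of a non-admissible node $v$ using Lemma~\ref{lem:CriticalUnions}(b), of which, in a pure graph, only the ``$d(X,Y)=0$, critical union'' alternative can hold. Two blockers meeting in at least two vertices then have a critical union $U$ with $N(v)\subseteq U$ and $v\notin U$; adding $v$ yields $2|U\cup\{v\}|-2$ edges on $U\cup\{v\}$, contradicting that $G$ is a circuit unless $U=V-v$. Iterating this, either two blockers already cover $V-v$, so that $G-v$ is critical, or the blockers pairwise meet in single, forced vertices and form the pure-graph analogue of a flower. In the flower case, the pure analogue of Lemma~\ref{lem:Flowers} partitions $V-v$ and forces the relevant $d$-values to vanish; combined with the $2$-edge-connectivity of critical sets from Lemma~\ref{lem:CriticalSet}, this exhibits a vertex set with a $2$-element neighbourhood, contradicting $3$-connectivity. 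So for every non-admissible node $v$ of $G$, the set $V-v$ is critical and the blockers of $v$ are governed by a single critical set containing $N(v)$.

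Now I would count. From $\sum_{w\in V}(d(w)-3)=2|E(G)|-3|V|=|V|-4$ and $\delta(G)=3$, at most $|V|-4$ vertices have degree at least $4$, so $G$ has at least four nodes, hence at least two lying in $V-\{x,y\}$. Suppose, for a contradiction, that at most one node of $V-\{x,y\}$ is admissible; then there is a non-admissible node $u\in V-\{x,y\}$, and by the previous step $V-u$ is critical. Fix a maximal critical set $X_0$ with $\{x,y\}\subseteq X_0$ (one exists because $xy$ lies in a circuit, hence in a critical set). Intersecting $X_0$ with the blocking critical sets of $u$ and controlling these intersections and unions through Lemma~\ref{lem:CriticalUnions}, I would push the pure-circuit counterpart of the argument behind Theorem~\ref{thm:NodeCriticalSetInMixedCircuit_AdmNodeOrUnbalanced2Sep}: it yields either a second admissible node of $V-\{x,y\}$, or an edge-disjoint $2$-separation of $G$, the latter impossible since $G$ is $3$-connected. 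Running this for each non-admissible node of $V-\{x,y\}$ produces the two required admissible nodes.

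The genuinely hard part is this last step. Once the flower configurations have been eliminated, producing a \emph{single} admissible node follows fairly quickly; the content of the lemma is the count ``at least two'' together with the constraint that the nodes avoid the prescribed edge. Securing both requires a careful simultaneous bookkeeping of the blocking critical sets of all non-admissible nodes and of a maximal critical set through $xy$ — the pure-circuit analogue of the flower/clover analysis of Section~\ref{sec:Circuits} — arranged so that each non-admissible node of $V-\{x,y\}$ either forces a forbidden $2$-separation or can be traded for an admissible node elsewhere in $V-\{x,y\}$. Getting the interaction between these critical sets and the fixed pair $\{x,y\}$ right, without an appeal to an induction already available for mixed circuits, is where the work lies.
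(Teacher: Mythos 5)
You should first note that the paper does not prove this lemma at all: it is imported verbatim from Berg and Jord\'an (\cite[Theorem 3.8]{BJ_LengthCircuits}), so the only question is whether your argument is itself complete, and it is not. The decisive gap is in your second paragraph. The conclusion you extract there -- that $V-u$ is critical for every non-admissible node $u$ -- is vacuous: for \emph{any} node $u$ of a pure circuit we have $i(V-u)=(2|V|-2)-3=2|V-u|-3$, so $V-u$ is critical regardless of whether $u$ is admissible. The other branch of your dichotomy, that a ``pure flower'' contradicts 3-connectivity, only disposes of the case in which the three blocking sets pairwise meet in single vertices; when two blockers meet in at least two vertices their union is simply $V-u$ (no contradiction and no 2-cut arises), and this case genuinely occurs. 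Concretely, take $K_{3,3}$ with classes $\{a_1,a_2,a_3\}$ and $\{b_1,b_2,b_3\}$ plus the edge $a_1a_2$: this is a 3-connected pure circuit on six vertices, and the node $a_3$ is non-admissible because each pair $\{b_i,b_j\}$ of its neighbours lies in the critical set $\{b_i,b_j,a_1,a_2\}$; these blockers pairwise share two vertices and no forbidden separation appears. So after your structural step you have no property that distinguishes non-admissible nodes from admissible ones, and 3-connectivity has not eliminated them.

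Consequently the counting in your third paragraph has nothing to run on: it shows $G$ has at least four nodes, but the passage from a non-admissible node to ``a second admissible node in $V-\{x,y\}$ or a forbidden 2-separation'' is exactly the content of the theorem, and you only gesture at it by invoking ``the pure-circuit counterpart'' of Theorem \ref{thm:NodeCriticalSetInMixedCircuit_AdmNodeOrUnbalanced2Sep}. That theorem and its supporting results (Lemmas \ref{lem:Flowers}, \ref{lem:Clover}, \ref{lem:MixedCircuitExclCriticalSetContainsNode}, \ref{lem:NonAdmissibleMixedNode}, \ref{lem:NonAdmissiblePureNode}) are stated and proved here for mixed circuits and mixed critical sets; their pure analogues are not available in this paper and are precisely what Berg and Jord\'an establish, with the interaction between the blocking sets of several non-admissible nodes and the fixed edge $xy$ carrying the real difficulty. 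Your closing paragraph concedes that this is ``where the work lies'', i.e.\ the essential step is missing. Two smaller points: a non-admissible node may carry only one blocking critical set (when two of the three neighbour pairs are edges), unless you treat an edge as a two-vertex critical set, so your ``at least two blockers'' count is off; and Lemma \ref{lem:CriticalUnions} is stated for independent mixed graphs, so inside a pure circuit you should justify the union/intersection counts directly -- that part is elementary, but as written it is not licensed by the cited lemma.
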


We extend this idea of admissibility to edges as well as nodes: an edge $e$ of an $\M$-connected mixed or pure graph $G$ is \emph{admissible} if $G-e$ is $\M$-connected. For $\M$-connected pure graphs we know that if a graph is 3-connected then it either contains an admissible edge or an admissible node:

\begin{lem}\label{lem:PureMConnectedAdmissibleNode}\cite[Theorem 5.4]{JJ_LengthConnected}
Let $G=(V;D,L)$ be a 3-connected, $\M$-connected pure graph. Let $C_1,C_2,\ldots , C_m$ be an ear decomposition of $\R(G)$ into pure circuits. Suppose that $G-e$ is not $\M$-connected for all $e\in\tilde{C}_m$ and for all but at most two edges of $C_m$. Then $V(G[C_m])-V(G[\bigcup_{i=1}^{m-1}C_i])$ contains an admissible node.
\end{lem}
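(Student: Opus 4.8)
The plan is to set up a careful ear-decomposition argument for $G$, reducing to the circuit case handled in the preceding section, and then to use Theorem~\ref{thm:NodeCriticalSetInMixedCircuit_AdmNodeOrUnbalanced2Sep} and Lemma~\ref{lem:PureCircuitsAdmissibleNodes} to locate an admissible node in the last lobe. First I would fix an ear decomposition $C_1,\ldots,C_m$ of $\R(G)$ into pure circuits, write $G_j = G[\bigcup_{i=1}^j C_i]$, $H = G[C_m]$, and set $Y = V(H)\setminus V(G_{m-1})$, $X = V(H)\setminus Y$, so that we seek an admissible node of $G$ in $Y$. The hypothesis tells us that $G-e$ is not $\M$-connected for every edge $e$ of the lobe $\tilde C_m$ and for all but at most two edges of $C_m$ overall; in particular very few edges of $C_m\setminus\tilde C_m$ are admissible, and by Lemma~\ref{lem:EarDecompRules}\ref{part:EarDecompRules_XMixedCritical} (in its pure analogue — or rather the corresponding sparsity computation for pure circuits, which I would verify) the set $X$ is critical in $H$ of the pure type of $G$, with $|\tilde C_m| = 2|Y|+1$ and every edge of $\tilde C_m$ incident with $Y$.

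Next I would handle the trivial lobe separately: if $Y=\emptyset$ then $|\tilde C_m|=1$, say $\tilde C_m=\{e\}$, and $G-e = G_{m-1}$ is $\M$-connected by Lemma~\ref{lem:EarDecomp}\ref{part:EarDecomp_Connected}, contradicting the hypothesis that all lobe edges are non-admissible. So $Y\neq\emptyset$, $|X|\geq 3$ by Lemma~\ref{lem:EarDecompRules}\ref{part:EarDecompRules_3ConnectedCase} (using 3-connectivity of $G$), and $G[Y]$ is connected. The key reduction is that any admissible node of the \emph{circuit} $H$ that lies in $Y$ is automatically an admissible node of $G$: a $1$-reduction at such a node $v$ replaces three edges incident with $v$ by one edge $xy$, and by Lemma~\ref{lem:PureMConnectedPreservedBy1extensions} applied in reverse, plus the fact that the reduced circuit $H_v^{xy}$ together with $C_1,\ldots,C_{m-1}$ still forms an ear decomposition (the lobe only shrinks by one vertex and the circuits $C_1,\ldots,C_{m-1}$ are untouched since they avoid the interior of $Y$), the reduced graph remains $\M$-connected. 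So it suffices to produce an admissible node of $H$ inside $Y$.

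At this point I would invoke Lemma~\ref{lem:PureCircuitsAdmissibleNodes}: if $H$ is $3$-connected then, picking any edge $xy$ of $H$ with $x,y\in X$ (such an edge exists since $X$ is critical with $|X|\ge 3$ and $H[X]$ is $2$-edge-connected by Lemma~\ref{lem:CriticalSet}), $H$ has at least two admissible nodes in $V(H)\setminus\{x,y\}$. Since every edge of $\tilde C_m$ is incident to $Y$, the nodes of $H$ lying in $X$ must have a neighbour in $Y$, and a counting/forest argument (pure analogue of Lemma~\ref{lem:NodeForest}, or Theorem~\ref{thm:NodeCriticalSetInMixedCircuit_AdmNodeOrUnbalanced2Sep}-style reasoning adapted to the pure case) shows at most one of the two guaranteed admissible nodes of $H$ can be forced outside $Y$ — more precisely, I would argue that an admissible node of $H$ in $X$ would, after reduction, still be an admissible node of $G$ (again the outer circuits are untouched), and being admissible in $G$ it contributes one of the $\le 2$ allowed admissible edges/nodes; pushing this bookkeeping through, at least one admissible node of $H$ must lie in $Y$. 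If instead $H$ is not $3$-connected, $H$ has an edge-disjoint $2$-separation, and I would use the pure analogue of Lemma~\ref{lem:2SumMixedCircuit}/\ref{lem:2SumPureCircuit} to $2$-cleave $H$, recurse on the piece containing $Y$, and transfer an admissible node back. The main obstacle I anticipate is the careful accounting in the $3$-connected case: translating "all but at most two edges of $C_m$ are non-admissible in $G$" into "at least one admissible node of $H$ lies in $Y$ rather than $X$", which requires showing that admissible reductions of $H$ at $X$-nodes genuinely correspond to admissible objects of $G$ and hence fall under the "at most two" exception, so that the pigeonhole from Lemma~\ref{lem:PureCircuitsAdmissibleNodes} forces the desired node into $Y$.
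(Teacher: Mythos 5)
First, a point of order: the paper does not prove this lemma at all --- it is imported as a black box from \cite[Theorem 5.4]{JJ_LengthConnected} --- so there is no in-paper proof to compare against. Judged on its own terms, your sketch has the right general shape (pass to the last circuit $H=G[C_m]$, exploit the structure of admissible nodes in pure circuits), but two load-bearing steps fail. The most serious is the pigeonhole step. Lemma \ref{lem:PureCircuitsAdmissibleNodes} hands you two admissible nodes of the \emph{circuit} $H$ in $V(H)-\{x,y\}$, and you try to force one into $Y$ by charging any admissible node of $H$ lying in $X$ against the ``at most two'' budget. But a vertex of $X$ that has degree $3$ in $H$ need not be a node of $G$ at all --- it may carry further edges from $C_1,\dots,C_{m-1}$ --- so a 1-reduction at it is not even defined in $G$, and the hypothesis of the lemma constrains only admissible \emph{edges} of $C_m$, not admissible nodes of $H$ inside $X$. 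There is no correspondence that lets you count such nodes against the budget, so both guaranteed admissible nodes of $H$ may lie in $X$ and the argument stalls.

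Two further gaps: (i) your ``key reduction'' (an admissible node of $H$ in $Y$ is automatically admissible in $G$) needs the hypotheses of Lemma \ref{lem:MixedMConnectedAdmissibleDefinition}; in particular the new edge must not already lie in $E(G_{m-1})$, which can fail when both endpoints of the reduction lie in $X$. The paper's own mixed-case analogues (Lemma \ref{lem:MixedMConnectedNodeWith2NeighboursAdmissible}, Theorem \ref{thm:MixedMConnectedHasAdmissibleNodeEdgeOr2Sum}) must convert exactly this obstruction into an admissible edge, and your hypothesis does not exclude admissible edges outside $C_m$. (ii) Lemma \ref{lem:PureCircuitsAdmissibleNodes} requires $H$ itself to be $3$-connected on at least $5$ vertices; $3$-connectivity of $G$ does not give $3$-connectivity of $H$, and the deferred ``2-cleave and recurse'' branch is where most of the real work of \cite[Theorem 5.4]{JJ_LengthConnected} lives. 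The actual proof proceeds differently: assume no node of $Y$ is admissible, use the flower structure of non-admissible nodes to grow a maximal critical set containing $X$, and derive a contradiction with $3$-connectivity or the edge hypothesis --- precisely the strategy of Theorems \ref{thm:LargerCriticalSetOrUnbalanced2Sep}, \ref{thm:NodeCriticalSetInMixedCircuit_AdmNodeOrUnbalanced2Sep} and \ref{thm:MixedMConnectedHasAdmissibleNodeEdgeOr2Sum} in the mixed setting here.
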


In the remainder of this section, we obtain a similar result for admissible edges and admissible nodes in $\M$-connected mixed graphs.

Consider an ear-decomposition $C_1,C_2,\ldots,C_m$ of the rigidity matroid of an $\M$-connected graph $G$. If there is some node $v$ which is admissible in $G[C_m]$, then, so long as $v$ is in the lobe of $G[C_m]$, and the edge added in the 1-reduction is not already contained in $\bigcup_{i=1}^{m-1}C_i$, the node $v$ will also be admissible in $G$. Actually, $v$ needn't be admissible in $C_m$ for this argument to work. So long as the following conditions are satisfied, $v$ will be admissible in $G$:

\begin{lem} \label{lem:MixedMConnectedAdmissibleDefinition}
Let $G=(V;D,L)$ be an $\M$-connected mixed graph, and let $H_{1},\ldots,H_{m}$ be the subgraphs of $G$ induced by the ear decomposition $C_{1},\ldots,C_{m}$ of $\R(G)$ into mixed circuits, where $m\geq 2$. Let $G_{m-1}=G[\bigcup_{i=1}^{m-1}C_i]$. Let $v\in V-V(G_{m-1})$ be a node with $x,y\in N(v)$ such that we can perform a 1-reduction at $v$ onto $xy$. 

Let $C$ be the unique circuit in the edge set of $(H_{m})_{v}^{xy}$. If $C\cap E(G_{m-1}) \neq\emptyset$ and $E(G_{v}^{xy})-E(G_{m-1})\subset C$ then this 1-reduction is admissible in $G$.
\end{lem}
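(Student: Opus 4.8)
The plan is to prove directly that the rigidity matroid $\R(G_v^{xy})$ is connected and non-trivial, using transitivity of the ``common-circuit'' relation rather than constructing an ear decomposition of $\R(G_v^{xy})$ by hand. Write $E_{m-1}=\bigcup_{i=1}^{m-1}C_i=E(G_{m-1})$. The first thing to pin down is that, because $v\notin V(G_{m-1})$, every edge of $G$ incident to $v$ lies in $C_m$ and outside $E_{m-1}$; hence the $1$-reduction leaves $E(G_{m-1})$ entirely untouched, so $E(G_{m-1})\subseteq E(G_v^{xy})$, and likewise $E((H_m)_v^{xy})\subseteq E(G_v^{xy})$. I would also record the ``locality'' of the rigidity matroid: for a subgraph $H$ of a graph $G'$ one has $\R(H)=\R(G')|_{E(H)}$, since the restriction of a generic realisation to a vertex subset is again generic and the rigidity matrix of $H$ is a submatrix of that of $G'$. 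In particular $\R(G_{m-1})=\R(G_v^{xy})|_{E(G_{m-1})}$ and $\R((H_m)_v^{xy})=\R(G_v^{xy})|_{E((H_m)_v^{xy})}$.

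With this in hand the argument is four short observations. \textbf{(i)} $G_{m-1}$ is $\M$-connected: $C_1,\dots,C_{m-1}$ is already an ear decomposition of $\R(G_{m-1})$, because conditions \ref{part:E1}--\ref{part:E3} for this truncated sequence only mention circuits and edge sets contained in $E_{m-1}$, on which $\R(G)$ and $\R(G_{m-1})$ agree; now apply Lemma \ref{lem:EarDecomp}\ref{part:EarDecomp_Connected}, noting $|E_{m-1}|\geq|C_1|\geq 5$ so the matroid is non-trivial. \textbf{(ii)} $C$ is a circuit of $\R(G_v^{xy})$, by the locality fact and $E((H_m)_v^{xy})\subseteq E(G_v^{xy})$. \textbf{(iii)} $E(G_v^{xy})\subseteq E(G_{m-1})\cup C$: immediate from $E(G_v^{xy})=E(G_{m-1})\cup\big(E(G_v^{xy})-E(G_{m-1})\big)$ together with the hypothesis $E(G_v^{xy})-E(G_{m-1})\subseteq C$. \textbf{(iv)} $G_v^{xy}$ is mixed, since the mixed circuit $C_1$ satisfies $C_1\subseteq E_{m-1}\subseteq E(G_v^{xy})$.

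To finish, take any two edges $e_1,e_2\in E(G_v^{xy})$; by (iii) each lies in $E(G_{m-1})\cup C$. If both lie in $C$, the circuit $C$ contains them both. If both lie in $E(G_{m-1})$, then by (i) they lie in a common circuit of $\R(G_{m-1})\subseteq\R(G_v^{xy})$. In the remaining case one of them, say $e_1$, lies in $E(G_{m-1})$ and $e_2$ lies in $C-E(G_{m-1})$; pick $f\in C\cap E(G_{m-1})$, which is non-empty by hypothesis. Then $e_1$ and $f$ lie in a common circuit (previous case) and $f$ and $e_2$ both lie in $C$, so $e_1$ and $e_2$ lie in a common circuit by transitivity of this relation. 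Hence $\R(G_v^{xy})$ has a single equivalence class and $|E(G_v^{xy})|\geq 2$, so it is non-trivially connected; combined with (iv) this says $G_v^{xy}$ is a mixed $\M$-connected graph, i.e.\ the $1$-reduction is admissible.

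I do not anticipate a serious obstacle; the proof is essentially bookkeeping together with transitivity of matroid connectivity. The points needing genuine care are the locality of $\R(\cdot)$ under restriction to subgraphs (used in (i) and (ii)) and the observation that the $1$-reduction disturbs none of $v$'s edges and hence leaves $E(G_{m-1})$ intact. This route also sidesteps the awkward minimality condition \ref{part:E3} that one would have to verify if one instead tried to show directly that $C_1,\dots,C_{m-1},C$ is an ear decomposition of $\R(G_v^{xy})$. (The uniqueness of $C$ claimed in the statement, though not needed above, is easy: $H_m-v$ is an independent edge set of size $2|V(H_m)|-4$ on $|V(H_m)|-1$ vertices, hence a basis of $\R((H_m)_v^{xy})$, so $C$ is the fundamental circuit of the added edge $xy$.)
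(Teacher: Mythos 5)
Your proposal is correct and follows essentially the same route as the paper: show $G_{m-1}$ is $\M$-connected via the truncated ear decomposition $C_1,\dots,C_{m-1}$, observe that $C$ is a circuit of $\R(G_v^{xy})$ meeting $E(G_{m-1})$, note $E(G_v^{xy})=E(G_{m-1})\cup C$, and conclude by transitivity of the common-circuit relation. Your version merely makes explicit some points the paper leaves implicit (locality of the rigidity matroid under restriction, and that $G_v^{xy}$ is mixed).
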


\begin{proof}
Since $v\not\in V(G_{m-1})$, we know $G_{m-1}$ is a subgraph of $G_v^{xy}$. Further, $\R(G_{m-1})$ has ear decomposition $C_1,C_2,\ldots,C_{m-1}$, so is $\M$-connected by Lemma \ref{lem:EarDecomp}\ref{part:EarDecomp_Connected}. Since $C\cap E(G_{m-1})\neq\emptyset$, the definition of matroid connectivity gives that $C\cup E(G_{m-1})$ is $\M$-connected. But $C\supset E(G_v^{xy})-E(G_{m-1})$, so $E(G_v^{xy})=C\cup E(G_{m-1})$. Hence $G_v^{xy}$ is $\M$-connected, and so $v$ is admissible in $G$.
\end{proof}

In the special case where the node $v$ in our last lobe has exactly two distinct neighbours, $v$ is always admissible, so long as no edges in $G$ are admissible:

\begin{lem}\label{lem:MixedMConnectedNodeWith2NeighboursAdmissible}
Let $G=(V;D,L)$ be an $\M$-connected mixed graph, and let $H_1,\ldots,H_m$ be the subgraphs of $G$ induced by the ear decomposition $C_{1},\ldots,C_{m}$ of $\R(G)$ into mixed circuits, where $m\geq 2$. Let $Y=V(H_{m})-\bigcup_{i=1}^{m-1}V(H_{i})$ and $X=V(H_{m})-Y$. 

Suppose no edges in $G$ are admissible, and let $v\in Y$ be a node with $|N(v)|=2$. Then $v$ is admissible in $G$.
\end{lem}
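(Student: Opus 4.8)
The plan is to exploit the fact that a node $v$ with exactly two distinct neighbours is a very constrained object inside the last mixed circuit $H_m$, and that "no edges of $G$ are admissible" forces a lot of structure via the earlier lemmas. First I would observe that since $v \in Y$ has $|N(v)| = 2$, say $N(v) = \{x,y\}$, the edge $e = xy$ that would be created by a $1$-reduction at $v$ cannot already lie in $G$ (this would create a multi-edge of the wrong type or a parallel edge — actually one must be careful: $v$ has two edges to $x$ and one to $y$, or similar; I would spell out that since $G$ is loop-free with at most one edge of each type between a pair, the two edges $vx, vx'$ forcing $x = x'$ means $v$ is joined to $x$ by both a direction and a length edge, and to $y$ by one edge, and the $1$-reduction puts an appropriately-typed $xy$-edge). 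In particular $e \notin E(G_{m-1})$ since $v \notin V(G_{m-1})$ and $x,y$ are the only candidates. The goal is then to apply Lemma~\ref{lem:MixedMConnectedAdmissibleDefinition}: I need the unique circuit $C$ in $(H_m)_v^{xy}$ to meet $E(G_{m-1})$ and to contain $E(G_v^{xy}) - E(G_{m-1})$.

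Next I would handle the reduction on $H_m$ itself. By Lemma~\ref{lem:MixedCircuitPreservedBy1extensions} (read backwards, or directly since $v$ has degree $3$ in $H_m$ and the $1$-reduction is the inverse of a $1$-extension), $(H_m)_v^{xy}$ is obtained from the mixed circuit $H_m$ by a $1$-reduction; I want to argue $v$ is in fact admissible \emph{in $H_m$}, i.e. $(H_m)_v^{xy}$ is a mixed circuit, so $C = E((H_m)_v^{xy})$ and hence trivially $C \supseteq E(G_v^{xy}) - E(G_{m-1}) = E(H_m)_v^{xy} - E(G_{m-1})$ and $C$ meets $E(G_{m-1})$ (the intersection is non-empty because $H_m$ shares the mixed-critical set $X$, hence at least two vertices and the edges among them, with $G_{m-1}$, and these survive the reduction — here I would invoke Lemma~\ref{lem:EarDecompRules}\ref{part:EarDecompRules_XMixedCritical} and \ref{part:EarDecompRules_EdgeCase}, noting every lobe edge is incident to $Y$, so edges inside $X$ lie in earlier circuits and are untouched by a reduction at $v \in Y$). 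To see $v$ is admissible in $H_m$: apply the classification of non-admissible nodes, Lemma~\ref{lem:NonAdmissibleMixedNode} or Lemma~\ref{lem:NonAdmissiblePureNode}. If $v$ were non-admissible, there would be a flower or clover on $v$, or (the mixed two-neighbour case) a length-critical $R$ and direction-critical $S$ with $R \cap S = N(v)$, $R \cup S = V(H_m) - v$, $d(R,S) = 0$. In every one of these cases Lemma~\ref{lem:Clover}\ref{part:Clover_Unbalanced2Separation}, resp. the explicit structure in Lemma~\ref{lem:NonAdmissibleMixedNode}(b), resp. Lemma~\ref{lem:Flowers}, produces an edge-disjoint $2$-separation of $H_m$ whose parts have a specific edge-type imbalance, or pins down $H_m$ on few vertices; I would use these to locate an edge $f$ of $G$ such that $G - f$ is $\M$-connected, contradicting the hypothesis that no edge of $G$ is admissible.

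The main obstacle — the step I expect to take real work — is precisely this last move: turning a flower/clover on $v$ \emph{in $H_m$} into an admissible edge \emph{of the whole graph $G$}. The subtle point is that admissibility of an edge in $G$ is about ear decompositions of $G$, not of $H_m$; I would argue as follows. A clover on $v$ (Lemma~\ref{lem:Clover}) gives an unbalanced edge-disjoint $2$-separation $(G[R], G[S \cup T + v] - E(R))$ of $H_m$ with the second part pure; since $v \in Y$ and $N(v) = \{x,y\} = \{s,t\}$ while the pure side is supported on $S \cup T + v \subseteq Y \cup \{s,t\}$, this $2$-separation extends to a $2$-separation of all of $G$ (the rest of $G$ attaches through $X \supseteq \{s,t\}$), contradicting Lemma~\ref{lem:UnbalancedMixedMConnHasNoCrossing2Sep}-style reasoning — more directly, an $\M$-connected mixed graph with a pure $2$-separated piece that is a $K_3$-like fragment containing $v$ of degree $2$ contradicts that $\M$-connected graphs have minimum degree $3$ unless further edges are present, and then $d_G(s), d_G(t) \geq 4$ by Lemma~\ref{lem:2SumMConn}\ref{part:2SumMConn_Cleave}, so the $2$-sum edge is admissible in $G$ by Lemma~\ref{lem:2SumMConn}, the contradiction we want. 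A flower case is handled similarly via Lemma~\ref{lem:Flowers} producing the relevant separation and degree bounds; the mixed-node two-neighbour case with length-critical/direction-critical $R,S$ likewise yields a $2$-separation with $G[R]$ direction-pure or $G[S]$ length-pure on the cut $N(v)$, handled by the same $2$-sum/admissible-edge argument. I would organise the proof so that "no admissible edge in $G$" is used exactly once, as the lever that kills all non-admissible possibilities for $v$, leaving $v$ admissible in $H_m$ and hence, by Lemma~\ref{lem:MixedMConnectedAdmissibleDefinition}, admissible in $G$.
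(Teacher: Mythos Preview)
Your overall framework (classify non-admissibility of $v$ in $H_m$, then feed the result into Lemma~\ref{lem:MixedMConnectedAdmissibleDefinition}) is reasonable, but the crucial middle step --- turning a non-admissible configuration on $v$ into an admissible edge of $G$ via a $2$-separation --- does not go through, and this is a genuine gap.

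First, a simplification you are missing: since $v$ has only two neighbours, one of them carries a double edge to $v$, so $v$ is a \emph{mixed} node. Hence flowers and clovers never arise; the only obstruction to admissibility of $v$ in $H_m$ is case~(b) of Lemma~\ref{lem:NonAdmissibleMixedNode}: a length-critical $A$ and direction-critical $B$ with $A\cap B=\{x,y\}$, $A\cup B=V(H_m)-v$, $d_{H_m}(A,B)=0$. Your plan is to say this yields a $2$-separation of $G$ with a pure side and then invoke Lemma~\ref{lem:2SumMConn} to extract an admissible edge. But this fails: the cut $\{x,y\}$ separates $H_m$, not $G$. The rest of $G$ attaches along the mixed-critical set $X$, and since $H_m[X]$ must contain both a length edge (which lives in $A$) and a direction edge (which lives in $B$), $X$ meets both $A-\{x,y\}$ and $B-\{x,y\}$. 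So $G_{m-1}$ bridges the two sides of your would-be separation. Your assertion that ``the rest of $G$ attaches through $X\supseteq\{s,t\}$'' has the containment backwards in spirit: $X$ is typically much larger than the cut and straddles it.

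The paper's argument avoids this by splitting on $|N(v)\cap X|$. When $x,y\in X$, one does \emph{not} try to prove $v$ admissible in $H_m$ (indeed it need not be): instead one shows $Y=\{v\}$, uses ``no admissible edges'' to rule out any $xy$-edge in $G$ (if $xy\in E(G)$, a $1$-extension of $G_{m-1}$ on $xy$ rebuilds $G-xy$, which is then $\mathcal M$-connected), and concludes $G_v^{xy}=G_{m-1}+xy$ is $\mathcal M$-connected directly. When $|N(v)\cap X|\le 1$, the $A,B$ structure is shown to be \emph{impossible}: $X$ meets both $A-B$ and $B-A$, $G[X]$ is connected so $X$ meets $\{x,y\}$, and since $|\{x,y\}\cap X|\le 1$ this forces a $1$-separation of $G[X]$, contradicting Lemma~\ref{lem:CriticalSet}\ref{part:CriticalSet_1sep}. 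Thus $v$ is admissible in $H_m$ and Lemma~\ref{lem:MixedMConnectedAdmissibleDefinition} finishes. Note in particular that ``no admissible edges'' is used for the first case, not as the lever killing the $A,B$ configuration; your claim that $e=xy\notin E(G_{m-1})$ ``since $v\notin V(G_{m-1})$'' is also unjustified --- that is precisely what the hypothesis is needed for when $x,y\in X$.
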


\begin{proof}
Let $N(v)=\{x,y\}$, and assume $v$ is not admissible in $G$. Since $v$ is a node, $d(v)=3$, so without loss of generality, let $vx$ be a double edge and $vy$ a single edge. Since $vx$ is a double edge, $v$ must be a mixed node. 

\begin{case}
  $x,y\in X$. Lemma \ref{lem:EarDecompRules}\ref{part:EarDecompRules_EdgeCase} implies that all $xy$-edges in $G$ must also be edges in $G_{m-1}=\bigcup_{i=1}^{m-1}H_i$. Also, since $X$ is mixed critical in $H_m$, we have $i_{H_m}(X+v)=(2|X|-2)+3=2|X+v|-1$, which implies $X+v=V(H_m)$ and hence $Y=\{v\}$.
  
  Assume $G$ contains some $xy$-edge, $e$. Since $C_1,C_2,\ldots,C_{m-1}$ is an ear decomposition of $\R(G_{m-1})$, we know $G_{m-1}$ is $\M$-connected. Add the vertex $v$ to $G_{m-1}$ by a 1-extension which removes the edge $e$. The resulting graph, $G-e$, is $\M$-connected by Lemma \ref{lem:MixedMConnectedPreservedBy1extensions}. But this means $e$ is an admissible edge in $G$, which contradicts our assumption. Hence $G$ contains no $xy$-edges, and so we can perform a 1-reduction at $v$ onto the pair $vx,vy$ to form the graph $G_v^{xy}=G_{m-1}+xy$. Since $G_{m-1}$ is $\M$-connected, and edge additions preserve $\M$-connectivity by Lemma \ref{lem:MixedMConnectedPreservedByEdgeAdditions}, $G_v^{xy}$ is $\M$-connected. 
\end{case}

\begin{case}  
  $|\{x,y\}\cap X|\leq 1$. By Lemma \ref{lem:MConnectedGraphsAre2Connected}, $G$ is 2-connected, so $|X|\geq 2$ and thus $|V(H_m)|\geq 4$. Hence $H_m\not\in\{K_3^+,K_3^-\}$, and so $xy$ is not a double edge in $G$. 
  If $v$ is admissible in $H_m$ then we are done by Lemma \ref{lem:MixedMConnectedAdmissibleDefinition}. So assume that $v$ is not admissible in $H_m$. Then, by Lemma \ref{lem:NonAdmissibleMixedNode}, there exists a length critical set $A$ and a direction critical set $B$ with $A\cap B=\{x,y\}$, $A\cup B=V(H_m)-v$ and $d_{H_m}(A,B)=0$. By Lemma \ref{lem:EarDecompRules}\ref{part:EarDecompRules_XMixedCritical}, $X$ is mixed critical in $H_m$. Since $v\not\in X$, this implies that $X$ must intersect both $A-B$ and $B-A$. But $X$ is connected by Lemma \ref{lem:CriticalSet}\ref{part:CriticalSet_2EdgeConnected}, so this implies $|\{x,y\}\cap X|\geq 1$.
  
  Hence $|\{x,y\}\cap X|=1$, and this vertex is the 1-vertex-cut of the unbalanced 1-separation $(H_m[X\cap A],H_m[X\cap B])$ of $X$ in $H_m$, which contradicts Lemma \ref{lem:CriticalSet}\ref{part:CriticalSet_1sep}. Thus $v$ must be admissible in $H_m$, and hence also in $G$.\qedhere
\end{case}
\end{proof}

We are now in a position to prove our main result on the existence of admissible nodes and edges in $\M$-connected mixed graphs:

\begin{thm}\label{thm:MixedMConnectedHasAdmissibleNodeEdgeOr2Sum}
Let $G=(V;D,L)$ be an $\M$-connected mixed graph such that $G\not\in\{K_3^+,K_3^-\}$ and $G$ has no admissible edges. Let $H_1,H_2,\ldots,H_m$ be the subgraphs of $G$ induced by an ear decomposition $C_{1},C_{2},\ldots, C_{m}$ of $\R(G)$ into mixed circuits. Let $E_j=\bigcup_{i=1}^{j}C_i$, $Y=V(H_{m})-\bigcup_{i=1}^{m-1}V(H_{i})$ and $X=V(H_{m})-Y$.
Then either $Y$ contains an admissible node, or $G$ can be expressed as the 2-sum of a mixed $\M$-connected graph with a pure $K_4$.
\end{thm}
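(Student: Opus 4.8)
The plan is to pull the problem back to the last circuit $H_m$ together with its frontier set $X$, resolve it there using the circuit results of Section~\ref{sec:Circuits}, and then transfer admissible nodes and pure 2-separations back to $G$ via Lemmas~\ref{lem:MixedMConnectedAdmissibleDefinition} and~\ref{lem:MixedMConnectedNodeWith2NeighboursAdmissible}. If $m=1$ then $G=H_1$ is a mixed circuit with $|V|\geq 4$ (since $G\notin\{K_3^+,K_3^-\}$) and $Y=V$, so Lemma~\ref{lem:MixedCircuitHasAdmissibleNodeOr2Sum} yields either an admissible node of $G$ --- which lies in $Y=V$ and is an admissible node of the $\M$-connected graph $G$, a 1-reduction of a mixed circuit to a smaller mixed circuit being an admissible 1-reduction --- or an expression of $G$ as a 2-sum of a mixed circuit (which is an $\M$-connected mixed graph) with a pure $K_4$. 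So assume $m\geq 2$ and set $G_{m-1}=G[E_{m-1}]$. Since $C_1,\dots,C_{m-1}$ remains an ear decomposition of $\R(G)$ after deleting any edge of $\widetilde C_m$, if $Y=\emptyset$ then $\widetilde C_m$ is a single edge $e$ (Lemma~\ref{lem:EarDecompRules}\ref{part:EarDecompRules_LobeSize}) and $G-e=G_{m-1}$ is $\M$-connected, so $e$ would be an admissible edge, contradicting our hypothesis; hence $Y\neq\emptyset$. Then $X$ is mixed critical in $H_m$ (Lemma~\ref{lem:EarDecompRules}\ref{part:EarDecompRules_XMixedCritical}), $G[Y]=H_m[Y]$ is connected (Lemma~\ref{lem:EarDecompRules}\ref{part:EarDecompRules_Connected}), and $d_G(v)=d_{H_m}(v)$ for every $v\in Y$ because all edges meeting $Y$ lie in $\widetilde C_m$; in particular the nodes of $G$ in $Y$ are exactly the nodes of $H_m$ in $Y$. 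I would then branch on whether $Y$ contains a non-node.

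\emph{Case~1: $Y$ contains a non-node.} Here I would imitate the proof of Theorem~\ref{thm:NodeCriticalSetInMixedCircuit_AdmNodeOrUnbalanced2Sep}, carried out inside $H_m$. Take $X'$ maximal among mixed critical sets of $H_m$ with $X\subseteq X'$ and $V(H_m)-X'$ containing a non-node, so that $V(H_m)-X'\subseteq Y$. By Lemmas~\ref{lem:MixedCircuitExclCriticalSetContainsNode} and~\ref{lem:NodeForest} some node $v\in V(H_m)-X'$ is a leaf of $H_m[V_3-X']$, and by maximality $|N(v)\cap X'|\leq 1$, so $v$ is either a series node of $H_m$ whose unique neighbour in $X'$ is a node, or a leaf node of $H_m$. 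If $v$ is admissible in $H_m$, then Lemma~\ref{lem:MixedMConnectedAdmissibleDefinition} transfers its admissibility to $G$: the reduced circuit $C=E((H_m)_v^{xy})$ contains the $H_m$-edges induced by $X$, a nonempty set which avoids $\widetilde C_m$ by Lemma~\ref{lem:EarDecompRules}\ref{part:EarDecompRules_EdgeCase} and hence lies in $E(G_{m-1})$, so that $C\cap E(G_{m-1})\neq\emptyset$ and $E(G_v^{xy})-E(G_{m-1})\subsetneq C$. Otherwise $v$ is non-admissible, so Theorem~\ref{thm:LargerCriticalSetOrUnbalanced2Sep} applies with $(H_m,X',v)$ in the roles of $(G,X,u)$; its ``larger node-critical set'' alternative is excluded by maximality of $X'$, so $H_m$ has an edge-disjoint 2-separation $(P_1,P_2)$ with $X'\subseteq V(P_1)$ and $P_2$ pure, and then the vertices of $P_2$ outside the 2-vertex-cut lie in $Y$ and so keep all their $G$-edges inside $P_2$, whence $(P_1,P_2)$ extends to an edge-disjoint 2-separation of $G$ with pure side $P_2$. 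Choosing such a separation with smallest pure side, Lemma~\ref{lem:2SumMConn}\ref{part:2SumMConn_Cleave} (whose shared-edge alternative would give an admissible edge of $G$), minimality, and Corollary~\ref{cor:CircuitsMinDegree3} together force the pure side to be a pure $K_4$, so $G=G_1\oplus_2 K_4$ with $G_1$ mixed and $\M$-connected.

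\emph{Case~2: every vertex of $Y$ is a node.} Comparing the degree sum over $Y$ with $|\widetilde C_m|=2|Y|+1$ gives $i_{H_m}(Y)=|Y|-1$, so $G[Y]=H_m[Y]$ is a tree; take a leaf $v$ of it. If $|N(v)|=2$, then Lemma~\ref{lem:MixedMConnectedNodeWith2NeighboursAdmissible} immediately yields that $v$ is admissible in $G$. Otherwise $v$ has three distinct neighbours --- its tree-parent $w\in Y$ and two vertices $a,b\in X$ --- and I would show $v$ is admissible in $H_m$: the 1-reduction onto $ab$ is blocked by the mixed critical set $X$, so non-admissibility of $v$ would, by Lemmas~\ref{lem:NonAdmissibleMixedNode} and~\ref{lem:NonAdmissiblePureNode}, force a strong or weak flower or a clover on $v$ whose critical sets, combined with $X$ through Lemmas~\ref{lem:Flowers}, \ref{lem:Clover} and~\ref{lem:CriticalUnions}, are incompatible with $G[Y]$ being a tree all of whose vertices are nodes; one then transfers via Lemma~\ref{lem:MixedMConnectedAdmissibleDefinition} as in Case~1. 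The degenerate possibilities $|Y|=1$ and $H_m\in\{K_3^+,K_3^-\}$ would be dealt with directly using the same lemmas.

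The step I expect to be the main obstacle is this transfer from $H_m$ back to $G$. For admissible nodes Lemma~\ref{lem:MixedMConnectedAdmissibleDefinition} does most of the work, but one still has to check that the witnessing reduction of $H_m$ remains a legitimate reduction of $G$: the new edge $xy$ might already occur in $G_{m-1}$, in which case one instead deletes $v$ without adding an edge and argues directly that $G-v$ is $\M$-connected, using that the reduced circuit $(H_m)_v^{xy}$ --- read with the pre-existing edge playing the role of $xy$ --- is a mixed circuit meeting $E_{m-1}$. For pure 2-separations the obstacle is forcing the minimal pure side to be exactly a pure $K_4$; this is where the assumption that $G$ has no admissible edge, together with Corollary~\ref{cor:CircuitsMinDegree3}, is essential. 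The most delicate point overall is Case~2: extracting enough structure from a $Y$ consisting entirely of nodes to pin down the admissibility of a tree-leaf with three distinct neighbours.
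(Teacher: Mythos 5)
Your overall strategy -- reduce to the last circuit $H_m$ and its frontier $X$, use the flower/clover machinery, transfer admissibility back via Lemmas~\ref{lem:MixedMConnectedAdmissibleDefinition} and~\ref{lem:MixedMConnectedNodeWith2NeighboursAdmissible}, and cleave off pure 2-separations -- is the same as the paper's, but both of your main cases contain a genuine gap. In Case~1, the claim that minimality of the pure side together with Corollary~\ref{cor:CircuitsMinDegree3} forces that side to be a pure $K_4$ is false: a minimal pure side is $3$-connected with minimum degree $3$, but it can be (and after cleaving, is) a pure circuit on five or more vertices, e.g.\ a direction-pure wheel $W_5$. The correct dichotomy at this point is: either the cleaved-off pure graph $F_2$ is a $K_4$ (giving the 2-sum conclusion), or it is a $3$-connected pure circuit on at least $5$ vertices, in which case Lemma~\ref{lem:PureCircuitsAdmissibleNodes} supplies an admissible node of $F_2$ avoiding the cut vertices, which one then shows is admissible in $G$ by re-summing with $F_1$ via Lemma~\ref{lem:2SumMConn}\ref{part:2SumMConn_Sum}. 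Without this second branch your Case~1 proves a false statement.

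The larger gap is in Case~2. For a tree-leaf $v\in Y$ with neighbours $w\in Y$ and $a,b\in X$, you assert that a flower or clover on $v$ is ``incompatible with $G[Y]$ being a tree all of whose vertices are nodes,'' and hence that $v$ is admissible in $H_m$. No such incompatibility exists: since $X$ is a mixed critical set with $a,b\in X\subseteq V(H_m)-\{v,w\}$, the set $X$ itself already serves as one petal, and both the flower and the clover completions genuinely occur (the partition structure of Lemma~\ref{lem:Flowers} puts $Y-v$ inside one petal, which places no constraint beyond $2$-edge-connectivity). The paper's resolution of exactly this configuration is the heart of its proof: in the flower case $(X,R,S)$ one shows that the 1-reduction at $v$ onto the pair $\{s,t\}$ meeting $Y$ -- a reduction that is \emph{not} admissible in $H_m$, because the circuit it creates is $G[R]+st$ with $R\subsetneq V(H_m)-v$ -- nevertheless satisfies the hypotheses of Lemma~\ref{lem:MixedMConnectedAdmissibleDefinition} (using $R\cup X=V(H_m)-v$, $d(R,X)=0$ and the fact that $R\cap X$ is critical), so $v$ is admissible in $G$ though not in $H_m$; in the clover case one gets a pure 2-separation and must again run the $K_4$-versus-admissible-node-in-a-pure-circuit dichotomy described above. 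Your plan of ``show admissibility in $H_m$, then transfer'' cannot be carried out here, so Case~2 as written does not close.
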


\begin{proof}
If $G$ is a mixed circuit, or $G=F_{1}\oplus_2 F_2$ where $F_2$ is a pure $K_4$, then we are done by Lemmas \ref{lem:MixedCircuitHasAdmissibleNodeOr2Sum} and \ref{lem:2SumMConn}\ref{part:2SumMConn_Cleave} respectively. So assume that $G$ is not a circuit and cannot be expressed as such a 2-sum. Since $G$ has no admissible edges, Lemma \ref{lem:EarDecompRules} implies $Y\neq\emptyset$, and hence $X$ is mixed critical in $H_m$. Lemma \ref{lem:MixedCircuitExclCriticalSetContainsNode} now implies that $Y$ contains a node. Since, by Lemma \ref{lem:NodeForest}, the node-subgraph of a mixed circuit is a forest, we can find a node $u\in Y$ such that $u$ is a leaf in $G[V_{3}\cap Y]$. 

If $|N(u)|=2$, then Lemma \ref{lem:MixedMConnectedNodeWith2NeighboursAdmissible} implies that $u$ is admissible in $G$ and we are done. So assume that $|N(u)|=3$ and let $N(u)=\{r,s,t\}$.

\begin{case}
	$N(u)\subseteq X$. Since $X$ is mixed critical in $H_m$, the sparsity conditions imply $X+u=V(H_m)$, and hence $Y=\{u\}$. In order to perform a 1-reduction on $u$ in $G$, we first have to make sure that there is a pair of vertices in $N(u)$ that we can 1-reduce onto i.e.\ if $u$ is pure of type $P\in \{D,L\}$, then there must be a pair of vertices in $N(u)$ which are not connected by an edge of type $P$; and if $u$ is mixed, then there must exist a pair of vertices in $N(u)$ which have at most one edge between them.
	
	Suppose $u$ is a pure node of type $P$, and assume $G[N(u)]$ contains two edges, $e$ and $f$, of the same type as $u$. Since $e$ and $f$ are not parallel, they must cover all three vertices in $N(u)$. We know $G[E_{m-1}]$ is $\M$-connected, so for all edges $g\in E_{m-1}-e$ there is some circuit $C_g\subseteq E_{m-1}$ such that $e,g\in C_g$. If $f\not\in C_g$ then $C_g$ is a circuit in $\R(G-f)$. Otherwise $f\in C_g$ and, by Lemma \ref{lem:PureMConnectedPreservedBy1extensions} when $C_g$ is pure, or Lemma \ref{lem:MixedCircuitPreservedBy1extensions} when $C_g$ is mixed, the 1-extension, $(C_g-f)\cup\{ur,us,ut\}$ is a circuit in $\R(G-f)$. So for all edges $g$ in $G-f$, we can find a circuit in $G-f$ containing both $g$ and $e$. Thus, by the transitivity of matroid connectivity, $G-f$ is $\M$-connected. This contradicts the hypothesis that $G$ contains no admissible edges, so our assumption must be false, and $G$ must contain at most one edge of type $P$ in $G[N(u)]$.
	
	Suppose instead that $u$ is a mixed node, and assume that each pair of vertices in $N(u)$ is connected by a double-edge. Then $G[N(u)]$ contains two non-parallel edges $e$ and $f$ such that $e\in D$ and $f\in L$. By a similar argument to the pure case above, we can show that $G-f$ is $\M$-connected, once more contradicting our hypothesis, and thus showing that $N(u)$ contains a pair of vertices with at most one edge between them.
	
	So regardless of whether $u$ is mixed or pure, we can find some pair of vertices, say $\{r,s\}$, in $N(u)$ such that we can perform a 1-reduction at $u$ onto $rs$. The resulting graph, $\bigcup_{i=1}^{m-1} H_i+rs$ is $\M$-connected, by Lemmas \ref{lem:EarDecomp}\ref{part:EarDecomp_Connected} and \ref{lem:MixedMConnectedPreservedByEdgeAdditions}. Hence $u$ is admissible in $G$.
	\end{case}
	
\begin{case}
$|N(u)\cap X|=2$. Let $N(u)\cap X = \{r,s\}$. Since $X$ is mixed critical in $H_m$, we have no admissible 1-reduction at $u$ onto $rs$ in $H_m$. If $u$ has an admissible 1-reduction in $H_m$ onto either $rt$ or $st$, then $u$ is also admissible in $G$ by Lemma \ref{lem:MixedMConnectedAdmissibleDefinition}, and we are done. So suppose $u$ is not admissible in $H_m$. Then Lemmas \ref{lem:NonAdmissibleMixedNode} and \ref{lem:NonAdmissiblePureNode} imply that there exists a triple $(R,S,T)$ which is either a flower or a clover on $u$ in $H_m$. We may assume that $R, S$ and $T$ are minimal sets with this property. We also know that $X$ is a mixed critical set with $r,s\in X\subseteq V(H_m)-\{t,u\}$, so by the definitions, the triple $(X,R,S)$ is either a strong or a weak flower, or a clover on $u$ in $H_m$.

	Suppose $(X,R,S)$ is either a strong or a weak flower on $u$ in $H_m$. Then Lemma \ref{lem:Flowers} implies $d_G(S,X)=0$, so $G$ contains no $st$-edges. Hence we can perform a 1-reduction at $u$ onto $st$. Let $C$ denote the unique circuit in $E((H_m)^{st}_u)$ formed by this 1-reduction. Since $R$ is a minimal critical set in $H_m$ which contains both $s$ and $t$, $G[C]$ must have $R$ as its vertex set. By Lemma \ref{lem:Flowers}, $R\cup X = V(H_m)-u$ and $d_{H_m}(R,X)=0$. This implies that $C\supseteq E((H_m)^{st}_u)-E_{m-1}$. Also, since $X\cap R\cap S\neq \emptyset$ and $s\in(X\cap R)-S$, we have that $|R\cap X|\geq 2$ which, by Lemma \ref{lem:CriticalUnions}, implies that $R\cap X$ is critical in $H_m$, and hence has non-empty edge set in $H_m$. Thus  $C\cap E_{m-1}\neq\emptyset$ and so Lemma \ref{lem:MixedMConnectedAdmissibleDefinition} implies that $u$ is admissible in $G$.
	
	Suppose instead that $(X,R,S)$ is a clover on $u$ in $H_m$. Then Lemma \ref{lem:Clover} implies that $H_m$ has a 2-separation $(G[X],G[R\cup S +u]-E(X))$ where $G[R\cup S +u]-E(X)=G[\tilde{C}_m]$ is pure. Which implies that $(G[E_{m-1}],G[\tilde{C}_m])$ is an unbalanced 2-separation of $G$. 

	Let $(S_1,S_2)$ be a 2-separation of $G$ with $S_2$ minimal such that $V(S_2)\subseteq R\cup S+u$, and let the corresponding 2-vertex-cut be $\{x,y\}$. Since $G[\tilde{C}_m]$ is pure, $S_2$ must also be pure. If $G$ contains an $xy$-edge, $e$, of the same type as $S_2$, then $G-e$ is also an $\M$-connected mixed graph by Lemma \ref{lem:2SumMConn}\ref{part:2SumMConn_Cleave}, which contradicts our assumption that $G$ contains no admissible edges.	Thus $e\not\in E(G)$, and so, by Lemma \ref{lem:2SumMConn}, $G=F_1\oplus_2 F_2$ where $F_1$ and $F_2$ are $\M$-connected and are formed from $S_1$ and $S_2$ respectively by adding the $xy$-edge $e$. Since $S_2$ is minimal and $F_2 \neq K_4$, we know that $F_2$ must be a 3-connected, $\M$-connected pure graph on at least 5 vertices. Further, since $S_2\subseteq G[\tilde{C}_m]$, Lemma \ref{lem:2SumMixedCircuit} implies that $F_2$ is a pure circuit. 
	
		Hence, by Lemma \ref{lem:PureCircuitsAdmissibleNodes}, $F_2$ has at least two admissible nodes in $V(F_2)-\{x,y\}$. Let $F_2'$ be the graph formed by an admissible 1-reduction at one of these nodes. Then $G'=F_1 \oplus_2 F_2'$ is an $\M$-connected mixed graph, and is a 1-reduction of $G$. Hence this node is also admissible in $G$. 
\end{case}

\begin{case}
	$|N(u)\cap X|\leq 1$. If $u$ is admissible in $H_m$, then we are done by Lemma \ref{lem:MixedMConnectedAdmissibleDefinition}. So suppose $u$ is not admissible in $H_m$. Since $u$ is a leaf in $G[V_3\cap Y]$, $u$ has some neighbour in $Y$ which is not a node. Thus, by Theorem \ref{thm:NodeCriticalSetInMixedCircuit_AdmNodeOrUnbalanced2Sep}, either $Y$ contains an admissible node $v$ and we are done; or there is a 2-separation $(S_{1},S_{2})$ of $H_{m}$ such that $S_{2}$ is pure and $X\subseteq V(S_{1})$, in which case $(S_{1}\cup G[E_{m-1}],S_{2})$ is an unbalanced 2-separation of $G$. By the same argument used in the case where $|N(u)\cap X|=2$ and $G$ had an unbalanced 2-separation, we can find a 2-separation $(S_1',S_2')$ of $G$ on some 2-vertex-cut $\{x,y\}$, where $S_2'$ is minimal with $S_2'\subseteq S_2$ and there is a node in $V(S_2')-\{x,y\}$ which is admissible in $G$.
\qedhere 
\end{case}
\end{proof}

We know, by Lemmas \ref{lem:MixedMConnectedPreservedByEdgeAdditions}, \ref{lem:MixedMConnectedPreservedBy1extensions} and \ref{lem:2SumMConn}\ref{part:2SumMConn_Sum}, that the inverse of the operations used in Theorem \ref{thm:MixedMConnectedHasAdmissibleNodeEdgeOr2Sum} preserve $\M$-connectivity. Thus the following inductive construction immediately follows from this result:

\begin{thm}
Let $G$ be a mixed graph. Then $G$ is $\M$-connected if and only if $G$ can be obtained from $K_3^+$ or $K_3^-$ by a sequence of edge additions, 1-extensions and 2-sums with pure $K_4$'s.
\end{thm}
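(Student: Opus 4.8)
The plan is to prove both directions by induction on the number of edges, leveraging the results already established in the section.

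\textbf{Sufficiency.} First I would show that any graph $G$ obtained from $K_3^+$ or $K_3^-$ by a sequence of edge additions, 1-extensions, and 2-sums with pure $K_4$'s is $\M$-connected and mixed. The base cases $K_3^+$ and $K_3^-$ are mixed circuits by the discussion following Lemma~\ref{lem:PureCircuitGraphSparsityDefn}, hence $\M$-connected (a circuit is a connected, non-trivially connected matroid). For the inductive step, suppose $G$ is obtained from a smaller $\M$-connected mixed graph $G_0$ by one of the three operations. If the operation is an edge addition, then Lemma~\ref{lem:MixedMConnectedPreservedByEdgeAdditions} gives that $G$ is $\M$-connected and mixed. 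If it is a 1-extension, then Lemma~\ref{lem:MixedMConnectedPreservedBy1extensions} applies. If it is a 2-sum with a pure $K_4$, then since a pure $K_4$ is itself a pure circuit (hence $\M$-connected and pure), Lemma~\ref{lem:2SumMConn}\ref{part:2SumMConn_Sum} gives that $G = G_0 \oplus_2 K_4$ is an $\M$-connected mixed graph. This completes the forward direction.

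\textbf{Necessity.} For the converse, suppose $G$ is an $\M$-connected mixed graph; I would argue by induction on $|E(G)|$ that $G$ can be built up from $K_3^+$ or $K_3^-$ by the stated operations. If $G \in \{K_3^+, K_3^-\}$ we are done. Otherwise, there are two cases. If $G$ has an admissible edge $e$, then $G - e$ is $\M$-connected by definition of admissibility; it is still mixed provided $G-e$ contains both a length and a direction edge — and here I should note that an $\M$-connected graph has at least $i(K_3^+) = 5$ edges of which, being a union of mixed circuits by Lemma~\ref{lem:MixedMConnectedHasDecompIntoMixedCircuits}, enough remain of each type, so $G-e$ is a smaller $\M$-connected mixed graph, and $G$ is obtained from it by an edge addition. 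If $G$ has no admissible edge, I fix an ear decomposition $C_1, \dots, C_m$ of $\R(G)$ into mixed circuits (which exists by Lemma~\ref{lem:MixedMConnectedHasDecompIntoMixedCircuits}) and apply Theorem~\ref{thm:MixedMConnectedHasAdmissibleNodeEdgeOr2Sum}: either $Y = V(H_m) - \bigcup_{i<m} V(H_i)$ contains an admissible node $v$, in which case the corresponding 1-reduction $G_v^{xy}$ is a smaller $\M$-connected mixed graph and $G$ is recovered from it by a 1-extension; or $G = F_1 \oplus_2 F_2$ with $F_1$ a mixed $\M$-connected graph and $F_2$ a pure $K_4$, in which case $F_1$ is smaller, $\M$-connected and mixed, and $G$ is obtained from $F_1$ by a 2-sum with a pure $K_4$. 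In every case the induction hypothesis applies to the smaller graph, completing the proof.

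\textbf{Main obstacle.} The substantive content of this theorem has already been absorbed into Theorem~\ref{thm:MixedMConnectedHasAdmissibleNodeEdgeOr2Sum} and the structural lemmas on preservation of $\M$-connectivity, so the remaining work is mostly bookkeeping. The one point requiring a little care is confirming that the smaller graph produced at each reduction step is genuinely \emph{mixed} and not accidentally pure: deleting an admissible edge, or performing a 1-reduction, or splitting off a direction-pure $K_4$, could in principle destroy all edges of one type. Here the fact that $\M$-connected mixed graphs decompose into mixed circuits (Lemma~\ref{lem:MixedMConnectedHasDecompIntoMixedCircuits}) and have minimum degree $3$ with at least $5$ edges is exactly what rules this out, so I would spell that out rather than leave it implicit. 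The induction itself is routine once Theorem~\ref{thm:MixedMConnectedHasAdmissibleNodeEdgeOr2Sum} is in hand.
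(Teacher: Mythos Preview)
Your proposal is correct and follows essentially the same approach as the paper: the paper derives the theorem in one sentence by noting that Theorem~\ref{thm:MixedMConnectedHasAdmissibleNodeEdgeOr2Sum} supplies the reduction step, while Lemmas~\ref{lem:MixedMConnectedPreservedByEdgeAdditions}, \ref{lem:MixedMConnectedPreservedBy1extensions} and \ref{lem:2SumMConn}\ref{part:2SumMConn_Sum} show the forward operations preserve $\M$-connectivity. Your ``main obstacle'' is largely a non-issue, since admissibility of a node is \emph{defined} to require the reduced graph to be mixed, and Theorem~\ref{thm:MixedMConnectedHasAdmissibleNodeEdgeOr2Sum} already asserts that $F_1$ is mixed in the 2-sum case; only the edge-deletion case needs the (easy) observation that a mixed circuit contains at least two edges of each type.
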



\section{Constructing Direction-Balanced $\M$-connected Mixed Graphs} \label{sec:Feasible}

In this section, we show that any direction-balanced, $\M$-connected mixed graph can be constructed from either $K_3^+$ or $K_3^-$ by a sequence of edge additions, 1-extensions and 2-sums with direction-pure $K_4$'s. This construction will be used to characterise global rigidity for $\M$-connected graphs in Section \ref{sec:GlobalRigidity}.

To obtain this construction, we shall show that if a direction-balanced, $\M$-connected mixed graph $G$ cannot be obtained by a 2-sum with a direction-pure $K_4$, then $G$ either has an admissible node or an admissible edge, whose removal preserves being direction-balanced. As such, we define a vertex $v$ of $G$ to be \emph{feasible} if there is an admissible 1-reduction at $v$ which preserves being direction balanced, and define and edge $e$ to be \emph{feasible}, if it is admissible and $G-e$ is direction-balanced.

Before proving this result, we first make the following observations about graphs with unbalanced 2-separations:

\begin{lem}\label{lem:2VertexCutPreservedByEdgeDeletionAnd1Red}
Let $G=(V;D,L)$ be an $\M$-connected mixed graph with an edge-disjoint 2-separation $(H_1,H_2)$ on 2-vertex-cut $\{ x,y\}$ with $H_2$ pure. Let $G'$ be formed from $G$ by either an edge-deletion or a pure 1-reduction. Then $\{x,y\}$ is also a 2-vertex-cut in $G'$. 
\end{lem}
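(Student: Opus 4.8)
The plan is to show that removing an edge or performing a pure 1-reduction cannot destroy the separating pair $\{x,y\}$. The key point is that neither operation introduces any edge between $V(H_1)-\{x,y\}$ and $V(H_2)-\{x,y\}$. First consider an edge-deletion: if $G'=G-e$ for some edge $e\in E(G)$, then $G'$ still decomposes as $(H_1-e)\cup(H_2-e)$ (where we interpret $H_i-e$ as $H_i$ when $e\notin E(H_i)$), and since $e\in E(H_1)\cup E(H_2)$ and the separation was edge-disjoint, both $H_1-e$ and $H_2-e$ still have vertex sets containing $\{x,y\}$ with no edges across. Here I must be slightly careful: deleting $e$ could in principle reduce $|V(H_i-e)|$ if $e$ was the only edge meeting some vertex, but that vertex would lie on one side only, so the pair $\{x,y\}$ still separates whatever vertices remain, and both sides still contain at least one vertex outside $\{x,y\}$ — this needs a brief check using that $G$ is $\M$-connected (hence 2-connected, with minimum degree 3 by Corollary~\ref{cor:CircuitsMinDegree3} applied via the ear decomposition), so removing one edge cannot isolate a vertex of $H_i$ down to nothing. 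Actually the cleanest phrasing is: $G'-\{x,y\} = (H_1-\{x,y\})\cup(H_2-\{x,y\})$ as a disjoint union of the \emph{edge sets}, and since $H_2$ is pure (hence on at least $3$ vertices, being a union of circuits, so $V(H_2)-\{x,y\}\neq\emptyset$) and likewise $V(H_1)-\{x,y\}\neq\emptyset$ by 2-connectivity, $\{x,y\}$ remains a 2-vertex-cut of $G'$.

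Now consider a pure 1-reduction: $G'=G^{ab}_{v}$ for some node $v$ of type $P\in\{D,L\}$, deleting $v$ and its three incident edges and adding a single edge $ab$ of type $P$, where $\{a,b\}\subseteq N(v)$. The vertex $v$ lies in one of $V(H_1)$, $V(H_2)$; say $v\in V(H_i)$ (if $v\in\{x,y\}$ this is fine, $v$ is in both, but then $d_G(v)\geq 4$ by Lemma~\ref{lem:2SumMConn}\ref{part:2SumMConn_Cleave}, contradicting that $v$ is a node, so $v\notin\{x,y\}$; in particular $v$ lies strictly inside exactly one side). Since $\{x,y\}$ separates $G$, all three neighbours of $v$ lie in $V(H_i)$ together with, possibly, $x$ and $y$ — but in any case $N_G(v)\subseteq V(H_i)$, because any path from $v$ to the other side must pass through $\{x,y\}\subseteq V(H_i)$. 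Hence $a,b\in V(H_i)$, so the new edge $ab$ lies inside $H_i$, and $G' = H_i'\cup H_j$ where $H_i'=(H_i)^{ab}_v$ and $\{j\}=\{1,2\}-\{i\}$; no edge crosses between the two sides, and $\{x,y\}\subseteq V(H_i')\cap V(H_j)$ still, with both sides having a vertex outside $\{x,y\}$ (for $H_i'$ this uses $|V(H_i)|\geq 4$, which follows since $v\notin\{x,y\}$ and $\{x,y\}\subseteq V(H_i)$ gives $|V(H_i)|\geq 3$, and $|V(H_i)-\{x,y\}|\geq 1$ still holds after deleting $v$ only if $v$ was not the unique such vertex — again minimum degree $3$ plus $N_G(v)\subseteq V(H_i)$ forces $|V(H_i)|\geq 4$). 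Therefore $\{x,y\}$ is a 2-vertex-cut of $G'$.

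The main obstacle I anticipate is not any deep structural fact but the bookkeeping of degenerate small cases: ensuring that after the operation each side of the would-be separation still genuinely contains a vertex strictly on that side (so that the pair is still a \emph{2-vertex-cut} and not merely a pair whose removal leaves one trivial piece), and ruling out that the reduced/deleted object coincides with $\{x,y\}$ or shrinks a side to nothing. All of these are handled by the minimum-degree-3 property of $\M$-connected graphs (via their ear decompositions into mixed circuits and Corollary~\ref{cor:CircuitsMinDegree3}) together with the observation that $N_G(v)\subseteq V(H_i)$ for any vertex $v$ lying strictly on side $i$ of a 2-separation. I would state the argument uniformly by observing that in both cases $G'$ still admits the decomposition $G'=H_1'\cup H_2'$ with $H_i'$ obtained from $H_i$ by the (possibly trivial) operation, $V(H_1')\cap V(H_2')=\{x,y\}$, no edge of $G'$ joining $V(H_1')-\{x,y\}$ to $V(H_2')-\{x,y\}$, and $|V(H_i')|\geq 3$ for $i\in\{1,2\}$; this immediately gives the claim.
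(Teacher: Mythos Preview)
Your proof is correct and takes essentially the same approach as the paper's. The paper dismisses the edge-deletion case in one phrase (``trivially holds'') and for the 1-reduction case gives exactly your key steps: invoke Lemma~\ref{lem:2SumMConn}\ref{part:2SumMConn_Cleave} to get $d_G(x),d_G(y)\geq 4$, hence $v\notin\{x,y\}$; then $N_G(v)\subseteq V(H_i)$, so the new edge stays on one side. Your extra bookkeeping about $|V(H_i')|\geq 3$ (via $|N_G(v)|=3$ for a pure node) is correct and fills in a detail the paper leaves implicit, but the argument is otherwise the same.
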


\begin{proof}
This trivially holds when $G'$ is formed from $G$ by an edge deletion, so we shall only prove the case where $G$ is formed by a 1-reduction. Suppose $G'=G_v^{rs}$ for some node $v\in V$. By Lemma  \ref{lem:2SumMConn}\ref{part:2SumMConn_Cleave}, $d_G(x),d_G(y)\geq 4$, which implies $v\not\in\{x,y\}$. So without loss of generality, we can assume $v\in V(H_2)-V(H_1)$. Since $\{x,y\}$ is a 2-vertex-cut in $G$, this implies $N_G(v)\subset V(H_2)$. Hence $(H_1,(H_2)_v^{rs})$ is a 2-separation of $G'$ on the 2-vertex-cut $\{x,y\}$.
\end{proof}

\begin{lem}\label{lem:UnbalancedLengthRedToDirectionRed}
Let $G=(V;D,L)$ be a direction-balanced, $\M$-connected mixed graph and let $v$ be a mixed node of $G$ with $N(v)=\{r,s,t\}$, where potentially $t=r$. Suppose $G'$ is the graph formed by an admissible length 1-reduction at $v$ onto the edge $l=rs$, and that $G'$ is not direction-balanced i.e.\ $G'$ has a 2-separation $(H_1,H_2)$ with $H_2$ length-pure. Suppose $l\in E(H_2)$. Then either 
\begin{enumerate}
	\item $\{r,s\}=V(H_1)\cap V(H_2)$. In which case $t\in V(H_2)-V(H_1)$, and $G$ has admissible direction 1-reductions onto both $rt$ and $st$; or \label{part:UnbalancedLengthRedToDirectionRed_Onto2sep}
	\item $\{r,s\}$ intersects $V(H_2)-V(H_1)$, and $G$ has an admissible direction 1-reduction onto $rs$. \label{part:UnbalancedLengthRedToDirectionRed_NotOnto2sep}
\end{enumerate}
\end{lem}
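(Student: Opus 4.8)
The plan is to locate the $2$-vertex-cut $\{x,y\}:=V(H_1)\cap V(H_2)$ of $G'$ relative to the pair $\{r,s\}$. Since $l=rs\in E(H_2)$ we have $r,s\in V(H_2)$, so each of $r,s$ lies in $\{x,y\}$ or in $V(H_2)\setminus V(H_1)$; hence either $\{r,s\}=\{x,y\}$, which is alternative (a), or $\{r,s\}$ meets $V(H_2)\setminus V(H_1)$, which is alternative (b), and these are exhaustive. I would record two facts first. After replacing $(H_1,H_2)$ by an edge-disjoint $2$-separation on the same vertex sets (still with $H_2$ length-pure and $l\in E(H_2)$), in case (a) the edge $l$ is a cut edge and is of the same type as the length-pure graph $H_2$, so Lemma~\ref{lem:2SumMConn}\ref{part:2SumMConn_Cleave} applied to $G'$ gives that $G-v=G'-l$ is $\M$-connected and that $d_{G'}(x),d_{G'}(y)\ge 4$. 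Also, $G-v$ is mixed in both cases: otherwise $l$ is the only length edge of $G'$, so since $H_2$ is length-pure we get $E(H_2)\setminus E(\{x,y\})\subseteq\{l\}$, and then, using $|V(H_2)|\ge 3$, that $G'$ is $2$-connected by Lemma~\ref{lem:MConnectedGraphsAre2Connected}, and in case (b) that one of $r,s$ lies in $V(H_2)\setminus V(H_1)$, some vertex of $V(H_2)$ would have degree at most $1$ in $G'$, a contradiction.

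For alternative (a) I would argue as follows. Suppose $t\notin V(H_2)\setminus V(H_1)$; then all neighbours of $v$ lie in $V(H_1)$, so $(H_1+v,\,H_2-l)$ is a $2$-separation of $G$ on $\{r,s\}=\{x,y\}$, and since $l=rs\in E(\{r,s\})$ the set $E(H_2-l)\setminus E(\{r,s\})=E(H_2)\setminus E(\{r,s\})$ contains no direction edge, contradicting that $G$ is direction-balanced. Hence $t\in V(H_2)\setminus V(H_1)$; in particular $t\neq r,s$, so $r,s,t$ are distinct. Now form $G_v^{rt}=(G-v)+rt$ by adding $rt$ as a \emph{direction} edge. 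This is a legitimate $1$-reduction: if $G$ already contained a direction edge $rt$ then, as $r\in\{x,y\}$ and $t\in V(H_2)\setminus V(H_1)$, this edge would lie in $E(H_2)\setminus E(\{x,y\})$, contradicting that $H_2$ is length-pure; and since $v$ is mixed one of $vr,vs,vt$ is a direction edge, so the move is the inverse of a genuine $1$-extension. As $G-v$ is an $\M$-connected mixed graph, Lemma~\ref{lem:MixedMConnectedPreservedByEdgeAdditions} shows $G_v^{rt}$ is mixed and $\M$-connected, so the $1$-reduction at $v$ onto $rt$ is admissible; the same argument applies verbatim to $st$. This proves (a).

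For alternative (b) the target is that the $1$-reduction at $v$ onto $rs$ adding a \emph{direction} edge is admissible. The validity check is identical: a direction edge $rs$ in $G$ would, because $\{r,s\}\neq\{x,y\}$, lie in $E(H_2)\setminus E(\{x,y\})$, again contradicting that $H_2$ is length-pure, and $v$ mixed makes the move genuine. So it remains to show $G_v^{rs}=(G-v)+rs$ is mixed (clear, since $G-v$ is mixed) and $\M$-connected. Here I would first use direction-balance of $G$ — via an argument like the one in (a) — to constrain the position of $t$ so that $\{x,y\}$ remains a $2$-vertex-cut of $G$ with $v$ lying on the side meeting $V(H_2)$; then $G$, and with it $G_v^{rs}$, decomposes as a $2$-sum $F_1\oplus_2 F_2$ whose second, pure, factor is built from the length-pure side $H_2$, and one transfers $\M$-connectivity across the $2$-sum using Lemmas~\ref{lem:2SumMConn}, \ref{lem:MixedCircuitPreservedBy1extensions} and \ref{lem:PureMConnectedPreservedBy1extensions}.

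I expect alternative (b) to be the main obstacle. Unlike (a), here $\{r,s\}$ need not be the cut of $G'$, so $\M$-connectivity of $G-v$ cannot be read off directly from the cleave lemma, and one must genuinely use that the reduced edge sits inside the pure side together with direction-balance of $G$. One also has to be careful because changing the type of a single edge does not preserve $\M$-connectivity in general (for instance a length-pure $K_4$ becomes independent after such a change), so the argument must rule out the corresponding degenerate configurations and deal separately with the several possible locations of $t$ relative to $(H_1,H_2)$.
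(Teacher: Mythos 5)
Your reduction to the two alternatives and your treatment of alternative (a) are correct and essentially identical to the paper's argument: direction-balance of $G$ forces $t\in V(H_2)-V(H_1)$, the edge $l=xy$ is of the same type as the pure side so Lemma~\ref{lem:2SumMConn}\ref{part:2SumMConn_Cleave} gives that $G'-l=G-v$ is $\M$-connected, length-purity of $H_2$ rules out pre-existing direction edges $rt$, $st$, and Lemma~\ref{lem:MixedMConnectedPreservedByEdgeAdditions} finishes.

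Alternative (b), however, is where you stop at a sketch, and the strategy you sketch does not go through. First, direction-balance of $G$ does \emph{not} constrain the position of $t$ here: if $t\in V(H_2)$ the resulting $2$-separation of $G$ is already direction-balanced because $v$ is mixed, and if $t\in V(H_1)-V(H_2)$ then $v$ is adjacent to both sides, so $\{x,y\}$ is simply not a $2$-vertex-cut of $G$ (nor of $G_v^{rs}$). Second, even when $\{x,y\}$ does survive as a cut, once you replace $l$ by the direction edge $d=rs$ with $s\in V(H_2)-V(H_1)$, the factor built from $H_2$ is no longer pure, so Lemmas~\ref{lem:2SumMConn}, \ref{lem:2SumPureCircuit} and \ref{lem:2SumMixedCircuit} (all of which require a pure side) cannot be used to transfer $\M$-connectivity across the $2$-sum. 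You also cannot fall back on $\M$-connectivity of $G-v=G'-l$, since in case (b) $l$ is not an $xy$-edge and need not be admissible in $G'$. The missing idea is the paper's use of Lemma~\ref{lem:MixedCircuitCanChangePureVertexToMixedVertex}: because $s\in V(H_2)-V(H_1)$ and $H_2$ is length-pure, $s$ is a pure vertex of $G'$, hence of every circuit of $\R(G')$ through $l$; such a circuit is necessarily mixed, and changing the type of the single incident edge $l$ turns it into a mixed circuit of $\R(G_v^{rs})$ through $d$. Fixing a direction edge $f\in E(G')\cap E(G_v^{rs})$ and running this circuit-by-circuit replacement over all $e\in E(G')-f$ gives $\M$-connectivity of $G_v^{rs}$ by transitivity, with no case analysis on $t$ (so the degenerate case $t=r$ is handled for free). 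As it stands, your proposal establishes (a) but leaves (b) with a genuine gap.
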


\begin{proof}
Let $V_1$ and $V_2$ denote the vertex sets of $H_1$ and $H_2$ respectively, and let $\{x,y\}=V_1\cap V_2$. Since $G$ is direction-balanced, $N_{G}(v)\cap(V_2-V_1)\neq\emptyset$.

We first consider case \ref{part:UnbalancedLengthRedToDirectionRed_Onto2sep}, where $\{r,s\}=\{x,y\}$. Lemma \ref{lem:2SumMConn}\ref{part:2SumMConn_Cleave} implies $G'-l$ is $\M$-connected. Since $N_{G}(v)\cap(V_2-V_1)\neq\emptyset$, we must have $t\in V_2-V_1$. But $H_2$ is length-pure, so this implies neither $st$ nor $rt$ are direction edges in $G'$ or $G$. Hence, since edge additions preserve $\M$-connectivity by Lemma \ref{lem:MixedMConnectedPreservedByEdgeAdditions}, we can add either of the direction edges $st$ or $rt$ to $G'-l$ to obtain $G^*$, which is a direction 1-reduction at $v$ as required.

We now consider case \ref{part:UnbalancedLengthRedToDirectionRed_NotOnto2sep}, where $\{r,s\}\neq\{x,y\}$. Since $l\in E(H_2)$, this implies at least one of the endvertices of $l$, say $s$, is contained in $V_2-V_1$. But $H_2$ is length-pure, so $rs$ is not a direction edge in $G'$ or $G$. Let $G^*$ be the graph obtained from $G$ by a direction 1-reduction onto the edge $rs$, and call this direction edge $d$. It remains to show that $G^*$ is $\M$-connected.

By construction, we have $E(G^*)=E(G')-l+d$. Let $f$ be a direction edge in $E(G^*)\cap E(G')$. Since $G'$ is $\M$-connected, we know that for all $e\in E(G')-f$, there is a circuit $C'$ in $\R(G')$ such that $e,f\in C'$. If $l\not\in C'$, then $C'$ is also a circuit in $\R(G^*)$ and we are done. Otherwise $l\in C'$, and so $C'$ is mixed. In which case, Lemma \ref{lem:MixedCircuitCanChangePureVertexToMixedVertex} implies that the edge set $C^*=C-l+d$ is a mixed circuit in $\R(G^*)$ containing $f$ and $d$ (and $e$, when $e\neq l$). Thus, by the transitivity of $\M$-connectivity, $G^*$ is $\M$-connected. Hence the direction 1-reduction at $v$ onto $rs$ is admissible.
\end{proof}

We are finally in a position to prove our main result:

\begin{thm}\label{thm:DbalMConnHasFeasibleMove}
Let $G\not\in\{K_3^+, K_3^- \}$ be a direction-balanced, $\M$-connected mixed graph. Suppose $G$ cannot be expressed as the 2-sum of a direction-balanced mixed graph with a direction-pure $K_4$. Then $G$ has either a feasible edge or a feasible vertex.
\end{thm}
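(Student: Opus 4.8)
## Proof Proposal

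The plan is to leverage Theorem~\ref{thm:MixedMConnectedHasAdmissibleNodeEdgeOr2Sum}, which already guarantees that $G$ either has an admissible edge, has an admissible node in the last lobe $Y$ of some ear decomposition into mixed circuits, or is a 2-sum with a pure $K_4$. The work remaining is to upgrade ``admissible'' to ``feasible'', i.e.\ to control the direction-balancedness of the reduced graph, and to deal with the case where the pure $K_4$ attached by a 2-sum is length-pure rather than direction-pure. First I would set up an ear decomposition $C_1,\dots,C_m$ of $\R(G)$ into mixed circuits (Lemma~\ref{lem:MixedMConnectedHasDecompIntoMixedCircuits}), put $Y=V(H_m)-\bigcup_{i<m}V(H_i)$ and $X=V(H_m)-Y$, and dispose of the easy cases: if $G$ has a feasible edge we are done, so assume $G-e$ is not direction-balanced for every admissible edge $e$; if $G$ is a 2-sum with a direction-pure $K_4$ it is excluded by hypothesis. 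The remaining possibility from Theorem~\ref{thm:MixedMConnectedHasAdmissibleNodeEdgeOr2Sum} is that $G$ is a 2-sum with a \emph{length}-pure $K_4$, or that $Y$ contains an admissible node $u$.

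For the 2-sum-with-length-pure-$K_4$ case, I would argue that such a $G$ cannot in fact be direction-balanced: the length-pure $K_4$ side of the 2-separation, after removing the shared length edge $xy$, contains no direction edge, contradicting direction-balancedness (cf.\ the argument structure of Lemma~\ref{lem:MConnDBal2SepPreservesDBal} and part~\ref{part:Clover_Unbalanced2Separation} of Lemma~\ref{lem:Clover}). So this case does not arise, and we may assume $Y$ contains an admissible node $u$. Now take the admissible 1-reduction at $u$ onto some pair $xy\subseteq N(u)$ producing $G'$, which is $\M$-connected mixed. If $G'$ is direction-balanced, $u$ is feasible and we are done. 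Otherwise $G'$ has a 2-separation $(H_1,H_2)$ with $H_2$ pure, and I would split on the type of the reduction: if $u$ is a direction node or a mixed node reduced onto a direction edge, then the newly added edge $xy$ is a direction edge, and since direction-balancedness of $G$ forces $N_G(u)$ to meet the ``interior'' side of any 2-separation, I can use Lemma~\ref{lem:2VertexCutPreservedByEdgeDeletionAnd1Red} to pull the 2-separation back to $G$ and derive a contradiction with $G$ being direction-balanced. If instead $u$ is reduced onto a length edge (including the subtle case of a mixed node where both length and direction reductions are available), I would invoke Lemma~\ref{lem:UnbalancedLengthRedToDirectionRed}: it converts the bad length 1-reduction into an admissible \emph{direction} 1-reduction at $u$ (either onto the same pair, or onto one of the other two pairs $rt$, $st$), and one checks using the length-purity of $H_2$ that this direction reduction does not recreate a length-unbalanced 2-separation — the added direction edge lives on the ``length-pure'' side, so the only 2-separations that could be spoiled would have to be length-balanced already, hence survive. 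That yields a feasible vertex.

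The main obstacle I anticipate is the bookkeeping in the case where $u$ is a node adjacent to $X$ (the cases $|N(u)\cap X|=2$ or $N(u)\subseteq X$ in the proof of Theorem~\ref{thm:MixedMConnectedHasAdmissibleNodeEdgeOr2Sum}), combined with the possibility that the admissible move produced by that theorem is not a single 1-reduction at $u$ but rather a 1-reduction at a node inside a pure circuit arising from a further 2-cleave (via Lemma~\ref{lem:PureCircuitsAdmissibleNodes}). In that subcase I would need to re-run the feasibility analysis for that deeper node: since it lies in the pure side $S_2'$ of an unbalanced 2-separation, any 2-separation of $G'$ that becomes direction-unbalanced must be ``localised'' inside a controlled region, and I would use Lemma~\ref{lem:2VertexCutPreservedByEdgeDeletionAnd1Red} together with Lemma~\ref{lem:MConnDBal2SepPreservesDBal} to transfer direction-balancedness across the 2-sum decomposition $G=F_1\oplus_2 F_2$ and conclude. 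I expect that with the right choice of admissible node (there are at least two available by Lemma~\ref{lem:PureCircuitsAdmissibleNodes}, so one can avoid any single obstructing 2-separation), the direction-balancedness can always be preserved; making this selection argument precise, and handling the interaction between the possibly many 2-separations of $G$ and the ear-decomposition structure, is where the real care is needed.
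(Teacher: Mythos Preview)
Your proposal has a genuine gap in the central step of upgrading ``admissible'' to ``feasible''. The difficulty is that when you switch from one admissible move to another (for instance via Lemma~\ref{lem:UnbalancedLengthRedToDirectionRed}, replacing a length 1-reduction by a direction 1-reduction), the new reduced graph may well acquire a \emph{different} length-pure 2-separation. Your sentence ``the added direction edge lives on the `length-pure' side, so the only 2-separations that could be spoiled would have to be length-balanced already'' is not justified: the new direction edge certainly destroys the \emph{old} bad 2-separation $(H_1,H_2)$, but there is nothing preventing a fresh bad 2-separation $(H_1^*,H_2^*)$ with $H_2^*$ length-pure from appearing elsewhere. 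Similarly, in your case where a direction reduction at $u$ produces a non-direction-balanced $G'$, the 2-vertex-cut of $G'$ need not lift to a 2-vertex-cut of $G$ at all (if the third neighbour $z$ lies on the length-pure side while $x,y$ lie on the other), so there is no contradiction with $G$ being direction-balanced; Lemma~\ref{lem:2VertexCutPreservedByEdgeDeletionAnd1Red} goes in the wrong direction for this. You also never say what to do when $G$ has an admissible edge $e$ with $G-e$ not direction-balanced, since Theorem~\ref{thm:MixedMConnectedHasAdmissibleNodeEdgeOr2Sum} cannot then be invoked to produce an admissible node.

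The paper resolves all of this with a single extremal device that your proposal is missing: over \emph{all} admissible moves (edge deletions and 1-reductions alike) it selects $G'$ minimising $n(G')$, the number of vertices lying in length-pure ends of $G'$. The entire proof then consists of showing that from such a minimal $G'$ one can always manufacture another admissible move $G^*$ with $n(G^*)<n(G')$ (using Lemma~\ref{lem:UnbalancedLengthRedToDirectionRed}, Lemma~\ref{lem:UnbalancedMixedMConnHasNoCrossing2Sep} to forbid crossing 2-separators, and explicit reconstructions when the length-pure piece $F_2$ is or is not a $K_4$). This minimality argument is exactly what controls the ``new bad 2-separation'' phenomenon: the new one is forced to be strictly smaller than the old one, contradicting the choice of $G'$. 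Your ``selection argument'' intuition in the final paragraph is pointing in the right direction, but the correct quantity to optimise is $n(G')$, and the argument does not reduce to choosing between two admissible nodes.
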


\begin{proof}

We proceed by contradiction, i.e.\ by assuming there exists some mixed graph $G$ such that all 1-reductions and edge-deletions of $G$ which preserve $\M$-connectivity do not preserve being direction-balanced. Theorem \ref{thm:MixedMConnectedHasAdmissibleNodeEdgeOr2Sum} tells us that we are able to construct some smaller $\M$-connected graph $G'$ from $G$, either by deleting an admissible edge, or by performing an admissible 1-reduction at some node in $G$. Let
\[
n(G')=\abs{ \{ v\in V(G'): v\in X \text{ for some length-pure end } X \text{ of } G'\} }.  
\]
We assume that out of all choices of admissible edges and nodes in $G$, the admissible move which formed $G'$ is such that $n(G')$ is minimal. Since we are also assuming that $G$ has no feasible moves, $G'$ is not direction-balanced, and hence $n(G')>0$.

Note that if $G'$ was formed by a length 1-reduction at $v$ onto the edge $rs$, and $G$ also has an admissible direction 1-reduction at $v$ onto $rs$, then every length-pure end in the graph $G_d$ formed by the direction 1-reduction is also a length-pure end in $G'$. Hence $n(G_d)\leq n(G')$, and so we can assume $G'$ was chosen to be the direction 1-reduction instead. 

Let $(H_1, H_2)$ be a 2-separation of $G'$  on some 2-vertex-cut $\{x,y\}$, such that $H_2$ is length-pure, and is minimal with respect to inclusion.

	\begin{claim}\label{clm:FeasibilityProof_xyNotLengthEdge}
	$xy$ is not a length edge in $G'$.
	\end{claim}
	
	\begin{proof}
	Assume $xy$ is a length edge in $G'$. Then either $xy$ is also a length edge in $G$, or this edge was added in a length 1-reduction of $G$ and so $G'=G_v^{xy}$ for some $v\in V(G)$.
	
	First, suppose $xy$ is a length edge of $G$. Then Lemma \ref{lem:2SumMConn}\ref{part:2SumMConn_Cleave} implies $G'-xy$ is $\M$-connected. We can construct $G-xy$ from $G'-xy$ by performing the inverse operation to that which formed $G'$ (an edge addition when $G'=G-e$ for some $e\in E(G)$, or a 1-extension when $G'=G_v^{rs}$ for some $v\in V(G)$). By Lemmas \ref{lem:MixedMConnectedPreservedByEdgeAdditions} and \ref{lem:MixedMConnectedPreservedBy1extensions} respectively, both of these operations preserve $\M$-connectivity, so $G-xy$ is $\M$-connected. Hence $xy$ is admissible in $G$. But we assumed $G$ had no feasible edges, so $G-xy$ must have a 2-separation $(H_1',H_2')$ on some 2-vertex-cut $\{x',y'\}$ where $H_2'$ is length-pure and $\{x',y'\}$ separates $x$ and $y$. Lemma \ref{lem:2VertexCutPreservedByEdgeDeletionAnd1Red} implies $\{x',y'\}$ is also a 2-vertex-cut in $G'-xy$. Thus $G'-xy$ has two crossing 2-separators, $\{x,y\}$ and $\{x',y'\}$, which contradicts Lemma \ref{lem:UnbalancedMixedMConnHasNoCrossing2Sep}.
	
	We now consider the second case: that $xy$ was added in a 1-reduction on $v$. If $v$ is a pure node, then in order for $G$ to be direction-balanced, $v$ must have neighbours in both $V(H_1)-V(H_2)$ and $V(H_2)-V(H_1)$, in addition to neighbouring $x$ and $y$. But this contradicts that $d_G(v)=3$. Hence $v$ must be a mixed node. Let $N_G(v)=\{x,y,z\}$. Lemma \ref{lem:UnbalancedLengthRedToDirectionRed} implies $z\in V(H_2)-V(H_1)$ and that $G$ has a different admissible 1-reduction at $v$, which adds the direction edge $xz$ instead, to form the graph $G^*$.
	
By our original assumption, $G^*$ is not direction-balanced, so it has a 2-separation $(H_1^*,H_2^*)$ on some 2-vertex-cut $\{x^*,y^*\}$ with $H_2^*$ length-pure and minimal with respect to inclusion. However, by the construction above, $G^*=G'-xy+xz$, and so $\{x,y\}$ is also a 2-vertex-cut in $G^*$. Since $xz$ is a direction edge, the 2-separation given by $\{x,y\}$ in $G^*$ is direction-balanced. Hence $\{x,y\}\neq \{x^*,y^*\}$. Lemma \ref{lem:UnbalancedMixedMConnHasNoCrossing2Sep} now implies that these two 2-separations of $G^*$ cannot cross. So either $V(H_2^*)\subset V(H_1)$ and  $V(H_2^*)-\{x^*,y^*\}$ is a length-pure end of both $G'$ and $G^*$, or $V(H_2^*)-\{x^*,y^*\}\subset V(H_2)-\{x,y,z\}$. 
Hence $n(G^*)< n(G')$, which contradicts our choice of $G'$.
	\end{proof}

Claim \ref{clm:FeasibilityProof_xyNotLengthEdge} and Lemma \ref{lem:2SumMConn}\ref{part:2SumMConn_Cleave} imply $G'=F_1\oplus_2 F_2$ where $F_2$ is length-pure, and both $F_1$ and $F_2$ are $\M$-connected and are formed from $H_1$ and $H_2$ respectively by adding the length edge $xy$. Let
\[V^*= \begin{cases}
				V(F_2)-\{x,y,r,s\}		& \text{if } G'=G-e \text{ and } e=rs, \\
				V(F_2)-\{x,y,r,s,t\}	& \text{if } G'=G_v^{rs} \text{ and } N_G(v)=\{r,s,t\} ,
			\end{cases}	\]
and
\[E^*= \begin{cases}
  			E(F_2)-\{xy\} 				& \text{if } G'=G-e \text{ and } e=rs, \\
  			E(F_2)-\{xy, rs\} 		& \text{if } G'=G_v^{rs} \text{ and } N_G(v)=\{ r,s,t\}.
		\end{cases}	\]

	\begin{claim}\label{clm:FeasibilityProof_NoAdmissibleMovesInE*V*}

	$F_2$ has no admissible edges $f\in E^*$, and no admissible nodes $w\in V^*$.
	\end{claim}
	
	\begin{proof}
	We shall show that if such an admissible edge or node exists in $F_2$, then it is also admissible in $G$ and contradicts our choice of $G'$.
	
	Assume that $F_2$ has either an admissible edge $f\in E^*$, or an admissible node $w\in V^*$ with neighbourhood $\{m,n,p\}$ such that $w$ has an admissible 1-reduction onto the edge $mn$. Then $F_2-f$, respectively $(F_2)_w^{mn}$, is $\M$-connected and contains the length edge $xy$. Thus by Lemma \ref{lem:2SumMConn}\ref{part:2SumMConn_Sum}, we can 2-sum this graph with $F_1$ to obtain
	\[(G')^* = \begin{cases}
							G'-f = F_1 \oplus_2 (F_2-f)			&\text{when } f \text{ is admissible in } F_2 \text{, or}\\
							(G')_w^{mn}=F_1 \oplus_2 (F_2)_w^{mn}	&\text{when } w \text{ is admissible in } F_2.		
					 \end{cases} \]	
	Since 2-sums preserve $\M$-connectivity, the resulting graph $(G')^* $ is $\M$-connected. By the definitions of $E^*$ and $V^*$, we know $(G')^*$ either contains both $r$ and $s$ when $G'=G-e$, or contains the vertices $r,s,t$ and the edge $rs$ when $G'=G_v^{rs}$.	Further, when $G'=G-e$ then we cannot have $e\in E((G')^*)$, as this would imply that $e$ was added back to $G'$ by a length-pure 1-reduction at $w$; for this to be possible, both endvertices of $e$ must be in $H_2$, which, since $G$ was direction-balanced, implies that $e$ is a direction edge, and thus cannot be added by such a 1-reduction.
	

	Hence we can either add back the edge $e$ to $(G')^*$, or perform a 1-extension on $(G')^*$ to add back the vertex $v$, as relevant. Both of these operations preserve $\M$-connectivity by Lemmas \ref{lem:MixedMConnectedPreservedByEdgeAdditions} and \ref{lem:MixedMConnectedPreservedBy1extensions} respectively. So the resulting graph $G^*$ (where $G^*=G-f$ or $G^*=G_w^{mn}$ respectively) is $\M$-connected. Thus implying that $f$ (respectively $w$) is admissible in $G$.

By our original assumption, $G$ has no feasible nodes or edges. So $G^*$ must have a 2-separation $(H_1^*,H_2^*)$ on some 2-vertex-cut $\{x^*,y^*\}$ where $H_2^*$ is length-pure, and is minimal with respect to inclusion. Since $F_2$ is length-pure, $f\in E^*$ is a length edge (resp.\ $w\in V^*$ is a length-pure node). But $G$ is direction-balanced, so this implies that when $G^*=G-f$, the set $\{x^*,y^*\}$ separates the endvertices of $f$ in $G^*$, and when $G^*=G_w^{mn}$, the set $\{x^*,y^*\}$ separates $N_G(w)$ in $G^*$. Since $f\in E(H_2)$ in the former case, and $\{w\}\cup N_G(w)\subseteq V(H_2)$ in the latter, this implies $\{x,y\}\neq \{x^*, y^*\}$ and that $V(H_2)-V(H_2^*)\neq\emptyset$. Lemmas \ref{lem:2VertexCutPreservedByEdgeDeletionAnd1Red} and \ref{lem:UnbalancedMixedMConnHasNoCrossing2Sep} now imply that both $\{x,y\}$ and $\{x^*,y^*\}$ are 2-vertex-cuts in $(G')^*$ and do not cross. 

When $G'=G-e$, we clearly have $V(H_2^*)\subset V(H_2)$. When instead, we have $G'=G_v^{rs}$, the fact that $G$ is direction-balanced implies that $v$ is either a mixed node in $G$, or $N_G(v)$ intersects $V(H_1)-\{x,y\}$; both of which imply that $v\not\in V(H_2^*)$ and thus  $V(H_2^*)\subset V(H_2)$. Hence for all choices of $G'$ and $G^*$, $V(H_2^*)\subset V(H_2)$. Since $H_2$ and $H_2^*$ were chosen to be minimal with respect to inclusion in $G'$ and $G^*$ respectively, the definition of an end now implies that there exists some vertex in $V(H_2)-V(H_2^*)$ which is contained in some end of $G'$, but in no end of $G^*$. 
Hence $n(G^*) < n(G')$, which contradicts our choice of $G'$. 
	\end{proof}

Claim \ref{clm:FeasibilityProof_NoAdmissibleMovesInE*V*} tells us that all admissible edges or nodes in $F_2$ must be contained in $E(F_2)-E^*$ or $V(F_2)-V^*$ respectively. In the remainder of the proof, we show that whatever the structure of $F_2$, we can find such an admissible move, and this move will contradict our choice of $G'$.

	\begin{claim} \label{clm:FeasibilityProof_IfNotK4ThenFeasible}
	$F_2=K_4$.
	\end{claim}

	\begin{proof}
	Assume $F_2\neq K_4$. We show that we can either find a feasible move in $G$, thus contradicting our original assumption that $G$ has no feasible moves; or we can find a different admissible move in $G$ which contradicts our choice of $G'$. As before, there are two cases to consider: when $G' = G-e$, and when $G'= G^{rs}_v$.
	
	\begin{case}
		$G'=G-e$. Since $G$ is direction-balanced, at least one of the endvertices of $e$, say $r$, is contained in $V(H_2)-V(H_1)$, and the other endvertex, $s$, is either also contained in $V(H_2)$, in which case $e$ must be a direction edge; or in $V(H_1)-V(H_2)$ and $e$ can be either a length or a direction edge.
		
		We know $F_2$ is an $\M$-connected length-pure graph. If $F_2$ is a length-pure circuit, then Lemma \ref{lem:PureCircuitsAdmissibleNodes} implies $F_2$ contains an admissible node in $V(F_2)-\{x,y,r\}$. Otherwise, by Lemma \ref{lem:EarDecomp}\ref{part:EarDecomp_Extending}, we can build an ear decomposition of $F_2$ such that the first circuit in the ear decomposition contains both the edge $xy$ and some edge incident with $r$. Lemma \ref{lem:PureMConnectedAdmissibleNode} now implies that $F_2$ contains either an edge other than $xy$ which is admissible, or an admissible node in $V(F_2)-\{x,y,r\}$. Thus, in both cases, $F_2$ either contains an admissible edge $f\in E(F_2)-\{xy\}=E^*$, or an admissible node $w\in V(F_2)-\{x,y,r\}$. Since $V^*=V(F_2)-\{x,y,r,s\}$, and, by Claim \ref{clm:FeasibilityProof_NoAdmissibleMovesInE*V*}, $V^*$ and $E^*$ contain no admissible nodes or edges respectively, it only remains to consider the case where $s\in V(H_2)-V(H_1)$ is an admissible node.
		
		Let $N(s)=\{m,n,p\}$ in $F_2$, and suppose, without loss of generality, that $(F_2)_s^{mn}$ is $\M$-connected. We can now add the vertex $s$ back to $(F_2)_s^{mn}$ by performing a 1-extension which deletes the edge $mn$ and adds back the edges $ms, ns$ and $rs=e$. The resulting graph, $F_2-ps$, is $\M$-connected by Lemma \ref{lem:PureMConnectedPreservedBy1extensions}. Hence $ps\in E^*$ is an admissible edge in $F_2$, contradicting Claim \ref{clm:FeasibilityProof_NoAdmissibleMovesInE*V*}.
	\end{case}

	\begin{case}
		$G'=G_{v}^{rs}$. First, suppose the edge, $rs$, added in the 1-reduction, is contained in $E(H_1)$. By Claim \ref{clm:FeasibilityProof_xyNotLengthEdge}, $\{r,s\}\neq\{x,y\}$, so without loss of generality, $r\in V(H_1)-V(H_2)$. In order for $G$ to be direction-balanced, we must have $t\in V(H_2)-V(H_1)$. If $F_2$ is a circuit then, by Lemma \ref{lem:PureCircuitsAdmissibleNodes}, it contains an admissible vertex in $V(H_2)-\{x,y,t\}=V^*$, contradicting Claim \ref{clm:FeasibilityProof_NoAdmissibleMovesInE*V*}. Otherwise, we can build an ear decomposition of $F_2$ whose first ear contains both $xy$ and an edge incident with $t$. Lemma \ref{lem:PureMConnectedAdmissibleNode} then implies that there is either an admissible edge in $E(F_2)-\{xy\}=E^*$, or an admissible node in $V(H_2)-\{x,y,t\}=V^*$, once more contradicting Claim \ref{clm:FeasibilityProof_NoAdmissibleMovesInE*V*}.
		
		Hence $rs\in E(H_2)-E(H_1)$ and, without loss of generality, $r\in V(H_2)-V(H_1)$. Since $H_2$ is length-pure, $rs$ is a length edge. If $v$ is a mixed node in $G$, then Lemma \ref{lem:UnbalancedLengthRedToDirectionRed} implies that $G$ has a direction 1-reduction onto $rs$. But this contradicts our choice of $G'$: that a length 1-reduction at $v$ was chosen only when $G$ had no direction 1-reduction at $v$ onto the same pair of vertices. Hence $v$ must be a length-pure node, and thus, since $r,s\in V(H_2)$, we must have $t\in V(H_1)-V(H_2)$ in order for $G$ to be direction-balanced. 
		
		Suppose $F_2$ is not a circuit. Then we can build an ear decomposition of $F_2$ such that the first ear contains both the edges $xy$ and $rs$. Lemma \ref{lem:PureMConnectedAdmissibleNode} now implies either $E(F_2)-\{xy,rs\}=E^*$ contains an admissible edge, or $V(F_2)-\{x,y,r,s\}=V^*$ contains an admissible node, contradicting Claim \ref{clm:FeasibilityProof_NoAdmissibleMovesInE*V*}.	Thus $F_2$ must be a length-pure circuit. In which case, Lemma \ref{lem:PureCircuitsAdmissibleNodes} implies that $F_2$ contains an admissible node in $V(F_2)-\{x,y,r\}\subseteq V^*\cup \{s\}$. By Claim \ref{clm:FeasibilityProof_NoAdmissibleMovesInE*V*}, the only possibility is that $s\in V(F_2)-\{x,y\}$ is admissible in $F_2$. We know $e=rs\in E(F_2)$. Let $m$ and $n$ denote the other two neighbours of $s$ in $F_2$.
		
		Since $s$ is admissible, either $(F_2)_s^{mn}$ or $(F_2)_s^{mr}$ is $\M$-connected (relabelling $m$ and $n$ if necessary). What's more, since $xy\in E(F_2)$, we know that in either 1-reduction, the edge added cannot be an $xy$-edge. So $mn\neq xy$ (respectively $mr\neq xy$). By Lemma \ref{lem:2SumMConn}\ref{part:2SumMConn_Sum}, we can 2-sum $F_1$ with either $(F_2)_s^{mn}$ or $(F_2)_s^{mr}$, as relevant, to obtain the $\M$-connected graph $(G')^*=(G_v^{rs})_s^{mn}$ (respectively $(G')^*=(G_v^{rs})_s^{mr}$). We can then perform a length 1-extension to $(G')^*$ to obtain a new graph $G^*=G_v^{st}$ (respectively  $G^*=G_s^{vm}$), as illustrated in Figure \ref{fig:FeasibilityProof_sAdmissible}.
%
\begin{figure}[t]
\centering

\subfigure[Construction of $G^*=G_v^{st}$ from $G'$ when $s$ is admissible in $G'$ onto the edge $mn$.]
	{
	\includegraphics{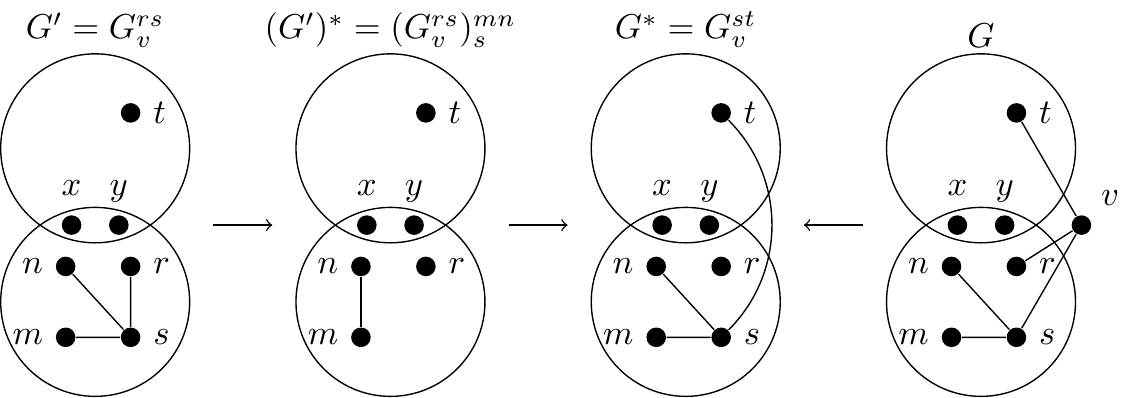}
	}

\subfigure[Construction of $G^*=G_s^{vm}$ from $G'$ when $s$ is admissible in $G'$ onto the edge $mr$.]
	{
	\includegraphics{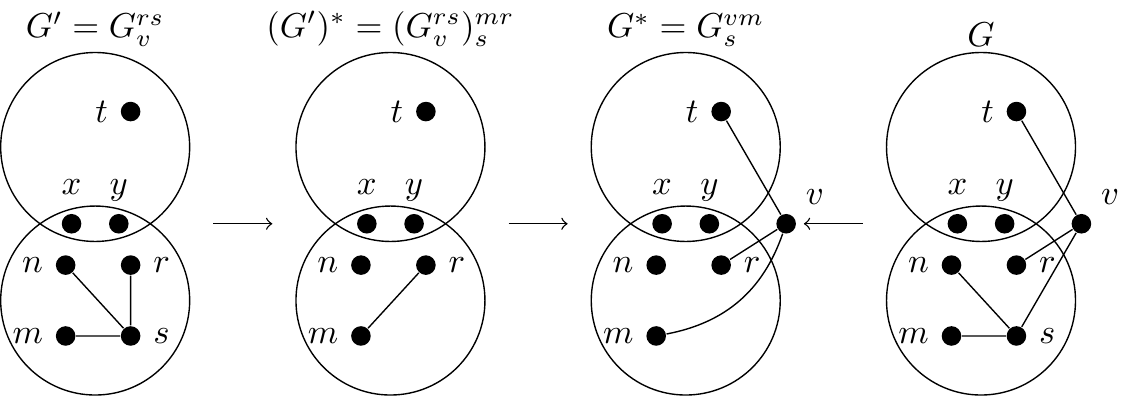}
	}
\caption{Constructions of $G^*$ from $G'=G_v^{rs}$.}
\label{fig:FeasibilityProof_sAdmissible}
\end{figure}
Lemma \ref{lem:MixedMConnectedPreservedBy1extensions} implies that $G^*$ is $\M$-connected in both cases. Further, since the node $s$ (respectively $v$) added back to obtain $G^*$ has neighbourhood $N_{G^*}(s)=\{m,n,t\}$ (resp.\ $N_{G^*}(v)=\{m,r,t\}$) where $t\in V(H_1)-V(H_2)$, and either $\{m,n\}\cap (V(H_2)-V(H_1))\neq \emptyset$ in the first case, or $r\in V(H_2)-V(H_1)$ in the second, we know $\{x,y\}$ is not a 2-vertex-cut of $G^*$.

			By our original assumption, $G^*$ is not direction-balanced. So let $(H_1^*,H_2^*)$ be a 2-separation of $G^*$ on some 2-vertex-cut $\{x^*,y^*\}$, with $H_2^*$ length-pure and minimal with respect to inclusion. Since $G$ is direction-balanced, the set $\{x^*,y^*\}$, must separate $\{s,t\}$ from $r$ in $G^*$ when $G^*=G_v^{st}$, and separate $\{v,m\}$ from $n$ in $G^*$ when $G^*=G_s^{vm}$. 
		Since $m,n,r,s\in V(H_2)$, this implies $V(H_2)\cap V(H_1^*)\neq \emptyset$ in both cases. But $\{x,y\}$ and $\{x^*,y^*\}$ are distinct 2-separators in $(G')^*$ and so cannot cross, by Lemma \ref{lem:UnbalancedMixedMConnHasNoCrossing2Sep}. Hence $\{x^*,y^*\}\subset V(H_2)$, which implies $V(H_2^*)\subset V(H_2)$ in both cases. The definition of an end now implies that there is some vertex in $V(H_2)-V(H_2^*)$ which is contained in some end of $G'$ but in no end of $G^*$. Hence $n(G^*)<n(G')$, which contradicts our choice of $G'$.\qedhere
	\end{case}	
	\end{proof}

Claim \ref{clm:FeasibilityProof_IfNotK4ThenFeasible} tells us that the only case left to consider is when $F_2=K_4$. We show that this cannot occur.

\begin{claim}\label{clm:FeasibilityProof_NotK4}
$F_2\neq K_4$.
\end{claim}

	\begin{proof}
	Assume $F_2=K_4$. We shall show that we can find an admissible move in $G$ which contradicts our choice of $G'$. As in Claim  \ref{clm:FeasibilityProof_IfNotK4ThenFeasible}, there are two cases to consider: when $G'=G-e$ and when $G'=G_v^{rs}$.
	
		\begin{case}	
	$G'=G-e$. Since $G$ is direction-balanced, $e$ must have at least one endvertex, say $r$, in $V(H_2)-V(H_1)$. Denote the remaining vertex in $V(H_2)-\{x,y\}$ by $z$.
	
	There are two possibilities for $e$: either $e=rz$, in which case $e$ is a direction edge in $G$. Or $e\neq rz$, in which case $e=rs$ for some $s\in V(H_1)$. In both cases we can construct a new graph, $G^*$, from $F_1$, by performing a sequence of two 1-extensions. This construction will give $G^*=G-f$ for some $f\in E(G)$. In the first case, when $e=rz$ is a direction edge, this construction gives $G^*=G-f$	where $f=rz$ is a length-edge. Whereas in the second case, when $e=rs$ for some $s\in V(H_1)$, we obtain $G^*=G-f$ where $f=rx$ is a length-edge. These two constructions are illustrated in Figure \ref{fig:FeasibilityProof_K4_e}. 

\begin{figure}[t]
\centering

\subfigure[Construction of $G^*=G-rz$ when $e=rz$ is a direction edge in $G$.]
	{
	\includegraphics{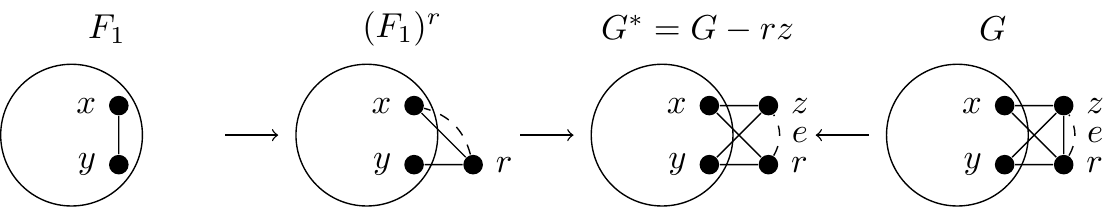}
	\label{fig:FeasibilityProof_K4_e=rz}
	}

\subfigure[Construction of $G^*=G-rx$ when $e=rs$ for some $s\in V(H_1)$. Where the type of an edge is not known, it is depicted by a dotted line. Note that this construction also works in the special case where $s\in\{ x,y\}$.]
	{
	\includegraphics{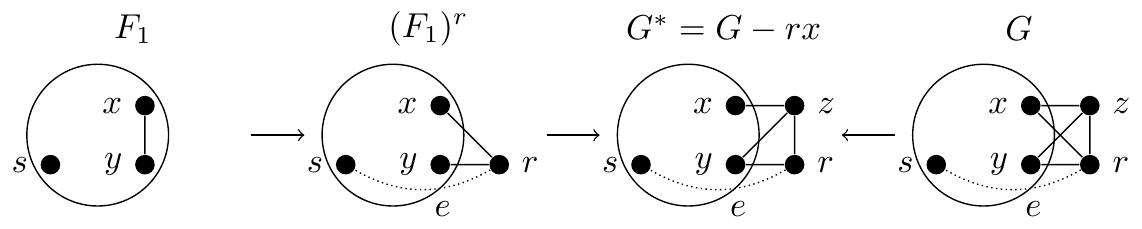}
	\label{fig:FeasibilityProof_K4_e!=rz}
	}
\caption{Constructions of $G^*=G-f$ from $F_1$.}
\label{fig:FeasibilityProof_K4_e}
\end{figure}

		We know $F_1$ is mixed and $\M$-connected, and by Lemma \ref{lem:MixedMConnectedPreservedBy1extensions}, 1-extensions preserve these properties. Hence $G^*$ is mixed and $\M$-connected. By the constructions, it is clear that $\{r,z\}$ is not a length-pure end in $G^*$, whereas it was in $G'$. So any length-pure end of $G^*$ must be contained in $F_1$, and thus is also a length-pure end of $G'$. Hence $n(G^*)< n(G')$, which contradicts our choice of $G'$.
		\end{case}

			\begin{case}
		$G'=G_v^{rs}$. Since $G$ is direction-balanced, $v$ has at least one neighbour in $V(H_2)-V(H_1)$. Denote the remaining vertex in $V(H_2)-\{x,y\}$ by $z$. We shall show that $z$ is admissible in $G$. As before, we can construct an $\M$-connected mixed graph $G^*$ from $F_1$ by a sequence of two 1-extensions. But this time, the graph $G^*$ obtained is a 1-reduction of $G$. There are three different constructions, depending on the structure $G'$.
		
	First, suppose $rs\in E(H_1)$. Then we must have $t\in V(H_2)-V(H_1)$. From Claim \ref{clm:FeasibilityProof_xyNotLengthEdge}, we know that $\{r,s\}\neq\{x,y\}$ so without loss of generality, $x\not\in N_G(v)$. We can then obtain $G^*=G_z^{xy}$ from $F_1$ by the construction shown in Figure \ref{fig:FeasibilityProof_K4_v_rsInS1}
		
\begin{figure}[t]

\centering
\subfigure[Construction of $G^*=G_z^{xy}$ when $G'=G_v^{rs}$ and $r,s\in V(H_1)$. Note that this construction also works when $y\in\{r,s\}$.  Where the type of an edge is not know, it is depicted by a dotted line.]
	{
	\includegraphics{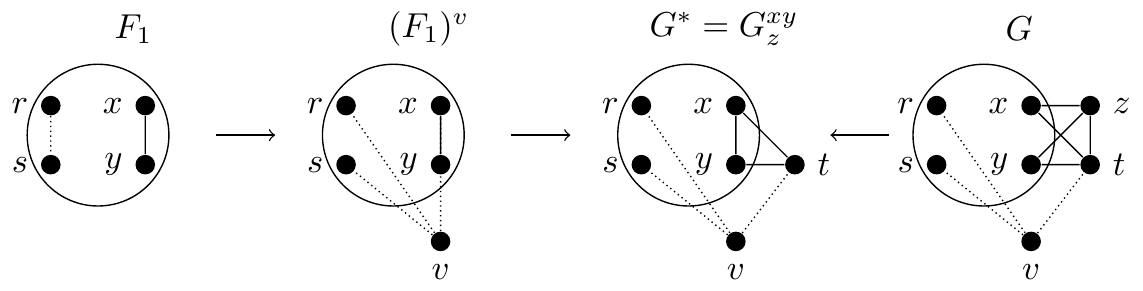}
	\label{fig:FeasibilityProof_K4_v_rsInS1}
	}		

\subfigure[Construction of $G^*=G_z^{yv}$ when $G'=G_v^{rs}$, $s=z$ and $r,s\in V(H_2)-V(H_1)$.]
	{
	\includegraphics{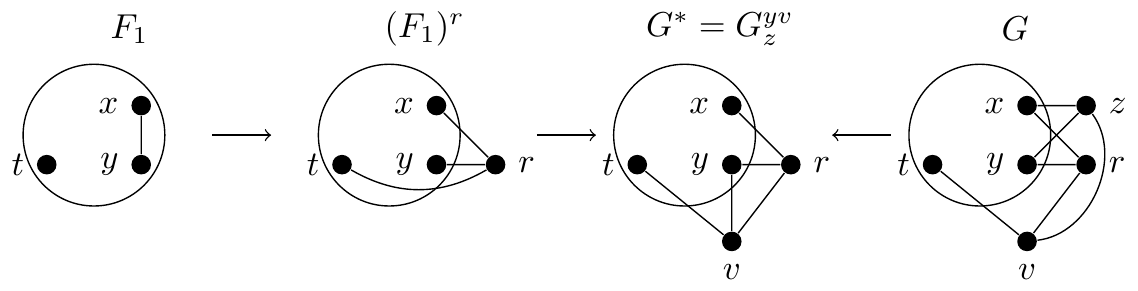}
	\label{fig:FeasibilityProof_K4_v_s=z}
	}

\subfigure[Construction of $G^*=G_z^{yr}$ when $G'=G_v^{rs}$, $s=y$ and $r\in V(H_2)-V(H_1)$.]
	{
	\includegraphics{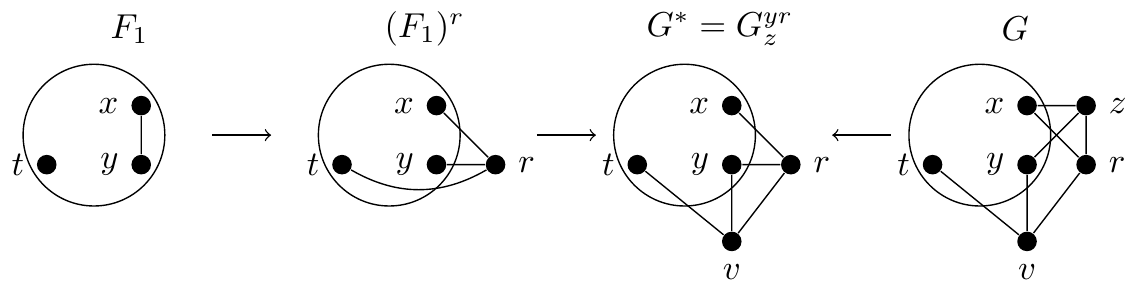}
	\label{fig:FeasibilityProof_K4_v_s=x}
	}
\caption{Constructions of $G^*=G_z$ from $F_1$.}
\label{fig:FeasibilityProof_K4_v}		
\end{figure}
			
			Second, suppose $rs\in E(H_2)$. If $v$ is mixed, then Lemma \ref{lem:UnbalancedLengthRedToDirectionRed}, implies that $G$ has an admissible direction 1-reduction at $v$ onto $rs$, which contradicts our original choice of $G'$. Hence $v$ is length-pure, and so, since $G$ is direction-balanced, $t\in V(H_1)-V(H_2)$. Either both $r,s\in V(H_2)-V(H_1)$, in which case $s=z$, and we obtain $G^*=G_z^{yv}$ by the construction shown in Figure \ref{fig:FeasibilityProof_K4_v_s=z}. Or, relabelling if necessary, $r\in V(H_2)-V(H_1)$ and $s\in V(H_1)\cap V(H_2)$, so without loss of generality $s=y$, and we construct $G^*=G_z^{yr}$	 as shown in Figure \ref{fig:FeasibilityProof_K4_v_s=x}.
		
 In all three cases, $V(G^*)-V(F_1)$ is not a length-pure end of $G^*$, whereas $V(G')-V(F_1)$ was a length-pure end of $G'$. So any length-pure end of $G^*$ is contained in $V(F_1)$, and must also be a length-pure end of $G'$. Thus $n(G^*)< n(G')$, which contradicts our choice of $G'$.\qedhere
		\end{case}
	\end{proof}
	
Clearly Claims \ref{clm:FeasibilityProof_IfNotK4ThenFeasible}	and \ref{clm:FeasibilityProof_NotK4} cannot both hold. Hence our original assumption is wrong, and $G$ contains either a feasible edge or a feasible node.
\end{proof}
 
Theorem \ref{thm:DbalMConnHasFeasibleMove}, together with the fact that edge additions, 1-extensions, and 2-sums with direction-pure $K_4$'s preserve $\M$-connectivity (Lemmas \ref{lem:MixedMConnectedPreservedByEdgeAdditions}, \ref{lem:MixedMConnectedPreservedBy1extensions} and \ref{lem:2SumMConn}\ref{part:2SumMConn_Sum} respectively), and that these operations also preserve being direction-balanced, gives us the following inductive construction:

\begin{thm}\label{thm:RecursiveConstructionDBalMConn}
Let $G$ be a mixed graph. Then $G$ is a direction-balanced, $\M$-connected mixed graph if and only if $G$ can be obtained from $K_3^+$ or $K_3^-$ by a sequence of edge additions, 1-extensions and 2-sums with direction-pure $K_4$'s.
\end{thm}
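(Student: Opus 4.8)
The statement is a biconditional and, given the machinery now assembled, it reduces to two inductions. The plan is: for the ``if'' direction, induct on the length of the construction sequence; for the ``only if'' direction, induct on $|E(G)|$, peeling off one operation at each step using Theorem~\ref{thm:DbalMConnHasFeasibleMove}. All of the hard analysis already lives in Theorem~\ref{thm:DbalMConnHasFeasibleMove} and the preservation lemmas of Sections~\ref{sec:M-Connected}--\ref{sec:Admissible}, so what remains is to check that the pieces fit together.

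\emph{Sufficiency.} I would induct on the length of the construction sequence. The base graphs $K_3^+$ and $K_3^-$ are mixed circuits, hence $\M$-connected and mixed, and they are vacuously direction-balanced since a $2$-separation requires at least $4$ vertices while $|V(K_3^+)|=|V(K_3^-)|=3$. For the inductive step, each of the three operations must preserve the conjunction ``direction-balanced, $\M$-connected, mixed''. Matroid connectivity is preserved by Lemmas~\ref{lem:MixedMConnectedPreservedByEdgeAdditions}, \ref{lem:MixedMConnectedPreservedBy1extensions} and \ref{lem:2SumMConn}\ref{part:2SumMConn_Sum} (noting a direction-pure $K_4$ is a pure circuit, hence $\M$-connected). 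Mixedness is immediate: an edge addition only adds an edge; a $2$-sum with a direction-pure $K_4$ keeps every length edge of $G$ and supplies direction edges; a $1$-extension deletes one edge $f$ but reinstates an edge of $f$'s type and retains every edge of the other type. Direction-balance under $2$-sums is exactly Lemma~\ref{lem:MConnDBal2SepPreservesDBal}. For edge additions and $1$-extensions I would argue directly: given a $2$-separation $(H_1,H_2)$ of the enlarged graph $G'$ on a $2$-vertex-cut $\{a,b\}$, the added edge (respectively the new vertex $v$) lies entirely within one side, so deleting it (respectively undoing the $1$-extension on that side) produces a $2$-separation of $G$ on the same cut; direction-balance of $G$ transfers back to $G'$, using that the $1$-extension reinstates an edge of $f$'s type on the side carrying $v$. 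The only subtlety is the degenerate case in which $v$ sits on the cut $\{a,b\}$: then $v$, having degree $3$, has neighbours on both sides, which forces one side of the separation to be small and lets the claim be checked by hand.

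\emph{Necessity.} Here I would induct on $|E(G)|$. If $G\in\{K_3^+,K_3^-\}$ the empty sequence works. Otherwise Theorem~\ref{thm:DbalMConnHasFeasibleMove} applies: either $G$ is a $2$-sum of a direction-balanced mixed graph $G_1$ with a direction-pure $K_4$, or $G$ has a feasible edge or a feasible vertex. In the first case $G$ has an edge-disjoint $2$-separation whose pure side is a $K_4$ minus a direction edge and which contains no direction $xy$-edge, so by Lemma~\ref{lem:2SumMConn}\ref{part:2SumMConn_Cleave} we may $2$-cleave $G$; the mixed piece returned is exactly $G_1$, now known to be $\M$-connected, and it is direction-balanced with $|E(G_1)|<|E(G)|$, so the inductive hypothesis applies and we prepend the $2$-sum. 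If $e$ is a feasible edge, then $G-e$ is $\M$-connected (admissibility) and direction-balanced (feasibility), and it is mixed because $G$, being $\M$-connected mixed, is redundantly rigid (Lemma~\ref{lem:MixedMConnectedIsRigid}), whence $G-e$ is rigid and therefore not pure; applying the inductive hypothesis to $G-e$ and prepending an edge addition yields $G$. If $v$ is a feasible vertex, the admissible $1$-reduction at $v$ produces an $\M$-connected, mixed, direction-balanced graph with two fewer edges, and $G$ is recovered by the inverse $1$-extension. Since each reduction strictly decreases $|E(G)|$ while staying in the class, and only $K_3^+$ and $K_3^-$ escape Theorem~\ref{thm:DbalMConnHasFeasibleMove}, the induction terminates correctly.

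Thus the theorem is essentially a corollary of Theorem~\ref{thm:DbalMConnHasFeasibleMove}: the genuinely new work was absorbed into that theorem and the preservation lemmas, and the only place in the present argument that demands any care is the verification that edge additions and $1$-extensions preserve direction-balance --- where, as noted, the awkward sub-case is when the new vertex of a $1$-extension lands on a $2$-vertex-cut of the enlarged graph.
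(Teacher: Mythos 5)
Your proof is correct and is essentially the paper's own argument spelled out: the paper deduces the theorem in a single sentence from Theorem \ref{thm:DbalMConnHasFeasibleMove} together with Lemmas \ref{lem:MixedMConnectedPreservedByEdgeAdditions}, \ref{lem:MixedMConnectedPreservedBy1extensions} and \ref{lem:2SumMConn}\ref{part:2SumMConn_Sum}, asserting without proof that the three operations also preserve being direction-balanced, and your two inductions are exactly that derivation made explicit. One small quibble in the part the paper omits: in the degenerate case of a 1-extension where the new vertex $v$ lies on a 2-vertex-cut $\{v,b\}$ of $G'$, the correct reason the claim survives is not that one side of the separation is small, but that the deleted edge $xy$ must join the two sides of $G'-\{v,b\}$ (otherwise $b$ would be a cut-vertex of $G$, contradicting Lemma \ref{lem:MConnectedGraphsAre2Connected}), whence $\{y,b\}$ (or $\{x,b\}$) is already an unbalanced 2-vertex-cut of $G$, contradicting the direction-balance of $G$.
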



\section{Global Rigidity of $\M$-connected Graphs} \label{sec:GlobalRigidity}

In order for the graphs constructed in Theorem \ref{thm:RecursiveConstructionDBalMConn} to be globally rigid, we need to know that the operations used preserve global rigidity. Lemmas \ref{lem:OperationsPreservingGlobalRigidity} and \ref{lem:2SumDPureK4PreservesGlobalRigidity}  imply that edge additions and 2-sums with direction-pure $K_4$'s preserve global rigidity, but 1-extensions are more troublesome. By Lemma \ref{lem:OperationsPreservingGlobalRigidity}, a 1-extension on a graph $G$ which deletes an edge $e$ will preserve global rigidity so long as $G-e$ is rigid. Fortunately, $\M$-connected mixed graphs are redundantly rigid, by Lemma \ref{lem:MixedMConnectedIsRigid}, so this condition is always satisfied in our construction. Hence all graphs described in Theorem  \ref{thm:RecursiveConstructionDBalMConn} are globally rigid, which gives us the following result:

\begin{lem}\label{lem:Conclusion_MConnDBalImpliesGloballyRigid}
Let $(G,p)$ be a generic mixed framework, and suppose $G$ is $\M$-connected and direction-balanced. Then $G$ is globally rigid.
\end{lem}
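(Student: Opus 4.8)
The plan is to induct along a construction sequence for $G$. By Theorem~\ref{thm:RecursiveConstructionDBalMConn}, since $G$ is a direction-balanced $\M$-connected mixed graph, there is a sequence $G_0, G_1, \dots, G_k = G$ with $G_0 \in \{K_3^+, K_3^-\}$ in which each $G_{j+1}$ arises from $G_j$ by an edge addition, a $1$-extension, or a $2$-sum with a direction-pure $K_4$. By Lemmas~\ref{lem:MixedMConnectedPreservedByEdgeAdditions}, \ref{lem:MixedMConnectedPreservedBy1extensions} and \ref{lem:2SumMConn}\ref{part:2SumMConn_Sum}, every $G_j$ is again an $\M$-connected mixed graph, and hence redundantly rigid by Lemma~\ref{lem:MixedMConnectedIsRigid}. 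The statement to prove, by induction on $j$, is that every generic realisation of $G_j$ is globally rigid.

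For the base case, $K_3^+$ and $K_3^-$ are mixed circuits on three vertices, so they admit no $2$-separation and are therefore (vacuously) direction-balanced; by Theorem~\ref{thm_JJ:GloballyRigidCircuits} all of their generic realisations are globally rigid.

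For the inductive step, assume every generic realisation of $G_j$ is globally rigid and let $p'$ be a generic realisation of $G_{j+1}$. If $G_{j+1}$ is a $2$-sum of $G_j$ with a direction-pure $K_4$, then $(G_{j+1},p')$ is globally rigid directly by Lemma~\ref{lem:2SumDPureK4PreservesGlobalRigidity}, applied to the globally rigid $\M$-connected mixed graph $G_j$. Otherwise $V(G_j)\subseteq V(G_{j+1})$, and the restriction $p$ of $p'$ to $V(G_j)$ is again generic, so $(G_j,p)$ is globally rigid by the inductive hypothesis. If $G_{j+1}=G_j+e$, then $(G_{j+1},p')$ is globally rigid by the edge-addition part of Lemma~\ref{lem:OperationsPreservingGlobalRigidity}. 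If $G_{j+1}$ is a $1$-extension of $G_j$ deleting an edge $e$, then $G_j-e$ is rigid because $G_j$ is redundantly rigid, so the $1$-extension part of Lemma~\ref{lem:OperationsPreservingGlobalRigidity} gives that $(G_{j+1},p')$ is globally rigid. This completes the induction, and taking $j=k$ proves the lemma.

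The only step that is not completely routine is the $1$-extension case: Lemma~\ref{lem:OperationsPreservingGlobalRigidity} requires the deleted edge to leave a rigid subgraph, and checking this is exactly where one uses that every intermediate graph $G_j$ inherits $\M$-connectivity, and hence redundant rigidity from Lemma~\ref{lem:MixedMConnectedIsRigid}. Beyond that, the remaining points are bookkeeping: that the restriction of a generic realisation is again generic, that $K_3^+$ and $K_3^-$ are (trivially) direction-balanced, and that the operations appearing in Theorem~\ref{thm:RecursiveConstructionDBalMConn} are precisely those handled by Lemmas~\ref{lem:OperationsPreservingGlobalRigidity} and \ref{lem:2SumDPureK4PreservesGlobalRigidity}.
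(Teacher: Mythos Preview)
Your proof is correct and follows essentially the same approach as the paper: the paper argues (in the paragraph immediately preceding the lemma) that the construction of Theorem~\ref{thm:RecursiveConstructionDBalMConn} preserves global rigidity at each step, using Lemmas~\ref{lem:OperationsPreservingGlobalRigidity} and~\ref{lem:2SumDPureK4PreservesGlobalRigidity} together with the redundant rigidity of $\M$-connected mixed graphs from Lemma~\ref{lem:MixedMConnectedIsRigid}. You have simply made the induction explicit and filled in the bookkeeping (the base case via Theorem~\ref{thm_JJ:GloballyRigidCircuits}, the fact that intermediate graphs remain $\M$-connected, and that restrictions of generic realisations are generic), which the paper leaves implicit.
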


By Lemma \ref{lem_JJK:NecessaryConditionsForGlobalRigidity}, all generic, globally rigid direction-length frameworks are mixed and direction-balanced. So Lemma \ref{lem:Conclusion_MConnDBalImpliesGloballyRigid} implies that global rigidity is a generic property of $\M$-connected direction-length frameworks. Hence characterising global rigidity for this class:

\begin{thm}\label{thm:MConnectedGloballyRigidIffDBal}
Let $p$ be a generic realisation of an $\M$-connected mixed graph $G$. Then $(G,p)$ is globally rigid if and only if $G$ is direction-balanced. 
\end{thm}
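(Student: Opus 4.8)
The plan is to deduce Theorem~\ref{thm:MConnectedGloballyRigidIffDBal} immediately from the inductive construction in Theorem~\ref{thm:RecursiveConstructionDBalMConn} and the necessary conditions in Lemma~\ref{lem_JJK:NecessaryConditionsForGlobalRigidity}. The two directions are genuinely asymmetric in difficulty, with essentially all of the work already done.

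For the ``if'' direction, suppose $G$ is $\M$-connected and direction-balanced. Then $\M$-connectedness gives $|V|\geq 3$, and by Lemma~\ref{lem:MixedMConnectedIsRigid} $G$ is mixed (indeed redundantly rigid). By Theorem~\ref{thm:RecursiveConstructionDBalMConn}, $G$ can be built from $K_3^+$ or $K_3^-$ by a sequence of edge additions, 1-extensions, and 2-sums with direction-pure $K_4$'s. The base cases $K_3^+$ and $K_3^-$ are globally rigid in generic position: they are mixed circuits by construction, hence (redundantly) rigid by Lemma~\ref{lem:RigidCircuits}, and globally rigid by Theorem~\ref{thm_JJ:GloballyRigidCircuits} since a single mixed circuit on $\{x,y\}$-cut-free vertex sets is trivially direction-balanced. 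Now I proceed by induction along the construction sequence. At each step we have an intermediate graph $G_i$ which, again by Theorem~\ref{thm:RecursiveConstructionDBalMConn} (applied to the prefix of the sequence), is $\M$-connected, mixed, and direction-balanced, hence by the inductive hypothesis globally rigid. Edge additions preserve global rigidity by Lemma~\ref{lem:OperationsPreservingGlobalRigidity}(a); 2-sums with direction-pure $K_4$'s preserve it by Lemma~\ref{lem:2SumDPureK4PreservesGlobalRigidity}. For a 1-extension deleting an edge $e$ of $G_i$: since $G_i$ is $\M$-connected and mixed it is redundantly rigid by Lemma~\ref{lem:MixedMConnectedIsRigid}, so $G_i-e$ is rigid, and thus Lemma~\ref{lem:OperationsPreservingGlobalRigidity}(b) applies and the 1-extension preserves global rigidity. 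Hence every graph in the sequence, and in particular $G$, is globally rigid. (This is exactly the content of Lemma~\ref{lem:Conclusion_MConnDBalImpliesGloballyRigid}, so the ``if'' direction is simply a restatement of that lemma.)

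For the ``only if'' direction, suppose $(G,p)$ is globally rigid; we must show $G$ is direction-balanced. Since $G$ is $\M$-connected it is 2-connected (Lemma~\ref{lem:MConnectedGraphsAre2Connected}), so $|V|\geq 3$, and Lemma~\ref{lem_JJK:NecessaryConditionsForGlobalRigidity}(d) states precisely that a globally rigid generic framework on at least three vertices is direction-balanced. One small point to check is that global rigidity of the generic realisation $(G,p)$ coincides with global rigidity of the graph $G$ in the sense defined for $\M$-connected graphs: but the ``if'' direction just established, combined with Lemma~\ref{lem_JJK:NecessaryConditionsForGlobalRigidity}(d), shows that for an $\M$-connected graph \emph{either all generic realisations are globally rigid or none are} --- direction-balancedness of $G$ is equivalent to global rigidity of any one (hence every) generic realisation --- so global rigidity is a generic property for this class and the statement is unambiguous.

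There is no real obstacle here: the theorem is a corollary packaging of Theorem~\ref{thm:RecursiveConstructionDBalMConn} (whose proof, via Theorem~\ref{thm:DbalMConnHasFeasibleMove}, is the technical heart of the paper) together with the already-cited necessary conditions. The only thing requiring a line of care is the observation in the previous paragraph that the two directions together upgrade the result into a statement about the graph $G$ rather than a particular realisation $p$, establishing that global rigidity is a generic property for $\M$-connected direction-length frameworks.
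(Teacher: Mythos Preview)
Your proof is correct and follows essentially the same approach as the paper: the ``if'' direction is exactly Lemma~\ref{lem:Conclusion_MConnDBalImpliesGloballyRigid}, derived from Theorem~\ref{thm:RecursiveConstructionDBalMConn} together with Lemmas~\ref{lem:OperationsPreservingGlobalRigidity}, \ref{lem:2SumDPureK4PreservesGlobalRigidity}, and~\ref{lem:MixedMConnectedIsRigid}, while the ``only if'' direction is Lemma~\ref{lem_JJK:NecessaryConditionsForGlobalRigidity}(d). Your additional remark that the two directions together establish global rigidity as a generic property for $\M$-connected frameworks is also exactly what the paper observes just before stating the theorem.
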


This implies that the inductive construction in Theorem \ref{thm:RecursiveConstructionDBalMConn}, is also a construction of all globally rigid $\M$-connected graphs:

\begin{thm}\label{thm:RecursiveConstructionGloballyRigidMConn}
Let $G$ be a graph. Then $G$ is a globally rigid $\M$-connected mixed graph if and only if $G$ can be obtained from $K_3^+$ or $K_3^-$ by a sequence of edge additions, 1-extensions and 2-sums with direction-pure $K_4$'s.
\end{thm}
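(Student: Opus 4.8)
The plan is to derive this statement as an immediate corollary of the two main results already in hand, Theorem \ref{thm:RecursiveConstructionDBalMConn} and Theorem \ref{thm:MConnectedGloballyRigidIffDBal}, since it is essentially their conjunction restricted to the class of $\M$-connected mixed graphs. There is no new combinatorial content to establish; the proof is a bookkeeping exercise of chaining equivalences.

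For the forward direction, I would start by assuming $G$ is a globally rigid $\M$-connected mixed graph. Theorem \ref{thm:MConnectedGloballyRigidIffDBal} applied to a generic realisation $p$ of $G$ then tells us that $G$ is direction-balanced. Since $G$ is also $\M$-connected and mixed, Theorem \ref{thm:RecursiveConstructionDBalMConn} immediately yields that $G$ can be built from $K_3^+$ or $K_3^-$ by a sequence of edge additions, 1-extensions and 2-sums with direction-pure $K_4$'s, as required.

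For the converse, I would assume $G$ is obtained from $K_3^+$ or $K_3^-$ by such a sequence of operations. Theorem \ref{thm:RecursiveConstructionDBalMConn} (or just its ``if'' direction, which is the observation that these three operations preserve both $\M$-connectivity and being direction-balanced, starting from the mixed circuits $K_3^+, K_3^-$) shows that $G$ is a direction-balanced $\M$-connected mixed graph. Then Theorem \ref{thm:MConnectedGloballyRigidIffDBal}, or equivalently Lemma \ref{lem:Conclusion_MConnDBalImpliesGloballyRigid}, shows that $G$ is globally rigid, completing the proof.

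The only subtlety worth flagging — and it is not really an obstacle, since it was already resolved by Lemma \ref{lem:Conclusion_MConnDBalImpliesGloballyRigid} and the discussion preceding it — is that the ``globally rigid'' conclusion relies on the fact that the 1-extensions in the construction always delete an edge $e$ with $G-e$ rigid, which holds because intermediate $\M$-connected mixed graphs are redundantly rigid by Lemma \ref{lem:MixedMConnectedIsRigid}; this is exactly what makes the global-rigidity-preservation clause of Lemma \ref{lem:OperationsPreservingGlobalRigidity}(b) applicable throughout. Hence the statement follows with no further work.
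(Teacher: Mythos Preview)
Your proposal is correct and matches the paper's approach exactly: the paper presents this theorem as an immediate consequence of Theorems \ref{thm:RecursiveConstructionDBalMConn} and \ref{thm:MConnectedGloballyRigidIffDBal}, with the redundant rigidity of $\M$-connected mixed graphs (Lemma \ref{lem:MixedMConnectedIsRigid}) already invoked in the discussion leading to Lemma \ref{lem:Conclusion_MConnDBalImpliesGloballyRigid} to handle the 1-extension clause.
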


\section{Concluding Remarks}\label{sec:Further}

There exist globally rigid mixed graphs whose rigidity matroid is not connected, so these results do not characterise global rigidity in general. In particular, $\M$-connected mixed graphs satisfy three properties which are not necessary for global rigidity: they contain at least two length edges, have minimum degree three, and every direction edge is redundant.

Servatius and Whiteley \cite{SW_PlaneCAD} showed that a mixed graph with a single length edge is globally rigid if and only if it is rigid. So to obtain a full characterisation of global rigidity, it remains to characterise it for graphs with $|L|\geq 2$. Our result succeeds in doing this for a large class of such graphs. 


\section*{Acknowledgements}

I would like to thank Bill Jackson for suggesting this problem and for helpful discussions regarding it.


\bibliographystyle{plain}
\bibliography{References}

\begin{thebibliography}{10}

\bibitem{AR_RigidityOfGraphs}
L.\ Asimow and B.\ Roth.
\newblock Rigidity of graphs {II}.
\newblock {\em J. Math. Anal. Appl.}, 68:171--190, 1979.

\bibitem{BJ_LengthCircuits}
A.R.\ Berg and T.\ Jord{\'a}n.
\newblock A proof of {C}onnelly's conjecture on 3-connected circuits of the
  rigidity matroid.
\newblock {\em Journal of Combinatorial Theory, Series B}, 88:78--89, 2003.

\bibitem{CH_PortOraclesForMatroids}
C.R.\ Coullard and L.\ Hellerstein.
\newblock Independence and port oracles for matroids, with an application to
  computational learning theory.
\newblock {\em Combinatorica}, 16(2):189--208, 1996.

\bibitem{G_L_CharacterisingGlobalRigidity}
S.J.\ Gortler, A.D.\ Healy, and D.P.\ Thurston.
\newblock Characterizing generic global rigidity.
\newblock {\em American Journal of Mathematics}, 132(4):897--939, 2010.

\bibitem{JJ_LengthConnected}
B.\ Jackson and T.\ Jord{\'a}n.
\newblock Connected rigidity matroids and unique realizations of graphs.
\newblock {\em Journal of Combinatorial Theory, Series B}, 94:1--29, 2005.

\bibitem{JJ_MixedCircuits}
B.\ Jackson and T.\ Jord{\'a}n.
\newblock Globally rigid circuits of the direction-length rigidity matroid.
\newblock {\em Journal of Combinatorial Theory, Series B}, 100:1--22, 2010.

\bibitem{JJ_DL_GlobalOperations}
B.\ Jackson and T.\ Jord\'{a}n.
\newblock Operations preserving global rigidity of generic direction-length
  frameworks.
\newblock {\em International J. Comput. Geom. Appl.}, 20:685--708, 2010.

\bibitem{JK_DL_BoundedFrameworks}
B.\ Jackson and P.\ Keevash.
\newblock Bounded direction-length frameworks.
\newblock {\em Discrete Comput. Geom.}, 46(1):48--71, 2011.

\bibitem{JK_NecessaryConditions}
B.\ Jackson and P.\ Keevash.
\newblock Necessary conditions for the global rigidity of direction-length
  frameworks.
\newblock {\em Discrete Comput. Geom.}, 46(1):72--85, 2011.

\bibitem{Ox_Matroids}
J.G.\ Oxley.
\newblock {\em Matroid Theory}.
\newblock Oxford University Press, 2nd edition, 2011.

\bibitem{SW_PlaneCAD}
B.\ Servatius and W.\ Whiteley.
\newblock Constraining plane configurations in computer-aided design:
  combinatorics of directions and lengths.
\newblock {\em SIAM J. Discrete Math.}, 12(1):136--153 (electronic), 1999.

\bibitem{W_MatroidsFromDiscAppGeom}
W.\ Whiteley.
\newblock Some matroids from discrete applied geometry.
\newblock In {\em Matroid Theory}, volume 197 of {\em Contemporary Math.},
  pages 171--311. American Mathematical Society, 1996.

\end{thebibliography}

\end{document}